\documentclass{amsart}
\usepackage{fullpage}
\usepackage{amsthm,mathtools,tikz-cd}
\usepackage{tikz}
\usepackage[all,cmtip]{xy}
\usepackage{color}
\usepackage{bbm}
\usepackage{framed}
\usepackage{latexsym}
\usetikzlibrary{%
  matrix,%
  calc,%
  arrows%
}
\newtheorem{theorem}{Theorem}[section]
\newtheorem{lemma}[theorem]{Lemma}
\newtheorem{proposition}[theorem]{Proposition}
\newtheorem{convention}[theorem]{Convention}
\newtheorem{corollary}[theorem]{Corollary}
\newtheorem{remark}[theorem]{Remark}
\newtheorem{definition}[theorem]{Definition}
\newtheorem{example}[theorem]{Example}
\newtheorem{result}[theorem]{Result}
\newtheorem{conjecture}[theorem]{Conjecture}

\title{An Approximate Nerve Theorem}

\author{Dejan Govc}
\address[DEJAN GOVC] {Institute of Mathematics, Physics and Mechanics, Jadranska 19, 1000 Ljubljana, Slovenia}

\email[Corresponding author]{dejan.govc@imfm.si}

\author{Primoz Skraba}
\address[PRIMOZ SKRABA] { Jozef Stefan Institute, Jamova cesta 39, 1000 Ljubljana, Slovenia \&
FAMNIT, University of Primorska, Glagolja\v ska ulica 8, 6000 Koper, Slovenia
        }
\email{primoz.skraba@ijs.si}

\newcommand{\RR}{\mathbb{R}}
\newcommand{\ZZ}{\mathbb{Z}}
\newcommand{\NN}{\mathbb{N}}
\newcommand{\kk}{\mathbbm{k}}

\newcommand{\id}{\operatorname{id}}
\newcommand{\im}{\operatorname{im}}
\newcommand{\coker}{\operatorname{coker}}
\newcommand{\eps}{\varepsilon}
\newcommand{\pt}{\operatorname{pt}}
\newcommand{\projdim}{\operatorname{projdim}}
\newcommand{\CW}{{\mathrm{CW}}}

\newcommand{\gr}{{(\operatorname{Gr})}}
\newcommand{\ngr}{{(\operatorname{NGr})}}

\newcommand{\Xs}{X}
\newcommand{\Ys}{Y}
\newcommand{\XCov}{\mathcal{U}}
\newcommand{\XCv}{U}
\newcommand{\YCov}{\mathcal{V}}
\newcommand{\YCv}{V}
\newcommand{\Nrv}{\mathcal{N}}
\newcommand{\Filt}{\mathcal{F}}

\newcommand{\Dgm}[2]{\mathrm{Dgm}(#1,#2)}

\newcommand{\Vect}{\operatorname{{\bf Vect}}}
\newcommand{\Mod}{\operatorname{{\bf Mod}}}
\newcommand{\Top}{\operatorname{{\bf Top}}}
\newcommand{\SCx}{\operatorname{{\bf SCx}}}

\newcommand{\Cg}{\mathsf{C}}
\newcommand{\Zg}{\mathsf{Z}}
\newcommand{\Bg}{\mathsf{B}}
\newcommand{\Hg}{\mathsf{H}}
\newcommand{\PC}{\mathsf{C}}
\newcommand{\PH}{\mathsf{H}}

\newcommand{\Tot}{\mathrm{Tot}}

\newcommand{\itl}[1]{\overset{#1}\sim}
\newcommand{\Litl}[1]{\overset{#1}{\sim_L}}
\newcommand{\Ritl}[1]{\overset{#1}{\sim_R}}

\newcommand{\Ext}{\operatorname{Ext}}
\newcommand{\Hom}{\operatorname{Hom}}

\newcommand{\spaceD}{\Delta}
\newcommand{\minD}{Q}

\begin{document}
\keywords{persistence modules, Mayer-Vietoris, spectral sequences, approximation}
\subjclass[2010]{Primary: 55,55T,18}
\begin{abstract}
 The Nerve Theorem relates the topological type of a suitably nice space with the nerve of a good cover of that space. It has many variants, such as to consider acyclic covers and numerous applications in topology including applied and computational topology. The goal of this paper is to relax the notion of a good cover to an approximately good cover, or more precisely, we introduce the notion of an $\eps$-acyclic cover. We use persistent homology to make this rigorous and prove tight bounds between the persistent homology of a space endowed with a function and the persistent homology of the nerve of an $\eps$-acyclic cover of the space. Our approximations are stated in terms of \emph{interleaving distance} between persistence modules. Using the Mayer-Vietoris spectral sequence, we prove upper bounds on the interleaving distance between the persistence module of the underlying space and the persistence module of the the nerve of the cover. To prove the best possible bound we must introduce special cases of interleavings between persistence modules called left and right interleavings. Finally, we provide examples which achieve the bound proving the lower bound and tightness of the result. 
 \end{abstract}
 \maketitle

\section{Introduction}

The Nerve Theorem is a classical result relating a sufficiently nice cover of a topological space with the nerve of that cover going back to Alexandroff~\cite{alexandroff1928allgemeinen}.
\begin{theorem}[Corollary 4G.3 \cite{Hatcher}]\label{hmtpynrv}
If $\XCov$ is an open cover of a paracompact space $\Xs$ such that every
nonempty intersection of finitely many sets in $\XCov$ is contractible, then $\Xs$ is homotopy
equivalent to the nerve $\Nrv(\Xs)$.
\end{theorem}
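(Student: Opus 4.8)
The plan is to exhibit both $\Xs$ and (the geometric realization of) the nerve as homotopy equivalent to a common intermediate space, the \emph{Mayer--Vietoris blowup complex} of the cover --- a model of the homotopy colimit of the cover diagram, which is the natural object here given the Mayer--Vietoris spectral sequence that will be used throughout the paper. For a finite subset $\sigma\subseteq\XCov$ write $\XCv_\sigma=\bigcap_{\XCv\in\sigma}\XCv$, and when $\XCv_\sigma\neq\emptyset$ let $\Delta^\sigma$ denote the corresponding closed simplex of $\Nrv(\Xs)$. Set $\widehat{\Xs}=\bigcup_{\sigma}\XCv_\sigma\times\Delta^\sigma$, glued along the face inclusions $\XCv_\tau\times\Delta^\tau\hookrightarrow\XCv_\sigma\times\Delta^\sigma$ for $\sigma\subseteq\tau$ (valid since $\XCv_\tau\subseteq\XCv_\sigma$). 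It carries two canonical projections, $p\colon\widehat{\Xs}\to\Xs$, $(x,t)\mapsto x$, and $q\colon\widehat{\Xs}\to\Nrv(\Xs)$, $(x,t)\mapsto t$, and it suffices to prove that both are homotopy equivalences.

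First I would show that $q$ is a homotopy equivalence. Filter $\Nrv(\Xs)$ by skeleta and pull back along $q$; this presents $\widehat{\Xs}$ as an increasing union of subspaces in which passing from the $(n-1)$-stage to the $n$-stage is a pushout along a cofibration, and $q$ restricts over each open $n$-cell to a projection of the form $\XCv_\sigma\times\Delta^\sigma\to\Delta^\sigma$ with $\XCv_\sigma$ contractible --- here is where the hypothesis on finite intersections enters. By the gluing lemma for homotopy equivalences (their compatibility with pushouts along cofibrations) one gets inductively that the restriction of $q$ to each skeletal stage is a homotopy equivalence, and the corresponding statement for sequential colimits of cofibrations upgrades this to $q$ itself.

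Next I would show that $p$ is a homotopy equivalence. Since $\Xs$ is paracompact, fix a partition of unity $\{\phi_\XCv\}_{\XCv\in\XCov}$ subordinate to $\XCov$. The formula $s(x)=\bigl(x,\sum_{\XCv}\phi_\XCv(x)\,v_\XCv\bigr)$, with $v_\XCv$ the vertex of $\Nrv(\Xs)$ labelled by $\XCv$, defines a continuous section of $p$: the sets $\XCv$ with $\phi_\XCv(x)>0$ all contain $x$, hence span a simplex of $\Nrv(\Xs)$, and local finiteness of the partition of unity gives continuity; clearly $p\comp s=\id_\Xs$. For the other composite, for fixed $x$ the fiber $p^{-1}(x)$ is the realization of the full simplex on $\{\XCv:x\in\XCv\}$, which is convex in the ambient barycentric coordinates and contains $s(x)$, so the straight-line homotopy $(x,t)\mapsto\bigl(x,(1-r)t+r\sum_\XCv\phi_\XCv(x)v_\XCv\bigr)$ stays in $\widehat{\Xs}$, is continuous, and realizes $s\comp p\simeq\id_{\widehat{\Xs}}$ over $\Xs$. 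Combining, $\Xs\xleftarrow{\ \simeq\ }\widehat{\Xs}\xrightarrow{\ \simeq\ }\Nrv(\Xs)$ yields the desired homotopy equivalence.

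The main obstacle is not a computation but the bookkeeping needed to make the two gluing steps rigorous for an \emph{arbitrary} cover, possibly infinite and not locally finite: one must check that the face inclusions $\XCv_\tau\times\Delta^\tau\hookrightarrow\XCv_\sigma\times\Delta^\sigma$ are cofibrations, choose the right (weak) topology on $\widehat{\Xs}$ so that the colimit and pushout arguments apply, and verify continuity of the assembled section and homotopy --- which is precisely where paracompactness is used. An alternative that sidesteps the blowup complex is a transfinite induction: well-order $\XCov$, add its members one at a time, and at each stage use ordinary Mayer--Vietoris (or van Kampen) together with contractibility of the intersections to promote a homotopy equivalence of nerves to a homotopy equivalence of unions; this carries the same content but trades the geometric construction for a more delicate induction.
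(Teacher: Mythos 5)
Your argument is the standard Mayer--Vietoris blowup (homotopy colimit) proof of Hatcher's Proposition 4G.2 and Corollary 4G.3, which the paper cites rather than reproves and whose underlying construction the Appendix also uses to establish convergence of the spectral sequence, so the approach matches. The only slip is notational: for $\sigma\subseteq\tau$ the identifications should be made via the two inclusions $\XCv_\tau\times\Delta^\sigma\hookrightarrow\XCv_\sigma\times\Delta^\sigma$ and $\XCv_\tau\times\Delta^\sigma\hookrightarrow\XCv_\tau\times\Delta^\tau$, not a direct map $\XCv_\tau\times\Delta^\tau\hookrightarrow\XCv_\sigma\times\Delta^\sigma$, which is dimensionally impossible.
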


One more recent application is in the area of \emph{topological data analysis} (TDA) \cite{ghrist2008barcodes,Carlsson2009,weinberger2011persistent}. The goal is to obtain information about the topology of a space, often given a discrete sample of the space. There has been a large body of work proving results in different contexts, including~\cite{chazal2011scalar,bobrowski2015topology,dey2015graph} just to name a few. A common point is the use of the Nerve Theorem, either explicitly or implicitly, through constructions such as the \v Cech complex. 
A powerful tool in TDA is \emph{persistent homology}, which studies the homology of a filtration rather than a single space. Applying the homology functor yields a \emph{persistence module}. If we compute homology with field coefficients, then we can obtain a complete topological invariant called  \emph{persistence barcode} or \emph{persistence diagram}. One useful source of filtrations are sublevel (resp. superlevel) set filtrations -- given a space endowed with a real-valued continuous function, $f:\Xs\rightarrow \RR$, the sublevel sets of the function form a filtration yielding a persistence diagram denoted by $\Dgm{\Xs}{f}$.

One example of an important function is the distance to a compact set. When the compact set consists of sample points, this function relates to a notion of scale and is equivalent to the \v Cech filtration.  Recall that the \v Cech complex on a point set $P$ is the nerve of the union of balls of radius $r$. The points are usually embedded in Euclidean space, allowing the Nerve Theorem to be applied via convexity. By varying the radius $r$, we obtain the \v Cech filtration. Other filtrations which are often considered are:
superlevel sets of probability density functions~\cite{bobrowski2015topology},
sublevel set filtration of a sampled function~\cite{chazal2011scalar}, and 
the elevation function on 2-manifolds~\cite{agarwal2006extreme}.
Persistence diagrams have proven interesting because they are stable \cite{CohEdeHar2007} -- meaning a small change in the filtration bounds the change in the invariant. One way of measuring the magnitude of the change is the \emph{bottleneck distance}.
 Stability enables us to prove theorems about approximating the persistent homology of a filtration using an alternate filtration constructed from a discrete sample, i.e. that the bottleneck distance is small. 

An important technique in proving such an approximation is \emph{interleaving} \cite{chazal2012structure}, which provides an algebraic condition for approximation (Section \ref{sec:modint}). A common theme is to construct an interleaving with a good cover, providing an approximation guarantee. In some cases, such as for distance filtration, an interleaving with a good cover can often be shown directly. In more general settings, it can often be more difficult to directly prove an interleaving. Our goal in this paper is to prove an approximation bound using the stability of persistent homology to relax the need for a good cover. Importantly, we only make assumptions on the \emph{local properties} of the space and function, which make it useful in a variety of applications. 

As we deal with persistent homology, we concentrate on a homological version of Theorem \ref{hmtpynrv}.  
\begin{theorem}[Theorem 4.4 \cite{brown2012cohomology}]\label{thm:acyclic}
Suppose $\Xs$ is the union of subcomplexes $\XCv_i$ such that every
non-empty intersection $\XCv_{i_0}\cap \cdots  \cap \XCv_{i_p}$ for $p \geq 0$ is acyclic. Then 
$\Hg_*(\Xs) \cong \Hg_*(\Nrv(\XCov))$
where $\Nrv(\XCov)$ is the nerve of the cover.
\end{theorem}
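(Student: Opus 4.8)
The plan is to reduce the statement to a comparison of two spectral sequences attached to a single double complex; this is the classical Mayer--Vietoris spectral sequence argument, and it also sets the stage for the persistence-module version developed later in the paper. For a nonempty intersection indexed by a $p$-simplex $\{i_0<\cdots<i_p\}$ of the nerve, write $\XCv_{i_0\cdots i_p}=\XCv_{i_0}\cap\cdots\cap\XCv_{i_p}$, and build the double complex
\[
E^0_{p,q}\;=\;\bigoplus_{i_0<\cdots<i_p}\Cg_q\bigl(\XCv_{i_0\cdots i_p}\bigr),
\]
with vertical differential $\partial$ the direct sum of the simplicial boundary maps and horizontal differential $\delta$ the alternating sum of the maps induced by the inclusions $\XCv_{i_0\cdots i_p}\hookrightarrow \XCv_{i_0\cdots\widehat{i_j}\cdots i_p}$; one checks $\partial\delta=\delta\partial$ as usual. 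I would then compute the homology of the total complex $\Tot(E^0)$ in two ways.

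First I would take vertical homology. By hypothesis each nonempty $\XCv_{i_0\cdots i_p}$ is acyclic, i.e. $\widetilde{\Hg}_*(\XCv_{i_0\cdots i_p})=0$, so $\Hg_q(\XCv_{i_0\cdots i_p})$ equals $\kk$ when $q=0$ and vanishes when $q>0$. Hence the $E^1$-page collapses onto the row $q=0$, where $E^1_{p,0}=\bigoplus_{i_0<\cdots<i_p}\kk=\Cg_p(\Nrv(\XCov))$, and under this identification the residual differential $\delta$ is precisely the simplicial boundary of $\Nrv(\XCov)$. A spectral sequence concentrated in a single row degenerates, so $\Hg_n(\Tot(E^0))\iso \Hg_n(\Nrv(\XCov))$ for every $n$.

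Next I would take horizontal homology first. Fix $q$. The augmented row $\cdots\to\bigoplus_{i<j}\Cg_q(\XCv_{ij})\xrightarrow{\delta}\bigoplus_i\Cg_q(\XCv_i)\xrightarrow{\varepsilon}\Cg_q(\Xs)\to 0$ splits as a direct sum, over the simplices $\sigma$ of $\Xs$, of the reduced simplicial chain complex of the full simplex on the index set $I_\sigma=\{i:\sigma\in\XCv_i\}$; each such complex is exact because $I_\sigma$ is nonempty (as $\Xs=\bigcup_i\XCv_i$ and the $\XCv_i$ are subcomplexes) and a full simplex is contractible. This generalized Mayer--Vietoris exactness gives $E^1_{p,q}=0$ for $p>0$ and $E^1_{0,q}=\Cg_q(\Xs)$, with residual differential the boundary of $\Xs$, so $E^2_{0,q}=\Hg_q(\Xs)$ and the page is concentrated in a single column; thus $\Hg_n(\Tot(E^0))\iso\Hg_n(\Xs)$. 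Combining the two computations yields $\Hg_*(\Xs)\iso\Hg_*(\Nrv(\XCov))$.

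The step I expect to be the main obstacle --- and the one I would write out most carefully --- is the generalized Mayer--Vietoris exactness of the rows, i.e. verifying the splitting over simplices of $\Xs$ and the exactness of the resulting simplex chain complexes, together with the convergence bookkeeping for the two spectral sequences when the cover is infinite. If the cover is locally finite (or, more simply, if every simplex of $\Xs$ lies in only finitely many $\XCv_i$) the double complex is bounded enough that both spectral sequences converge automatically; otherwise one first passes to a cofinal family of finite subcovers. Identifying the collapsed differentials with the simplicial boundaries of $\Nrv(\XCov)$ and of $\Xs$ is then a routine sign check.
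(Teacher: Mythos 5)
Your argument is correct, and it is essentially the classical proof from the cited reference \cite{brown2012cohomology}: compute the homology of the Mayer--Vietoris double complex in two ways. It shares with the paper the first half --- acyclicity of the $\XCv_I$ collapses the vertical homology onto the bottom row, which is identified with the simplicial chain complex of $\Nrv(\XCov)$, so the total complex computes $\Hg_*(\Nrv(\XCov))$; in the paper this is the $\eps=0$ case of the Persistent Nerves Theorem and Corollary \ref{secondpage}. Where you genuinely diverge is the other half, the identification $\Hg_*(\Tot(E^0))\iso\Hg_*(\Xs)$: you obtain it algebraically, by splitting each augmented row over the simplices $\sigma$ of $\Xs$ into the augmented chain complex of the full simplex on the nonempty index set $I_\sigma$ and invoking its acyclicity, then running the second (row-first) spectral sequence. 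The paper instead proves this identification geometrically (Theorem \ref{convergence}, proved in the Appendix): the total complex is the cellular chain complex of the Mayer--Vietoris blowup complex $\Xs_\XCov\leq\Xs^{\CW}\times\Nrv^{\CW}$, and the projection $\Xs_\XCov\to\Xs^{\CW}$ is a (filtration-preserving) homotopy equivalence by Hatcher's Proposition 4G.2, so only one spectral sequence is ever used. Your route is more elementary and self-contained (no blowup complex, no homotopy-theoretic input, works over any coefficient ring); the paper's route is chosen because the convergence statement is needed verbatim in the filtered/persistent setting, where the differentials carry the twists $t^{\deg(\sigma,I)-\deg(\sigma,I_l)}$ and the same blowup argument gives convergence of the persistent Mayer--Vietoris spectral sequence used throughout Sections \ref{sec:approx}--\ref{sec:thms}. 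One small correction: your convergence worry for infinite covers is unnecessary --- the double complex is concentrated in the first quadrant, so in each total degree both filtrations are finite and both spectral sequences converge without any local finiteness hypothesis; likewise the full simplex on an infinite $I_\sigma$ is still acyclic, since every cycle is supported on finitely many vertices, so no passage to cofinal finite subcovers is needed.
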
 

The main result of this paper is to provide an \emph{approximate} version of the above theorem in the context of persistent homology. Given a space and function, we first define the notion of an $\eps$-acyclic cover.  Note that we do not restrict ourselves to induced functions on a fixed cover, but consider a covering by filtrations. This notion is less intuitive but is applicable in a wider range of settings. We follow the formalization of covers by filtrations by Sheehy \cite{sheehy2012multicover}. Informally, our main result is:
\begin{result}
Given a space $\Xs$ endowed with a function $f$ and a (filtered) cover $\XCov$, if every non-empty finite intersection of cover elements is $\eps$-acyclic, then there exists a function on the nerve $g: \Nrv(\XCov)\rightarrow \RR$, such that the bottleneck distance $d_B(\cdot)$ is bounded by
\[
d_B(\Dgm{\Xs}{f}, \Dgm{\Nrv(\XCov)}{g})\leq 2(Q+1)\eps,
\]
where 
\[
Q = \min\{\mathrm{dim} (\Xs), \mathrm{dim} (\Nrv(\XCov)\}.
\]
\end{result}
The construction of the function on the nerve is given explicitly and agrees with what is currently done in practice when computing persistent homology.  

In the paper, we do not use persistence diagrams and bottleneck distance, but find it more convenient to work directly with the corresponding persistence modules and interleavings. Therefore, we do not explicitly define bottleneck distance or diagrams as they are not required for the statement of our results, but do allude to them to give intuition for readers who are familiar with persistence. In cases where the diagrams are well defined, bottleneck distance type of results follow automatically. 

We prove this result by using the Mayer-Vietoris spectral sequence to glue together the $\eps$-acyclic pieces into the global persistent homology. To obtain a tight bound, we introduce the notion of \emph{left} and \emph{right-interleavings} (Section~\ref{sec:interleaving}), which have additional structure. This refinement of interleavings captures similar phenomena as the results of \cite{bauer2014induced}, but works at the level of modules rather than barcodes. Hence, it does not require modules to be decomposable and so we believe these notions are of independent interest.

We prove the result in two steps: first we show how the approximation bound evolves through the computation of the spectral sequence (Section ~\ref{sec:approx}), then resolve the extension problem to relate the result of the spectral sequence with the persistent homology of the underlying space (Section~\ref{sec:linking}). While we have tried to make this paper self contained, we do assume some familiarity with spectral sequences, but we try and provide intuition and references whenever possible.

\section{Preliminaries}
\label{sec:prelim}
We assume the reader is familiar with persistent homology. We refer the reader to \cite{EdeHar2010} and \cite{zomorodian2005computing} for complete introductions. The relevant preliminaries are given below -- as much as possible we have tried to avoid technical complications but we try to point out where generalizations are possible.

\subsection{Topological preliminaries}
To minimize technical complications,  we work primarily with $\ZZ$-filtered simplicial complexes (see Definition~\ref{def:Zfiltered} below), denoted $\Xs$ and $\ZZ$-filtered covers $\XCov$ of such complexes by subcomplexes (where each subcomplex itself has a specified filtration). However, our proofs work directly on the algebraic level and hence should be extendable to much more general settings than those presented here without changing the bounds. 
Note that already the results for $\ZZ$-filtered simplicial complexes are widely applicable. It is known, for instance, that each smooth manifold or, more generally, Whitney-stratified space can be triangulated. Hence, such a space $\Ys$ equipped with a sublevel set filtration induced by some function $f:\Ys\to\RR$ can be approximated arbitrarily well by a piecewise-linear (PL) function on a simplicial complex and therefore by an $\RR$-filtered simplicial complex.

\begin{definition}\label{def:Zfiltered}
Let $J\subseteq\RR$. A {\em $J$-filtered simplicial complex} is a pair $(\Xs,\Filt)$, where $\Xs$ is an abstract simplicial complex and $\Filt=(\Xs^j)_{j\in J}$ is a family of subcomplexes such that $j_1\leq j_2$ implies $\Xs^{j_1}\subseteq \Xs^{j_2}$, $\Xs^{-\infty}:=\bigcap_{j\in J}\Xs^j=\emptyset$ and $\Xs^{\infty}:=\bigcup_{j\in J}\Xs^j=\Xs$.

A {\em $J$-filtered cover\footnote{All covers are assumed to be indexed.} by subcomplexes} of a $J$-filtered simplicial complex $(\Xs,\Filt)$ is an indexed family $\XCov=(\XCv_i,\Filt_i)_{i\in\Lambda}$, where each $\XCv_i$ is a subcomplex of $\Xs$ and $\Filt_i$ is a filtration of this subcomplex, such that the filtrations $\Filt$ and $\Filt_i$ satisfy a compatibility requirement, namely that $\Xs^j=\bigcup_{i\in\Lambda}\XCv_i^j$ holds for each $j\in J$. Note that whenever $I\subseteq\Lambda$, the intersection $\XCv_I:=\bigcap_{i\in I}\XCv_i$ has a natural filtration $\Filt_I$ given by $\XCv_I^j:=\bigcap_{i\in I}\XCv_i^j$.
\end{definition}

Note that the requirements on $\Xs^{-\infty}$ and $\Xs^{\infty}$ are sometimes dropped. If $J\subseteq\RR$ is a discrete subset, for instance $J=\ZZ$, the filtration $\Filt$ may also be given as a function $f:\Xs\to\ZZ$ whose sublevel sets are $f^{-1}(-\infty,j]=\Xs^j$. For this reason, a $\ZZ$-filtered simplicial complex is sometimes written as $(\Xs,f)$. Since the filtration is regarded as part of the structure, we often suppress it from notation and simply write $\Xs$.

When the filtrations are given as functions, the compatibility requirement in the definition of the filtered cover $\XCov=(\XCv_i,f_i)_{i\in\Lambda}$ of the filtered simplicial complex $(\Xs,f)$ can be stated\footnote{To make sense of the minimum, we may consider $f_i$ to be extended to the whole $\Xs$ by defining it to be $\infty$ outside of $\XCv_i$.} as $f=\min_{i\in\Lambda}f_i$.

\begin{remark}\label{inducedcov}
This definition of $J$-filtered cover is the one given by Sheehy \cite{sheehy2012multicover}, which allows for the extension of Theorem \ref{thm:acyclic} to the persistent setting via the Persistent Nerve Lemma of Chazal and Oudot \cite{chazal2008towards}.

There is also a natural way to assign a filtered cover to an unfiltered cover of a filtered complex. Namely, if $(\Xs,\Filt)$ is a $J$-filtered simplicial complex and $\overline\XCov=(\XCv_i)_{i\in\Lambda}$ is a cover of the underlying complex $\Xs$ by subcomplexes, $\overline\XCov$ can naturally be given the structure of a $J$-filtered cover $\XCov=(\XCv_i,\Filt_i)_{i\in\Lambda}$ by defining $\XCv_i^j=\XCv_i\cap\Xs^j$. We call $\XCov$ the {\em induced $J$-filtered cover of $(\Xs,\Filt)$} associated to $\overline\XCov$. In this case, if the filtrations are given by functions $f:\Xs\to\ZZ$ and $f_i:\XCv_i\to\ZZ$, the functions $f_i$ are simply restrictions $f_i=f|_{\XCv_i}$.
\end{remark}

Our results also make sense in the setting of triangulable spaces, which we now recall.

\begin{definition}
A topological space $\Ys$ is said to be {\em triangulable} if there exists a simplicial complex $\Xs$ and a homeomorphism $h:|\Xs|\to\Ys$, where $|\Xs|$ denotes the carrier of $\Xs$. The pair $(\Xs,h)$ is said to be a {\em triangulation} of $\Ys$.
\end{definition}

In the persistent setting, we also consider filtered triangulable spaces. To do this, start with a space $\Ys$ and a continuous function $f:\Ys\to\RR$. The pair $(\Ys,f)$ is then regarded to be an $\RR$-filtered topological space. The filtration is defined by $\Ys^j=f^{-1}(-\infty,j]$ and is known as {\em the sublevel set filtration of $\Ys$ induced by $f$}.

We sometimes need to replace the function $f:\Ys\to\RR$ by a piecewise linear approximation.

\begin{definition}
Suppose $(\Xs,h)$ is a triangulation of $\Ys$ and $f:\Ys\to\RR$ a continuous function. The {\em piecewise linear approximation of $f$ associated to $(\Xs,h)$} is the function $\hat f:|\Xs|\to\RR$ defined on the vertices of $\Xs$ by $\hat f(v)=f(h(v))$ and extended affinely over the simplices.
\end{definition}

\begin{definition}
Suppose $\Xs$ is a simplicial complex and $\hat f:|\Xs|\to\RR$ a piecewise linear function (w.r.t. the triangulation). Then $\Xs$ may be given the structure of an $\RR$-filtered simplicial complex $(\Xs,\hat f)$ by defining $\Xs^j$ to consist of all simplices contained in $\hat f^{-1}(-\infty,j]$. We call this filtration {\em the lower star filtration of $\hat f$}.
\end{definition}

Note that lower star filtrations are usually considered only for finite simplicial complexes and the function values on the vertices are assumed to be distinct. (See, for instance, \cite{EdeHar2010}.)

It is a standard fact (for finite simplicial complexes, this is explained in \cite{EdeHar2010}) that the sublevel set filtration of $|\Xs|$ and the lower star filtration of $\Xs$ induced by the same function $\hat f$ are related by the fact that $|\Xs^j|$ is a deformation retract of $|\Xs|^j$. Of more interest to us, however, is comparing the persistence modules of these two filtrations.

Finally, we recall the standard construction for the nerve given a cover is:
\begin{definition}\label{def:nerve}
Given a cover $(\XCv_i)_{i\in\Lambda}$ of $\Xs$, the nerve $\Nrv$ is the set of finite subsets of $\Lambda$ defined as follows:
a finite set $I\subseteq\Lambda$ belongs to $\Nrv$ if and only if the intersection of the $\XCv_i$ whose indices are in $I$, is non-empty, or equivalently
\[
\XCv_I=\bigcap_{i\in I}\XCv_i\neq \emptyset.
\]
If $I$ belongs to $\Nrv$, then so do all of its subsets making $\Nrv$ an abstract simplicial complex.
\end{definition}

\subsection{Modules and interleavings}\label{sec:modint}

Let $\kk$ be a field. Both the graded and the non-graded ring of polynomials with coefficients in $\kk$ are commonly denoted by $\kk[t]$ in the literature. We mostly work with the former. For this reason, we reserve the notation $\kk[t]=\kk[t]_\gr$ for the graded version and the non-graded version is always explicitly denoted as such by $\kk[t]_\ngr$.

Here $\kk[t]$ is graded by degree, namely $\kk[t]=\bigoplus_{i\in\NN_0}\kk[t]_i$, where $\kk[t]_i=\kk\cdot t^i$ consists of the homogeneous polynomials of degree $i$, i.e. scalar multiples of $t^i$. This decomposition is regarded as part of the structure of $\kk[t]$ and has to be taken into account when defining $\kk[t]$-modules and their morphisms, whereas $\kk[t]_{\ngr}$ is simply a ring without any additional structure, so $\kk[t]_{\ngr}$-modules and their morphisms are not required to respect any such grading.

\begin{definition}\cite[p. 42]{eisenbud2013commutative}
A {\em $\kk[t]$-module} is a $\kk[t]_\ngr$-module $M$ together with a decomposition (also called grading) into abelian subgroups $M=\bigoplus_{j\in\ZZ}M^j$ such that $\kk[t]_i\cdot M^j\subseteq M^{i+j}$ holds for all $i\in\NN_0$ and $j\in\ZZ$. Let $\eps\in\NN_0$. An {\em $\eps$-morphism of $\kk[t]$-modules} $M$ and $N$ is a morphism $f:M\to N$ of the underlying $\kk[t]_\ngr$-modules such that for all $j\in\ZZ$ we have $f(M^j)\subseteq N^{j+\eps}$. A $0$-morphism is also called {\em a morphism}.
\end{definition}

\begin{example}
There is a distinguished $\eps$-morphism $\id_\eps:M\to M$ given by $\id_\eps(m)=t^\eps m$.
\end{example}

Since $\kk[t]_0=\kk$, any such $M$ is also a $\ZZ$-graded $\kk$-module. Consequently, some authors \cite{eisenbud2013commutative} call this a {\em graded $\kk[t]$-module}. For us, ``graded $\kk[t]$-module'' means something else (see Definition~\ref{def:graded-module}).

\begin{definition}
An {\em $\eps$-interleaving of $\kk[t]$-modules} $M$ and $N$ is a pair $(\phi,\psi)$ of $\eps$-morphisms $\phi:M\to N$ and $\psi:N\to M$ such that $\phi\psi=\id_{2\eps}$ and $\psi\phi=\id_{2\eps}$. A $0$-interleaving is the same as an isomorphism. If there is an $\eps$-interleaving between $M$ and $N$, we say that $M$ and $N$ are {\em $\eps$-interleaved} and write $M\itl{\eps}N$.
\end{definition}

\begin{remark}
We also work with interleavings of graded modules and chain complexes. These are defined by components and for the latter we additionally require that the interleaving maps are chain maps, i.e. that they commute with the differentials.
\end{remark}

The notion of $\eps$-interleaving defines an extended\footnote{This means that we allow it to take the value $\infty$.} metric between isomorphism classes of $\kk[t]$-modules, i.e. it satisfies the following basic properties.

\begin{proposition}\label{distance}
Suppose $M, N$ and $P$ are $\kk[t]$-modules. Then the following properties hold.
\begin{enumerate}
\item Positive definiteness: $M\itl{0}N$ holds if and only if $M\cong N$.
\item Symmetry: $M\itl{\eps}N$ implies $N\itl{\eps}M$.
\item Triangle inequality: $M\itl{\eps_1}N$ and $N\itl{\eps_2}P$ imply $M\itl{\eps_1+\eps_2} P$.
\end{enumerate}
\end{proposition}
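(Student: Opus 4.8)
The plan is to verify the three properties directly from the definitions, since each reduces to elementary bookkeeping with the grading together with the observation that multiplication by $t^\eps$ is central, hence commutes with every $\kk[t]_\ngr$-module homomorphism.

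For positive definiteness, I would observe that a $0$-interleaving is a pair $(\phi,\psi)$ of $0$-morphisms with $\phi\psi=\psi\phi=\id_0=\id$, so $\phi$ and $\psi$ are mutually inverse grading-preserving homomorphisms, i.e. $M\cong N$ as $\kk[t]$-modules; conversely, if $h\colon M\to N$ is an isomorphism of $\kk[t]$-modules then $(h,h^{-1})$ is a $0$-interleaving. Symmetry is immediate: if $(\phi,\psi)$ is an $\eps$-interleaving of $M$ and $N$, then $(\psi,\phi)$ is an $\eps$-interleaving of $N$ and $M$, because the two triangle identities $\phi\psi=\id_{2\eps}$ and $\psi\phi=\id_{2\eps}$ are symmetric in $\phi$ and $\psi$.

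The only part that requires any argument is the triangle inequality. Given an $\eps_1$-interleaving $(\phi_1,\psi_1)$ of $M,N$ and an $\eps_2$-interleaving $(\phi_2,\psi_2)$ of $N,P$, I would set $\phi:=\phi_2\comp\phi_1\colon M\to P$ and $\psi:=\psi_1\comp\psi_2\colon P\to M$. Checking degrees, $\phi(M^j)=\phi_2(\phi_1(M^j))\subseteq\phi_2(N^{j+\eps_1})\subseteq P^{j+\eps_1+\eps_2}$, and likewise for $\psi$, so both are $(\eps_1+\eps_2)$-morphisms. For the triangle identities I would compute $\phi\psi=\phi_2\phi_1\psi_1\psi_2=\phi_2\,\id_{2\eps_1}\,\psi_2$, and then use that $\id_{2\eps_1}$ is multiplication by the central element $t^{2\eps_1}$, which commutes with the $\kk[t]_\ngr$-linear map $\phi_2$, to rewrite this as $t^{2\eps_1}\phi_2\psi_2=t^{2\eps_1}\id_{2\eps_2}=\id_{2(\eps_1+\eps_2)}$; the identity $\psi\phi=\id_{2(\eps_1+\eps_2)}$ follows symmetrically. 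Hence $(\phi,\psi)$ is an $(\eps_1+\eps_2)$-interleaving and $M\itl{\eps_1+\eps_2}P$.

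The main ``obstacle'' here is purely notational: one must keep careful track of which maps are merely $\kk[t]_\ngr$-linear versus grading-preserving, and remember that the shift maps $\id_\eps$ are central, so that composing interleaving maps rescales the two sides by exactly $t^{2(\eps_1+\eps_2)}$. The same argument applies componentwise to the interleavings of graded modules and chain complexes mentioned in the remark, using the automatic fact that a composite of chain maps is again a chain map.
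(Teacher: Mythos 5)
Your proof is correct and follows the same route as the paper: the paper declares the first two properties immediate and, for the triangle inequality, simply asserts that composing the interleaving maps yields an $(\eps_1+\eps_2)$-interleaving, which is exactly what you do. The only difference is that you spell out the degree-counting and the centrality of $t^{2\eps_1}$ needed to verify the composite triangle identity, details the paper leaves to the reader.
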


\begin{proof}
The first two properties are immediate. To show the third, let $(\phi,\psi)$ be an $\eps_1$-interleaving of $M$ and $N$ and $(\eta,\theta)$ an $\eps_2$-interleaving of $N$ and $P$. Then, $(\eta\phi,\theta\psi)$ is an $(\eps_1+\eps_2)$-interleaving of $M$ and $P$.
\end{proof}

\begin{definition}
The {\em interleaving distance} between $\kk[t]$-modules $M$ and $N$ is defined by the formula
\[
d_I(M,N)=\min\{\eps\in\NN_0\mid M\itl{\eps}N\}.
\]
\end{definition}

Therefore, the notion of interleaving provides a means to quantify how close two modules are to each other. Modules $\eps$-interleaved with $0$ are of particular importance, as they may be regarded as small, and are therefore useful as a model of experimental error. Alternatively, they are characterized as follows.

\begin{proposition}
A $\kk[t]$-module $M$ is $\eps$-interleaved with $0$ if and only if $t^{2\eps}M=0$.
\end{proposition}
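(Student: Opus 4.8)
The plan is to prove the biconditional directly from the definitions. First suppose $M$ is $\eps$-interleaved with $0$. By definition this means there is a pair of $\eps$-morphisms $\phi\colon M\to 0$ and $\psi\colon 0\to M$ with $\phi\psi=\id_{2\eps}$ and $\psi\phi=\id_{2\eps}$. The only morphism out of $M$ into the zero module is the zero map, so $\psi\phi=0$. But $\psi\phi=\id_{2\eps}$ means $\psi\phi(m)=t^{2\eps}m$ for every $m\in M$. Hence $t^{2\eps}m=0$ for all $m\in M$, i.e. $t^{2\eps}M=0$.

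Conversely, suppose $t^{2\eps}M=0$. I would exhibit the interleaving explicitly: take $\phi\colon M\to 0$ and $\psi\colon 0\to M$ to both be the zero map. These are trivially $\eps$-morphisms (the grading condition $f(M^j)\subseteq N^{j+\eps}$ is vacuous when the target is $0$, and likewise for the source being $0$). Then $\phi\psi$ is the zero endomorphism of $0$, which equals $\id_{2\eps}$ on $0$; and $\psi\phi$ is the zero endomorphism of $M$, which sends $m\mapsto 0 = t^{2\eps}m$ by the hypothesis, hence equals $\id_{2\eps}$ on $M$. Thus $(\phi,\psi)$ is an $\eps$-interleaving of $M$ and $0$.

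There is no real obstacle here; the statement is essentially a restatement of the definition of interleaving specialized to the case where one module is zero, combined with the observation that the only morphisms to and from $0$ are zero maps. The one point that deserves a sentence of care is checking that the zero maps genuinely qualify as $\eps$-morphisms in the graded sense, which is immediate, and that composing them recovers $\id_{2\eps}$ rather than some other map — this is exactly where the hypothesis $t^{2\eps}M=0$ is used, since $\id_{2\eps}$ is by definition multiplication by $t^{2\eps}$ and we need this to vanish on all of $M$.
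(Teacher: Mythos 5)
Your argument is correct and is essentially the paper's own proof: the forward direction reads off $\id_{2\eps}=\psi\phi=0$ from the interleaving with the zero module, and the converse takes $(\phi,\psi)=(0,0)$, with $t^{2\eps}M=0$ ensuring the composite equals $\id_{2\eps}$. You have merely spelled out the routine verifications in more detail.
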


\begin{proof}
If $M\itl{\eps}0$, let $(\phi,\psi)$ be the interleaving. This means that $\id_{2\eps}=\psi\phi=0$. Conversely, if $t^{2\eps}M=0$, the interleaving is given by $(\phi,\psi)=(0,0)$.
\end{proof}

This immediately implies that subquotients of small modules are small.

\begin{corollary}\label{subquotient}
Let $M$ be a $\kk[t]$-module and $P$ its subquotient. Then $M\itl{\eps}0$ implies $P\itl{\eps}0$.
\end{corollary}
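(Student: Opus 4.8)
The plan is to reduce everything to the characterization in the preceding proposition, which identifies the condition $M\itl{\eps}0$ with the purely algebraic condition $t^{2\eps}M=0$. So the first step is to rephrase the goal: it suffices to show that $t^{2\eps}P=0$, since then the same proposition, applied in the reverse direction, yields $P\itl{\eps}0$.

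Next I would unwind the definition of subquotient. By definition $P$ is a quotient of a submodule of $M$ (equivalently, a submodule of a quotient of $M$), so in either case we may write $P\cong N/L$ for some $\kk[t]$-submodules $L\subseteq N\subseteq M$. Multiplication by $t^{2\eps}$ is an endomorphism of the underlying $\kk[t]_\ngr$-module that is moreover compatible with the grading (it shifts degree by $2\eps$), so it descends to the subquotient and satisfies $t^{2\eps}(N/L)=(t^{2\eps}N+L)/L$. Since $N\subseteq M$, we have $t^{2\eps}N\subseteq t^{2\eps}M=0$, hence $t^{2\eps}P=t^{2\eps}(N/L)=L/L=0$, as required.

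There is essentially no obstacle here: the only thing to be careful about is bookkeeping, namely that submodules and quotients are taken in the graded category and that multiplication by $t^{2\eps}$ respects the grading, so that $P$ inherits a genuine $\kk[t]$-module structure for which the proposition applies. Once that is noted, the result follows immediately, and in fact the same argument shows more generally that any functor-of-points style operation preserving the relation ``annihilated by $t^{2\eps}$'' preserves $\eps$-interleaving with $0$.
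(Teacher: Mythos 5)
Your proof is correct and follows essentially the same route as the paper: both reduce to the characterization $M\itl{\eps}0\iff t^{2\eps}M=0$, write $P$ as a quotient of a submodule $N\leq M$, and observe that $t^{2\eps}N\subseteq t^{2\eps}M=0$ forces $t^{2\eps}P=0$. Your extra remarks about the grading are harmless bookkeeping that the paper leaves implicit.
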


\begin{proof}
Let $P=N/_\sim$ for some $N\leq M$. Since $t^{2\eps}M=0$, we have $t^{2\eps}N=0$ and therefore $t^{2\eps}P=0$.
\end{proof}

In the context of persistence modules it is useful to define the notions of interleavings categorically.
Let $\Vect$ be the category of vector spaces over $\kk$ and let $I\subseteq\RR$ be closed under addition. Being a poset, $I$ may be viewed as a category in the usual way. For each $\eps\in I$ and $\eps\geq 0$, there is a functor $T_\eps:(I,\leq)\to(I,\leq)$ given by $T_\eps(a)=a+\eps$ and a natural transformation $\eta_\eps:\id\Rightarrow T_\eps$ given by $\eta_\eps(a):a\to a+\eps$. These observations are due to Bubenik and Scott \cite{bubenik2014categorification}. This leads to the following definition.

\begin{definition}
A {\em persistence module} is a functor $F:(I,\leq)\to\Vect$. For $\eps\geq 0$, an {\em $\eps$-morphism} $\phi:F\overset{\eps}\to G$ is a natural transformation $\phi:F\Rightarrow G\circ T_\eps$. A {\em morphism} is a $0$-morphism. An $\eps_1$-morphism and an $\eps_2$-morphism can be composed in the natural way to yield a $(\eps_1+\eps_2)$-morphism. An {\em $\eps$-interleaving} is a pair $(\phi,\psi)$ of $\eps$-morphisms $\phi:F\overset{\eps}\to G$ and $\psi:F\overset{\eps}\to G$ such that $\psi\phi=F\eta_{2\eps}$ and $\phi\psi=G\eta_{2\eps}$. We say $F$ and $G$ are {\em $\eps$-interleaved}, $F\itl{\eps}G$. 
\end{definition}

We denote the corresponding functor category by $\Vect^{(I,\leq)}$. 
The notion of {\em interleaving distance} also makes sense in the setting of persistence modules and is defined by the analogous formula
\[
d_I(F,G)=\inf\{\eps\in I\cap[0,\infty)\mid F\itl{\eps}G\}.
\]
Note however that the infimum is not necessarily attained in this case (see \cite{chazal2014observable}).

A standard fact about persistence modules over $I=\ZZ$ is that they correspond in a natural way to $\kk[t]$-modules. Let $\Mod_{\kk[t]}$  denote the category of modules over $\kk[t]$ (in a graded sense). Then, the following holds.

\begin{theorem}
The categories $\Vect^{(\ZZ,\leq)}$ and $\Mod_{\kk[t]}$ are isomorphic.
\end{theorem}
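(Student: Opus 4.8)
The plan is to exhibit an explicit pair of mutually inverse functors between $\Vect^{(\ZZ,\leq)}$ and $\Mod_{\kk[t]}$. First I would define a functor $\Phi:\Vect^{(\ZZ,\leq)}\to\Mod_{\kk[t]}$ as follows. Given a persistence module $F:(\ZZ,\leq)\to\Vect$, set $M^j=F(j)$ for each $j\in\ZZ$ and let $M=\bigoplus_{j\in\ZZ}M^j$ be the underlying graded abelian group. The action of $t$ is defined degreewise: on $M^j$ it is the structure map $F(j\leq j+1):F(j)\to F(j+1)$, landing in $M^{j+1}$; this extends to an action of $\kk[t]_1$ and hence, together with the $\kk$-vector space structure on each $M^j$, to an action of the whole graded ring $\kk[t]$. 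The condition $\kk[t]_i\cdot M^j\subseteq M^{i+j}$ holds by construction, and functoriality of $F$ guarantees that $t^i$ acts as the composite $F(j\leq j+i)$, so associativity of the module action reduces to functoriality. On morphisms, a natural transformation $\phi:F\Rightarrow G$ gives a family $\phi_j:F(j)\to G(j)$ commuting with the structure maps, which assembles to a degree-$0$ map of graded groups commuting with the $t$-action, i.e.\ a morphism of $\kk[t]$-modules.

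Next I would define the inverse functor $\Psi:\Mod_{\kk[t]}\to\Vect^{(\ZZ,\leq)}$. Given a $\kk[t]$-module $M=\bigoplus_{j\in\ZZ}M^j$, set $F(j):=M^j$, which is a $\kk=\kk[t]_0$-module, hence an object of $\Vect$; for $j_1\leq j_2$ let $F(j_1\leq j_2):M^{j_1}\to M^{j_2}$ be multiplication by $t^{j_2-j_1}$, which lands in $M^{j_2}$ by the grading condition. Functoriality ($F(j_2\leq j_3)\circ F(j_1\leq j_2)=F(j_1\leq j_3)$ and $F(j\leq j)=\id$) follows from $t^{a}\cdot t^{b}=t^{a+b}$ and $t^0=1$. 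A morphism $g:M\to N$ of $\kk[t]$-modules restricts in each degree to $g|_{M^j}:M^j\to N^j$, and $\kk[t]$-linearity in degree $1$ gives exactly the naturality squares, so $\Psi(g)$ is a natural transformation. It is then routine to check $\Psi\Phi=\id$ and $\Phi\Psi=\id$ on the nose (not merely up to natural isomorphism): unwinding the definitions, $\Phi\Psi(M)$ has the same underlying graded group as $M$, the same $\kk$-action, and the same degree-raising operator, namely multiplication by $t$; and similarly for $\Psi\Phi(F)$. Hence the two categories are isomorphic.

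The main subtlety, such as it is, is bookkeeping rather than mathematics: one must be careful that a $\kk[t]$-module in the graded sense of the earlier definition carries \emph{precisely} the data of a vector space in each degree together with a single degree-$+1$ map (multiplication by $t$), and that all higher powers of $t$ are then forced. In particular one should note that a morphism of graded $\kk[t]$-modules is required to be degree-preserving (a $0$-morphism in the terminology introduced above), which is what makes it correspond to an honest natural transformation rather than an $\eps$-morphism for $\eps>0$; the $\eps$-morphisms and $\eps$-interleavings of $\kk[t]$-modules match up under the same correspondence with the $\eps$-morphisms of persistence modules, since $\id_\eps$ (multiplication by $t^\eps$) corresponds to the transformation $F\eta_\eps$. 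I expect no real obstacle; the only care needed is to verify that no information is lost or added in either direction, which amounts to observing that both structures are equivalent to a diagram $\cdots\to V_{j}\to V_{j+1}\to\cdots$ of $\kk$-vector spaces indexed by $\ZZ$.
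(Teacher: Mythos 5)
Your proposal is correct and follows essentially the same route as the paper: both define $\Phi(F)=\bigoplus_{j\in\ZZ}F(j)$ with the $t$-action given by the structure maps and $\Psi(M)(j)=M^j$ with structure maps given by multiplication by powers of $t$, then check these are mutually inverse on the nose. You simply spell out the routine verifications (functoriality, degree bookkeeping, behaviour on morphisms) that the paper leaves implicit.
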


\begin{proof}
Inverse functors $\Phi:\Vect^{(\ZZ,\leq)}\to\Mod_{\kk[t]}$ and $\Psi:\Mod_{\kk[t]}\to\Vect^{(\ZZ,\leq)}$ can be defined explicitly. On objects, these are defined as $\Phi(F)=\bigoplus_{j\in\ZZ}F(j)$ and $\Psi(M)(j)=M^j$. On morphisms, we have $\Phi(\eta)=(\eta_j)_{j\in\ZZ}$ and $\Psi(f)_j=f_j$, where $f_j:M^j\to N^j$ is the restriction of $f$ to the $j$-th step of the filtration.
\end{proof}

Note that this also holds for $I=\eps\ZZ$, $\eps>0$. The correspondence between persistence modules over $I=\ZZ$ and $\kk[t]$-modules was first noted in \cite{zomorodian2005computing}.  We use this extensively in this paper. There is a similar correspondence between persistence modules over $I=\RR$ and modules over the monoid algebra over $[0,\infty)$, as noted by Lesnick \cite{lesnick2015theory}.
However, for our application, as we shall see, this is unnecessary.

In particular, we can show that each persistence module over $\RR$ can be approximated by a persistence module over $\eps\ZZ$ up to $\eps$. To make sense of this, first observe that there is a natural inclusion functor $i_\eps:(\eps\ZZ,\leq)\to(\RR,\leq)$. This functor has a left inverse $p_\eps:(\RR,\leq)\to(\eps\ZZ,\leq)$ given by $p_\eps(a)=\lfloor\frac{a}{\eps}\rfloor\eps$. Note that this left inverse is not unique. In a sense, however, it is the most natural choice in our situation.

These two functors give rise to the (natural) restriction functor $I_\eps:\Vect^{(\RR,\leq)}\to\Vect^{(\eps\ZZ,\leq)}$ given by $I_\eps(F)=Fi_\eps$ and an extension functor $P_\eps:\Vect^{(\eps\ZZ,\leq)}\to\Vect^{(\RR,\leq)}$ given by $P_\eps(F)=Fp_\eps$. Under our choice of $p_\eps$, when defined, the persistence diagrams of $F:(\eps\ZZ,\leq)\to\Vect$ and $P_\eps(F):(\RR,\leq)\to\Vect$ agree as multisets (except perhaps on the diagonal, depending on the convention used).

The functors $I_\eps$ and $P_\eps$ have various useful properties. Since $p_\eps i_\eps=\id$, we have $I_\eps P_\eps=\id$. The composition $P_\eps I_\eps$ is, in a certain sense, also not far from the identity. Furthermore, $P_\eps$ is an isometric embedding, and $I_\eps$ is an almost isometry.

\begin{proposition}\label{rightinverse}
Let $F:(\RR,\leq)\to\Vect$ be a persistence module. Then $F$ and $P_\eps I_\eps(F)$ are $\eps$-interleaved.
\end{proposition}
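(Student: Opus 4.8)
The plan is to realise $G:=P_\eps I_\eps(F)$ as the reparametrisation $F\circ s$ of $F$ along the order-preserving map $s:=i_\eps\circ p_\eps:(\RR,\leq)\to(\RR,\leq)$, $s(a)=\lfloor a/\eps\rfloor\eps$, and then to exploit that $s$ differs from the identity by less than $\eps$. Concretely $G(a)=F(s(a))$, and since $s$ is monotone the structure map $G(a\leq b)$ is $F(s(a)\leq s(b))$. The two elementary properties of $s$ that drive everything are: (i) $a-\eps<s(a)\leq a$ for all $a\in\RR$; and (ii) $s(a+\eps)=s(a)+\eps$, which is immediate from $s(a)=\lfloor a/\eps\rfloor\eps$ and which in particular yields $a\leq s(a+\eps)$. (This is really an instance of the folklore fact that $F$ and $F\circ\gamma$ are $\eps$-interleaved whenever $\gamma$ is order-preserving with $|\gamma(a)-a|\leq\eps$; we simply spell out the relevant case.)

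I would then define the two interleaving morphisms directly as structure maps of $F$. For $\phi:F\overset{\eps}\to G$ take $\phi(a):F(a)\to G(a+\eps)=F(s(a+\eps))$ to be $F(a\leq s(a+\eps))$, which makes sense by (ii). For $\psi:G\overset{\eps}\to F$ take $\psi(a):G(a)=F(s(a))\to F(a+\eps)$ to be $F(s(a)\leq a+\eps)$, which makes sense since $s(a)\leq a\leq a+\eps$ by (i). Naturality of $\phi$ and of $\psi$ is automatic: every commuting square that must be verified is built entirely from structure maps of the functor $F$ on the poset $(\RR,\leq)$, so it commutes by functoriality of $F$.

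It remains to verify the two interleaving identities, and functoriality again does all the work. With the usual composition convention $(\psi\phi)(a)=\psi(a+\eps)\circ\phi(a)$, this is the composite $F(a)\to F(s(a+\eps))\to F(a+2\eps)$, i.e.\ $F$ applied to the chain $a\leq s(a+\eps)\leq a+2\eps$ (valid because $s(a+\eps)\leq a+\eps\leq a+2\eps$), so it equals $F(a\leq a+2\eps)=F\eta_{2\eps}$ at $a$. Symmetrically $(\phi\psi)(a)=\phi(a+\eps)\circ\psi(a)$ is the composite $F(s(a))\to F(a+\eps)\to F(s(a+2\eps))$; using the chain $s(a)\leq a+\eps\leq s(a+2\eps)$, where the last inequality holds because $s(a+2\eps)=s(a)+2\eps>a+\eps$, this equals $F(s(a)\leq s(a+2\eps))=G(a\leq a+2\eps)=G\eta_{2\eps}$ at $a$. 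Hence $(\phi,\psi)$ is an $\eps$-interleaving and $F\itl{\eps}P_\eps I_\eps(F)$.

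There is no real obstacle here: the only points requiring care are keeping track of the index shifts so that the composition conventions for $\eps$-morphisms line up correctly, and recording the identity $s(a+\eps)=s(a)+\eps$ so that all the inequalities above are genuinely available.
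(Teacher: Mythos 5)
Your proposal is correct and follows essentially the same route as the paper: the interleaving maps you define, $\phi(a)=F(a\leq s(a+\eps))$ and $\psi(a)=F(s(a)\leq a+\eps)$, are precisely the paper's shifting morphisms $\phi_x=\id_{p_\eps(x)+\eps-x}$ and $\psi_x=\id_{x+\eps-p_\eps(x)}$ (using $s(a+\eps)=p_\eps(a)+\eps$). You merely make explicit the naturality and interleaving identities that the paper leaves as immediate consequences of functoriality.
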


\begin{proof}
An $\eps$-interleaving $(\phi,\psi)$ is given by $\phi_x:F(x)\to F(p_\eps(x)+\eps)$ and $\psi_x:F(p_\eps(x))\to F(x+\eps)$, given by the shifting morphisms $\phi_x=\id_{p_\eps(x)+\eps-x}$ and $\psi_x=\id_{x+\eps-p_\eps(x)}$.
\end{proof}

\begin{proposition}\label{isometry}
The functor $P_\eps:\Vect^{(\eps\ZZ,\leq)}\to\Vect^{(\RR,\leq)}$ is an isometric embedding.
\end{proposition}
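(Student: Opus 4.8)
The plan is to prove the two inequalities $d_I(P_\eps F, P_\eps G)\le d_I(F,G)$ and $d_I(F,G)\le d_I(P_\eps F, P_\eps G)$, in both cases by transporting interleavings by whiskering (precomposition) along the functors $p_\eps$ and $i_\eps$.

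For the first inequality I would use that $p_\eps$ intertwines the translation functors: $p_\eps\circ T_{n\eps}^{(\RR,\le)}=T_{n\eps}^{(\eps\ZZ,\le)}\circ p_\eps$ for every $n\in\NN_0$, which holds because $\lfloor(a+n\eps)/\eps\rfloor=\lfloor a/\eps\rfloor+n$. Given an $n\eps$-interleaving $(\phi,\psi)$ of $F$ and $G$, whiskering with $p_\eps$ then produces $n\eps$-morphisms $\phi p_\eps : P_\eps F\to P_\eps G$ and $\psi p_\eps : P_\eps G\to P_\eps F$; since whiskering commutes with vertical composition, the interleaving identities $\psi\phi=F\eta_{2n\eps}$ and $\phi\psi=G\eta_{2n\eps}$ whisker to the corresponding identities for $(\phi p_\eps,\psi p_\eps)$, once one notes that $(F\eta_{2n\eps})p_\eps$ and $(P_\eps F)\eta_{2n\eps}$ have the same components, namely the structure maps $F(p_\eps a)\to F(p_\eps a+2n\eps)$. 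Hence $F\itl{n\eps}G$ implies $P_\eps F\itl{n\eps}P_\eps G$, and passing to infima yields $d_I(P_\eps F,P_\eps G)\le d_I(F,G)$.

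For the reverse inequality I would run the mirror argument with $i_\eps$: the inclusion $i_\eps$ commutes with the $\eps\ZZ$-translations on the nose, so whiskering with $i_\eps$ carries an $n\eps$-interleaving of persistence modules over $\RR$ to an $n\eps$-interleaving of their restrictions to $\eps\ZZ$; since $p_\eps i_\eps=\id$ gives $I_\eps P_\eps=\id$, it follows that $P_\eps F\itl{n\eps}P_\eps G$ implies $F\itl{n\eps}G$. Thus $P_\eps$ both preserves and reflects $n\eps$-interleavings, so $F$ and $G$ are $n\eps$-interleaved exactly when $P_\eps F$ and $P_\eps G$ are.

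The step I expect to be the main obstacle is deducing equality of the two distances from this. The distance over $\eps\ZZ$ is an infimum over $\eps\NN_0$, whereas the distance over $\RR$ is an infimum over all of $[0,\infty)$, so equality amounts to showing that $d_I(P_\eps F,P_\eps G)$ is attained at a multiple of $\eps$ — equivalently, that any $\delta$-interleaving of $P_\eps F$ and $P_\eps G$ can be improved to an $\eps\lfloor\delta/\eps\rfloor$-interleaving. This is exactly where the special form of these modules must be exploited: $P_\eps F$ and $P_\eps G$ are constant, with identity structure maps, on every half-open interval $[\eps k,\eps(k+1))$, so an interleaving between them is rigid there. Making this precise — a diagram chase that rounds the interleaving parameter down to a multiple of $\eps$ while keeping both composite identities valid, and that carefully controls how $\lfloor\cdot\rfloor$ interacts with doubling the shift — is the heart of the argument. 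The same reasoning also bounds by $\eps$ the distortion of interleaving distances under the left inverse $I_\eps$, which is the companion ``almost isometry'' statement announced above.
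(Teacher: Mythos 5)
Your first inequality is fine and is exactly the paper's argument: whisker an $n\eps$-interleaving with $p_\eps$, using $p_\eps(a+n\eps)=p_\eps(a)+n\eps$, to conclude $d_I(P_\eps(F),P_\eps(G))\leq d_I(F,G)$. The problem is the converse inequality. You correctly reduce it to the claim that any $\delta$-interleaving of $P_\eps(F)$ and $P_\eps(G)$ can be improved to an $\eps\lfloor\delta/\eps\rfloor$-interleaving, you correctly identify this as the heart of the matter, and then you stop: the ``diagram chase'' is announced but never carried out, so the proof is missing precisely its decisive step. This is not a deferrable routine verification. The paper's proof of this direction consists exactly of that rounding-down: it takes the components $\phi_{k\eps},\psi_{k\eps}$ of an $\eta$-interleaving of $P_\eps(F)$ and $P_\eps(G)$, uses $p_\eps(k\eps+\eta)=k\eps+p_\eps(\eta)$ to read them as maps $F(k\eps)\to G(k\eps+p_\eps(\eta))$ and $G(k\eps)\to F(k\eps+p_\eps(\eta))$, and declares them a $p_\eps(\eta)$-interleaving of $F$ and $G$. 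Verifying the composite identities for these rounded maps is where the issue you anticipate (how $\lfloor\cdot\rfloor$ interacts with doubling the shift) actually bites: the hypothesis gives $\psi_{k\eps+\eta}\phi_{k\eps}=F(k\eps\to k\eps+p_\eps(2\eta))$, whereas what is needed is $\psi_{k\eps+p_\eps(\eta)}\phi_{k\eps}=F(k\eps\to k\eps+2p_\eps(\eta))$, and $p_\eps(2\eta)=2p_\eps(\eta)+\eps$ whenever the fractional part of $\eta/\eps$ is at least $1/2$.

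In that regime the rounding-down claim, as you state it, is simply false, so the gap cannot be closed along the route you sketch. Concretely, let $F(0)=\kk$ and $F(k\eps)=0$ for $k\neq0$, and let $G=0$. Then $P_\eps(F)$ is the interval module supported on $[0,\eps)$, and the pair of zero maps is an $(\eps/2)$-interleaving of $P_\eps(F)$ with $0$, since the $\eps$-shift map of $P_\eps(F)$ is zero; yet $F$ is not $0$-interleaved with $G$ (it is not the zero module), so no $(\eps/2)$-interleaving can be improved to an $\eps\lfloor(\eps/2)/\eps\rfloor=0$-interleaving. Indeed $d_I(P_\eps(F),P_\eps(G))=\eps/2$ while $d_I(F,G)=\eps$, which is also at odds with the equality of distances the proposition asserts. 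What your whiskering argument with $i_\eps$ does give, after first relaxing a $\delta$-interleaving to an $\eps\lceil\delta/\eps\rceil$-interleaving by composing with structure maps, is $d_I(F,G)\leq\eps\lceil d_I(P_\eps(F),P_\eps(G))/\eps\rceil$, i.e. an additive discrepancy strictly less than $\eps$; but rounding up is the only direction available in general, so the isometry claim does not follow from your outline, and any complete treatment has to confront exactly this half-grid phenomenon rather than leave it as a promised diagram chase.
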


\begin{proof}
Suppose $F,G\in\Vect^{(\eps\ZZ,\leq)}$ and suppose $F\itl{\eta}G$ and let $(\phi,\psi)$ be the relevant interleaving. Then, $(\phi p_\eps,\psi p_\eps)$ is an $\eta$-interleaving of $P_\eps(F)$ and $P_\eps(G)$. This implies that
\[
d_I(F,G)=\min\{\eta\in\eps\NN_0\mid F\itl{\eta}G\}\geq\inf\{\eta\in[0,\infty)\mid P_\eps(F)\itl{\eta}P_\eps(G)\}=d_I(P_\eps(F),P_\eps(G)).
\]
To prove the converse inequality, suppose $P_\eps(F)$ and $P_\eps(G)$ are $\eta$-interleaved and let $(\phi,\psi)$ be the interleaving. We claim that this implies $F$ and $G$ are $p_\eps(\eta)$-interleaved. To define an interleaving, note that for $k\in \ZZ$,  $\phi_{k\eps}:F(k\eps)\to G(p_\eps(k\eps+\eta))=G(k\eps+p_\eps(\eta))$ and $\psi_{k\eps}:G(k\eps)\to F(p_\eps(k\eps+\eta))=F(k\eps+p_\eps(\eta))$, so the maps $\tilde\phi_{k\eps}=\phi_{k\eps}$ and $\tilde\psi_{k\eps}=\psi_{k\eps}$ are components of a $p_\eps(\eta)$-interleaving $(\tilde\phi,\tilde\psi)$, showing that $d_I(F,G)\leq d_I(P_\eps(F),P_\eps(G))$.
\end{proof}

\begin{proposition}\label{discreteitl}
Given persistence modules $F,G:(\RR,\leq)\to\Vect$, we have 
\[
d_I(F,G)-2\eps\leq d_I(I_\eps(F),I_\eps(G))\leq d_I(F,G)+\eps.
\]
\end{proposition}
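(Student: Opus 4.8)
The plan is to prove the two inequalities by different means. For the right-hand bound $d_I(I_\eps(F),I_\eps(G))\leq d_I(F,G)+\eps$, I would argue directly by rounding the interleaving parameter up to a multiple of $\eps$. Suppose $(\phi,\psi)$ is a $\delta$-interleaving of $F$ and $G$, and set $\delta'=\eps\lceil\delta/\eps\rceil$, so that $\delta\leq\delta'<\delta+\eps$ and $\delta'\in\eps\ZZ$. For $a\in\eps\ZZ$, define $\tilde\phi_a:F(a)\to G(a+\delta')$ and $\tilde\psi_a:G(a)\to F(a+\delta')$ by post-composing $\phi_a$ and $\psi_a$ with the structure maps $G(a+\delta)\to G(a+\delta')$ and $F(a+\delta)\to F(a+\delta')$, respectively. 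Using naturality of $\phi$ and $\psi$ one checks that $\tilde\phi$ and $\tilde\psi$ are $\delta'$-morphisms of persistence modules over $\eps\ZZ$, and then, using the interleaving identities $\psi_{a+\delta}\phi_a=F\eta_{2\delta}$ and $\phi_{a+\delta}\psi_a=G\eta_{2\delta}$ of $(\phi,\psi)$, that $(\tilde\phi,\tilde\psi)$ is a $\delta'$-interleaving of $I_\eps(F)$ and $I_\eps(G)$. Hence $d_I(I_\eps(F),I_\eps(G))\leq\delta'<\delta+\eps$ whenever $F\itl{\delta}G$, and taking the infimum over all such $\delta$ yields the bound.

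For the left-hand bound, equivalent to $d_I(F,G)\leq d_I(I_\eps(F),I_\eps(G))+2\eps$, I would instead combine the triangle inequality for interleaving distance of persistence modules (which holds by the argument of Proposition~\ref{distance}) with the two preceding propositions. Applying the triangle inequality to the chain $F$, $P_\eps I_\eps(F)$, $P_\eps I_\eps(G)$, $G$ gives
\[
d_I(F,G)\leq d_I\bigl(F,P_\eps I_\eps(F)\bigr)+d_I\bigl(P_\eps I_\eps(F),P_\eps I_\eps(G)\bigr)+d_I\bigl(P_\eps I_\eps(G),G\bigr).
\]
By Proposition~\ref{rightinverse} the outer two terms are each at most $\eps$, and by Proposition~\ref{isometry} the middle term equals $d_I(I_\eps(F),I_\eps(G))$. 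Summing the three estimates yields $d_I(F,G)\leq d_I(I_\eps(F),I_\eps(G))+2\eps$, as required.

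The only step with genuine content is the verification, in the first paragraph, that the post-composed pair $(\tilde\phi,\tilde\psi)$ still satisfies the interleaving identities. This is a routine diagram chase: one inserts a structure map of $G$ (resp. $F$) between $\phi$ and $\psi$, slides it past $\psi$ (resp. $\phi$) using naturality, and then collapses the resulting composite of structure maps by functoriality, so that everything reduces to the original identity followed by a single structure map of length $2(\delta'-\delta)$, i.e.\ to $F\eta_{2\delta'}$ (resp. $G\eta_{2\delta'}$). The remaining points — the inequalities $\delta\leq\delta'<\delta+\eps$, the fact that passing to infima preserves the bounds, and the behaviour of the extended metric at $\infty$ — are immediate.
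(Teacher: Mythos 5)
Your proof is correct and follows essentially the same route as the paper: the left-hand bound is obtained exactly as in the paper via the triangle inequality together with Propositions~\ref{rightinverse} and~\ref{isometry}, and your rounding of $\delta$ up to $\delta'\in\eps\NN_0$ followed by restriction to $\eps\ZZ$ is just an explicit packaging of the paper's observation that interleavings restrict along $i_\eps$ and that the set of interleaving parameters is upward closed. No gaps; the diagram chase you describe for $(\tilde\phi,\tilde\psi)$ is exactly the content of that upward-closedness step.
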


\begin{proof}
Let $A=\{\eta\in\eps\NN_0\mid I_\eps(F)\itl{\eta}I_\eps(G)\}$ and $B=\{\eta\in [0,\infty)\mid F\itl{\eta}G\}$. Note that if two modules are $\eta$-interleaved, they are also $\theta$-interleaved for all $\theta\geq\eta$, so these sets are upward closed in $\eps\NN_0$ and $[0,\infty)$, respectively. By definition, we have
\[
d_I(I_\eps(F),I_\eps(G))=\min A\qquad\text{and}\qquad d_I(F,G)=\inf B.
\]
Suppose $\eta\in B\cap \eps\NN_0$ and let $(\phi,\psi)$ be the relevant $\eta$-interleaving. Then $(\phi i_\eps,\psi i_\eps)$ is an $\eta$-interleaving of $I_\eps(F)$ and $I_\eps(G)$. Therefore, $B\cap\eps\NN_0\subseteq A$. Since $B$ is upward closed, this immediately implies $\inf B\geq\min A-\eps$ and therefore
\[
d_I(I_\eps(F),I_\eps(G))\leq d_I(F,G)+\eps.
\]
The other inequality follows from Proposition \ref{isometry} and Proposition \ref{rightinverse}:
\begin{multline*}
d_I(F,G)\leq d_I(F,P_\eps(I_\eps(F)))+d_I(P_\eps(I_\eps(F)),P_\eps(I_\eps(G)))+d_I(P_\eps(I_\eps(G)),G)\\\leq d_I(P_\eps(I_\eps(F)),P_\eps(I_\eps(G)))+2\eps=d_I(I_\eps(F),I_\eps(G))+2\eps.
\end{multline*}
\end{proof}

These observations allow us to compare persistence modules over $\eps\ZZ$ and persistence modules over $\RR$. Namely, since $P_\eps$ is an isometric embedding, $\eps\ZZ$-persistence modules can be understood as a special case of $\RR$-persistence modules, namely those satisfying the property $F(a\to b)=\id$ for any pair of points $a\leq b$ lying the same interval $[k\eps,(k+1)\eps)$. Therefore, we regard persistence modules $F:(\eps\ZZ,\leq)\to\Vect$ and $G:(\RR,\leq)\to\Vect$ as $\eps$-close if $P_\eps(F)$ and $G$ are $\eps$-interleaved. With this understanding, we may state:

\begin{corollary}
For any $\eps>0$, any continuous-valued persistence module $F:(\RR,\leq)\to\Vect$ can be $\eps$-approximated by a $\kk[t]$-module $F_\eps:(\eps\ZZ,\leq)\to\Vect$, namely $F_\eps=I_\eps(F)$. 
\end{corollary}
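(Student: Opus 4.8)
The plan is to observe that this corollary is essentially immediate once we unwind the conventions established just above it, with all the real work already contained in Proposition~\ref{rightinverse}. First I would set $F_\eps=I_\eps(F)$, which by construction is a persistence module over $(\eps\ZZ,\leq)$. Under the isomorphism of categories $\Vect^{(\eps\ZZ,\leq)}\iso\Mod_{\kk[t]}$ -- the $\eps\ZZ$-variant of the isomorphism $\Vect^{(\ZZ,\leq)}\iso\Mod_{\kk[t]}$ recorded above, valid for every $\eps>0$ -- the object $F_\eps$ corresponds to a genuine $\kk[t]$-module, so it has the claimed type.

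Next I would recall the agreed meaning of ``\emph{$\eps$-approximated}'': by the discussion immediately preceding the statement, a module $F_\eps:(\eps\ZZ,\leq)\to\Vect$ is regarded as $\eps$-close to $F:(\RR,\leq)\to\Vect$ precisely when $P_\eps(F_\eps)$ and $F$ are $\eps$-interleaved. With $F_\eps=I_\eps(F)$ this reduces the corollary to the single assertion $F\itl{\eps}P_\eps(I_\eps(F))$. But that is exactly the content of Proposition~\ref{rightinverse}, so invoking that proposition completes the argument.

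Since all the substance lives in Proposition~\ref{rightinverse}, there is no genuine obstacle here; the only points requiring care are bookkeeping ones -- namely that $I_\eps(F)$ is legitimately a $\kk[t]$-module via the $\eps\ZZ$-version of the module/persistence-module correspondence, and that ``$\eps$-approximation'' is being used in the precise sense of pulling back along $P_\eps$ and then measuring interleaving distance. I would also add the remark that in situations where persistence diagrams are defined this means $F_\eps$ has a barcode within bottleneck distance $\eps$ of that of $F$ (by the isometry theorem), which is the reason such a discretization is harmless for the applications in the sequel.
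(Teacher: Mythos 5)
Your proposal is correct and matches the paper's (implicit) argument exactly: the corollary is stated as an immediate consequence of Proposition~\ref{rightinverse} together with the convention that $F_\eps$ is $\eps$-close to $F$ when $P_\eps(F_\eps)$ and $F$ are $\eps$-interleaved, and the $\eps\ZZ$-version of the module correspondence is exactly what the paper notes ("this also holds for $I=\eps\ZZ$"). Nothing further is needed.
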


We concern ourselves with strictly positive $\eps$. The connection between discrete and continuous parameter persistence was first exploited in the first algebraic persistence stability result \cite{ChaCohGli2009} and has been studied in \cite{vejdemo2012interleaved}. The related notion of \emph{observable structure} was further introduced in \cite{chazal2014observable}. In principle, this discretization is technically unnecessary but desirable in algorithmic applications (see Discussion). 

As mentioned at the end of the preceding section, we would like to compare the persistence modules of a sublevel set filtration and a lower star filtration associated to the same piecewise linear function $\hat f$. The functorial approach is fruitful here, as the two filtrations may also be regarded as functors $S_{\hat f}:(\RR,\leq)\to(\Top,\subseteq)$ and $L_{\hat f}:(\RR,\leq)\to(\SCx,\subseteq)$, respectively.

Let $\Hg_n$ denote the $n$-th simplicial homology functor and $\Hg_n^s$ the $n$-th singular homology functor. Note that these are related by $\Hg_n\cong\Hg_n^sG$, where $G:(\SCx,\subseteq)\to(\Top,\subseteq)$ is the geometric realization functor.

\begin{proposition}\label{filtrations}
Suppose $\Xs$ is a simplicial complex and $\hat f:|\Xs|\to\RR$ a piecewise linear function (w.r.t. the triangulation). Then the persistence modules $\Hg^s_nS_{\hat f}:(\RR,\leq)\to\Vect$ and $\Hg_nL_{\hat f}:(\RR,\leq)\to\Vect$ are isomorphic.
\end{proposition}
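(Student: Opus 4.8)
The plan is to produce a natural isomorphism $\Hg^s_nS_{\hat f}\iso\Hg_nL_{\hat f}$ induced levelwise by the inclusions $\iota_j:|\Xs^j|\hookrightarrow|\Xs|^j$ of the geometric realization of the $j$-th lower star complex into the $j$-th sublevel set $|\Xs|^j=\hat f^{-1}(-\infty,j]$. Since $\Hg_n\iso\Hg^s_n\comp G$ with $G$ the geometric realization functor, the module $\Hg_nL_{\hat f}$ sends $j$ to $\Hg^s_n(|\Xs^j|)$ and $\Hg^s_nS_{\hat f}$ sends $j$ to $\Hg^s_n(|\Xs|^j)$; for $j_1\le j_2$ the relevant square consists of inclusions of subspaces of $|\Xs|$ and hence commutes, so the family $\{\iota_j\}_j$ defines a natural transformation of functors $(\RR,\le)\to\Top$ and, after applying $\Hg^s_n$, a natural transformation $\Hg_nL_{\hat f}\Rightarrow\Hg^s_nS_{\hat f}$. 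A natural transformation all of whose components are isomorphisms is a natural isomorphism, so it remains to show that each $\iota_j$ induces an isomorphism on singular homology; I would get this by showing $|\Xs^j|$ is a (strong) deformation retract of $|\Xs|^j$.

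For the retraction, note first that since $\hat f$ is affine on each closed simplex, $\max_{|\sigma|}\hat f=\max_{v\in\sigma}\hat f(v)$, so a simplex lies in $\Xs^j$ precisely when all of its vertices lie in $V_j:=\{v\mid\hat f(v)\le j\}$; thus $\Xs^j$ is the full subcomplex of $\Xs$ spanned by $V_j$. Writing $x\in|\Xs|$ in barycentric coordinates as $x=\sum_v t_v(x)\,v$ and putting $a(x)=\sum_{v\in V_j}t_v(x)$, one sees that $a(x)>0$ whenever $x\in|\Xs|^j$, since otherwise all the weight of $x$ would sit on vertices with value $>j$, forcing $\hat f(x)>j$. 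I would then define, for $s\in[0,1]$,
\[
R_j(x,s)=\frac{1}{a(x)+(1-s)(1-a(x))}\left(\sum_{v\in V_j}t_v(x)\,v+(1-s)\sum_{v\notin V_j}t_v(x)\,v\right),
\]
i.e.\ scale the ``high'' barycentric coordinates of $x$ down by $1-s$ and renormalize. Then $R_j(-,0)=\id$; the point $R_j(x,1)=\frac{1}{a(x)}\sum_{v\in V_j}t_v(x)\,v$ has support a face spanned by vertices in $V_j$, hence lies in $|\Xs^j|$; and $R_j(y,s)=y$ for $y\in|\Xs^j|$, since there $a(y)=1$. Continuity of $R_j$ on $|\Xs|^j$ is routine: the barycentric coordinate functions $t_v$ are continuous and the denominator is continuous and everywhere positive on $|\Xs|^j$.

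The one substantive point, and the step I expect to be the main obstacle, is checking that the homotopy stays inside the sublevel set, i.e.\ that $\hat f(R_j(x,s))\le j$ for $x\in|\Xs|^j$ and $s\in[0,1]$. By affineness of $\hat f$ on the carrier simplex of $x$, writing $A=\sum_{v\in V_j}t_v(x)\hat f(v)$ and $B=\sum_{v\notin V_j}t_v(x)\hat f(v)$ we have
\[
\hat f(R_j(x,s))=\frac{A+(1-s)B}{a(x)+(1-s)(1-a(x))},
\]
and the desired inequality reduces to $P\ge(1-s)Q$ with $P=ja(x)-A\ge0$ (each $v\in V_j$ has $\hat f(v)\le j$) and $Q=B-j(1-a(x))\ge0$ (each $v\notin V_j$ has $\hat f(v)>j$). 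Since $P-Q=j-(A+B)=j-\hat f(x)\ge0$ and $0\le 1-s\le1$, we get $P\ge Q\ge(1-s)Q$, as required. Hence each $\iota_j$ is the inclusion of a deformation retract, $\Hg^s_n(\iota_j)$ is an isomorphism, and the natural transformation constructed above is a natural isomorphism $\Hg^s_nS_{\hat f}\iso\Hg_nL_{\hat f}$. I would also remark that the argument uses neither finiteness of $\Xs$ nor distinctness of the vertex values.
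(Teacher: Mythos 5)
Your proof takes essentially the same route as the paper: both define the natural transformation componentwise via the inclusions $|\Xs^j|\hookrightarrow|\Xs|^j$ and then invoke the fact that $|\Xs^j|$ is a deformation retract of $|\Xs|^j$ to conclude each component of $\Hg_n^s\eta$ is an isomorphism. The only difference is that the paper cites this retraction as a standard fact (referring to Edelsbrunner--Harer), whereas you write out the explicit straight-line retraction $R_j$ and verify the sublevel-set inequality by hand; your computation is correct.
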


\begin{proof}
There is a natural transformation $\eta:G\circ L_{\hat f}\Rightarrow S_{\hat f}$ given component-wise by the inclusions $|\Xs^j|\to|\Xs|^j$. However, since $|\Xs^j|$ and $|\Xs|^j$ are homotopy equivalent, the components of the natural transformation $\Hg_n^s\eta:\Hg_nL_{\hat f}\to\Hg_n^sS_{\hat f}$ are isomorphisms, therefore it is a natural isomorphism.
\end{proof}

Another important fact about sublevel set filtrations is that the persistent homologies associated to a pair of $\eps$-close functions on the same space are $\eps$-interleaved.  We recall a classical result. 

\begin{proposition}\label{functionalcase}
Suppose $\Ys$ is a topological space and $f,g:\Ys\to\RR$ are functions satisfying $\|f-g\|_\infty\leq\eps$. Then the persistence modules $\PH_*(\Ys,f)$ and $\PH_*(\Ys,g)$ are $\eps$-interleaved.
\end{proposition}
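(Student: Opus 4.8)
The plan is to produce the interleaving directly from the sublevel-set inclusions, using the hypothesis $\|f-g\|_\infty\le\eps$ to relate the two filtrations. First I would observe that for every $j\in\RR$ and every $y\in\Ys$, if $f(y)\le j$ then $g(y)\le f(y)+\eps\le j+\eps$, so $\Ys^j_f:=f^{-1}(-\infty,j]\subseteq g^{-1}(-\infty,j+\eps]=:\Ys^{j+\eps}_g$, and symmetrically $\Ys^j_g\subseteq\Ys^{j+\eps}_f$. These are genuine inclusions of topological spaces, and together with the structure maps of each sublevel-set filtration they form two families of maps that are natural in $j$ — i.e. they commute with the inclusions $\Ys^{j}_f\hookrightarrow\Ys^{j'}_f$ for $j\le j'$, simply because all the maps in sight are subset inclusions inside $\Ys$.

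Next I would apply the singular homology functor $\Hg^s_*$ (degreewise) to obtain, for each $n$, a pair of natural transformations $\phi:\PH_n(\Ys,f)\Rightarrow\PH_n(\Ys,g)\circ T_\eps$ and $\psi:\PH_n(\Ys,g)\Rightarrow\PH_n(\Ys,f)\circ T_\eps$, i.e. $\eps$-morphisms in the sense of the categorical definition of persistence module recalled above. The interleaving identities $\psi\phi=\PH_n(\Ys,f)\,\eta_{2\eps}$ and $\phi\psi=\PH_n(\Ys,g)\,\eta_{2\eps}$ then follow on the space level before passing to homology: the composite $\Ys^j_f\hookrightarrow\Ys^{j+\eps}_g\hookrightarrow\Ys^{j+2\eps}_f$ is exactly the inclusion $\Ys^j_f\hookrightarrow\Ys^{j+2\eps}_f$, which is the structure map realizing $\eta_{2\eps}$, and likewise with the roles of $f$ and $g$ reversed. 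Applying $\Hg^s_n$ and using functoriality gives the required commuting triangles, so $(\phi,\psi)$ is an $\eps$-interleaving.

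There is essentially no hard part here — this is the standard ``functoriality of sublevel sets'' argument and the only thing to be careful about is bookkeeping: checking that the maps are natural (automatic, since everything is an inclusion into $\Ys$) and that the two composites literally equal the shift maps $\eta_{2\eps}$ rather than merely being homotopic to them (also automatic at the space level here, again because everything is an honest inclusion). If one wanted to state the result over $I=\eps\ZZ$ or combine it with the discretization machinery of Proposition~\ref{discreteitl}, one could post-compose with $I_\eps$, but for the $\RR$-indexed statement as written no such step is needed. The only genuine subtlety worth a sentence is that the infimum in $d_I$ need not be attained, so the claim is the existence of an $\eps$-interleaving (hence $d_I\le\eps$), not that $\eps$ is optimal — but that is exactly what the proposition asserts.
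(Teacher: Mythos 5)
Your argument is correct and is essentially the paper's own proof: both take the sublevel-set inclusions $f^{-1}(-\infty,j]\subseteq g^{-1}(-\infty,j+\eps]$ and $g^{-1}(-\infty,j]\subseteq f^{-1}(-\infty,j+\eps]$ and apply homology, noting that the composites are the shift inclusions. Your version simply spells out the naturality and interleaving-identity checks that the paper leaves implicit.
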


\begin{proof}
For each $x\in\RR$ there are inclusions $f^{-1}(-\infty,x]\to g^{-1}(-\infty,x+\eps]$ and $g^{-1}(-\infty,x]\to f^{-1}(-\infty,x+\eps]$. Upon taking their homology, we obtain the desired $\eps$-interleaving.
\end{proof}

\begin{remark}\label{functionalcase2}
This also holds for lower star filtrations, with completely analogous proof.
\end{remark}

As we have seen, continuous persistence modules can be approximated by discrete ones. For this reason, we mostly work with $\kk[t]$-modules in the remainder of the paper. To avoid notational clutter, we also adopt the following convention.

\begin{convention}
Both ordinary and persistent homology are denoted by the same symbol $\PH_*$. In case the filtration is explicitly mentioned, as in $\PH_*(\Xs,\Filt)$ or $\PH_*(\Xs,f)$, the meaning is unambiguous. However, when suppressing the filtration, $\PH_*(\Xs)$ could in principle mean either the persistent homology of the filtered simplicial complex $(\Xs,\Filt)$ or the ordinary homology of its underlying space $\Xs$. Whenever $\Xs$ has the structure of a filtered simplicial complex, $\PH_*(\Xs)$ will always mean persistent homology and $\PH_*(\Xs^j)$, with the filtration step explicitly specified (possibly $j=\infty$), will always mean ordinary homology.
\end{convention}

\subsection{Spectral Sequences}\label{sec:specsec}

In this section, we introduce the concept of a \emph{spectral sequence} and examine its various basic properties. Then, we focus our attention on the Mayer-Vietoris spectral sequence which is the one most suitable for our needs.
Many spectral sequences arise from double complexes. A description of these spectral sequences can be found in \cite[Chapter 10]{rotman2008introduction} and \cite{mccleary2001user}. Versions of the Mayer-Vietoris spectral sequence can also be found in \cite{bott2013differential} and \cite{brown2012cohomology} among numerous others. 


\begin{definition}\label{def:graded-module}
{\em A graded $\kk[t]$-module} is a $\ZZ$-indexed family $M=M_*=(M_p)_{p\in\ZZ}$ of $\kk[t]$-modules.
\end{definition}

\begin{definition}
{\em A (chain) complex of $\kk[t]$-modules} is a pair $(\Cg,\partial)$ where $\Cg$ is a graded $\kk[t]$-module and $\partial=(\partial_p)_{p\in\ZZ}$ is a family of morphisms $\partial_p:\Cg_p\to\Cg_{p-1}$ of $\kk[t]$-modules such that $\partial_{p-1}\partial_p=0$ for each $p\in\ZZ$.
\end{definition}

It is often convenient to view a graded $\kk[t]$-module as a genuine $\kk[t]$-module by identifying it with the direct sum of its components $M\equiv\bigoplus_{p\in\ZZ}M_p$. The decomposition is regarded as part of the structure. Similarly, we often view a chain complex as a {\em differential graded module}, i.e. the $\kk[t]$-module $\Cg\equiv\bigoplus_{p\in\ZZ}\Cg_p$ equipped with a $\kk[t]$-module homomorphism $\partial$ such that $\partial(\Cg_p)\subseteq \Cg_{p-1}$ and $\partial\circ\partial=0$.

\begin{definition}
{\em A bigraded $\kk[t]$-module} is a $\ZZ\times\ZZ$-indexed family $M=M_{*,*}=(M_{p,q})_{p,q\in\ZZ}$ of $\kk[t]$-modules.
\end{definition}

\begin{definition}
{\em A double complex (bicomplex) of $\kk[t]$-modules} is a triple $(M,\partial^0,\partial^1)$ where $M$ is a bigraded $\kk[t]$-module and $\partial^0=(\partial^0_{p,q})_{p,q\in\ZZ}$ and $\partial^1=(\partial^1_{p,q})_{p,q\in\ZZ}$ are two families of morphisms $\partial^0_{p,q}:M_{p,q}\to M_{p,q-1}$ and $\partial^1_{p,q}:M_{p,q}\to M_{p-1,q}$ such that $\partial^0_{p,q-1}\partial^0_{p,q}=0,\partial^1_{p-1,q}\partial^1_{p,q}=0$ and $\partial^1_{p,q-1}\partial^0_{p,q}+\partial^0_{p-1,q}\partial^1_{p,q}=0$ for $p,q\in\ZZ$.
\end{definition}

Note that the notions of $\eps$-morphisms and interleavings make sense for bigraded modules and therefore for spectral sequences. We may define them by components and, in the case of double complexes, additionally assume that they commute with both differentials.

As with graded modules and complexes, we often view bigraded $\kk[t]$-modules as genuine $\kk[t]$-modules with additional structure, namely via the identification $M=\bigoplus_{p,q\in\ZZ}M_{p,q}$. We can also view $M$ as a graded $\kk[t]$-module in (at least) two ways, namely by summing over all $p$ or by summing over all $q$. Using this view, a double complex can be seen as a bigraded module $M$ that is a differential module with respect to $\partial^0$ as well as with respect to $\partial^1$, and the two structures are related by the equation $\partial^0\partial^1+\partial^1\partial^0=0$.

The relevance of this anticommutativity property is that combining the two differentials by summing them also yields a differential $\partial^0+\partial^1$. In fact, we may equivalently work with {\em commutative double complexes} $(M,\partial^0,\partial^1)$ with the only difference that $\partial^0$ and $\partial^1$ commute instead of anticommute, i.e. $\partial^0\partial^1=\partial^1\partial^0$. Note that such $M$ becomes an anticommutative double complex upon replacing $\partial^0$ by $(-1)^p\partial^0$. The advantage of the anticommutative case is that we do not have to keep track of signs in the combined differential.
\begin{center}
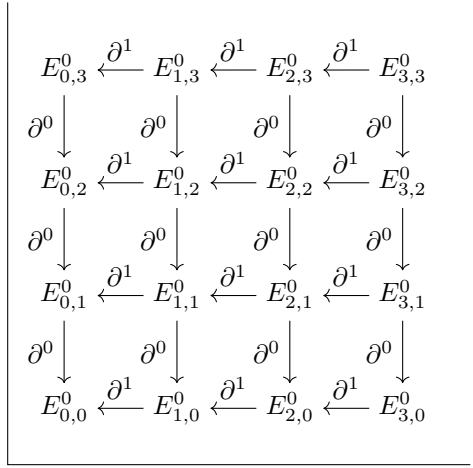
\begin{figure}[h]
\begin{tikzpicture}[scale=1.5]
  \foreach \x in {0,...,3}
    \foreach \y in {0,...,3} 
	{
		\node (E\x\y) at (\x,\y) {$E^0_{\x,\y}$};
	}
   \foreach \x/\xx in {0/1,1/2,2/3}
    \foreach \y in {0,...,3} 
	{
		\draw[->] (E\xx\y) --node[above]{$\partial^1$} (E\x\y);
	}
   \foreach \x/\xx in {0/1,1/2,2/3}
    \foreach \y in {0,...,3} 
	{
		\draw[->] (E\y\xx) --node[left]{$\partial^0$} (E\y\x);
	}

	\draw (-0.5,3.6) -- (-0.5,-0.5) -- (3.6,-0.5);
\end{tikzpicture}
\caption{A double complex comes equipped with two differentials $\partial^0$ and $\partial^1$. Considering the antidiagonals, we also obtain a chain complex, called the total complex with $\partial^0\partial^1+ \partial^1\partial^0=0$ by anticommutativity.}
\end{figure}
\end{center}


To each double complex, one may associate a total complex by summing over the antidiagonals  and combining the two boundary operators into a total boundary operator. Note that  the $n$-th antidiagonal is the direct sum of all entries in the double complex such that $p+q=n$. This leads to the following definition.

\begin{definition}
Let $M$ be a double complex. The {\em total complex} $(\Tot(M),D)$ associated to $M$ is the chain complex defined by $\Tot_n(M)=\bigoplus_{p+q=n}M_{p,q}$ and $D=\partial^0+\partial^1$. 
\end{definition}

Spectral sequences are a tool that allows us to compute the homology of this total complex. This is a very common situation in practice. Suppose we are given a chain complex $(\Cg,\partial)$ whose homology we would like to compute. It is often possible to find a natural filtration of such a complex. By taking successive quotients, one then obtains a double complex $M$, whose total complex is isomorphic to the original chain complex. In particular, their homologies agree:
\[
\PH_*(\Tot(M),D) \cong \PH_*(\Cg,\partial).
\]
The homology of such a complex $(\Cg,\partial)$ can therefore be computed systematically using the associated spectral sequence. In fact, this is precisely what happens in our case. The associated \emph{spectral sequence} consists of \emph{pages}, where each page $E^r, r=0,1,\ldots$ is a differential bigraded module, computed successively by taking the homology with respect to the differential on the previous page. On the $r$-th page, the differential is given by
\[
d^r: E^r_{p,q} \rightarrow E^r_{p-r,q+r-1}.
\]
It may happen that there is a $R$ such that for $r>R$ all differentials beginning or ending at $E^r_{p,q}$ are zero maps. In this case the $(p,q)$-th component stabilizes in the sense that all these modules $E^r_{p,q}$ are isomorphic. If such a $R$ exists for each pair $(p,q)$, the spectral sequence is said to \emph{converge} and the stabilized modules $E^r_{p,q}$ are denoted by $E^\infty_{p,q}$. In this case, the bigraded module with components $E^\infty_{p,q}$ is called the $\infty$-page of the spectral sequence. If $E^N=E^\infty$ for some finite $N$, the spectral sequence is said to \emph{collapse} on the $N$-th page.

Each successive page of the spectral sequence provides a successively better approximation of the homology of the total complex, so if the spectral sequence of a double complex $M$ converges, it is said to converge to $\Hg_*(\Tot(M))$. In practice, this means that $\Hg_*(\Tot(M))$ may be reconstructed from the $E^\infty$ page. In particular, if $E^\infty$ consists of free modules, $\Hg_n(\Tot(M))$ is isomorphic to $\bigoplus_{p+q=n}E^\infty_{p,q}$. Generally, however, the relation between $\Hg_*(\Tot(M))$ and $E^\infty$ is  slightly more complicated. If a spectral sequence converges to $\Hg_*(\Tot(M))$, then there exists a filtration
\[
\Hg_{p+q}(\Tot(M))^0 \subseteq \Hg_{p+q}(\Tot(M))^1 \subseteq \ldots \subseteq\Hg_{p+q}(\Tot(M))^p \subseteq \ldots\subseteq \Hg_*(\Tot(M))
\]
and the $E^\infty$ consists of successive quotients of various steps of the filtration of $\Hg_*(\Tot(M))$ arising from the structure of the double complex:
\[
E^\infty_{p,q}\cong\frac{\Hg_{p+q}(\Tot(M))^p}{\Hg_{p+q}(\Tot(M))^{p-1}}.
\]
Note that the $p$ here denotes the position in the filtration which coincides with the column of the double complex. It is straightforward to check that in our case, the spectral sequences are convergent.

Hence, reconstructing $\Hg_*(\Tot(M))$ up to isomorphism from $E^\infty$ in general requires us to solve a series of extension problems over each antidiagonal $p+q=n$. In the case we're interested in, the filtration has two additional properties which  follow from the explicit description in the Appendix, namely
\[
\PH_n(\Tot(M))^{-1}=0\qquad\text{and}\qquad\PH_n(\Tot(M))^n=\PH_n(X).
\]

The first three steps in a spectral sequence are shown in Figure \ref{fig:threesteps}. The spectral sequence relevant to our needs is called the Mayer-Vietoris spectral sequence. It is a first quadrant spectral sequence, meaning that $E^r_{p,q}=0$ if either $p<0$ or $q<0$. Note that first quadrant spectral sequences always converge, since eventually all differentials beginning or ending at a particular $(p,q)$ in the first quadrant will point outside this quadrant. The Mayer-Vietoris spectral sequence is defined as the spectral sequence of a particular double complex arising from a cover of the space whose homology we are interested in.


\begin{center}
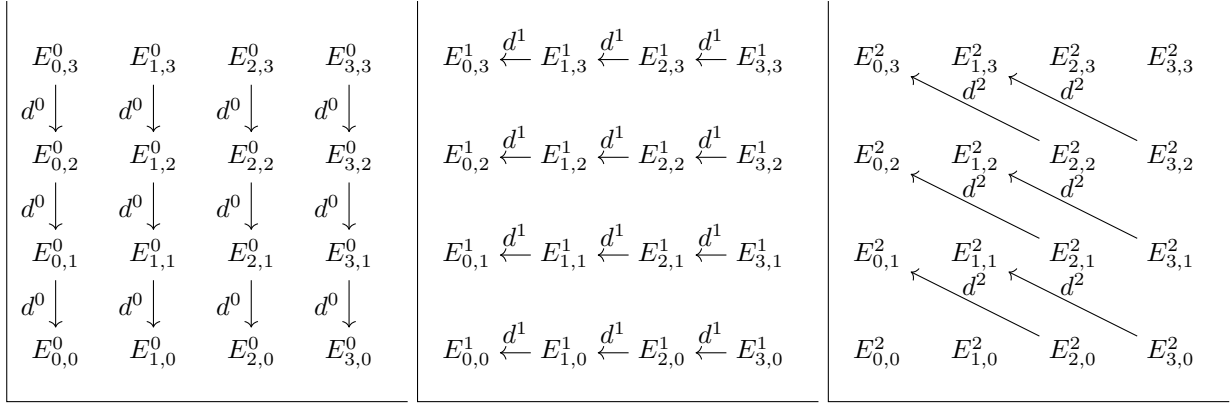
\begin{figure}[h]
\begin{tikzpicture}[scale=1.3]
  \foreach \x in {0,...,3}
    \foreach \y in {0,...,3} 
	{
		\node (E\x\y) at (\x,\y) {$E^0_{\x,\y}$};
	}
   \foreach \x/\xx in {0/1,1/2,2/3}
    \foreach \y in {0,...,3} 
	{
		\draw[->] (E\y\xx) --node[left]{$d^0$} (E\y\x);
	}

	\draw (-0.5,3.6) -- (-0.5,-0.5) -- (3.6,-0.5);
\end{tikzpicture}
\begin{tikzpicture}[scale=1.3]
  \foreach \x in {0,...,3}
    \foreach \y in {0,...,3} 
	{
		\node (E\x\y) at (\x,\y) {$E^1_{\x,\y}$};
	}
   \foreach \x/\xx in {0/1,1/2,2/3}
    \foreach \y in {0,...,3} 
	{
		\draw[->] (E\xx\y) --node[above]{$d^1$} (E\x\y);
	}

	\draw (-0.5,3.6) -- (-0.5,-0.5) -- (3.6,-0.5);
\end{tikzpicture}
\begin{tikzpicture}[scale=1.3]
  \foreach \x in {0,...,3}
    \foreach \y in {0,...,3} 
	{
		\node (E\x\y) at (\x,\y) {$E^2_{\x,\y}$};
	}
   \foreach \x/\xx in {0/2,1/3}
    \foreach \y/\yy in {0/1,1/2,2/3} 
	{
		\draw[->] (E\xx\y) --node[above]{$d^2$} (E\x\yy);
	}

	\draw (-0.5,3.6) -- (-0.5,-0.5) -- (3.6,-0.5);
\end{tikzpicture}
\caption{\label{fig:threesteps}The differentials for the first three pages of the spectral sequence. In each case, to compute the next page we take homology with respect to the differential on the current page. We set $d^0 = \partial^0$ and $d^1$ is the homomorphism induced by $\partial^1$.}
\end{figure}
\end{center}
Now, suppose we are given a pair $(\Xs,\XCov)$, where $\Xs$ is a filtered simplicial complex and $\XCov=(\XCv_i,\Filt_i)_{i\in\Lambda}$ is a filtered cover of $\Xs$ by subcomplexes. To any such pair, we may associate a commutative double complex $(E^0,\partial^0,\partial^1)$ where the underlying bigraded module is given by (recall that $\XCv_I$ is given the filtration $\Filt_I$)
\[
E^0_{p,q}=\bigoplus_{|I|=p+1}\PC_q(\XCv_I)
\]
and the boundary maps $\partial^0_{p,q}:E^0_{p,q}\to E^0_{p,q-1}$ and $\partial^1_{p,q}:E^0_{p,q}\to E^0_{p-1,q}$ are defined on the simplices by
\[
\partial^0_{p,q}(\sigma,I)=\sum_{k=0}^q(-1)^k t^{\deg(\sigma,I)-\deg(\sigma_k,I)}(\sigma_k,I)\qquad\text{and}\qquad\partial^1_{p,q}(\sigma,I)=\sum_{l=0}^p(-1)^l t^{\deg(\sigma,I)-\deg(\sigma,I_l)} (\sigma,I_l).
\]
These formulae require some explanation. To simplify things, we choose total orderings on the set $V$ of vertices of $\Xs$ and the index set $\Lambda$ of the cover $\XCov$. Note that $\Lambda$ is the set of vertices of the nerve $\Nrv$ of $\XCov$. These total orders of $V$ and $\Lambda$ allow us to speak unambiguously of ``the $k$-th vertex of $\sigma$'' and ``$l$-th vertex of $I$''. The simplices are denoted as pairs $(\sigma,I)$ to distinguish between two copies of $\sigma$ corresponding to different summands in $E^0_{p,q}$. Each simplex $\sigma=\{v_0,\ldots,v_q\}\in\XCv_I$ has a birth time $\deg(\sigma,I)$ in the filtration of $\XCv_I$. As usual, $\sigma_k:=\sigma-\{v_k\}$ are the faces of codimension $1$ in $\sigma$. Since $I=\{i_0,\ldots,i_p\}$ is a $p$-simplex in the nerve, it also makes sense to think of $I_l:=I-\{i_l\}$ as the faces of codimension $1$ in $I$.

It is a standard fact that $E^0$ is indeed a chain complex with respect to $\partial^0$ and $\partial^1$. Furthermore, $\partial^0$ and $\partial^1$ commute, since the first only operates on the chains of $\Xs$, whereas the second operates on the chains of the nerve $\Nrv$. Hence, replacing $\partial^0$ by $(-1)^p\partial^0$ yields a double complex. The spectral sequence $(E^r,d^r)$ associated to this double complex is called {\em the Mayer-Vietoris spectral sequence of $(\Xs,\XCov)$}.

This double complex is designed so that its homology is precisely the homology of $(\Xs,\Filt)$, implying the following fact, which is the main reason for the importance of the Mayer-Vietoris spectral sequence.

\begin{theorem}\label{convergence}
The Mayer-Vietoris spectral sequence of $(\Xs,\XCov)$ converges to $\PH_*(\Xs)$.
\end{theorem}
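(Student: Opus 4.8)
The plan is to deduce this from the classical generalized Mayer--Vietoris principle, refined to keep track of the filtration. Since the Mayer--Vietoris spectral sequence is a first quadrant spectral sequence it converges automatically, and by the general theory it converges to $\PH_*(\Tot(E^0),D)$. So the whole content is an isomorphism of $\kk[t]$-modules $\PH_*(\Tot(E^0),D)\cong\PH_*(\Xs,\Filt)$; tracking the column filtration along the way additionally gives the two properties $\PH_n(\Tot(E^0))^{-1}=0$ and $\PH_n(\Tot(E^0))^n=\PH_n(\Xs)$ quoted above.

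First I would extend the double complex one column to the left: set $\widetilde E_{p,q}=E^0_{p,q}$ for $p\ge 0$ and $\widetilde E_{-1,q}=\PC_q(\XCv_\emptyset)=\PC_q(\Xs)$, using that the empty intersection $\XCv_\emptyset$ is $\Xs$ itself (with its own filtration) by the compatibility requirement $\Xs^j=\bigcup_i\XCv_i^j$, and keep the same formulas for $\partial^0,\partial^1$. A routine check of signs and $t$-exponents shows $\widetilde E$ is again a commutative double complex, so $\Tot(\widetilde E)$ is defined; moreover $\Tot(\widetilde E)$ is, up to a shift, the mapping cone of the chain map $\epsilon=\partial^1|_{p=0}\colon\Tot(E^0)\to\PC_*(\Xs)$, the ``nerve augmentation'', which on generators is $\epsilon(\sigma,\{i\})=t^{\deg(\sigma,\{i\})-\deg(\sigma)}\sigma$. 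Hence it suffices to prove that $\Tot(\widetilde E)$ is acyclic, since that is equivalent to $\epsilon$ inducing an isomorphism $\PH_*(\Tot(E^0),D)\xrightarrow{\ \cong\ }\PH_*(\Xs,\Filt)$.

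For acyclicity I would run the spectral sequence of $\widetilde E$ filtered by rows (by the index $q$). This filtration is bounded in each total degree $n$, since the antidiagonal $p+q=n$ meets the region $p\ge -1$, $q\ge 0$ in only finitely many spots, so the spectral sequence converges; its $E^1$-page is the $\partial^1$-homology of the rows, so it is enough to show that every row is exact. To see that the $q$-th row $\cdots\to\widetilde E_{1,q}\xrightarrow{\partial^1}\widetilde E_{0,q}\xrightarrow{\partial^1}\widetilde E_{-1,q}\to 0$ is exact, decompose it as a direct sum over the $q$-simplices $\sigma$ of $\Xs$: since $\sigma\in\XCv_I$ iff $I\subseteq\Lambda_\sigma:=\{i\in\Lambda\mid\sigma\in\XCv_i\}$, and $\Lambda_\sigma\neq\emptyset$ by the covering hypothesis, the $\sigma$-summand is the augmented simplicial chain complex of the full simplex on the vertex set $\Lambda_\sigma$, weighted by powers of $t$ recording birth times (here $\deg(\sigma,I)=\max_{i\in I}\deg(\sigma,\{i\})$ because $\XCv_I^j=\bigcap_{i\in I}\XCv_i^j$, and $\deg(\sigma)=\min_i\deg(\sigma,\{i\})$).

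The remaining step, which I expect to be the only genuinely delicate one, is to produce a contracting $\kk[t]$-homotopy for each such weighted simplex. I would choose an apex $i_0\in\Lambda_\sigma$ with $\deg(\sigma,\{i_0\})$ minimal, so that $\deg(\sigma,\{i_0\})=\deg(\sigma)$; then for every $I\subseteq\Lambda_\sigma$ one has $\deg(\sigma,I\cup\{i_0\})=\deg(\sigma,I)$, so the cone map $h(\sigma,I)=\pm(\sigma,I\cup\{i_0\})$ (with $h(\sigma)=(\sigma,\{i_0\})$ at the augmented end, and $h=0$ where $i_0\in I$) carries no power of $t$ and is a genuine degree-$0$ $\kk[t]$-module map. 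The point of this choice of $i_0$ is precisely that the $t$-exponents appearing when $h$ is composed with $\partial^1$ cancel telescopically, so the classical combinatorial identity $\partial^1 h+h\partial^1=\id$ goes through verbatim. This yields exactness of each row, hence $\PH_*(\Tot(\widetilde E))=0$, hence the theorem; everything except this weight bookkeeping is formal homological algebra.
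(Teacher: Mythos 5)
Your proof is correct, but it takes a genuinely different route from the paper. The paper argues geometrically: it identifies $\Tot(E^0)$ with the cellular chain complex of the filtered Mayer--Vietoris blowup complex $\Xs_\XCov\subseteq\Xs\times\Nrv$ and then invokes the fact that the projection $\Xs_\XCov\to\Xs$ is a filtration-compatible homotopy equivalence (Hatcher, Proposition 4G.2, as in the Persistent Nerve Lemma of Chazal--Oudot), so the persistent homology of the total complex agrees with $\PH_*(\Xs)$. You instead run the classical algebraic generalized Mayer--Vietoris argument (as in Bott--Tu or Brown) lifted to the filtered setting: augment the double complex by the column $\PC_*(\Xs)$ at $p=-1$, observe that each row splits as a direct sum over simplices $\sigma$ of a $t$-weighted augmented chain complex of the full simplex on $\Lambda_\sigma$, and contract it by coning at an apex $i_0$ of minimal birth time. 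The key bookkeeping is right: since $\deg(\sigma,I)=\max_{i\in I}\deg(\sigma,\{i\})$ and $\deg(\sigma)=\min_i\deg(\sigma,\{i\})$ (which is attained, by the compatibility condition $\Xs^j=\bigcup_i\XCv_i^j$), the choice of minimal $i_0$ gives $\deg(\sigma,I\cup\{i_0\})=\deg(\sigma,I)$, so the homotopy is a genuine degree-$0$ $\kk[t]$-map and all $t$-exponents in $\partial^1h+h\partial^1=\id$ cancel; the mapping-cone and row-filtration steps are standard and the boundedness needed for convergence of the row-filtered spectral sequence holds since each antidiagonal meets only finitely many spots. What the paper's route buys is brevity and a direct link to the geometric nerve-lemma machinery; what your route buys is a self-contained chain-level argument over $\kk[t]$ that avoids CW structures and geometric realization entirely and makes explicit exactly where the filtered-cover compatibility $f=\min_i f_i$ enters (namely, in the existence of the minimal-birth apex). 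Both are complete proofs of the theorem.
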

This result can be found in~\cite{lipsky2011spectral} and variations can be found in~\cite{bott2013differential,brown2012cohomology,godement1958topologie}. For completeness, we include the idea of proof in the Appendix.
%
The first page of the Mayer-Vietoris spectral sequence can be expressed as follows. Note that $d^0$ is simply $\partial^0$, which acts on each summand as the simplicial boundary operator, therefore
\begin{equation}
\label{E1}
E^1_{p,q}=\bigoplus_{|I|=p+1}\PH_q(\XCv_I).
\end{equation}
The boundary map $d^1$ is induced by $\partial^1$. Explicitly, representing homology classes by cycles, we have:
\[
d^1_{p,q}\left(\left[\sum_{n=0}^N\lambda t^{\mu_n}\sigma_n\right],I\right)=\sum_{l=0}^p(-1)^l \left(\left[\sum_{n=0}^N\lambda_n t^{\mu_n+\deg(\sigma_n,I)-\deg(\sigma_n,I_l)}\sigma_n\right],I_l\right).
\]
The only case we really need is $q=0$. In this case, the explicit formula can be simplified, and has the same form as that of $\partial^1_{p,0}$, the only difference being that it is defined on homology classes instead of simplices:
\[
d^1_{p,0}([v],I)=\sum_{l=0}^p(-1)^l t^{\deg(v,I)-\deg(v,I_l)} ([v],I_l).
\]
In the case of induced covers (see Remark \ref{inducedcov}), the explicit formula of $d^1_{p,q}$ has the same form as that of $\partial^1_{p,q}$ for all $q$. Computing $E^2$ is also straightforward, simply take the homology with respect to $d^1$. The higher pages require us to compute the higher differentials, which usually requires more work.

For illustrative purposes we now prove the Persistent Nerves Theorem of Sheehy \cite[Theorem 6]{sheehy2012multicover} using spectral sequences. In \cite{sheehy2012multicover} this is proved by using the Persistent Nerve Lemma of Chazal and Oudot \cite[Lemma 3.4.]{chazal2008towards}. Our proof also uses the idea of Chazal and Oudot, namely the fact that the Mayer-Vietoris blowup complex associated to $(\Xs,\XCov)$ is homotopy equivalent to $\Xs$ is used to establish Theorem \ref{convergence} (see Appendix). 
Our theorems are motivated by and can be thought of as a generalization of this proof (recall that $\PH_*(\cdot)$ denotes persistent homology). We begin with a preliminary Lemma.
\begin{lemma}\label{induced}
Suppose the chain complexes $(\Cg',\partial')$ and $(\Cg'',\partial'')$ are $\eps$-interleaved as chain complexes. Then their homologies $\Hg'_*=\PH_*(\Cg')$ and $\Hg''_*=\PH_*(\Cg'')$ are $\eps$-interleaved as graded modules.
\end{lemma}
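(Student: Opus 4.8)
The plan is to take the $\eps$-interleaving of chain complexes and push it through the homology functor, checking that the resulting maps still form an $\eps$-interleaving of the homology modules. Let $(\varphi,\psi)$ be the given $\eps$-interleaving of $(\Cg',\partial')$ and $(\Cg'',\partial'')$, so $\varphi:\Cg'\to\Cg''$ and $\psi:\Cg''\to\Cg'$ are chain maps that are $\eps$-morphisms of (graded) $\kk[t]$-modules and satisfy $\psi\varphi=\id_{2\eps}$ and $\varphi\psi=\id_{2\eps}$ (where $\id_{2\eps}$ denotes multiplication by $t^{2\eps}$). First I would observe that, since $\varphi$ and $\psi$ are chain maps, they induce maps on homology $\varphi_*:\Hg'_*\to\Hg''_*$ and $\psi_*:\Hg''_*\to\Hg'_*$, and that these inherit the grading shift: because $\varphi(\Cg'^j)\subseteq\Cg''^{j+\eps}$, a cycle of degree $j$ is sent to a cycle of degree $j+\eps$, and likewise boundaries to boundaries, so $\varphi_*(\Hg'^j)\subseteq\Hg''^{j+\eps}$; thus $\varphi_*$ and $\psi_*$ are $\eps$-morphisms of graded modules. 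This step is essentially the functoriality of homology together with a bookkeeping check on degrees.

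The next step is the interleaving identities. Functoriality of homology gives $(\psi\varphi)_*=\psi_*\varphi_*$ and $(\varphi\psi)_*=\varphi_*\psi_*$, so it suffices to identify $(\id_{2\eps})_*$, the map on homology induced by multiplication by $t^{2\eps}$ on the chain level, with multiplication by $t^{2\eps}$ on homology. This is immediate: $t^{2\eps}$ acts on the chain complex as a chain map (it commutes with $\partial$ since $\partial$ is a $\kk[t]$-module morphism), it sends a cycle $z$ to $t^{2\eps}z$, and the induced map on the class $[z]$ is $[t^{2\eps}z]=t^{2\eps}[z]$, which is exactly $\id_{2\eps}$ on $\Hg_*$. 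Hence $\psi_*\varphi_*=\id_{2\eps}$ and $\varphi_*\psi_*=\id_{2\eps}$ on the respective homology modules, so $(\varphi_*,\psi_*)$ is an $\eps$-interleaving of $\Hg'_*$ and $\Hg''_*$ as graded $\kk[t]$-modules.

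There is essentially no serious obstacle here — the lemma is a soft, formal consequence of functoriality — so the "hard part" is only a matter of care: making sure the grading convention for chain complexes of $\kk[t]$-modules matches the one for modules (so that an $\eps$-chain-map really does induce an $\eps$-morphism in each homological degree), and being explicit that the interleaving relations are asked to hold degreewise, i.e. for each $n$ separately, consistent with the Remark that interleavings of graded modules and of chain complexes are defined componentwise. I would write the proof in two short sentences along the lines above, perhaps remarking that the same argument works verbatim if one replaces $\kk[t]$-modules by persistence modules over any poset closed under addition, since only functoriality of homology and the compatibility of the shift functor with taking homology are used.
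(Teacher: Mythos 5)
Your proof is correct and is essentially the paper's proof: both push the chain-level interleaving through the homology functor by noting that the interleaving maps send cycles to cycles and boundaries to boundaries (hence descend to homology), and that the induced maps satisfy the interleaving relations because $t^{2\eps}$ on chains induces $t^{2\eps}$ on homology. The paper phrases the middle step via explicit interleavings of the cycle and boundary modules rather than invoking functoriality directly, but the argument is the same.
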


\begin{proof}
Let $\phi:\Cg'\to\Cg''$ and $\psi:\Cg''\to\Cg'$ be the interleaving maps. Since $(\phi,\psi)$ is an interleaving of chain complexes, $\phi$ and $\psi$ preserve cycles and boundaries. Therefore the restrictions $\phi_\Zg:\Zg'\to\Zg''$ and $\psi_\Zg:\Zg''\to\Zg'$ of the interleaving maps define an $\eps$-interleaving $(\phi_\Zg,\psi_\Zg)$ of the cycle modules, and the restrictions $\phi_\Bg:\Bg'\to\Bg''$ and $\psi_\Bg:\Bg''\to\Bg'$ provide an $\eps$-interleaving $(\phi_\Bg,\psi_\Bg)$ of the boundary modules. These also descend to the level of quotients, i.e. we may define an $\eps$-interleaving $(\phi_\Hg,\psi_\Hg)$ of $\Hg'_*$ and $\Hg''_*$ by the formulae
\[
\phi_\Hg([x])=[\phi_\Zg(x)]\qquad\text{and}\qquad\psi_\Hg([x])=[\psi_\Zg(x)].
\]
It is readily verified that these maps are well-defined and provide the appropriate interleaving.
\end{proof}
%
%
%
%
This leads us immediately to the Persistent Nerves Theorem.
\begin{theorem}
Suppose $\Xs$ is a filtered simplicial complex and $\XCov$ a persistently acyclic filtered cover of $\Xs$. Then, $\PH_*(\Xs)\cong\PH_*(\Nrv(\XCov))$.
\end{theorem}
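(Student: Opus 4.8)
The plan is to extract the isomorphism directly from the Mayer--Vietoris spectral sequence of $(\Xs,\XCov)$, which converges to $\PH_*(\Xs)$ by Theorem~\ref{convergence}. The first step is to unwind the hypothesis on the first page. By~\eqref{E1} we have $E^1_{p,q}=\bigoplus_{|I|=p+1}\PH_q(\XCv_I)$, and persistent acyclicity (i.e.\ each nonempty sublevel set $\XCv_I^j$ is acyclic) forces $\PH_q(\XCv_I)=0$ for $q\geq 1$, while $\PH_0(\XCv_I)$ is the free $\kk[t]$-module on a single generator $\langle I\rangle$ placed in degree $\deg(\XCv_I):=\min\{j\mid\XCv_I^j\neq\emptyset\}$ --- connectedness of each nonempty level makes every internal structure map of $\PH_0(\XCv_I)$ an isomorphism. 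So $E^1$, and hence the whole spectral sequence from page one onward, is concentrated in the single row $q=0$.

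Second, I would identify the row complex $(E^1_{*,0},d^1)$ with the simplicial chain complex of the nerve, filtered by declaring the simplex $I$ to be born at $\deg(\XCv_I)$; this is a legitimate filtration of $\Nrv(\XCov)$ since $I'\subseteq I$ forces $\XCv_I\subseteq\XCv_{I'}$ and hence $\deg(\XCv_{I'})\leq\deg(\XCv_I)$. Under the identification $\PH_0(\XCv_I)\cong\kk[t]\langle I\rangle$, a vertex class $[v]\in\PH_0(\XCv_I)$, living in internal degree $\deg(v,I)$, corresponds to $t^{\deg(v,I)-\deg(\XCv_I)}\langle I\rangle$. Feeding this into the text's formula $d^1_{p,0}([v],I)=\sum_l(-1)^l t^{\deg(v,I)-\deg(v,I_l)}([v],I_l)$ and cancelling powers of $t$ yields precisely the simplicial boundary $\partial\langle I\rangle=\sum_l(-1)^l t^{\deg(\XCv_I)-\deg(\XCv_{I_l})}\langle I_l\rangle$ of the filtered nerve. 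Thus $(E^1_{*,0},d^1)\cong(\PC_*(\Nrv(\XCov)),\partial)$ as chain complexes of $\kk[t]$-modules, and taking homology (Lemma~\ref{induced} with $\eps=0$) gives $E^2_{p,0}\cong\PH_p(\Nrv(\XCov))$.

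Third, I would observe that a spectral sequence concentrated in one row degenerates: for $r\geq 2$ the differential $d^r\colon E^r_{p,q}\to E^r_{p-r,q+r-1}$ has either source or target outside the row $q=0$ and therefore vanishes, so $E^2=E^\infty$; in particular $E^\infty_{p,q}=0$ for $q\geq 1$ and $E^\infty_{n,0}\cong\PH_n(\Nrv(\XCov))$. The extension problem is then vacuous: convergence provides a filtration $0=\PH_n(\Xs)^{-1}\subseteq\PH_n(\Xs)^0\subseteq\cdots\subseteq\PH_n(\Xs)^n=\PH_n(\Xs)$ with $E^\infty_{p,n-p}\cong\PH_n(\Xs)^p/\PH_n(\Xs)^{p-1}$, and since $E^\infty_{p,n-p}=0$ for $p<n$ the filtration is constant up to step $n-1$, so $\PH_n(\Xs)^{n-1}=0$ and $\PH_n(\Xs)=\PH_n(\Xs)^n\cong E^\infty_{n,0}\cong\PH_n(\Nrv(\XCov))$.

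There is no genuine obstacle here; the proof is meant as a template for the approximate theorem. The only step that demands care is the identification in the second paragraph: one must choose the generator of each $\PH_0(\XCv_I)$ so that the $t$-exponents appearing in $d^1$ match the birth times assigned on the nerve. It is precisely this bookkeeping that must be redone ``up to $\eps$'' in the $\eps$-acyclic setting --- where $E^1$ is only $\eps$-close to being concentrated in the row $q=0$, and interleavings rather than isomorphisms must be propagated through the successive pages and then through the extension problem.
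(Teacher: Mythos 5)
Your proof is correct and follows essentially the same route as the paper: exploit persistent acyclicity to collapse $E^1$ to the bottom row, identify $(E^1_{*,0},d^1)$ with the filtered chain complex of the nerve (your basis-level identification $[v]\mapsto t^{\deg(v,I)-\deg(\XCv_I)}\langle I\rangle$ is the same map the paper writes explicitly as $\phi_p$, with well-definedness of its inverse guaranteed by acyclicity in both treatments), apply Lemma~\ref{induced}, and observe that single-row concentration makes both the higher differentials and the extension problem vacuous. The only cosmetic difference is that the paper checks directly that its inverse map $\psi_p(I)=([v_I],I)$ is independent of the choice of $v_I$, whereas you phrase the same point as $\PH_0(\XCv_I)$ being free of rank one with isomorphic structure maps.
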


\begin{proof}
We use the Mayer-Vietoris spectral sequence $E$ associated to $(\Xs,\XCov)$. Let $(\PC,\partial)$ be the simplical chain complex of the nerve $\Nrv$ of $\XCov$. The boundary operator is given by the explicit formula
\[
\partial_p(I)=\sum_{l=0}^p(-1)^l t^{\deg I-\deg I_l} I_l.
\]
This has the same form as the boundary operators $d^1$ in the bottom row of $E^1$ and $\partial^1$ in the bottom row of the double complex. In particular,
\[
d^1_{p,0}([v],I)=\sum_{l=0}^p(-1)^l t^{\deg(v,I)-\deg(v,I_l)} ([v],I_l).
\]
In fact, $(\PC,\partial)$ and $(E^1_{*,0},d^1_{*,0})$ are isomorphic as chain complexes, the inverse isomorphisms $\phi_p:E^1_{p,0}\to\PC_p$ and $\psi_p:\PC_p\to E^1_{p,0}$ being given by
\[
\phi_p([v],I)=t^{\deg v-\deg I}I\qquad\text{and}\qquad\psi_p(I)=([v_I],I),
\]
where we choose a vertex $v_I\in V$ with the property $\deg v_I=\deg I$. Note that $\psi$ is well-defined, because $\XCv_I$ is acyclic: if $v\neq v_I$ is another vertex with $\deg v = \deg I$, it belongs to the same homology class as $v_I$. That $\phi$ and $\psi$ are inverse to each other follows by direct computation.

By Lemma \ref{induced}, this implies that $E^2_{*,0}\cong\PH_*(\Nrv)$. Using the fact that all $\XCv_I$ are acyclic, we have that $E^1_{p,q}=0$ for $q>0$, so the higher differentials $d^r$ for $r>1$ are all trivial and therefore $E^2\cong E^{\infty}$. As all modules above the bottom row are zero, there are also no extension problems, so the conclusion follows. In more detail, by Theorem \ref{convergence}, there is a filtration $(\PH_*(\Xs)^p)_{p\in\ZZ}$, defined on $\PH_*(\Xs)$, such that
\[
E^\infty_{p,q}\cong\frac{\PH_{p+q}(\Xs)^p}{\PH_{p+q}(\Xs)^{p-1}}=\begin{cases}0;&q\neq 0,\\
\PH_{p+q}(\Nrv);&q=0.
\end{cases}
\]
Applying the third isomorphism theorem $n$ times and recalling that $\PH_n(\Xs)^{-1}=0$ and $\PH_n(\Xs)^n=\PH_n(\Xs)$, this implies
\[
\PH_n(\Xs)\cong\frac{\PH_n(\Xs)^n}{\PH_n(\Xs)^{-1}}\cong\ldots\cong\frac{\PH_n(\Xs)^n}{\PH_n(\Xs)^{n-2}}\cong\frac{\PH_n(\Xs)^n}{\PH_n(\Xs)^{n-1}}\cong E^\infty_{n,0}\cong\PH_n(\Nrv),
\]
as desired.
\end{proof}

\begin{center}
\begin{figure}
\includegraphics[width=0.7\textwidth]{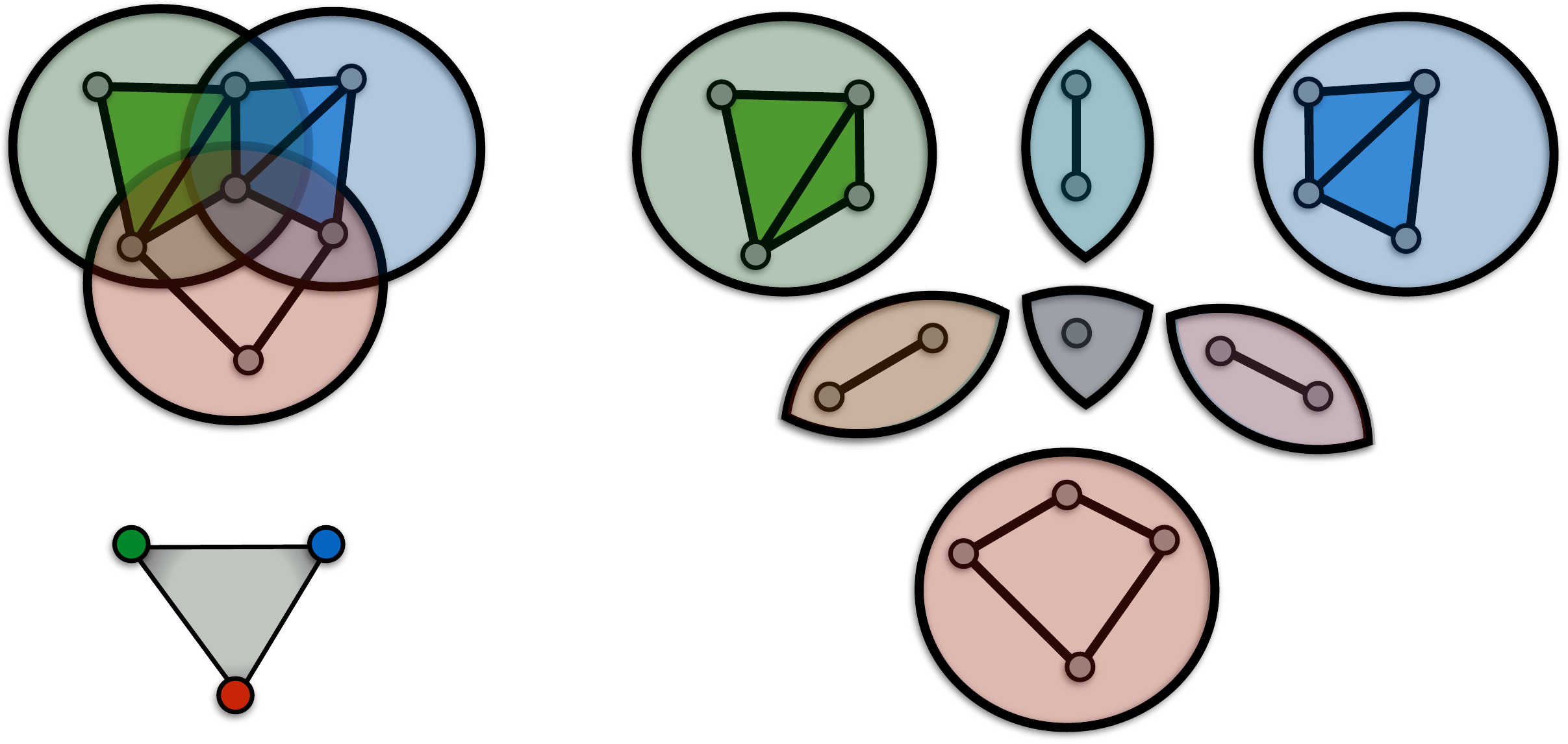}
\caption{An example construction. On the left, we have a simplicial complex which is covered by a cover with three elements. The corresponding nerve is shown below it as it is a triangle. This is {\bf not} an example of a good cover. The 0-th column of the double complex consists of a direct sum of the subcomplexes which lie in each individual element, the 1st column, the subcomplexes which lie in the pairwise intersections and the finally the 2nd column contains the triple intersection. The row index represents the dimension grading from the underlying complex, i.e. vertices in the 0-th row, edges in the 1st  row and triangles in the 2nd. Note that the total complex has potentially multiple copies of a simplex and is much larger than the original complex.}
\end{figure}
\end{center}


\section{$\eps$-Acyclic Covers}\label{sec:epscover}

Here we introduce the notion of an $\eps$-acyclic cover. For convenience, we find it easier to work with the notion of interleaving and modules rather than persistence diagrams. However, we also include the diagrams for the definitions, for completeness and to help with intuition.

In the classical setting of Theorem \ref{thm:acyclic}, we assume that each non-empty finite intersection $\XCv_I$ has the homology of a point. In our case, we wish to assume that the homology of each non-empty intersection $\XCv_I$ is $\eps$-close to the homology of a point, specifically, we require that the two homologies are $\eps$-interleaved.

To be more precise, for each $a\in\ZZ$, we define $\pt_a$ to be the $\ZZ$-filtered simplicial complex consisting of a single point, with the filtration defined by the requirement that $\pt_a^j=\emptyset$ for $j<a$ and $\pt_a^j=\{*\}$ for $j\geq a$.

\begin{definition}
A non-empty $\ZZ$-filtered simplicial complex $\Xs$ is {\em (persistently) acyclic} if it has the persistent homology of a point, i.e. $\PH_*(\Xs)\cong\PH_*(\pt_a)$ for some $a\in\ZZ$. It is {\em $\eps$-acyclic} if its persistent homology is $\eps$-interleaved with the persistent homology of a point, i.e. $\PH_*(\Xs)\itl{\eps}\PH_*(\pt_a)$ for some $a\in\ZZ$.
\end{definition}

In other words, $\eps$-acyclicity means that $\PH_q(\Xs)\itl{\eps}0$ for $q\neq 0$ and $\PH_0(\Xs)\itl{\eps}t^a\kk[t]$ for some $a\in\ZZ$. A persistence module $M$ that is $\eps$-close to the trivial module $0$, i.e. $M\itl{\eps}0$ is said to be {\em $\eps$-trivial}. The same understanding applies to persistence diagrams.

The persistence diagram of an acyclic complex consists of only the diagonal in degrees other than $0$, and a single point of the form $(a,\infty)$ in degree $0$ representing the essential class (corresponding to the first component that appears), while the persistence diagram of an $\eps$-acyclic complex consists of points which are at most $\eps$-away from the diagonal (see Figure~\ref{fig:emptytrivial}) and a single point $(a,\infty)$ in degree $0$.

\begin{center}
\begin{figure}
\begin{tikzpicture}[scale=4]
\draw[<->] (1.1,0) -- (0,0) -- (0,1.1);
\draw[fill=gray] (0,0) -- (1,1) -- (1,0);																			
\end{tikzpicture}
\hspace{1cm}
\begin{tikzpicture}[scale=4]
\draw[<->] (1.1,0) -- (0,0) -- (0,1.1);
\draw[fill=gray] (0,0) -- (1,1) -- (1,0);	
\draw[blue] 	(0,0) -- (1,1);																	
\draw[fill=blue,opacity=0.3] (0,0)--(0,0.1)-- (0.9,1) -- (1,1);

\end{tikzpicture}
\caption{\label{fig:emptytrivial}On the left we have a trivial persistence diagram and on the right an $\eps$-trivial persistence diagram, where points can occur with any multiplicity within the shaded region. }
\end{figure}
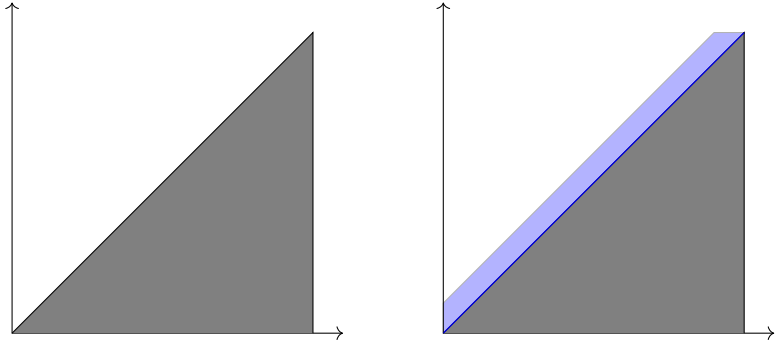
\end{center}

We can now define an $\eps$-acyclic cover. 

\begin{definition}
Let $\eps\in\NN_0$. We say that the filtered cover $\XCov$ of $\Xs$ is an {\em $\eps$-acyclic cover} if for each $I\in\Nrv(\XCov)$ there is an $a\in\ZZ$ such that $\PH_*(\XCv_I)\itl{\eps}\PH_*(\pt_a)$.
\end{definition}
Assuming $\XCov$ is an $\eps$-acyclic cover of $\Xs$, our aim is to prove that $\PH_*(\Xs)$ and $\PH_*(\Nrv)$ are $\eta$-interleaved, where $\eta$ is bounded above in terms of $\eps$ and possibly some other parameter. To help with intuition, we now relate the double complex we use in the spectral sequence with the notion of an $\eps$-acyclic cover. This is best expressed in terms of the $E^1$ pages. The $E^1$ page of an acyclic cover and an $\eps$-acyclic cover are shown in Figure ~\ref{fig:acyclicpage}. For $q>0$, the elements are $0$ or $\eps$-interleaved with $0$ respectively. For $q=0$, each element or non-empty intersection yields one essential class. Since
\[
E^1_{p,0} = \bigoplus_{|I|=p+1}\PH_0(\XCv_I),
\]
the meaning of $\eps$-acyclicity is that each element of the $E^1$-page is either an essential class corresponding to some $I\in\Nrv(\XCov)$ or $\eps$-trivial.

\begin{center}
\begin{figure}
\begin{tikzpicture}[xscale=2.3, yscale=1.5]
  \foreach \x in {0,...,2}
    \foreach \y in {1,...,2} 
	{
		\node (E\x\y) at (\x,\y) {$0$};
	}
 	\foreach \x in {0,...,2}{
 		\node (E\x0) at (\x,0) {$E^1_{\x,0}$};
 	}
	\draw (-0.5,2.6) -- (-0.5,-0.5) -- (2.6,-0.5);
	\draw[fill=blue,opacity=0.3] (-0.5,-0.5) rectangle (2.6,0.5);
	
	
\end{tikzpicture}
%
\hspace{1cm}
%
\begin{tikzpicture}[xscale=2.3, yscale=1.5]
  \foreach \x in {0,...,2}
    \foreach \y in {1,...,2} 
	{
		\node (E\x\y) at (\x,\y) {$E^1_{\x,\y} \itl{\eps} 0$};
	}
 	\foreach \x in {0,...,2}{
 		\node (E\x0) at (\x,0) {$E^1_{\x,0} $};
	
 	}
	\draw (-0.5,2.6) -- (-0.5,-0.5) -- (2.6,-0.5);
	\draw[fill=blue,opacity=0.3] (-0.5,-0.5) rectangle (2.6,0.5);
	
	
\end{tikzpicture}
\caption{\label{fig:acyclicpage}The $E^1$ page of an acyclic cover (left) and an $\eps$-acyclic cover (right). In the case of an acyclic cover, the spectral sequence degenerates on the $E^2$ page because the non-trivial terms are concentrated in the first row. For the $\eps$-acyclic cover, the terms above the first row are only required to be $\eps$-trivial.}
\end{figure}
\end{center}

The notion of $\eps$-acyclicity need only hold at the level of homology, or equivalently, the interleaving is defined on the $E^1$-page of the spectral sequence. Consider the corresponding condition at the chain level, i.e. the cover is interleaved with an acyclic cover at the chain level. This implies that the terms on the $E^0$-page are $\eps$-interleaved. It is straightforward to check that an interleaving on the $E^0$-page induces an interleaving on the total complex and hence on the persistent homology. This observation combined with the lower bounds presented in Section \ref{sec:lowerbnd} illustrates that $\eps$-acyclicity is a strictly weaker requirement than requiring chain level interleaving as well as that in certain natural cases, chain level interleavings do not exist. 

\subsection{Construction}
Here we describe an explicit construction of the filtration for the nerve. Recall the standard construction for the nerve (Definition~\ref{def:nerve}).
In our case however, the cover elements $\XCv_i$ are filtered by functions $f_i$, and the space $\Xs$ has a filtration as well, given by $f=\min_{i\in\Lambda} f_i$. Therefore, we must also describe a function $g$ on the nerve. One natural construction is the following. For $I\in\Nrv$, 
define
\begin{equation}\label{eq:definitionofg}
g(I) = \min\{j\mid\XCv_I^j\neq\emptyset\}.
\end{equation}
That is, we place a simplex in the filtration, the first time the intersection is not empty. Note there are numerous other constructions, such as taking the average or maximum value which may make more sense in certain cases. It is clear that the sublevel sets of $g$ define a filtration on the nerve.

\section{Left and Right Interleaving}\label{sec:interleaving}
 We extend the usual notion of interleaving to left and right interleaving.  This is a refinement of interleaving and certain structural properties will be useful for proving our main result. Readers may skip this section and replace the notions of left and right interleaving in Section \ref{sec:thms} simply by interleaving, since this is all that is needed for the easy result (Theorem \ref{thm:easy}). The main result in this section is Proposition ~\ref{doesnotaddup}. We note that a similar result could be obtained using the techniques in \cite{bauer2014induced} by considering matchings between barcodes. One drawback of using matchings is that it requires the persistence module to be pointwise finite dimensional~\cite{bauer2014induced} or at least have an interval decomposition. Our alternative approach has no such requirement, as it applies to modules where no such decomposition exists. 

 This represents a new viewpoint on interleavings since left and right interleavings are asymmetric leading to several different types of composition (addressed in Proposition~\ref{doesnotaddup}). Though we only use one type of composition in Section~\ref{sec:thms}, the others are included for completeness as well as to highlight an interesting phenomenon. We show that for most types of composition of right and left interleavings, the factors are not additive but rather take the maximum of the two component interleavings. Except for one specific case, this holds for 
 more general persistence theories such as persistence over $\mathbb{Z}$ \cite{patel2016semicontinuity} and with appropriate modification, to multidimensional persistence \cite{lesnick2015theory}. The exception is the fourth case in Proposition~\ref{doesnotaddup}, which has the additional requirement of having projective dimension $1$. Unfortunately, this is precisely the case used in Section~\ref{sec:thms}. We conjecture that this is not an artifact of the proof technique but that the statement does not hold for this type of composition in the case of more general persistence modules. If so, we believe this asymmetry is of independent interest. Finally, we show an equivalence between a general interleaving and a sequence of right and left interleavings. This decomposition can be interpreted as ``shortening'' and ``lengthening'' bars, but holds even when a barcode does not exist.  

 In order to prove our result, we must work with approximations of persistence modules efficiently. In particular, we must be able to estimate kernels and cokernels of maps. Intuitively, given a map whose codomain is approximately zero, the kernel should be approximately equal to the domain. The following proposition justifies this intuition. We remind the reader that ``morphism'' always means $0$-morphism, i.e. it is assumed that degrees are preserved.

\begin{proposition}\label{kernel}
Let $g:N\to P$ be a morphism of $\kk[t]$-modules and $P\itl{\eps}0$. Then, $N\itl{2\eps}\ker g$. In fact, $\phi:N\to\ker g$ and $\psi:\ker g\to N$ defined by $\phi(n)=t^{2\eps}n$ and $\psi(m)=m$ satisfy $\phi\psi=\id_{2\eps}$ and $\psi\phi=\id_{2\eps}$.
\end{proposition}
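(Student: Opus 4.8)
The plan is to verify directly that the two explicitly given maps $\phi$ and $\psi$ form a $2\eps$-interleaving, which immediately yields $N\itl{2\eps}\ker g$. First I would check that the maps are well-defined $2\eps$-morphisms. The map $\psi:\ker g\to N$ is just the inclusion, hence a $0$-morphism, which is in particular a $2\eps$-morphism (composing with $\id_{2\eps}$ if we want degrees to shift by exactly $2\eps$, but as stated $\psi(m)=m$ lands in the right place; the interleaving identities below will carry the shift). For $\phi:N\to\ker g$ given by $\phi(n)=t^{2\eps}n$, the content is that $t^{2\eps}n$ actually lies in $\ker g$: since $P\itl{\eps}0$ we have $t^{2\eps}P=0$ by the proposition characterizing $\eps$-triviality, so $g(t^{2\eps}n)=t^{2\eps}g(n)=0$, using that $g$ is $\kk[t]_\ngr$-linear. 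Thus $\phi$ is a well-defined $\kk[t]$-module map raising degree by $2\eps$.

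Next I would check the two composition identities. Computing $\psi\phi:N\to N$, we get $\psi\phi(n)=\psi(t^{2\eps}n)=t^{2\eps}n=\id_{2\eps}(n)$, so $\psi\phi=\id_{2\eps}$. Computing $\phi\psi:\ker g\to\ker g$, we get $\phi\psi(m)=\phi(m)=t^{2\eps}m=\id_{2\eps}(m)$, so $\phi\psi=\id_{2\eps}$ as well. Hence $(\phi,\psi)$ is a $2\eps$-interleaving of $N$ and $\ker g$, establishing $N\itl{2\eps}\ker g$.

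Honestly, there is no real obstacle here: the statement is essentially a repackaging of the fact that $t^{2\eps}$ annihilates an $\eps$-trivial module, and the only thing to be careful about is the bookkeeping of the grading — confirming that $\phi(N^j)=t^{2\eps}N^j\subseteq (\ker g)^{j+2\eps}$ and that $\psi$ preserves degree — together with the observation that multiplication by $t^{2\eps}$ commutes with the module map $g$ so that the image genuinely lands in the kernel. The only mild subtlety worth a sentence is noting that $\ker g$ is indeed a $\kk[t]$-submodule of $N$ (it is the kernel of a morphism of $\kk[t]$-modules, hence graded: $(\ker g)^j=\ker g\cap N^j$), so that the restriction of multiplication by $t^{2\eps}$ to $\ker g$ makes sense and the identities above are identities of $\kk[t]$-module maps. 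This also makes transparent why one needs $2\eps$ rather than $\eps$: one factor of $t^\eps$ is ``spent'' killing $P$ on each side, which is exactly the same phenomenon as in the characterization $M\itl{\eps}0\iff t^{2\eps}M=0$.
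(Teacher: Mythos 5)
Your proof is correct and follows essentially the same route as the paper: verify that $\phi$ is well-defined via $t^{2\eps}P=0$ and $g(t^{2\eps}n)=t^{2\eps}g(n)=0$, then check the two composition identities directly. The only cosmetic difference is that the paper packages the degree bookkeeping by taking the interleaving pair to be $(\phi,\id_{2\eps}\psi)$, exactly the shift you mention parenthetically.
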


\begin{proof}
The equalities follow directly from the definitions of $\phi$ and $\psi$. Therefore, $(\phi,\id_{2\eps}\psi)$ is a $2\eps$-interleaving. We only have to verify that $\phi$ is well-defined. To see this, note that $P\itl{\eps}0$, so multiplication by $t^{2\eps}$ is the zero map on $P$. This means that for any $n\in N$, we have $t^{2\eps}n\in\ker g$, because $g(t^{2\eps}n)=t^{2\eps} g(n)=0$.
\end{proof}

The analogous statement for cokernels is also true by the dual argument.

\begin{proposition}\label{cokernel}
Let $f:M\to N$ be a morphism of $\kk[t]$-modules and $M\itl{\eps}0$. Then, $N\itl{2\eps}\coker f$. In fact, $\eta:N\to\coker f$ and $\theta:\coker f\to N$ defined by $\eta(n)=[n]$ and $\theta([n])=t^{2\eps}n$ satisfy $\eta\theta=\id_{2\eps}$ and $\theta\eta=\id_{2\eps}$.
\end{proposition}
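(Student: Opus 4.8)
The plan is to mimic the proof of Proposition~\ref{kernel}, dualizing every step: where that argument used that $t^{2\eps}$ kills the codomain to land elements in the kernel, here we will use that $t^{2\eps}$ kills the \emph{subobject} $\im f$ to lift elements out of the cokernel. Concretely, I would first check that $\eta$ and $\theta$ are well-defined $\kk[t]_\ngr$-module maps of the correct degree (both are degree $0$ morphisms). The map $\eta:N\to\coker f=N/\im f$ is just the canonical projection, so it is automatically well-defined and $\kk[t]$-linear. The content is in $\theta$.

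For $\theta$, given a class $[n]\in\coker f$, I want to set $\theta([n])=t^{2\eps}n$. To see this is well-defined, suppose $[n]=[n']$, i.e. $n-n'\in\im f$. Since $M\itl{\eps}0$, Proposition (the one preceding Corollary~\ref{subquotient}) gives $t^{2\eps}M=0$, and hence $t^{2\eps}(\im f)=f(t^{2\eps}M)=0$ (here I use that $f$ is a $\kk[t]$-module map, so $f(t^{2\eps}m)=t^{2\eps}f(m)$). Therefore $t^{2\eps}(n-n')=0$, so $t^{2\eps}n=t^{2\eps}n'$ and $\theta$ is independent of the representative. Linearity and the degree condition ($\theta([N^j])\subseteq N^{j+2\eps}$) are then immediate from the corresponding properties of multiplication by $t^{2\eps}$.

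Finally I would verify the two interleaving identities. For $\theta\eta:N\to N$ we have $\theta\eta(n)=\theta([n])=t^{2\eps}n=\id_{2\eps}(n)$, so $\theta\eta=\id_{2\eps}$. For $\eta\theta:\coker f\to\coker f$ we have $\eta\theta([n])=\eta(t^{2\eps}n)=[t^{2\eps}n]=t^{2\eps}[n]=\id_{2\eps}([n])$, so $\eta\theta=\id_{2\eps}$. Since $\eta$ and $\theta$ are both $2\eps$-morphisms satisfying $\eta\theta=\id_{2\eps}$ and $\theta\eta=\id_{2\eps}$, the pair $(\eta,\theta)$ is a $2\eps$-interleaving of $N$ and $\coker f$, which is exactly the claim.

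There is essentially no hard part here; the only thing requiring any care is the well-definedness of $\theta$, where one must notice that $t^{2\eps}$ annihilating $M$ forces it to annihilate $\im f$ because $f$ is a module homomorphism — this is the precise dual of the observation in Proposition~\ref{kernel} that $g$ being a homomorphism lets $t^{2\eps}$-multiples of arbitrary elements land in $\ker g$. Everything else is a one-line computation, so the proof will be just a few sentences long, exactly parallel to the cited proof of Proposition~\ref{kernel}.
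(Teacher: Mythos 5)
Your argument is the paper's own proof: in both, the only nontrivial point is the well-definedness of $\theta$, which you establish exactly as the paper does (from $t^{2\eps}M=0$ one gets $t^{2\eps}\im f=0$, so representatives differing by $\im f$ have the same image under $t^{2\eps}$), and the two identities $\eta\theta=\id_{2\eps}$, $\theta\eta=\id_{2\eps}$ are the same one-line computations. The only slip is your closing inference: $\eta$ is a $0$-morphism, not a $2\eps$-morphism, and under the paper's definition a $2\eps$-interleaving consists of two $2\eps$-morphisms whose composites equal $\id_{4\eps}$; the repair is exactly the paper's, namely that the pair $(\id_{2\eps}\eta,\theta)$ is the actual $2\eps$-interleaving of $N$ and $\coker f$, which follows immediately from the identities you proved.
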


\begin{proof}
Again, the two equalities follow directly from the definitions and $(\id_{2\eps}\eta,\theta)$ is a $2\eps$-interleaving. We only have to verify that $\theta$ is well-defined. To see this, note that $M\itl{\eps}0$, so $t^{2\eps}M=0$ and thus $t^{2\eps}\im f=0$. Now suppose $[n_1]=[n_2]$. It follows that $n_1-n_2\in\im f$, so $t^{2\eps}(n_1-n_2)=0$, concluding the proof.
\end{proof}

As described in Section \ref{sec:modint}, interleavings define a metric between modules. It turns out, however, that the interleavings arising in these two situations have somewhat special properties, so they deserve separate definitions to distinguish them from ordinary interleavings. We will exploit the properties of such interleavings to obtain tight bounds in the Approximate Nerve Theorem.

\begin{definition}\label{def:linterleave}
Suppose $M$ and $N$ are $\kk[t]$-modules. We say that $M$ and $N$ are {\em $2\eps$-left interleaved} and write $M\Litl{2\eps}N$ if there is a $\kk[t]$-module $P\itl{\eps}0$ and a short exact sequence of the form $0\to M\to N\to P\to 0$.
\end{definition}

\begin{definition}\label{def:rinterleave}
Suppose $N$ and $P$ are $\kk[t]$-modules. We say that $N$ and $P$ are {\em $2\eps$-right interleaved} and write $N\Ritl{2\eps} P$ if there is a $\kk[t]$-module $M\itl{\eps}0$ and a short exact sequence of the form $0\to M\to N\to P\to 0$.
\end{definition}

\begin{remark}\label{nonsymmetric}
Note that these definitions are not symmetric, i.e. $M\Litl{2\eps}N$ does not imply $N\Litl{2\eps}M$ and $N\Ritl{2\eps}P$ does not imply $P\Ritl{2\eps}N$. To see the asymmetry, let $M$ consist of one generator born at $j=0$ with one relation at $j=a+2\eps$, and $N$ have one generator born at $j=0$ with one relation at $j=a$. The kernel of the obvious map is $\eps$-interleaved with 0, hence $M$ and $N$ are $\eps$-left interleaved. In fact there exists no $0$-morphism from $N\rightarrow M$, hence their right or left interleaving distance is infinite.
\end{remark}

We now prove some properties of left and right interleavings. As mentioned above, left and right interleavings are not symmetric, but they do still satisfy the triangle inequality. Positive definiteness also holds, but this is easy to see by definition -- simply take the $0$ morphism.

Before continuing, we establish a basic  proposition, similar in spirit to Proposition \ref{kernel} and Proposition \ref{cokernel}. 

\begin{proposition}\label{extension}
Suppose we are given an exact sequence
\[
0\rightarrow M\xrightarrow{i} N\xrightarrow{f} P\rightarrow 0
\]
where $M\itl{\eps_1}0$ and $P\itl{\eps_2}0$. Then $N\itl{\eps_1+\eps_2}0$.
\end{proposition}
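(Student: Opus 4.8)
The plan is to produce an explicit $(\eps_1+\eps_2)$-interleaving of $N$ with $0$, which by the earlier characterization (a $\kk[t]$-module $L$ satisfies $L\itl{\eps}0$ iff $t^{2\eps}L=0$) reduces entirely to showing that $t^{2(\eps_1+\eps_2)}N=0$. So first I would translate both hypotheses into divisibility statements: $M\itl{\eps_1}0$ means $t^{2\eps_1}M=0$, and $P\itl{\eps_2}0$ means $t^{2\eps_2}P=0$.

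Next I would chase the short exact sequence $0\to M\xrightarrow{i}N\xrightarrow{f}P\to 0$. Take any $n\in N$. Since $f$ is a morphism (degree-preserving), $f(t^{2\eps_2}n)=t^{2\eps_2}f(n)=0$ because $t^{2\eps_2}$ annihilates $P$. By exactness, $t^{2\eps_2}n\in\ker f=\im i$, so there is $m\in M$ with $i(m)=t^{2\eps_2}n$. Then
\[
t^{2\eps_1+2\eps_2}n = t^{2\eps_1}(t^{2\eps_2}n) = t^{2\eps_1}i(m) = i(t^{2\eps_1}m) = i(0) = 0,
\]
using that $i$ is a morphism and $t^{2\eps_1}M=0$. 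Since $n$ was arbitrary, $t^{2(\eps_1+\eps_2)}N=0$, which is exactly the statement that $N\itl{\eps_1+\eps_2}0$ (with the interleaving pair given by the zero morphisms in both directions).

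There is essentially no obstacle here; the only point requiring a little care is that $i$ and $f$ are genuine $0$-morphisms of $\kk[t]$-modules, so they commute with multiplication by powers of $t$ — this is what makes the computation $f(t^{2\eps_2}n)=t^{2\eps_2}f(n)$ and $t^{2\eps_1}i(m)=i(t^{2\eps_1}m)$ legitimate. One could alternatively phrase the argument without the $t^{2\eps}L=0$ characterization: apply Proposition \ref{cokernel} to $i$ to get $N\itl{2\eps_1}\coker i\cong P$, then apply the triangle inequality with $P\itl{\eps_2}0$ to conclude $N\itl{2\eps_1+\eps_2}0$; but this gives the weaker bound $2\eps_1+\eps_2$ rather than $\eps_1+\eps_2$, so the direct annihilator computation is the right route for the sharp constant. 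I would present the direct computation.
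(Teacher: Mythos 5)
Your proposal is correct and follows essentially the same argument as the paper: both reduce the claim to showing $t^{2(\eps_1+\eps_2)}N=0$ via the characterization of $\eps$-triviality, then chase the short exact sequence by noting $t^{2\eps_2}n\in\ker f$ and applying $t^{2\eps_1}M=0$. The only cosmetic difference is that you route the element through $i$ explicitly while the paper identifies $M$ with $\ker f$ directly.
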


\begin{proof}
We need to show that $t^{2(\eps_1+\eps_2)}N=0$. Let $n\in N$. Note that $f(t^{2\eps_2}n)=t^{2\eps_2}f(n)=0$, since $t^{2\eps_2}P=0$, so $m=t^{2\eps_2}n\in\ker f=M$. Therefore $t^{2(\eps_1+\eps_2)}n=t^{2\eps_1}m=0$, since $t^{2\eps_1}M=0$.
\end{proof}

Our main motivation for introducing left and right interleavings is to study how the metrics between modules act with respect to composition. We show that
\begin{itemize}
\item the approximation factors are additive under composition of the same types of interleaving (i.e. left with left or right with right),
\item only the maximum of the approximation factors is relevant when composing different types of interleaving (i.e. left with right or right with left).
\end{itemize}
We first require some basic structural propositions. 

\begin{proposition}\label{prop:proof}
Suppose $f:M\to N$ and $g:N\to P$ are morphisms of modules. Then there is exact sequence of the form
\[
0\to\ker f\to\ker gf\to\ker g\to\coker f\to\coker gf\to\coker g\to 0
\]
\end{proposition}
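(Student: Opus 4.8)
The plan is to construct the six-term exact sequence explicitly and verify exactness at each of the six spots, treating the construction as a standard piece of homological algebra (it is the ``kernel-cokernel exact sequence'' associated to a composable pair of maps). First I would define the maps. The map $\ker f\to\ker gf$ is simply the inclusion, which makes sense because $f(x)=0$ implies $gf(x)=0$. The map $\ker gf\to\ker g$ is $f$ restricted to $\ker gf$: if $gf(x)=0$ then $f(x)\in\ker g$. The connecting map $\ker g\to\coker f$ sends $n\in\ker g$ to the class $[n]\in N/\im f=\coker f$; this is well-defined as a set map but one must check it lands appropriately — actually it is just the composite $\ker g\hookrightarrow N\twoheadrightarrow\coker f$. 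The map $\coker f\to\coker gf$ is induced by $g:N\to P$, i.e. $[n]\mapsto[g(n)]$, well-defined because $g(\im f)=\im gf\subseteq\im gf$. The map $\coker gf\to\coker g$ is induced by the identity on $P$, i.e. $[p]\mapsto[p]$ under the quotient $P/\im gf\to P/\im g$, well-defined since $\im gf\subseteq\im g$. Finally, surjectivity onto the last $\coker g$ is clear since $P\to\coker g$ is already surjective.

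The core of the proof is then checking exactness at the four interior spots (exactness at $\ker f$ is just injectivity of an inclusion, and exactness at the final $\coker g$ is the surjectivity just noted). At $\ker gf$: an element $x\in\ker gf$ maps to $0$ in $\ker g$ iff $f(x)=0$ iff $x\in\ker f$, which is exactly the image of the first map. At $\ker g$: an element $n\in\ker g$ maps to $0$ in $\coker f$ iff $n\in\im f$, say $n=f(x)$; then $gf(x)=g(n)=0$ so $x\in\ker gf$ and $n$ is in the image of $\ker gf\to\ker g$ — conversely any $f(x)$ with $x\in\ker gf$ is a boundary hence dies in $\coker f$. At $\coker f$: a class $[n]\in\coker f$ maps to $0$ in $\coker gf$ iff $g(n)\in\im gf$, say $g(n)=gf(x)$; then $n-f(x)\in\ker g$ and $[n]=[n-f(x)]$ is the image of $n-f(x)\in\ker g$ — conversely the image of any $n\in\ker g$ is $[g(n)]=[0]$ in $\coker gf$. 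At $\coker gf$: a class $[p]\in\coker gf$ maps to $0$ in $\coker g$ iff $p\in\im g$, say $p=g(n)$; then $[p]=[g(n)]$ is the image of $[n]\in\coker f$ — conversely the image of $[n]\in\coker f$ is $[g(n)]=[0]$ in $\coker g/\text{(nothing)}$... more precisely $[g(n)]\in\coker gf$ maps to $[g(n)]=0$ in $\coker g$. Each of these is a one-line diagram chase.

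I would also remark that all of these maps are genuine morphisms of graded $\kk[t]$-modules (degree-preserving $\kk[t]$-linear maps), since they are restrictions, corestrictions, or quotients of the $\kk[t]$-linear degree-preserving maps $f$, $g$, and identities; this is automatic and needs no separate verification beyond a sentence. No obstacle is expected here — this is a textbook lemma (sometimes called the snake lemma applied to the evident ladder, or derivable directly) and the only ``work'' is bookkeeping the six verifications. If I wanted to be slick I could instead invoke the snake lemma applied to the commutative diagram with rows $0\to\ker gf\to N\to N\to\coker gf\to 0$ and exhibit the short exact sequences relating $\ker f,\ker g,\coker f,\coker g$, but the direct chase above is cleaner to write out and self-contained. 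The main (very mild) point to be careful about is the well-definedness of the two maps between cokernels, both of which reduce to the inclusion $\im f\subseteq\ker(N\to\coker gf)$... rather, to the containment $\im gf\subseteq\im g$ and $g(\im f)\subseteq\im gf$, both trivially true.
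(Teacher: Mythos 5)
Your proof is correct, and it takes a genuinely different (though closely related) route from the paper. You construct the six maps explicitly and verify exactness at each spot by a direct element chase; the paper instead applies the Snake Lemma twice -- once to the morphism of exact rows comparing $f$ with $gf$ (via $\id_M$ and the inclusion $\ker g\to N$), yielding $0\to\ker f\to\ker gf\to\ker g\to\coker f\to\coker gf$, and once to the morphism of exact rows comparing $gf$ with $g$ (via the projection $N\to\coker f$ and $\id_P$), yielding $\ker gf\to\ker g\to\coker f\to\coker gf\to\coker g\to 0$ -- and then splices the two sequences after observing that the two maps $\ker g\to\coker f$ coincide. Your direct chase buys self-containedness: you never need to check the hypotheses of the Snake Lemma or argue that the two connecting maps agree (the one mildly delicate point in the splicing step), and every map is written down explicitly, which is also convenient later when the paper uses these sequences (Corollaries \ref{cokerexact} and \ref{kerexact}). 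The paper's route is shorter given the Snake Lemma as a black box and makes the ``kernel--cokernel sequence'' visibly an instance of standard machinery. Your well-definedness checks for the cokernel maps and the identification of the connecting map as the composite $\ker g\hookrightarrow N\twoheadrightarrow\coker f$ are all accurate, and the remark that everything is automatically a degree-preserving morphism of $\kk[t]$-modules is the right level of care for this setting.
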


\begin{proof}
Note that the diagrams
\begin{center}
\begin{tikzpicture}[scale=1.5]
\node (01) at (-1,1) {$0$};
\node (M1) at (0,1) {$M$};
\node (M2) at (1,1) {$M$};
\node (02) at (2,1) {$0$};
\node (0B) at (-2,0) {$0$};
\node (K) at (-1,0) {$\ker g$};
\node (N) at (0,0) {$N$};
\node (P) at (1,0) {$P$};
\path[->]
(01) edge (M1)
(M1) edge node[above]{$\id$} (M2)
(M2) edge (02)
(0B) edge (K)
(K) edge (N)
(N) edge node[below]{$g$} (P)
(01) edge (K)
(M1) edge node[left]{$f$} (N)
(M2) edge node[right]{$gf$} (P);
\end{tikzpicture}
\begin{tikzpicture}[scale=1.5]
\node (M) at (-1,1) {$M$};
\node (N) at (0,1) {$N$};
\node (C) at (1,1) {$\coker f$};
\node (0) at (2,1) {$0$};
\node (01) at (-2,0) {$0$};
\node (P1) at (-1,0) {$P$};
\node (P2) at (0,0) {$P$};
\node (02) at (1,0) {$0$};
\path[->]
(M) edge node[above]{$f$} (N)
(N) edge (C)
(C) edge (0)
(01) edge (P1)
(P1) edge node[below]{$\id$} (P2)
(P2) edge (02)
(M) edge node[left]{$gf$} (P1)
(N) edge node[right]{$g$} (P2)
(C) edge (02);
\end{tikzpicture}
\end{center}
have exact rows. Applying the Snake Lemma to each of these diagrams, we obtain exact sequences
\[
0\to\ker f\to\ker gf\to\ker g\to\coker f\to\coker gf
\]
and
\[
\ker gf\to\ker g\to\coker f\to\coker gf\to\coker g\to 0
\]
By construction, the two maps $\ker g\to\coker f$ are actually the same, so splicing the two sequences yields
\[
0\to\ker f\to\ker gf\to\ker g\to\coker f\to\coker gf\to\coker g\to 0
\]
as desired.
\end{proof}

This immediately yields two useful corollaries, dual to each other.

\begin{corollary}\label{cokerexact}
Suppose $f:M\to N$ and $g:N\to P$ are morphisms of modules with $g$ injective. Then the sequence
\[
0\to\coker f\to\coker gf\to\coker g\to 0
\]
is exact.
\end{corollary}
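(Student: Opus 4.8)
The plan is to deduce Corollary \ref{cokerexact} directly from the six-term exact sequence established in Proposition \ref{prop:proof}, by feeding in the extra hypothesis that $g$ is injective. Recall the sequence is
\[
0\to\ker f\to\ker gf\to\ker g\to\coker f\to\coker gf\to\coker g\to 0.
\]
Since $g$ is injective, $\ker g=0$. The crucial consequence is that the connecting-type map $\ker g\to\coker f$ has trivial domain, so by exactness at $\coker f$ the map $\coker f\to\coker gf$ is injective. At the same time, exactness of the original sequence at $\coker gf$ and $\coker g$ is unaffected. Thus I would simply truncate the six-term sequence: drop the first three terms (which now reduce to $0\to\ker f\to\ker gf\to 0$, an isolated piece telling us $\ker f\cong\ker gf$ but irrelevant here) and read off that
\[
0\to\coker f\to\coker gf\to\coker g\to 0
\]
is exact, exactness at $\coker f$ coming from $\ker g=0$, exactness at $\coker gf$ and surjectivity onto $\coker g$ being inherited verbatim from Proposition \ref{prop:proof}.

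Concretely, the steps in order are: (1) invoke Proposition \ref{prop:proof} to get the six-term exact sequence for the pair $f,g$; (2) observe $g$ injective $\Rightarrow \ker g=0$; (3) substitute $\ker g=0$ into the sequence, so that it becomes $0\to\ker f\to\ker gf\to 0\to\coker f\to\coker gf\to\coker g\to 0$; (4) note that exactness at the $0$ term forces the image of $\coker f\to\coker gf$ to equal the kernel of $\coker gf\to\coker g$ and, since the map into $\coker f$ is now the zero map out of $0$, exactness at $\coker f$ says $\coker f\to\coker gf$ is injective; (5) conclude that the shortened sequence $0\to\coker f\to\coker gf\to\coker g\to 0$ is exact.

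There is essentially no obstacle here — the only mild point to articulate is why killing the $\ker g$ term in the middle cleanly splits the long sequence into a left piece and the desired right piece, rather than leaving residual interaction; this is immediate from exactness since a $0$ term at that spot makes the incoming and outgoing maps both zero at that node, severing the sequence there. One could alternatively give a self-contained proof by applying the Snake Lemma to just the second diagram in the proof of Proposition \ref{prop:proof} (the one with rows $M\xrightarrow{f}N\to\coker f\to 0$ and $0\to P\xrightarrow{\id}P\to 0$) and noting that when $g$ is injective the connecting map $\ker g=0\to\coker f$ vanishes; but deriving it as a corollary of the already-proven six-term sequence is the most economical route and is what I would write.
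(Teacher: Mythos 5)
Your proof is correct and matches the paper's intended argument: the paper presents Corollary \ref{cokerexact} as an immediate consequence of Proposition \ref{prop:proof}, exactly as you do, with $g$ injective forcing $\ker g=0$ and thereby truncating the six-term sequence to the desired short exact sequence. Your step (4) spelling out why the vanishing middle term severs the sequence is the only content the paper leaves implicit, and it is handled correctly.
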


\begin{corollary}\label{kerexact}
Suppose $f:M\to N$ and $g:N\to P$ are morphisms of modules with $f$ surjective. Then the sequence
\[
0\to\ker f\to\ker gf\to\ker g\to 0
\]
is exact.
\end{corollary}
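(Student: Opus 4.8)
The statement to prove is Corollary \ref{kerexact}: given $f:M\to N$ surjective and $g:N\to P$, the sequence $0\to\ker f\to\ker gf\to\ker g\to 0$ is exact.

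\medskip

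The plan is to deduce this directly from Proposition \ref{prop:proof}, which furnishes the six-term exact sequence
\[
0\to\ker f\to\ker gf\to\ker g\to\coker f\to\coker gf\to\coker g\to 0.
\]
First I would observe that surjectivity of $f$ is exactly the statement $\coker f=0$. Substituting this into the six-term sequence, the tail becomes $0\to\ker g\to 0\to\coker gf\to\coker g\to 0$, and the relevant head is $0\to\ker f\to\ker gf\to\ker g\to 0$. So the only thing to check is that the connecting map $\ker g\to\coker f$, which in the six-term sequence has image equal to the kernel of $\coker f\to\coker gf$, now maps into the zero module, hence the map $\ker gf\to\ker g$ is surjective; combined with exactness at $\ker f$ and $\ker gf$ inherited verbatim from Proposition \ref{prop:proof}, this gives the claimed short exact sequence. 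I would spell this out in one or two sentences: exactness at $\ker f$ (i.e.\ injectivity of $\ker f\to\ker gf$) and at $\ker gf$ are immediate from the longer sequence, and exactness at $\ker g$ (surjectivity of $\ker gf\to\ker g$) follows because the next term $\coker f$ vanishes.

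\medskip

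There is essentially no obstacle here — this is a formal consequence of the preceding proposition together with the trivial identification $f$ surjective $\iff\coker f=0$. The only point requiring a word of care is making sure the indexing of the exact sequence in Proposition \ref{prop:proof} is read correctly so that it really is the segment $0\to\ker f\to\ker gf\to\ker g\to\coker f$ that we are truncating; since $\coker f=0$ we may simply delete that last term and everything beyond it. One could alternatively give a direct diagram-chase proof using the Snake Lemma applied to the left-hand square diagram from the proof of Proposition \ref{prop:proof} (whose rows, after noting $\coker f = 0$, degenerate appropriately), but invoking the already-established six-term sequence is cleaner and is the natural route given the way the material has been organized.

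\medskip

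So concretely the write-up is: ``Since $f$ is surjective, $\coker f=0$. Plugging this into the exact sequence of Proposition \ref{prop:proof} and discarding the vanishing terms immediately yields the exact sequence $0\to\ker f\to\ker gf\to\ker g\to 0$.'' I expect the author's proof to be exactly this one-line argument, dual to the one for Corollary \ref{cokerexact} (where instead $g$ injective gives $\ker g=0$, truncating the six-term sequence from the other end).
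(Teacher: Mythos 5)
Your argument is correct and is exactly the paper's (implicit) proof: the corollary is stated as an immediate consequence of Proposition \ref{prop:proof}, obtained by noting that surjectivity of $f$ gives $\coker f=0$ and truncating the six-term exact sequence accordingly, dual to Corollary \ref{cokerexact}. Nothing further is needed.
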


Using these, we may now prove the triangle inequality for left-interleavings.

\begin{proposition}\label{prop:leftright}
Suppose $M\Litl{2\eps_1} N$ and $N\Litl{2\eps_2} P$. Then $M\Litl{2(\eps_1+\eps_2)} P$.
\end{proposition}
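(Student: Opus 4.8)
The plan is to unwind the definitions of left interleaving in terms of short exact sequences and then splice the two given sequences together using the structural results just proved. By Definition~\ref{def:linterleave}, $M\Litl{2\eps_1}N$ means there is a $\kk[t]$-module $Q_1\itl{\eps_1}0$ and a short exact sequence $0\to M\to N\to Q_1\to 0$, and similarly $N\Litl{2\eps_2}P$ gives a module $Q_2\itl{\eps_2}0$ with $0\to N\to P\to Q_2\to 0$. Write $i:M\hookrightarrow N$ and $j:N\hookrightarrow P$ for the two injections; then $ji:M\to P$ is injective, so to establish $M\Litl{2(\eps_1+\eps_2)}P$ it suffices to identify $\coker(ji)$ and show it is $(\eps_1+\eps_2)$-interleaved with $0$, since then $0\to M\xrightarrow{ji} P\to\coker(ji)\to 0$ is the desired short exact sequence.

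First I would apply Corollary~\ref{cokerexact} with $f=i$ and $g=j$ (here $g=j$ is injective, as required): this yields the exact sequence $0\to\coker i\to\coker(ji)\to\coker j\to 0$. Now $\coker i\cong Q_1$ from the first short exact sequence and $\coker j\cong Q_2$ from the second, so we have a short exact sequence $0\to Q_1\to\coker(ji)\to Q_2\to 0$ with $Q_1\itl{\eps_1}0$ and $Q_2\itl{\eps_2}0$. Second, I would invoke Proposition~\ref{extension} on this sequence to conclude $\coker(ji)\itl{\eps_1+\eps_2}0$. Combining this with the short exact sequence $0\to M\xrightarrow{ji}P\to\coker(ji)\to 0$ and Definition~\ref{def:linterleave} gives exactly $M\Litl{2(\eps_1+\eps_2)}P$.

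I do not expect a serious obstacle here; the argument is essentially a diagram-chase assembly of ingredients already in place. The one point requiring a little care is the identification $\coker i\cong Q_1$ and $\coker j\cong Q_2$ — this is immediate from exactness of the defining sequences ($Q_1$ is literally the cokernel of $M\to N$ up to the canonical isomorphism, and likewise for $Q_2$), but one should state it cleanly so that the hypotheses of Proposition~\ref{extension} are visibly met. The dual statement for right interleavings would be proved the same way, using Corollary~\ref{kerexact} in place of Corollary~\ref{cokerexact} and reading the short exact sequences from the kernel side.
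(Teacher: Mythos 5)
Your proof is correct and matches the paper's argument exactly: both unwind the left-interleavings into short exact sequences, apply Corollary~\ref{cokerexact} to get $0\to\coker i\to\coker ji\to\coker j\to 0$, invoke Proposition~\ref{extension} to see $\coker ji\itl{\eps_1+\eps_2}0$, and conclude from the sequence $0\to M\xrightarrow{ji} P\to\coker ji\to 0$. The only difference is cosmetic (you name the cokernels $Q_1,Q_2$ before identifying them).
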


\begin{proof}
The assumptions mean that we have exact sequences
\[
0\rightarrow M\xrightarrow{i} N\xrightarrow{f}\coker i\rightarrow 0\qquad\text{and}\qquad 0\rightarrow N\xrightarrow{j} P\xrightarrow{g} \coker j\rightarrow 0
\]
with $\coker i\itl{\eps_1} 0$ and $\coker j\itl{\eps_2} 0$. Since $j$ is injective, we have
\[
0\to\coker i\to\coker ji\to\coker j\to 0
\]
by Corollary \ref{cokerexact}, so $\coker ji\itl{\eps_1+\eps_2}0$ by Proposition \ref{extension}. Observing that the sequence
\[
0\rightarrow M\xrightarrow{ji} P\rightarrow\coker ji\rightarrow 0
\]
is exact completes the proof.
\end{proof}


The same result holds for right-interleavings. 

\begin{proposition}\label{prop:rightleft}
Suppose $M\Ritl{2\eps_1} N$ and $N\Ritl{2\eps_2} P$. Then $M\Ritl{2(\eps_1+\eps_2)} P$.
\end{proposition}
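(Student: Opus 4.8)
The plan is to argue by exact duality with Proposition~\ref{prop:leftright}, systematically replacing cokernels by kernels. First I would unpack the hypotheses via Definition~\ref{def:rinterleave}: the relation $M\Ritl{2\eps_1}N$ provides a module $A$ with $A\itl{\eps_1}0$ and a short exact sequence $0\to A\to M\xrightarrow{q}N\to 0$, while $N\Ritl{2\eps_2}P$ provides a module $B$ with $B\itl{\eps_2}0$ and a short exact sequence $0\to B\to N\xrightarrow{r}P\to 0$. In particular $A=\ker q$ and $B=\ker r$, and both $q$ and $r$ are surjective.

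Next I would form the composite $rq\colon M\to P$, which is surjective as a composite of surjections. Since $q$ is surjective, Corollary~\ref{kerexact} (applied with $f=q$, $g=r$) yields a short exact sequence
\[
0\to\ker q\to\ker(rq)\to\ker r\to 0,
\]
that is, $0\to A\to\ker(rq)\to B\to 0$. Because $A\itl{\eps_1}0$ and $B\itl{\eps_2}0$, Proposition~\ref{extension} gives $\ker(rq)\itl{\eps_1+\eps_2}0$.

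Finally, since $rq$ is surjective, the sequence $0\to\ker(rq)\to M\xrightarrow{rq}P\to 0$ is exact, and with the middle term $\ker(rq)$ being $(\eps_1+\eps_2)$-trivial, this is by Definition~\ref{def:rinterleave} precisely the assertion $M\Ritl{2(\eps_1+\eps_2)}P$.

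I do not expect a real obstacle here, since the argument is the formal dual of the left-interleaving case: Corollary~\ref{cokerexact} is swapped for Corollary~\ref{kerexact}, and the composite quotient map is replaced by a composite inclusion's counterpart. The only point needing a moment's care is verifying that the hypothesis of Corollary~\ref{kerexact} — surjectivity of the \emph{first} map $q$ — is available; it is, precisely because a right-interleaving presents $N$ as a quotient of $M$ (whereas a left-interleaving would instead present $M$ as a submodule of $N$, which is why the cokernel version is the one needed there).
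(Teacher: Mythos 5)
Your proof is correct and follows essentially the same route as the paper's: both compose the two quotient maps, apply Corollary~\ref{kerexact} (using surjectivity of the first map) to get the short exact sequence $0\to\ker q\to\ker(rq)\to\ker r\to 0$, invoke Proposition~\ref{extension} to conclude $\ker(rq)\itl{\eps_1+\eps_2}0$, and read off the right interleaving from the exact sequence $0\to\ker(rq)\to M\to P\to 0$. Your closing remark correctly pinpoints the one hypothesis that needs checking, so nothing is missing.
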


\begin{proof}
By the assumptions there are exact sequences
\[
0\rightarrow \ker f\xrightarrow{i} M\xrightarrow{f} N\rightarrow 0\qquad\text{and}\qquad 0\rightarrow \ker g\xrightarrow{j} N\xrightarrow{g} P\rightarrow 0
\]
with $\ker f\itl{\eps_1}0$ and $\ker g\itl{\eps_2}0$. Since $f$ is surjective, we have
\[
0\to\ker f\to\ker gf\to\ker g\to 0
\]
by Corollary \ref{kerexact}, so $\ker gf\itl{\eps_1+\eps_2}0$ by Proposition \ref{extension}. Observing that the sequence
\[
0\rightarrow\ker gf\rightarrow M\xrightarrow{gf} P\rightarrow 0
\]
is exact completes the proof.
\end{proof}

The previous results are required to show that the interleavings are in a sense closed under composition, e.g. composing two left interleavings (with a suitable ordering of terms), yields a left interleaving (with an additive approximation factor). Now, we show the more interesting property: most combinations of the different notions of interleavings do not interact, i.e. composition yields the maximum of the two rather than an additive factor. First, we show that if composition is {\bf not} in the natural order as in Proposition~\ref{prop:rightleft} and \ref{prop:leftright}, the interleavings do not yield an additive factor.
\begin{proposition}\label{prop:nonnatural_in}
Suppose one of the following two possibilities holds,
\[
M\Litl{2\eps} N \quad \mbox{and}\quad P\Litl{2\eps} N, \qquad\mbox{or} \qquad M\Ritl{2\eps} N  \quad \mbox{and}\quad  P\Ritl{2\eps} N,
\]
then $M\itl{2\eps}P$.
\end{proposition}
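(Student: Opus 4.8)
The plan is to unwind Definitions~\ref{def:linterleave} and~\ref{def:rinterleave} into concrete statements about submodules and quotients of $N$, and then in each of the two cases exhibit the interleaving maps explicitly as (lifts of) multiplication by $t^{2\eps}$.

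For the left--left case, observe that $M\Litl{2\eps}N$ provides an injection $i\colon M\to N$ whose cokernel $Q$ satisfies $t^{2\eps}Q=0$; hence applying $t^{2\eps}$ to an arbitrary $n\in N$ lands in $\ker(N\to Q)=\im i$, so after identifying $M$ with $\im i$ we get $t^{2\eps}N\subseteq M$. The same argument applied to $P\Litl{2\eps}N$ gives $t^{2\eps}N\subseteq P$, so $t^{2\eps}N\subseteq M\cap P$. I would then take both $\phi\colon M\to P$ and $\psi\colon P\to M$ to be multiplication by $t^{2\eps}$ computed inside $N$: each is well-defined and lands in the correct submodule by the inclusions just established, each shifts degree by $2\eps$, and the composites $\psi\phi$ and $\phi\psi$ are multiplication by $t^{4\eps}$, i.e.\ $\id_{4\eps}$. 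This is exactly a $2\eps$-interleaving, giving $M\itl{2\eps}P$.

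The right--right case is dual. Here $M\Ritl{2\eps}N$ and $P\Ritl{2\eps}N$ yield surjections $f\colon M\to N$ and $g\colon P\to N$ with $\ker f\itl{\eps}0$ and $\ker g\itl{\eps}0$. I would define $\phi\colon M\to P$ by $\phi(m)=t^{2\eps}\tilde p$, where $\tilde p$ is \emph{any} $g$-preimage of $f(m)$; this is independent of the choice because two such preimages differ by an element of $\ker g$, which is annihilated by $t^{2\eps}$. Since $f(m)$ is homogeneous of the same degree as $m$ and $\ker g$ is killed by $t^{2\eps}$, the value $t^{2\eps}\tilde p$ is automatically homogeneous of degree $\deg m+2\eps$, so $\phi$ is a $2\eps$-morphism; $\kk[t]$-linearity is immediate from the construction. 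The symmetric recipe using $f$-preimages defines $\psi\colon P\to M$, and tracing the composites shows once more that $\psi\phi=\phi\psi=\id_{4\eps}$, so $(\phi,\psi)$ is the desired $2\eps$-interleaving.

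The only place requiring genuine care -- the main obstacle -- is the right--right case: one must check that the lifted map $\phi$ is well-defined independently of the chosen preimage and that it carries $M^k$ into $P^{k+2\eps}$. Both points reduce to the facts, recorded earlier in the excerpt, that $K\itl{\eps}0$ is equivalent to $t^{2\eps}K=0$, together with the elementary observation that a surjective $0$-morphism of graded $\kk[t]$-modules is surjective in each degree. The left--left case has no such subtlety, since there the interleaving maps are literally multiplication by $t^{2\eps}$ read inside $N$.
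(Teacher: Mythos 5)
Your proof is correct and matches the paper's argument in essence: the paper also composes the canonical $2\eps$-morphisms $N\to M$ and $N\to P$ (multiplication by $t^{2\eps}$ into the kernel, resp.\ $t^{2\eps}$ applied to a lifted preimage, via Propositions~\ref{kernel} and~\ref{cokernel}) with the given $0$-morphisms $M\to N$ and $P\to N$, so that each composite is a $2\eps$-morphism and the round trips are $\id_{4\eps}$. You have simply unwound those two propositions into the explicit formulas rather than citing them.
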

\begin{proof}
In the first case, we have the following two exact sequences:
\[
0\rightarrow M\xrightarrow{i} N\xrightarrow{f}X\rightarrow 0\qquad\text{and}\qquad 0\rightarrow P\xrightarrow{g} N\xrightarrow{j} Y \rightarrow 0
\]
Similarly in the second case, we have:
\[
0\rightarrow X \xrightarrow{f}M\xrightarrow{i} N\rightarrow 0\qquad\text{and}\qquad 0\rightarrow Y  \xrightarrow{j}P\xrightarrow{g} N  \rightarrow 0
\]
In both cases, by assumption $X\itl{\eps}0$ and $Y\itl{\eps}0$. 
By Proposition \ref{kernel} and Proposition \ref{cokernel}, for both cases there are $2\eps$-interleavings $(\phi,\psi)$ of $M$ and $N$ and $(\eta,\theta)$ of $P$ and $N$. These fit into the following commutative diagram, where the horizontal arrows are ordinary morphisms and all other arrows are $2\eps$-morphisms.
\begin{center}
\begin{tikzpicture}[scale=2]
\node (MT) at (-1,1) {$M$};
\node (NT) at (0,1) {$N$};
\node (PT) at (1,1) {$P$};
\node (M) at (-1,0) {$M$};
\node (N) at (0,0) {$N$};
\node (P) at (1,0) {$P$};
\node (MB) at (-1,-1) {$M$};
\node (NB) at (0,-1) {$N$};
\node (PB) at (1,-1) {$P$};
\path[->]
(MT) edge node[above]{$i$} (NT)
(PT) edge node[above]{$g$} (NT)
(M) edge node[below]{$i$} (N)
(P) edge node[below]{$g$} (N)
(MB) edge node[below]{$i$} (NB)
(PB) edge node[below]{$g$} (NB)
(MT) edge node[left]{$t^{2\eps}$} (M)
(NT) edge node[right]{$t^{2\eps}$} (N)
(PT) edge node[right]{$t^{2\eps}$} (P)
(NT) edge node[below]{$\psi$} (M)
(NT) edge node[below]{$\theta$} (P)
(M) edge node[left]{$t^{2\eps}$} (MB)
(N) edge node[right]{$t^{2\eps}$} (NB)
(P) edge node[right]{$t^{2\eps}$} (PB)
(N) edge node[below]{$\psi$} (MB)
(N) edge node[below]{$\theta$} (PB);
\end{tikzpicture}
\end{center}
By inspection of this diagram, we see that $(\theta i,\psi g)$ is a $2\eps$-interleaving of $M$ and $P$.
\end{proof}

\begin{proposition}\label{prop:nonnatural_out}
Suppose one of the following two possibilities holds,
\[
N\Litl{2\eps} M \quad \mbox{and}\quad N\Litl{2\eps} P, \qquad\mbox{or} \qquad N\Ritl{2\eps} M  \quad \mbox{and}\quad  N\Ritl{2\eps} P,
\]
then $M\itl{2\eps}P$.
\end{proposition}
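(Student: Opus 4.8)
The plan is to dualize the proof of Proposition~\ref{prop:nonnatural_in}, reversing every arrow. First I would rewrite the hypotheses as short exact sequences. In the left case they read $0\to N\xrightarrow{i} M\xrightarrow{f} X\to 0$ and $0\to N\xrightarrow{j} P\xrightarrow{h} Y\to 0$ with $X\itl{\eps}0$ and $Y\itl{\eps}0$, so that $N$ is a submodule of both $M$ and $P$ with $\eps$-trivial cokernels; in the right case they read $0\to X\to N\xrightarrow{q} M\to 0$ and $0\to Y\to N\xrightarrow{q'} P\to 0$ with $X\itl{\eps}0$ and $Y\itl{\eps}0$, so that $N$ surjects onto both $M$ and $P$ with $\eps$-trivial kernels.

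Next I would apply Proposition~\ref{kernel} (left case) or Proposition~\ref{cokernel} (right case) to each sequence. In the left case, Proposition~\ref{kernel} applied to $f$ and to $h$ yields $2\eps$-interleavings between $M$ and $N$ and between $P$ and $N$ in which one leg is a $2\eps$-morphism $\alpha\colon M\to N$ (resp. $\gamma\colon P\to N$) of ``multiplication by $t^{2\eps}$'' type, while the other leg is the honest inclusion $i\colon N\to M$ (resp. $j\colon N\to P$), a $0$-morphism. In the right case, Proposition~\ref{cokernel} applied to the two kernel inclusions does the dual thing, with the $0$-legs now the quotient maps $q\colon N\to M$ and $q'\colon N\to P$ and the $2\eps$-legs $\delta\colon M\to N$, $\delta'\colon P\to N$. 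In each case the two propositions also hand us the defining identities, e.g. $i\alpha=\id_{2\eps}$ on $M$ and $\alpha i=\id_{2\eps}$ on $N$ (and likewise for $\gamma,j$, or for $q,\delta$ and $q',\delta'$ in the right case).

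Finally I would build the interleaving of $M$ and $P$ by composing: in the left case set $\mu=j\alpha\colon M\to P$ and $\nu=i\gamma\colon P\to M$, and in the right case $\rho=q'\delta\colon M\to P$ and $\tau=q\delta'\colon P\to M$. Since each composite joins a $2\eps$-morphism to a $0$-morphism, these are again $2\eps$-morphisms (not $4\eps$-morphisms), and the interleaving identities fall out of the identities collected above; for instance $\nu\mu=i(\gamma j)\alpha=t^{2\eps}(i\alpha)=\id_{4\eps}$ on $M$, and symmetrically $\mu\nu=\id_{4\eps}$ on $P$, which is exactly what a $2\eps$-interleaving requires. This is the $3\times 3$ diagram chase in the proof of Proposition~\ref{prop:nonnatural_in} read with the arrows reversed, so I would record it as the analogous commutative diagram whose vertical maps are all equal to $t^{2\eps}$.

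The step needing the most care is this degree accounting: one must verify that composing the two auxiliary $2\eps$-interleavings produces a $2\eps$-interleaving rather than a $4\eps$-one. This works precisely because the interleavings coming from Proposition~\ref{kernel} and Proposition~\ref{cokernel} have a special shape — one leg is genuinely degree-preserving — so composing along those legs does not inflate the shift. Everything else is routine, and the two cases are formally dual.
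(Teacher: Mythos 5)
Your proposal is correct and is essentially the paper's own proof: apply Proposition~\ref{kernel} (left case) or Proposition~\ref{cokernel} (right case) to each short exact sequence to obtain $2\eps$-interleavings of $M$ with $N$ and of $P$ with $N$ whose legs pointing out of $N$ are genuine $0$-morphisms, then compose through $N$ along those $0$-legs to get a $2\eps$-interleaving of $M$ and $P$ without doubling the shift. Your composites $j\alpha$, $i\gamma$ (resp.\ $q'\delta$, $q\delta'$) are, up to renaming, exactly the pair $(g\psi,i\theta)$ the paper reads off its commutative diagram, and your degree-accounting remark is precisely the point that makes the bound $2\eps$ rather than $4\eps$.
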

\begin{proof}
The proof is similar as above. For each case, we get two pairs of exact sequences
\[
0\rightarrow N\xrightarrow{i} M \xrightarrow{f}X\rightarrow 0\qquad\text{and}\qquad 0\rightarrow N\xrightarrow{g} P\xrightarrow{j} Y \rightarrow 0
\]
and  
\[
0\rightarrow X \xrightarrow{f}N\xrightarrow{i} M\rightarrow 0\qquad\text{and}\qquad 0\rightarrow Y  \xrightarrow{j}N\xrightarrow{g} P  \rightarrow 0
\]
with $X\itl{\eps}0$ and $Y\itl{\eps}0$. Again, by Proposition \ref{kernel} and Proposition \ref{cokernel}, we have $2\eps$-interleavings $(\phi,\psi)$ of $N$ and $M$ and $(\eta,\theta)$ of $N$ and $P$, which fit into the following commutative diagram, where the horizontal arrows are ordinary morphisms and all other arrows are $2\eps$-morphisms.
\begin{center}
\begin{tikzpicture}[scale=2]
\node (MT) at (-1,1) {$M$};
\node (NT) at (0,1) {$N$};
\node (PT) at (1,1) {$P$};
\node (M) at (-1,0) {$M$};
\node (N) at (0,0) {$N$};
\node (P) at (1,0) {$P$};
\node (MB) at (-1,-1) {$M$};
\node (NB) at (0,-1) {$N$};
\node (PB) at (1,-1) {$P$};
\path[->]
(NT) edge node[above]{$i$} (MT)
(NT) edge node[above]{$g$} (PT)
(N) edge node[below]{$i$} (M)
(N) edge node[below]{$g$} (P)
(NB) edge node[below]{$i$} (MB)
(NB) edge node[below]{$g$} (PB)
(MT) edge node[left]{$t^{2\eps}$} (M)
(NT) edge node[right]{$t^{2\eps}$} (N)
(PT) edge node[right]{$t^{2\eps}$} (P)
(MT) edge node[below]{$\psi$} (N)
(PT) edge node[below]{$\theta$} (N)
(M) edge node[left]{$t^{2\eps}$} (MB)
(N) edge node[right]{$t^{2\eps}$} (NB)
(P) edge node[right]{$t^{2\eps}$} (PB)
(M) edge node[below]{$\psi$} (NB)
(P) edge node[below]{$\theta$} (NB);
\end{tikzpicture}
\end{center}
By inspection of this diagram, we see that $(g\psi,i\theta)$ is a $2\eps$-interleaving of $M$ and $P$.
\end{proof}

We conclude with showing that all other combinations of left and right-interleavings do not interact, i.e. composing a $2\eps$-left interleaving followed by a $2\eps$-right interleaving still yields a $2\eps$-interleaving. As the two notions are not symmetric, there are four such possible cases to treat. It turns out that three of the four cases can be handled directly, while the fourth is more involved. 

\begin{proposition}\label{doesnotaddup}
Suppose one of the following four possibilities holds:
\begin{itemize}
\item $M\Litl{2\eps} N$ and $N\Ritl{2\eps} P$,
\item $N\Litl{2\eps} M$ and $N\Ritl{2\eps} P$,
\item $M\Litl{2\eps} N$ and $P\Ritl{2\eps} N$ or
\item $N\Litl{2\eps} M$ and $P\Ritl{2\eps} N$.
\end{itemize}
Then  $M\itl{2\eps}P$.
\end{proposition}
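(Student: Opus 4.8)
The plan is to dispatch the three combinations (1)--(3) by a uniform, completely elementary construction, and to isolate combination (4) as the one genuinely delicate case, where projective dimension $1$ is needed.

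For (1)--(3) I would first unwind the hypotheses using Definitions \ref{def:linterleave}--\ref{def:rinterleave}: each $2\eps$-left (resp.\ $2\eps$-right) interleaving is a short exact sequence one of whose outer terms is $\itl{\eps}0$, i.e.\ annihilated by $t^{2\eps}$. Thus in each of the three cases one is handed a degree-$0$ inclusion $i$ with $t^{2\eps}$-torsion cokernel together with a degree-$0$ surjection $g$ with $t^{2\eps}$-torsion kernel, and the two desired interleaving maps $M\to P$ and $P\to M$ can be written down directly, each as at most one composition of such honest maps with one ``lift along the surjection'' step, followed by a single multiplication by $t^{2\eps}$ --- never two. (For instance, in case (1) the map $M\to P$ is, up to multiplication by $t^{2\eps}$, the composite $g\comp i$, while $P\to M$ is ``lift $p$ through $g$, then multiply by $t^{2\eps}$'', the one multiplication simultaneously resolving the ambiguity of the lift --- which lies in a $t^{2\eps}$-torsion submodule --- and landing the element in the required submodule $\ker f$; cases (2) and (3) are analogous.) Both maps then have degree $\le 2\eps$, and the relations $\psi\phi=\id_{4\eps}$, $\phi\psi=\id_{4\eps}$ are verified by unwinding the formulas, exactly as in the proofs of Propositions \ref{prop:nonnatural_in} and \ref{prop:nonnatural_out} (one may equivalently organize this as a chase of a commutative ladder with vertical maps $t^{2\eps}$). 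No hypothesis on $\kk[t]$ beyond its being a graded ring is used here.

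For (4) the set-up is: a degree-$0$ inclusion $i\colon N\hookrightarrow M$ with $\coker i=X\itl{\eps}0$ and a degree-$0$ surjection $g\colon P\twoheadrightarrow N$ with $\ker g=Y\itl{\eps}0$. Put $h=i\comp g\colon P\to M$; then $\ker h\cong Y$ and $\coker h\cong X$ are both $\itl{\eps}0$. The composite $h$ is an honest map $P\to M$ and (after one cosmetic multiplication by $t^{2\eps}$) will be one of the two interleaving maps. The other direction is the obstruction: the naive recipe ``go $M\to N$ by the $t^{2\eps}$-projection, then $N\to P$ by lifting through $g$'' uses two successive multiplications by $t^{2\eps}$, one to reach $\ker f=N$ and one to resolve the lift, and so gives only $M\itl{4\eps}P$. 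I would reduce the problem to constructing a single degree-$2\eps$ morphism $k\colon M\to P$ with $hk=\id_{2\eps}$: given such a $k$, the pair $(k,\id_{2\eps}h)$ is a $2\eps$-interleaving, because $(\id_{2\eps}h)k=\id_{2\eps}(hk)=\id_{4\eps}$ on the nose, while $h(kh-\id_{2\eps})=(hk)h-\id_{2\eps}h=0$ forces $\im(kh-\id_{2\eps})\subseteq\ker h\itl{\eps}0$, hence $t^{2\eps}(kh-\id_{2\eps})=0$, i.e.\ $k(\id_{2\eps}h)=\id_{2\eps}(kh)=\id_{4\eps}$.

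Constructing $k$ is the heart of the matter and the step I expect to be hardest. Having $hk=\id_{2\eps}$ means lifting the corestriction $\bar t^{2\eps}\colon M\to\im h$ of $\id_{2\eps}$ along the surjection $P\twoheadrightarrow\im h$. Here I would use that $\kk[t]$ is a graded PID: take a length-$1$ free resolution $0\to R\to F\to M\to 0$ (submodules of free $\kk[t]$-modules are free, so $R$ is free), lift the composite $F\to M\xrightarrow{\bar t^{2\eps}}\im h$ through $P\twoheadrightarrow\im h$ using freeness of $F$, and push the lift down to $M$. The obstruction to the descent lies in $\Ext^1(M,Y)$, which is $t^{2\eps}$-torsion since $Y$ is; the real work is to show it can be made to vanish \emph{at degree $2\eps$} rather than only after a further multiplication by $t^{2\eps}$. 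For this I would exploit that $\Ext^2$ vanishes over $\kk[t]$ --- equivalently, that the four-term exact sequence $0\to Y\to P\xrightarrow{h}M\to X\to 0$ is trivial in $\Ext^2(X,Y)$ --- together with a chain-level diagram chase through the free resolutions of $M$ and $P$, tracking degrees carefully. This is precisely the place where the homological dimension of $\kk[t]$ is used, and where the argument has no analogue for persistence theories of higher dimension; accordingly we expect (4) to fail there, which is why this case is singled out.
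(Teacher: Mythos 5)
Your treatment of the first three combinations is correct and essentially the paper's: in each of those cases one explicit map in each direction, involving a single multiplication by $t^{2\eps}$, does the job, exactly as in the paper's first case and in the diagrams of Propositions \ref{prop:nonnatural_in} and \ref{prop:nonnatural_out}. Your reduction of the fourth case is also sound, and in fact well aimed: the paper's construction does produce precisely a degree-$2\eps$ morphism $k\colon M\to P$ with $hk=\id_{2\eps}$ (this is the content of the remark made in the first case and reused in Proposition \ref{prop:equivalence}), and your check that such a $k$ together with $\id_{2\eps}h$ yields a $2\eps$-interleaving, via $\im(kh-\id_{2\eps})\subseteq\ker h$, is correct.

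The gap is the construction of $k$ itself, which you defer to ``a chain-level diagram chase through the free resolutions of $M$ and $P$, tracking degrees carefully''; that plan, as stated, does not deliver the vanishing you need. Writing $i\colon N\to M$ with $\coker i=X$, $f\colon M\to X$, and $g\colon P\to N$ with $\ker g=Y$, producing $k$ is equivalent to showing that the obstruction class $\tilde\mu^*[\xi]\in\Ext^1(M,Y(2\eps))$ vanishes, where $[\xi]$ is the class of $0\to Y\to P\to N\to 0$ and $\tilde\mu(m)=i^{-1}(t^{2\eps}m)$. What comes cheaply from $\tilde\mu i=t^{2\eps}$ and $t^{2\eps}Y=0$ is only that $i^*$ kills this obstruction, so by the long exact sequence $\Ext^1(X,Y(2\eps))\xrightarrow{f^*}\Ext^1(M,Y(2\eps))\xrightarrow{i^*}\Ext^1(N,Y(2\eps))\to\Ext^2(X,Y(2\eps))=0$ it lies in the image of $f^*$; note that here $\Ext^2(X,Y)=0$ only gives surjectivity of $i^*$, which is not what is needed, and resolving $M$ and $P$ just reproduces the same class. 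The missing idea is to use projective dimension $1$ on a different long exact sequence: apply $\Hom(X,-)$ to $0\to Y\to P\to N\to 0$, conclude from $\Ext^2(X,Y)=0$ that $g_*\colon\Ext(X,P)\to\Ext(X,N)$ is surjective, lift the class of $0\to N\to M\to X\to 0$ to an extension $0\to P\to Q\to X\to 0$ equipped with a map of extensions over $\id_X$, and then the Snake Lemma gives $0\to Y\to Q\to M\to 0$; this says $P\Litl{2\eps}Q$ and $Q\Ritl{2\eps}M$ and reduces the fourth case to the first, with your $k$ given by $\theta(q)\mapsto t^{2\eps}q\in P$ for $\theta\colon Q\to M$. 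As written, the heart of case (4) is asserted rather than proved.
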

\begin{proof}
We treat each possibility separately.

{\bf First case.} We give a direct argument. There are exact sequences
\[
0\rightarrow M\xrightarrow{i} N\xrightarrow{f}X\rightarrow 0\qquad\text{and}\qquad 0\rightarrow Y\xrightarrow{j}N\xrightarrow{g} P\rightarrow 0
\]
that is $M = \ker f$ and $P = \coker j$ with $X\itl{\eps}0$ and $Y\itl{\eps}0$. The interleaving maps $\phi:M\to P$ and $\psi:P\to M$ may be defined explicitly by the formulae  $\phi(m)=t^{2\eps}g(i(m))$ and $\psi([n])=t^{2\eps}n$. Here, $[n]=g(n)$ is the class in $\coker j$ represented by $n\in N$. Note that $t^{2\eps}n\in M$, since $X\itl{\eps}0$. It is clear that $\phi$ is well-defined. To show that $\psi$ is well-defined, observe that $Y\itl{\eps}0$ implies $t^{2\eps}Y=0$ and therefore, $t^{2\eps}\im j=0$, so if $[n_1]=[n_2]$, we have $t^{2\eps}n_1=t^{2\eps}n_2$.

We remark that shifting by $2\eps$ is not necessary for the first map to be well-defined and is only done to adhere to the definition of interleaving. In fact, without this shifting we already have that $(gi)\circ\psi=\id_{2\eps}$ and $\psi\circ(gi)=\id_{2\eps}$, which is important, as it is used in the proof of Proposition \ref{prop:equivalence}.


{\bf Second case.} There are exact sequences
\[
0\rightarrow N\xrightarrow{i} M\xrightarrow{f}X\rightarrow 0\qquad\text{and}\qquad 0\rightarrow Y\xrightarrow{j}N\xrightarrow{g} P\rightarrow 0
\]
with $X\itl{\eps}0$ and $Y\itl{\eps}0$. There are  $2\eps$-interleavings $(\phi,\psi)$ of $N$ and $M$ and $(\eta,\theta)$ of $N$ and $P$, which fit into the same commutative diagram as in the proof of Proposition~\ref{prop:nonnatural_out}. Similarly, we conclude that $(g\psi,i\theta)$ is a $2\eps$-interleaving of $M$ and $P$.

{\bf Third case.} There are exact sequences
\[
0\rightarrow M\xrightarrow{i} N\xrightarrow{f}X\rightarrow 0\qquad\text{and}\qquad 0\rightarrow Y\xrightarrow{j}P\xrightarrow{g} N\rightarrow 0
\]
with $X\itl{\eps}0$ and $Y\itl{\eps}0$. There are  $2\eps$-interleavings $(\phi,\psi)$ of $N$ and $M$ and $(\eta,\theta)$ of $N$ and $P$, which fit into the same commutative diagram as in the proof of Proposition~\ref{prop:nonnatural_in}. Similarly, we conclude that $(\theta i,\psi g)$ is a $2\eps$-interleaving of $M$ and $P$.

{\bf Fourth case.} There are exact sequences
\[
0\rightarrow N\xrightarrow{i} M\xrightarrow{f}X\rightarrow 0\qquad\text{and}\qquad 0\rightarrow Y\xrightarrow{j}P\xrightarrow{g} N\rightarrow 0
\]
with $X\itl{\eps}0$ and $Y\itl{\eps}0$. To the latter, we associate the following long exact sequence of $\Ext$-modules\footnote{Note that $\Hom$-modules consist of morphisms of $\kk[t]$-modules. These are degree-preserving. The appropriate notion of $\Ext$-module needs to reflect this. In particular, the maps used in the relevant projective resolutions must also be degree-preserving.}:
\[
0\rightarrow \Hom(X,Y)\rightarrow \Hom(X,P)\rightarrow \Hom(X,N)\rightarrow \Ext(X,Y)\rightarrow \Ext(X,P)\rightarrow \Ext(X,N)\rightarrow 0
\]
Note that all higher $\Ext$-modules are $0$. To see this, recall that the projective dimension $\projdim(X)$ of a $\kk[t]$-module $X$ is the smallest $n\in\NN_0$ such that $\Ext^{n+1}(X,M)$ vanishes for all $\kk[t]$-modules $M$ (see \cite[Proposition 8.6]{rotman2008introduction}). It is known that any module over a principal ideal domain has projective dimension at most $1$, so in particular $\Ext^2(X,Y)=0$, as desired. (There is a slight subtlety here that the number $\projdim(X)$ could in principle depend on whether $X$ is regarded as a $\kk[t]$-module or a $\kk[t]_\ngr$-module. That this is not the case follows from \cite[Corollary 3.3.7]{nuastuasescu1979graded}.)

In particular, $\Ext(X,P)\rightarrow\Ext(X,N)$ is an epimorphism. Using the classical interpretation of elements of $\Ext$-modules as (equivalence classes of) extensions of modules and maps between them as morphisms of such extensions implies that there is a map of extensions
\begin{center}
\begin{tikzpicture}[scale=1.5]
\node (0TL) at (-2,1) {$0$};
\node (P) at (-1,1) {$P$};
\node (Q) at (0,1) {$Q$};
\node (XT) at (1,1) {$X$};
\node (0TR) at (2,1) {$0$};
\node (0BL) at (-2,0) {$0$};
\node (N) at (-1,0) {$N$};
\node (M) at (0,0) {$M$};
\node (XB) at (1,0) {$X$};
\node (0BR) at (2,0) {$0$};
\path[->]
(0TL) edge (P)
(P) edge (Q)
(Q) edge (XT)
(XT) edge (0TR)
(0BL) edge (N)
(N) edge (M)
(M) edge (XB)
(XB) edge (0BR)
(P) edge node[left]{$g$} (N)
(Q) edge node[left]{$h$} (M)
(XT) edge node[left]{$\id$} (XB);
\end{tikzpicture}
\end{center}
Now, using the Snake Lemma on this diagram, we see that the sequence
\[
0\to\ker g\to\ker h\to\ker\id\to\coker g\to\coker h\to\coker\id\to 0
\]
is exact. Since $\ker g=Y$ and $\coker g=\ker\id=\coker\id=0$, the sequence
\[
0\to Y\to Q\to M\to 0
\]
is exact. Therefore $Q\Ritl{2\eps} M$ and $P\Litl{2\eps} Q$, so the fourth case reduces to the first case.
\end{proof}




Intuitively, the notion of left and right interleavings corresponds to the notion of shortening (respectively lengthening) bars by changing birth and death times. This was described in \cite{bauer2014induced} using matchings. The main advantage of using short exact sequences is that the independence between modifying birth and death times can be captured without a decomposition existing. It also gives an alternative algebraic characterization of when this holds, namely that the projective dimension is one. To make this connection concrete, we prove that every interleaving admits a decomposition into left and right interleavings. We also show the converse, giving a characterization of an interleaving given a decomposition. We first require one additional definition.
\begin{definition}
If  $S$ is a persistence module, there is an $\eps$-\emph{shifted} module $S(\eps)$ which is a reparameterization of $S$ by
\[
S^\alpha(\eps) = S^{\alpha+\eps}.
\]
\end{definition}

\begin{proposition}\label{prop:equivalence}
There exists an interleaving $M\itl{2\eps}S$ if and only if $\exists N,P,Q$ such that
\begin{equation*}
\begin{aligned}[c]
M\Ritl{2\eps} N, \\
Q\Litl{2\eps} P,
\end{aligned}
\qquad 
\begin{aligned}[c]
N\Litl{2\eps} P,\\
S\Ritl{2\eps} Q.
\end{aligned}
\end{equation*}
\end{proposition}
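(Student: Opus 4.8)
The plan is to prove both directions by direct construction of the relevant short exact sequences, exploiting the explicit formulas for interleaving maps rather than any structural decomposition of the modules.

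For the forward direction, suppose $(\phi,\psi)$ is a $2\eps$-interleaving $M\itl{2\eps}S$, so $\phi:M\to S$ and $\psi:S\to M$ are $2\eps$-morphisms with $\psi\phi=\id_{4\eps}$ and $\phi\psi=\id_{4\eps}$. The idea is to factor $\phi$ (after a shift) through a module $N$ built as the image of $\phi$, or more precisely to interpolate between $M$ and $S$ using the intermediate modules appearing in a standard interpolation of an interleaving. Concretely, I would set $N=\im(\id_{2\eps}:M\to M)+\text{(a copy twisted by }\phi)$ — but the cleaner route is to define $N$ so that $M\Ritl{2\eps}N$ comes from the short exact sequence $0\to\ker\phi\to M\to\im\phi\to 0$ together with the observation that $\ker\phi\itl{\eps}0$ (since $\psi\phi=\id_{4\eps}$ forces $t^{4\eps}\ker\phi=0$, hence $\ker\phi\itl{2\eps}0$; one must check the bookkeeping gives $\eps$ after the right reparameterization). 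Dually, $S\Ritl{2\eps}Q$ comes from $0\to\ker\psi\to S\to\im\psi\to 0$. The two ``left'' relations $N\Litl{2\eps}P$ and $Q\Litl{2\eps}P$ are then obtained by taking $P$ to be a common module receiving both $\im\phi$ and $\im\psi$ with $\eps$-trivial cokernels — the natural candidate is $P=\im(\id_{2\eps}) \subseteq S$ viewed appropriately, using that $\coker(\im\phi\hookrightarrow S)$ is $\eps$-trivial because $t^{4\eps}S\subseteq \im(\phi\psi)\subseteq\im\phi$. Assembling the four sequences with the shifts tracked carefully yields the claim.

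For the reverse direction, suppose we are given $N,P,Q$ with $M\Ritl{2\eps}N$, $N\Litl{2\eps}P$, $Q\Litl{2\eps}P$, and $S\Ritl{2\eps}Q$. Reading the chain $M\;{\Ritl{2\eps}}\;N\;{\Litl{2\eps}}\;P$ and invoking the first case of Proposition~\ref{doesnotaddup} (in the form $N\Litl{2\eps}M$ replaced by $M\Ritl{2\eps}N$, i.e. the ``second/fourth'' pattern as appropriate) gives $M\itl{2\eps}P$ — here the key point flagged in the remark after the first case of Proposition~\ref{doesnotaddup} is that the composite satisfies $gi\circ\psi=\id_{2\eps}$ and $\psi\circ gi=\id_{2\eps}$ \emph{without} an extra shift, so composing is legitimate. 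Similarly $S\;{\Ritl{2\eps}}\;Q\;{\Litl{2\eps}}\;P$ gives $S\itl{2\eps}P$. Then $M\itl{2\eps}P$ and $P\itl{2\eps}S$ compose by the triangle inequality (Proposition~\ref{distance}) — but this only yields $M\itl{4\eps}S$, which is too weak. The fix, and the reason the sharper composition results were proved, is to not pass through ordinary interleavings but rather to chain the four short exact sequences directly: $M\Ritl{2\eps}N\Litl{2\eps}P$ and reversing $Q\Litl{2\eps}P$ to view $P$ as built from $Q$ plus an $\eps$-trivial piece in the other direction, so that the whole string $M\to N\to P\leftarrow Q\leftarrow S$ can be collapsed using the explicit maps from the first case of Proposition~\ref{doesnotaddup} twice, keeping all shifts equal to $2\eps$.

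The main obstacle is precisely this shift bookkeeping: the left/right interleaving relations are stated with a parameter $2\eps$ that refers to the total interleaving width of the $\eps$-trivial kernel/cokernel, and naively composing four of them threatens to accumulate width. The crux is to use the ``no extra shift needed'' refinement of the first case of Proposition~\ref{doesnotaddup} — the fact that the interleaving maps $gi$ and $\psi$ there satisfy $gi\circ\psi=\id_{2\eps}$ on the nose — so that the four pieces telescope rather than add. Getting the forward construction to produce sequences with exactly the parameter $2\eps$ (and not $4\eps$) will require choosing $N$, $P$, $Q$ as carefully reparameterized images of the interleaving maps, and verifying the four cokernels/kernels are $\eps$-trivial using $t^{4\eps}$-annihilation coming from $\phi\psi=\id_{4\eps}$ and $\psi\phi=\id_{4\eps}$.
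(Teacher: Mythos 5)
Your reverse direction is essentially the paper's argument: chain the first two and the last two short exact sequences, apply the fourth case of Proposition~\ref{doesnotaddup} (which reduces to the first), and use the ``no extra shift'' remark there — that $\varphi\circ(j\circ i)=\id_{2\eps}$ and $\psi\circ(k\circ\ell)=\id_{2\eps}$ hold on the nose — so the pieces telescope into a $2\eps$-interleaving $(\psi\circ j\circ i,\varphi\circ k\circ\ell)$ rather than accumulating to $4\eps$. You correctly rejected the naive triangle-inequality route; apart from some vagueness about which case of Proposition~\ref{doesnotaddup} is being invoked, this half is sound.

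The forward direction, however, has a genuine gap. Your ``cleaner route'' takes the short exact sequence $0\to\ker\phi\to M\to\im\phi\to 0$ for the $2\eps$-interleaving map $\phi:M\to S$, but $\psi\phi=\id_{4\eps}$ only gives $t^{4\eps}\ker\phi=0$, i.e.\ $\ker\phi\itl{2\eps}0$, and this cannot be improved by any bookkeeping: for $M=\kk[t]/(t^{4\eps})$ and $S=0$ (which are $2\eps$-interleaved, with $\phi=0$) the kernel is all of $M$ and is not $\eps$-trivial. The same defect hits your candidate cokernels ($t^{4\eps}S\subseteq\im\phi$ only gives $2\eps$-triviality), so your construction yields $M\Ritl{4\eps}\im\phi$ and the analogous $4\eps$-relations — the wrong parameter — and moreover does not naturally produce the required wedge shape with a single common module $P$ receiving both sides. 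The idea you mention in passing and then abandon, interpolation, is exactly what is needed and is what the paper does: build $Z$ with $M\itl{\eps}Z$ (maps $(\xi,\eta)$) and $S\itl{\eps}Z$ (maps $(\zeta,\nu)$), set $P=Z(\eps)$ so that $f=\xi:M\to P$ and $g=\zeta:S\to P$ become $0$-morphisms, and take $N=\im f$, $Q=\im g$. Then $\eta\xi=\id_{2\eps}$ and $\xi\eta=\id_{2\eps}$ (and likewise for $\zeta,\nu$) force $t^{2\eps}$ to annihilate $\ker f$, $\coker f$, $\ker g$, $\coker g$, so all four short exact sequences carry genuinely $\eps$-trivial end terms and the relations hold with parameter $2\eps$ as required.
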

\begin{proof}
We first show if $M$ is $2\eps$-interleaved with $S$ then $N,P,$ and $Q$ exist.  First, we construct an interpolation. Let $Z$ be such that $M\itl{\eps}Z$ with the interleaving maps $(\xi,\eta)$ and $S\itl{\eps}Z$ with the interleaving maps $(\zeta,\nu)$. For the construction of the interpolated module, see ~\cite{ChaCohGli2009}. 
Now we set
\[
P = Z(\eps)
\]
the shifted version of $Z$. Then let
\[
f : M \rightarrow Z(\eps) \qquad \mbox{and} \qquad g : S\rightarrow Z(\eps)
\]
where $f$ and $g$ are the interleaving maps $\xi$ and $\zeta$ respectively. Note that as morphisms into $Z(\eps)$, $f$ and $g$ are 0-morphisms, that is, they are ungraded morphisms. Setting 
\[
N = \im f \qquad\text{and}\qquad
 Q = \im g
\]
we have the following set of short exact sequences:
\begin{center}
\begin{tikzcd}[column sep=small,row sep=tiny]
0  \arrow{r}&\ker f \arrow{r}&M \arrow{r}&\im f \arrow{r}& 0   \\
  0 \arrow{r}& \im f\arrow{r}& Z(\eps)\arrow{r} & \coker f \arrow{r}& 0 \\
  0 \arrow{r} &\im g\arrow{r}& Z(\eps) \arrow{r}& \coker g\arrow{r} & 0  \\
  0 \arrow{r} &\ker g \arrow{r}&S \arrow{r}& \im g \arrow{r} &0   
\end{tikzcd}
\end{center}
We can directly verify that $\ker f$, $\coker f$, $\coker g$ and $\ker g$ are $\eps$-interleaved with 0, hence completing the proof.
In the other direction assume $N,P,$ and $Q$ exist. This gives rise to the following short exact sequences, where the $\eps$ denote (possibly distinct) modules $\eps$-interleaved with $0$.
\begin{center}
\begin{tikzcd}
 0 \arrow{r} &\eps \arrow{r} &M \arrow[r, "i"]\arrow[d,"j \circ i"] &N \arrow{r} &0 \\
 0 \arrow{r} &N \arrow[r,"j"]& P \arrow{r} \arrow[d, "\cong"]& \eps \arrow{r} &0 \\
0 \arrow{r} &Q\arrow[r, "k"] &P\arrow{r}\arrow{u}& \eps \arrow{r} &0\\
0 \arrow{r}& \eps \arrow{r}& S \arrow[r,"\ell"]\arrow[u," k\circ\ell"] &Q \arrow{r}& 0
\end{tikzcd}
\end{center}
If we consider the composition of the first two exact sequences and the last two, we are in the fourth case of Proposition \ref{doesnotaddup}. This implies that there exists a $2\eps$-morphism $\varphi: P\rightarrow M$ and $\psi: P\rightarrow S$, each of which is the component of an appropriate interleaving. Hence, we can consider the following commutative diagram:
\begin{center}
\begin{tikzpicture}[scale=2]
\node (MT) at (-1,1) {$M$};
\node (NT) at (0,1) {$P$};
\node (PT) at (1,1) {$S$};
\node (M) at (-1,0) {$M$};
\node (N) at (0,0) {$P$};
\node (P) at (1,0) {$S$};
\node (MB) at (-1,-1) {$M$};
\node (NB) at (0,-1) {$P$};
\node (PB) at (1,-1) {$S$};
\path[->]
(MT) edge node[above]{$j\circ i$} (NT)
(PT) edge node[above]{$k\circ\ell$} (NT)
(M) edge node[below]{$j\circ i$} (N)
(P) edge node[below]{$k\circ\ell$} (N)
(MB) edge node[below]{$j\circ i$} (NB)
(PB) edge node[below]{$k\circ\ell$} (NB)
(MT) edge node[left]{$t^{2\eps}$} (M)
(NT) edge node[right]{$t^{2\eps}$} (N)
(PT) edge node[right]{$t^{2\eps}$} (P)
(NT) edge node[below]{$\varphi$} (M)
(NT) edge node[below]{$\psi$} (P)
(M) edge node[left]{$t^{2\eps}$} (MB)
(N) edge node[right]{$t^{2\eps}$} (NB)
(P) edge node[right]{$t^{2\eps}$} (PB)
(N) edge node[below]{$\varphi$} (MB)
(N) edge node[below]{$\psi$} (PB);
\end{tikzpicture}
\end{center}
This diagram commutes, since $t^{2\eps} = \varphi \circ j\circ i$ and $t^{2\eps} = \psi \circ k \circ \ell$ by the remark in the proof of the first case of Proposition \ref{doesnotaddup}. Hence, we have the required $2\eps$-interleaving given by $(\psi \circ j\circ i, \varphi\circ k \circ \ell)$.
\end{proof}
This decomposition helps give an interpretation to right and left interleaving in the case where the barcode exists.  The first short exact sequence is a right interleaving which shortens bars by changing the death time of a bar; the second sequence is a left interleaving, which lengthens the bars by changing the birth time of a bar; the third sequence is again a left interleaving which now shortens the bars by changing the birth time; finally the last sequence is a right interleaving which lengths bars by changing the death time. This interpretation of shortening and lengthening bars leads us to the following conjecture.

\begin{conjecture}
Any composition of suitable left and right interleavings yields an equivalence with interleavings.
\end{conjecture}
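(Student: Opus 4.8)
\emph{Proof proposal.} First I would pin down what ``composition of suitable left and right interleavings'' should mean: a finite zigzag of short exact sequences $0\to A_k\to B_k\to C_k\to 0$ of $\kk[t]$-modules, each with an $\eps_k$-trivial ``error'' term (the $A_k$ or $C_k$ playing the role of the small module), where consecutive sequences share one of their non-error modules, and ``suitable'' means these shared modules sit in positions for which one of the composition rules of Propositions~\ref{prop:leftright}--\ref{doesnotaddup} applies. The assertion then splits in two. Direction one: any such zigzag between $M$ and $S$ forces an ordinary interleaving $M\itl{2\delta}S$ for some finite $\delta$ (and, once everything is re-normalized, $\delta$ is the maximum over maximal runs of same-type/same-direction steps of the sum of their factors). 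Direction two: every interleaving arises this way; this is already Proposition~\ref{prop:equivalence} and needs nothing new.

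For direction one I would induct on the length $n$ of the zigzag. When $n\le 2$ the claim is literally one of Propositions~\ref{prop:leftright}, \ref{prop:rightleft}, \ref{prop:nonnatural_in}, \ref{prop:nonnatural_out}, \ref{doesnotaddup}: between them they cover two same-type steps in natural order (factors add), two same-type steps in non-natural order (factors take a max), and all the mixed left--right adjacencies (factors take a max); and since by Propositions~\ref{kernel} and \ref{cokernel} a left or right interleaving is in particular an ordinary interleaving, any adjacency left over can be collapsed by the triangle inequality (Proposition~\ref{distance}). For $n>2$ I would pick an adjacent pair, collapse it by the relevant base case into a single left, right, or plain interleaving, and recurse on the shorter zigzag; the one bookkeeping subtlety is that a plain interleaving produced inside the zigzag must be re-expanded (via Proposition~\ref{prop:equivalence} used in the nontrivial direction, or simply absorbed via the triangle inequality) so that the induction hypothesis still applies and the factor stays controlled.

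The engine of the whole argument, and the step I expect to be the genuine obstacle, is the commutation move of the fourth case of Proposition~\ref{doesnotaddup}: faced with a block $N\Litl{2\eps}M$ adjacent to $P\Ritl{2\eps}N$ --- the one configuration that resists the elementary diagram chases --- one builds an auxiliary module $Q$ from the long exact $\Ext$-sequence attached to $0\to Y\to P\to N\to 0$, using the surjectivity of $\Ext(X,P)\to\Ext(X,N)$, which holds exactly because every $\kk[t]$-module has projective dimension at most $1$, and thereby rewrites the block as $Q\Ritl{2\eps}M$ followed by $P\Litl{2\eps}Q$. Iterating this ``swap'' is what should normalize an arbitrary suitable zigzag into an alternating form to which the induction of direction one reduces cleanly. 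The delicate points are: verifying the $\Ext$-surjectivity at every swap (hence the essential use of the PID structure of $\kk[t]$, which the discussion after Proposition~\ref{doesnotaddup} already flags as probably not removable --- so I would not expect the conjecture to hold for more general persistence modules); checking that the error terms remain $\eps$-trivial under the pullbacks and pushouts these swaps produce, which follows from Propositions~\ref{kernel}, \ref{cokernel}, \ref{extension} and Corollary~\ref{subquotient} but must be tracked term by term; and bounding the accumulation of factors under repeated swaps, which is what decides whether one obtains the sharp max-over-runs bound or merely a finite one.
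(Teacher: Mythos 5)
You should note first that the paper offers no proof of this statement: it is stated explicitly as a conjecture immediately after Proposition~\ref{prop:equivalence}, and the authors only remark that the intended meaning is that bars may be shortened and lengthened ``in any order,'' with the caveat that the fourth case of Proposition~\ref{doesnotaddup} (and hence projective dimension one) would be essential. So there is no paper proof to compare against; what you have written must stand or fall on its own, and as it stands it is a plan rather than a proof.

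The main genuine gap is in what you call direction two. The substance of the conjecture is precisely that an interleaving $M\itl{2\eps}S$ can be realized by a zigzag of left and right interleavings in \emph{any} suitable prescribed pattern, not just the one pattern $M\Ritl{2\eps}N\Litl{2\eps}P$, $S\Ritl{2\eps}Q\Litl{2\eps}P$ constructed in Proposition~\ref{prop:equivalence} from the interpolated module $Z(\eps)$. Saying this direction ``needs nothing new'' conflates existence of one decomposition with existence of all suitable ones; and your later move of ``re-expanding'' a plain interleaving inside the zigzag via Proposition~\ref{prop:equivalence} in the nontrivial direction is exactly an instance of the statement being proved for an arbitrary position in the zigzag, so the induction as described is circular at that point. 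In direction one the base cases are indeed covered by Propositions~\ref{prop:leftright}, \ref{prop:rightleft}, \ref{prop:nonnatural_in}, \ref{prop:nonnatural_out} and \ref{doesnotaddup}, but the inductive step rests on two things you yourself flag as unverified: that repeated Ext-swaps (the fourth-case construction of $Q$) normalize an arbitrary suitable zigzag, which requires checking at each swap that the new error terms stay $\eps$-trivial and that the shared-module adjacency conditions are preserved, and that the approximation factors under this rewriting obey the claimed max-over-runs bound rather than accumulating additively through the triangle inequality (Proposition~\ref{distance}), which would give a strictly weaker conclusion. Until those two points are carried out explicitly, the argument does not establish the conjecture; it is a credible strategy, correctly restricted to $\kk[t]$-modules for the projective-dimension-one reason the paper itself gives, but the key steps remain open.
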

Essentially, we should be able to shorten and lengthen bars (when these notions are well defined) in any order, rather than just the order we list in Proposition~\ref{prop:equivalence}. Note since we use the fourth case of Proposition \ref{doesnotaddup}, the results do not hold for general persistence modules, but rather require projective dimension one. We believe this approach may help highlight what results hold for more general modules.

We conclude this section with the analysis of a special case: when a module is $\eps$-interleaved with the trivial module. This was studied extensively in ~\cite{chacholski2015multidimensional} for more complicated modules. Unfortunately, the results were not applicable directly, however the connection of left and right interleavings with ~\cite{chacholski2015multidimensional}  remains open. We conclude with a lemma that further illustrates  that interleaving with the trivial module has special structure. 
\begin{lemma}\label{lem:zerointerleaving}
If a module is $\eps$-interleaved with $0$, then it is both right and left $2\eps$-interleaved with $0$.
\end{lemma}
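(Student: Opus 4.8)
The plan is to read the conclusion off the definitions directly: I will exhibit two explicit short exact sequences, one witnessing that $M$ is $2\eps$-left interleaved with $0$ and one witnessing that $M$ is $2\eps$-right interleaved with $0$. The only conceptual input is that a module which is $\eps$-interleaved with $0$ is precisely the kind of module allowed to occur as the ``error term'' in Definitions~\ref{def:linterleave} and~\ref{def:rinterleave}, so $M$ may be used as that error term against itself.

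Concretely, for the left interleaving I would use the short exact sequence
\[
0\to 0\to M\xrightarrow{\ \id\ } M\to 0,
\]
whose cokernel is $M$ itself; since $M\itl{\eps}0$ by hypothesis, Definition~\ref{def:linterleave} gives $0\Litl{2\eps}M$. For the right interleaving I would use
\[
0\to M\xrightarrow{\ \id\ } M\to 0\to 0,
\]
whose kernel is $M$; again $M\itl{\eps}0$, so Definition~\ref{def:rinterleave} yields $M\Ritl{2\eps}0$. Together these show $M$ is both left and right $2\eps$-interleaved with the trivial module. Exactness of the two displayed sequences is immediate, and no calculation beyond invoking the hypothesis is required; if one prefers, the hypothesis can be restated as $t^{2\eps}M=0$ via the criterion proved earlier, but this is not needed.

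The step I would flag as the main (and essentially only) obstacle is a bookkeeping one rather than a mathematical one: by Remark~\ref{nonsymmetric} the relations $\Litl{}$ and $\Ritl{}$ are not symmetric, so one must be careful to assert them in the directions that actually admit such a sequence, namely $0\Litl{2\eps}M$ and $M\Ritl{2\eps}0$. The opposite orientations ($M\Litl{2\eps}0$ or $0\Ritl{2\eps}M$) would, by exactness, force $M=0$, so they cannot be what the statement intends. Once the orientation is fixed correctly, the proof is a one-line application of each definition.
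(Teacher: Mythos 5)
Your proof is correct and is essentially the same as the paper's: the paper writes the two short exact sequences with $\ker(t^{2\eps})$ and $\coker(t^{2\eps})$ in place of $M$, but since the hypothesis gives $t^{2\eps}M=0$ these coincide with $M$ and your identity-map sequences are the same sequences. Your explicit note on orientation ($0\Litl{2\eps}M$ and $M\Ritl{2\eps}0$, not the reverse) is the right reading and matches what the paper's sequences exhibit.
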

\begin{proof}
To prove the result, we consider the following short exact sequences illustrating left and right interleaving respectively: 
\[
0 \rightarrow 0 \xrightarrow{t^{2\eps}} A \xrightarrow{\cong} \coker(t^{2\eps}) \rightarrow 0
\]
\[
0 \rightarrow \ker(t^{2\eps}) \xrightarrow{\cong}  A \xrightarrow{t^{2\eps}}  0 \rightarrow 0
\]
It follows directly that $\ker(t^{2\eps})$ and $\coker(t^{2\eps})$ are $\eps$-interleaved with 0, fufilling Definitions~\ref{def:rinterleave} and \ref{def:linterleave} and hence $A$ is both $2\eps$-left and right interleaved with $0$.
\end{proof}

\section{Approximating Higher Pages}\label{sec:approx}

The main work in the proof is to track the approximation factors through the spectral sequence. Let $E$ be the Mayer-Vietoris spectral sequence associated to $(\Xs,\XCov)$. In the acyclic case, as in the case for many spectral sequences, the sequence collapses on the second page. Furthermore, the special structure of the second page, i.e. $E^2_{p,q}=0$ for $q>0$, eliminates the possibility of extension problems.  This allows for the homology of the space to be read off from the bottom row, and hence corresponding with the homology of the nerve (Theorem~\ref{thm:acyclic}). The extension problems which arise in our setting are further discussed in Section~\ref{sec:linking}. 

Therefore, a natural first step is to  compare the bottom row of the $E^2$ page with the homology of the nerve. 

\begin{proposition}
If $\XCov$ is an $\eps$-acyclic cover of $\Xs$, $(E^1_{*,0},d^1_{*,0})$ and $(\PC_*(\Nrv),\partial)$ are $2\eps$-interleaved as chain complexes.
\end{proposition}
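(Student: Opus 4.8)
The plan is to build the $2\eps$-interleaving of chain complexes between $(E^1_{*,0},d^1_{*,0})$ and $(\PC_*(\Nrv),\partial)$ by mimicking the isomorphism used in the proof of the (exact) Persistent Nerves Theorem, but replacing the ``choose a canonical vertex $v_I$ with $\deg v_I = \deg I$'' step — which relies on genuine acyclicity — with a choice valid up to the slack permitted by $\eps$-acyclicity. Recall that $E^1_{p,0} = \bigoplus_{|I|=p+1}\PH_0(\XCv_I)$ and $\PC_p(\Nrv) = \bigoplus_{|I|=p+1}\kk[t]\cdot I$, and that $g(I)=\deg I = \min\{j\mid \XCv_I^j\neq\emptyset\}$. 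Since $\XCov$ is $\eps$-acyclic, for each $I\in\Nrv$ we have $\PH_0(\XCv_I)\itl{\eps} t^{a_I}\kk[t]$ for some $a_I\in\ZZ$; one first checks $a_I = \deg I$, since the birth time of the essential class of an $\eps$-acyclic complex must agree with the first nonempty filtration step.

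First I would fix, for each $I$, an $\eps$-interleaving $(\alpha_I,\beta_I)$ between $\PH_0(\XCv_I)$ and $t^{\deg I}\kk[t]$, and assemble $\phi_p = \bigoplus_I \alpha_I : E^1_{p,0}\to\PC_p(\Nrv)$ (identifying $t^{\deg I}\kk[t]$ with the summand $\kk[t]\cdot I$, i.e. sending the generator to $t^{\deg I - \deg I}I$... more precisely, after rescaling, to $I$ placed in degree $\deg I$) and $\psi_p = \bigoplus_I \beta_I$. By construction each $\phi_p,\psi_p$ is an $\eps$-morphism and $\phi_p\psi_p = \id_{2\eps}$, $\psi_p\phi_p = \id_{2\eps}$ holds summand-by-summand from the interleaving relations. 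The substantive point to verify is that $\phi$ and $\psi$ are chain maps up to the correct shift, i.e. that $\partial_p \phi_p = \phi_{p-1} d^1_{p,0}$ and $d^1_{p,0}\psi_p = \psi_{p-1}\partial_p$ — or, if they only commute up to $\id_{2\eps}$, that this is still enough for the notion of interleaving of chain complexes as defined in the Remark after the definition of $\eps$-interleaving. Here I would use the explicit formulas $d^1_{p,0}([v],I)=\sum_l (-1)^l t^{\deg(v,I)-\deg(v,I_l)}([v],I_l)$ and $\partial_p(I)=\sum_l(-1)^l t^{\deg I - \deg I_l}I_l$, noting that these have literally the same shape, so that the diagram commutes on the nose once the identifications are chosen compatibly; the slack $\eps$ enters only through the maps $\alpha_I,\beta_I$ themselves, not through the differentials.

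The main obstacle I anticipate is the compatibility of the chosen interleavings $(\alpha_I,\beta_I)$ across faces — namely, ensuring that the single diagram square built from $\alpha_I$ and $\alpha_{I_l}$ commutes (exactly, or up to $t^{2\eps}$), rather than just each map being individually an $\eps$-morphism. The resolution is that $\PH_0$ of an $\eps$-acyclic complex is forced to be $\eps$-interleaved with a very rigid module $t^{\deg I}\kk[t]$, and the interleaving maps to/from a free cyclic module are essentially canonical (multiplication by a power of $t$ on the generator): $\alpha_I$ must send the class $[v]$ of any vertex $v\in\XCv_I$ with $\deg v$ close to $\deg I$ to $t^{\deg v - \deg I}$ times the generator, and this prescription is automatically natural with respect to the face inclusions $\XCv_I\hookrightarrow\XCv_{I_l}$. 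So I would spell out that $\alpha_I([v]) = t^{\deg(v,I)-\deg I}\cdot(\text{gen})$ and $\beta_I(\text{gen}) = [v_I]$ for a choice of near-minimal vertex $v_I$, check directly that these are $\eps$-morphisms (the degree shifts are at most... bounded by $\eps$ because $\deg v_I \le \deg I + 2\eps$, giving the interleaving relations $\alpha_I\beta_I = \id_{2\eps}$, $\beta_I\alpha_I = \id_{2\eps}$), and then observe the face-compatibility squares commute by the matching-shape computation above. Once all squares commute, $(\phi,\psi)$ is the desired $2\eps$-interleaving of chain complexes and the proposition follows; the bound $2\eps$ (rather than $\eps$) is exactly the $\eps$-interleaving-to-$2\eps$-round-trip inherent in $\alpha\beta=\id_{2\eps}$.
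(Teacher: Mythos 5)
Your construction is essentially the paper's (the paper takes $\phi_p([v],I)=t^{\deg(v)-\deg(I)+2\eps}I$ and $\psi_p(I)=t^{2\eps}([v_I],I)$ with $\deg v_I=\deg I$), but your write-up has a genuine gap exactly at the point where $\eps$-acyclicity has to do the work. With your normalization $\alpha_I([v])=t^{\deg(v,I)-\deg I}\cdot I$ and $\beta_I(I)=[v_I]$, the composite $\beta_I\alpha_I$ sends $[v]$ to $t^{\deg(v,I)-\deg I}[v_I]$, and no amount of degree bookkeeping makes this equal to $t^{2\eps}[v]$: you need the homological fact that any two vertex classes of the $\eps$-acyclic complex $\XCv_I$ merge after $2\eps$, equivalently that $t^{2\eps}[v_I]$ is independent of which vertex alive at the relevant time was chosen. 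The same fact is needed for the $\psi$-side face squares: $d^1\psi(I)$ involves the class of $v_I$ viewed in $\XCv_{I_l}$, while $\psi\partial(I)$ involves the class of $v_{I_l}$, and these agree only after multiplying by $t^{2\eps}$, and only because of $\eps$-acyclicity. So your claim that the squares ``commute on the nose\ldots the slack enters only through $\alpha_I,\beta_I$'' is false for the $\psi$ direction: with your maps (which carry no $t^{2\eps}$ factor) the squares need not commute at all, and your fallback ``commute up to $\id_{2\eps}$'' is not available, since the paper's notion of chain-complex interleaving requires honest chain maps and Lemma~\ref{induced} (the way this proposition is used) needs cycles and boundaries to be preserved strictly. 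This is also why the statement says $2\eps$ rather than $\eps$: the $t^{2\eps}$ shift must be built into $\phi$ and $\psi$ for $\psi$ to be a chain map, not because an $\eps$-interleaving ``costs'' $2\eps$ (a pair of $\eps$-morphisms with round trip $\id_{2\eps}$ is by definition an $\eps$-interleaving).

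Two smaller points. Your assertion that one may take $a_I=\deg I$ is not justified as stated: the definition of $\eps$-acyclicity only forces $|a_I-\deg I|\le\eps$. One can in fact show $\PH_0(\XCv_I)\itl{\eps}t^{\deg I}\kk[t]$ (a vertex class is non-torsion, so an essential class is born exactly at $\deg I$), but this needs an argument, and the paper's proof avoids the issue entirely by never identifying $a_I$ and using only the choice-independence of $t^{2\eps}[v_I]$. Similarly, ``interleaving maps to or from a free cyclic module are essentially canonical'' is not a substitute for the naturality check: abstractly chosen $(\alpha_I,\beta_I)$ admit scalar and torsion freedom, which is precisely why the paper works with the explicit formulas above and isolates the independence of $t^{2\eps}[v_I]$ as the single place where $\eps$-acyclicity enters.
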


\begin{proof}
The interleaving maps $\phi_p:E^1_{p,0}\to\PC_p(\Nrv)$ and $\psi_p:\PC_p(\Nrv)\to E^1_{p,0}$ are defined by the formulae
\[
\phi_p([v],I)=t^{\deg(v)-\deg(I)+2\eps}I\qquad\text{and}\quad\psi_p(I)=t^{2\eps}([v_I],I),
\]
where $v_I\in V$ is any vertex such that $\deg v_I = \deg I$. Note that the definition of $\psi$ requires a choice of $v_I$, but since $\XCov$ is an $\eps$-acyclic cover, $t^{2\eps}[v_I]$ is independent of this choice, so $\psi$ is well-defined.

A completely straightforward calculation now shows that $(\phi,\psi)$ is a $2\eps$-interleaving and that $\phi$ and $\psi$ commute with the differentials $\partial$ and $d^1$. (For the latter note that the differentials only really act on the information coming from the nerve, i.e. $I$, while the interleaving maps preserve this information.)
\end{proof}

Using Lemma \ref{induced}, this immediately yields:

\begin{corollary}\label{secondpage}
If $\XCov$ is an $\eps$-acyclic cover of $\Xs$, then $E^2_{*,0}$ and $\PH_*(\Nrv)$ are $2\eps$-interleaved as graded modules.
\end{corollary}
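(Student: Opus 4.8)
The plan is to combine the immediately preceding Proposition with Lemma~\ref{induced}. The first thing I would do is recall how the pages of the Mayer--Vietoris spectral sequence are formed: the $r$-th page is obtained from the $(r-1)$-st by passing to homology with respect to the differential $d^{r-1}$. Restricting attention to the bottom row $q=0$, where $d^1_{p,0}\colon E^1_{p,0}\to E^1_{p-1,0}$, this says precisely that $E^2_{p,0}=\ker(d^1_{p,0})/\im(d^1_{p+1,0})$, i.e.\ $E^2_{*,0}$ is the homology of the chain complex $(E^1_{*,0},d^1_{*,0})$ as a graded $\kk[t]$-module. (Here one should note that because the spectral sequence is first-quadrant and $d^1$ lowers $p$ by one, no other page differential interferes with this identification of the bottom row.)

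Next, the persistent homology $\PH_*(\Nrv)$ of the nerve is by definition the homology of its simplicial chain complex $(\PC_*(\Nrv),\partial)$. The preceding Proposition provides $2\eps$-interleaving maps $\phi\colon E^1_{*,0}\to\PC_*(\Nrv)$ and $\psi\colon\PC_*(\Nrv)\to E^1_{*,0}$ which commute with the differentials $d^1_{*,0}$ and $\partial$ and satisfy $\phi\psi=\id_{4\eps}$, $\psi\phi=\id_{4\eps}$; that is, $(E^1_{*,0},d^1_{*,0})$ and $(\PC_*(\Nrv),\partial)$ are $2\eps$-interleaved as chain complexes. Lemma~\ref{induced} then applies directly: the interleaving descends to a $2\eps$-interleaving $(\phi_\Hg,\psi_\Hg)$, $\phi_\Hg([x])=[\phi(x)]$, $\psi_\Hg([x])=[\psi(x)]$, between $\PH_*(E^1_{*,0},d^1_{*,0})=E^2_{*,0}$ and $\PH_*(\PC_*(\Nrv),\partial)=\PH_*(\Nrv)$. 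This is exactly the assertion $E^2_{*,0}\itl{2\eps}\PH_*(\Nrv)$ as graded modules.

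There is no substantive obstacle here — the statement really is a one-line consequence of the previous two results. The only point needing a moment's care is the bookkeeping identification $E^2_{*,0}\cong\PH_*(E^1_{*,0},d^1_{*,0})$, which must be read off from the construction of the Mayer--Vietoris spectral sequence (and which also underlies the earlier treatment of the Persistent Nerves Theorem). Once that is made explicit, invoking Lemma~\ref{induced} closes the argument.
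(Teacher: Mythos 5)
Your proposal is correct and matches the paper's argument: Corollary \ref{secondpage} is obtained exactly by applying Lemma \ref{induced} to the chain-complex $2\eps$-interleaving of $(E^1_{*,0},d^1_{*,0})$ with $(\PC_*(\Nrv),\partial)$ from the preceding Proposition, using that $E^2_{*,0}$ is the homology of the bottom row. Your extra remark identifying $E^2_{*,0}\cong\PH_*(E^1_{*,0},d^1_{*,0})$ is the same bookkeeping the paper leaves implicit.
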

Note that setting $\eps=0$, recovers Theorem \ref{thm:acyclic}. We now observe that in the nerve construction, the dimension of the nerve is $D=\dim\Nrv$, all $(D+1)$-intersections are empty and hence 0. In this case, Corollary \ref{secondpage} can be sharpened:
\begin{remark}\label{rem:outsidebox}
For $d\geq D+1$, $E^2_{d,0}$ and $\PH_d(\Nrv)$ are both trivial and hence isomorphic.
\end{remark}

The next step is to establish a relation between $E^2$ and $E^\infty$.

\begin{proposition}\label{epsilon}
If $\XCov$ is an $\eps$-acyclic cover of $\Xs$, then $E^r_{p,q}\itl{\eps}0$ holds for all $p\in\ZZ$ and $q\neq 0$ and all $r\geq 1$.
\end{proposition}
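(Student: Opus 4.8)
The plan is a short induction on the page index $r$, resting on two trivial observations: $\eps$-triviality is characterized by annihilation by $t^{2\eps}$ (hence it is preserved by arbitrary direct sums), and it passes to subquotients (Corollary \ref{subquotient}).

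First I would treat the base case $r=1$. By \eqref{E1} we have $E^1_{p,q}=\bigoplus_{|I|=p+1}\PH_q(\XCv_I)$. Since $\XCov$ is an $\eps$-acyclic cover, for each $I\in\Nrv$ there is $a\in\ZZ$ with $\PH_*(\XCv_I)\itl{\eps}\PH_*(\pt_a)$; as $\PH_q(\pt_a)=0$ for $q\neq 0$, this forces $\PH_q(\XCv_I)\itl{\eps}0$, i.e.\ $t^{2\eps}\PH_q(\XCv_I)=0$. A direct sum of modules each annihilated by $t^{2\eps}$ is again annihilated by $t^{2\eps}$, so $t^{2\eps}E^1_{p,q}=0$ whenever $q\neq 0$, which is precisely $E^1_{p,q}\itl{\eps}0$. (If $p<0$ or $q<0$ the entry vanishes, so there is nothing to check.)

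For the inductive step, assume $E^r_{p,q}\itl{\eps}0$ for all $p\in\ZZ$ and all $q\neq 0$. By definition the next page is $E^{r+1}_{p,q}=\ker\!\big(d^r\colon E^r_{p,q}\to E^r_{p-r,q+r-1}\big)\big/\im\!\big(d^r\colon E^r_{p+r,q-r+1}\to E^r_{p,q}\big)$, hence a subquotient of $E^r_{p,q}$. Corollary \ref{subquotient} then gives $E^{r+1}_{p,q}\itl{\eps}0$, completing the induction.

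There is essentially no genuine obstacle here; the proposition is the bookkeeping statement that once an entry is $\eps$-trivial on $E^1$ it stays $\eps$-trivial on every later page, so that turning pages of the spectral sequence costs nothing in the approximation factor (and, since the Mayer--Vietoris spectral sequence is first quadrant, the same holds on $E^\infty$). The real difficulties --- resolving the extension problems on each antidiagonal and identifying the bottom row with $\PH_*(\Nrv)$ --- are deferred to Section \ref{sec:linking} and Corollary \ref{secondpage}.
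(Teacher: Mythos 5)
Your proof is correct and follows essentially the same route as the paper: the base case uses Equation \eqref{E1} and the definition of an $\eps$-acyclic cover (with $t^{2\eps}$ annihilating each summand and hence the direct sum), and the later pages are handled via Corollary \ref{subquotient}. The only cosmetic difference is that you phrase the second step as an induction over pages, whereas the paper simply notes that every $E^r_{p,q}$ with $r>1$ is a subquotient of $E^1_{p,q}$; these are the same argument.
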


\begin{proof}
Using 
\[
E^1_{p,q}=\bigoplus_{|I|=p+1}\PH_q(\XCv_I)
\]
(see Equation \eqref{E1}) and the definition of $\eps$-acyclic cover, we obtain the claim for $E^1_{p,q}$ with $q>0$. Since all the $E^r_{p,q}$ with $r>1$ are subquotients of $E^1_{p,q}$, the claim is now a direct consequence of Corollary \ref{subquotient}.
\end{proof}

We can now prove the following proposition:
\begin{proposition}\label{higherpages}
If $\XCov$ is an $\eps$-acyclic cover of $\Xs$, then $E^{r+1}_{*,0}\Litl{2\eps}E^r_{*,0}$ as graded modules for all $r\geq 2$.
\end{proposition}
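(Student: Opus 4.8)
The plan is to exhibit, for each fixed $p$, a short exact sequence of $\kk[t]$-modules
\[
0\to E^{r+1}_{p,0}\to E^r_{p,0}\xrightarrow{d^r}\im d^r\to 0
\]
whose rightmost term is $\eps$-interleaved with $0$, and then invoke Definition \ref{def:linterleave} componentwise.

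First I would unwind the definition of the $(r+1)$-st page at the bottom row. Recall $E^{r+1}_{p,q}$ is the homology of $(E^r_{*,*},d^r)$ at $(p,q)$, where $d^r:E^r_{p,q}\to E^r_{p-r,q+r-1}$. For $q=0$, the differential \emph{into} $E^r_{p,0}$ emanates from $E^r_{p+r,1-r}$, and since $r\geq 2$ we have $1-r\leq -1<0$; as the Mayer--Vietoris spectral sequence is a first-quadrant spectral sequence, this term vanishes. Hence the incoming differential is zero and $E^{r+1}_{p,0}=\ker\bigl(d^r\colon E^r_{p,0}\to E^r_{p-r,r-1}\bigr)$. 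Since $d^r$ is a (degree-preserving) morphism of $\kk[t]$-modules, its kernel and image are submodules, which gives the short exact sequence displayed above with $\im d^r\leq E^r_{p-r,r-1}$.

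Next I would control the size of $\im d^r$. Because $r\geq 2$, the target index satisfies $r-1\geq 1\neq 0$, so Proposition \ref{epsilon} gives $E^r_{p-r,r-1}\itl{\eps}0$. As $\im d^r$ is a submodule (in particular a subquotient) of $E^r_{p-r,r-1}$, Corollary \ref{subquotient} yields $\im d^r\itl{\eps}0$. Taking $M=E^{r+1}_{p,0}$, $N=E^r_{p,0}$, and $P=\im d^r$ in Definition \ref{def:linterleave} then shows $E^{r+1}_{p,0}\Litl{2\eps}E^r_{p,0}$ for every $p$, which is exactly the asserted left interleaving of graded modules.

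There is no real obstacle here beyond careful bookkeeping of the spectral-sequence bidegrees; the one point that deserves a line of care is that $d^r$ is a genuine $0$-morphism of $\kk[t]$-modules (the internal $\ZZ$-grading that makes each $E^r_{p,q}$ a $\kk[t]$-module is the filtration grading, which $d^r$ preserves), so that the kernel--image sequence really lives in the category where Definition \ref{def:linterleave} was stated.
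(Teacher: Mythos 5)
Your proof is correct and follows essentially the same route as the paper's: identify $E^{r+1}_{p,0}=\ker d^r_{p,0}$ because the incoming differential vanishes, then use Proposition \ref{epsilon} on the target row $r-1>0$ together with Definition \ref{def:linterleave}. You are in fact slightly more careful than the paper's terse argument, by explicitly taking the third term of the short exact sequence to be $\im d^r$ (a submodule of $E^r_{p-r,r-1}$, hence $\eps$-interleaved with $0$ by Corollary \ref{subquotient}) rather than $E^r_{p-r,r-1}$ itself, which is what Definition \ref{def:linterleave} actually requires.
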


\begin{proof}
Notice that $E^{r+1}_{p,0}=\ker d^r_{p,0}$, since the domain of $d^r_{p+r,-r+1}$ is $0$. We conclude that $\ker d^r_{p,0}\Litl{2\eps}E^r_{p,0}$ is true by Definition \ref{def:linterleave}, since $E^r_{p-r,r-1}\itl{\eps}0$ by Proposition \ref{epsilon}.
\end{proof}


If the spectral sequence collapses after finitely many steps, $E^2$ may already give a good approximation to $E^\infty$. This happens, for instance, if dimension of the nerve or underlying space are finite.  We define $D:=\dim\Nrv$, the maximum dimension of any simplex in $\Nrv$. Since simplices in $\Nrv$ correspond to non-empty intersections of cover elements, $D$ is also the smallest number such that any intersection of more than $D+1$ distinct cover elements is empty. Note that in the following the number of pages required until the spectral sequence collapses may be bounded by the dimension of the underlying space.

\begin{theorem}\label{infinitypage}
If $\XCov$ is an $\eps$-acyclic cover of $\Xs$ and $0<D<\infty$, then $E^\infty_{*,0}\Litl{2(D-1)\eps}E^2_{*,0}$ as graded modules. For $D=0,1$ we have $E^\infty_{*,0}\cong E^2_{*,0}$.
\end{theorem}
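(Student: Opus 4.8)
The plan is to combine the single‑page estimate of Proposition \ref{higherpages} with the fact that, in the first‑quadrant Mayer--Vietoris spectral sequence, the bottom row stabilizes after $D+1$ pages, and then simply count the pages involved.

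First I would pin down where the bottom row stabilizes. Since nothing maps into the bottom row --- the only $d^r$ with target $E^r_{p,0}$ would start at $E^r_{p+r,1-r}=0$ --- we have $E^{r+1}_{p,0}=\ker d^r_{p,0}\leq E^r_{p,0}$ for every $r\geq 1$, exactly as in the proof of Proposition \ref{higherpages}. The outgoing differential $d^r_{p,0}\colon E^r_{p,0}\to E^r_{p-r,r-1}$ is then forced to vanish once $r\geq D+1$: if $p>D$ the source is zero by Remark \ref{rem:outsidebox}, and if $0\leq p\leq D$ then $p-r<0$, so the target is zero. Hence $E^\infty_{*,0}=E^{D+1}_{*,0}$.

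Next I would chain left interleavings. Proposition \ref{higherpages} gives $E^{r+1}_{*,0}\Litl{2\eps}E^r_{*,0}$ for each $r$ with $2\leq r\leq D$; composing these $D-1$ left interleavings via the triangle inequality for left interleavings (Proposition \ref{prop:leftright}) yields $E^{D+1}_{*,0}\Litl{2(D-1)\eps}E^2_{*,0}$, which by the previous paragraph is the asserted bound on $E^\infty_{*,0}$. When $D=1$ the chain is empty and the bottom row is already stable at page $2$, so $E^\infty_{*,0}=E^2_{*,0}$; when $D=0$ the only possibly‑nonzero bottom term is $E^r_{0,0}$, and $E^2_{0,0}=\ker d^1_{0,0}=E^1_{0,0}$ because $E^1_{-1,0}=0$, so again $E^\infty_{*,0}\cong E^2_{*,0}$.

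I do not expect any genuine difficulty, since everything rests on results already proved; the one place to be careful is the index bookkeeping. Specifically, one must check that $d^r_{p,0}=0$ precisely for $r\geq D+1$ (so stabilization happens on page $D+1$, not one page later), that exactly $D-1$ instances of Proposition \ref{higherpages} are used (so the accumulated parameter is $2(D-1)\eps$ rather than an off‑by‑one variant), and that the left interleavings are composed in the order $E^{r+1}\Litl{}E^r$ followed by $E^r\Litl{}E^{r-1}$ demanded by Proposition \ref{prop:leftright}.
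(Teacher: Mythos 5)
Your proof is correct and follows essentially the same route as the paper's: identify page $D+1$ as the point where the bottom row stabilizes (using the first-quadrant structure and the fact that $E^r_{p,q}=0$ for $p>D$), then chain the $D-1$ left interleavings $E^{D+1}_{*,0}\Litl{2\eps}E^D_{*,0}\Litl{2\eps}\cdots\Litl{2\eps}E^2_{*,0}$ from Proposition \ref{higherpages} via Proposition \ref{prop:leftright}. Your treatment of the $D=0$ edge case is a bit more explicit than the paper's (which implicitly relies on $d^1=0$ to get $E^1=E^2$), but the substance is the same.
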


\begin{proof}
Since the intersections of more than $D+1$ cover elements are necessarily empty, $E^r_{p,q}=0$ holds for all $p>D$. Therefore, for $r>D$, we have $d^r=0$, since either the domain or codomain of each $d^r_{p,q}$ is zero. This immediately implies that the spectral sequence has collapsed by the $(D+1)$-th page, i.e. $E^{D+1}=E^{D+2}=\ldots$. This concludes the proof for $D=0$. For $D>0$, using Proposition \ref{higherpages}, this shows that
\[
E^{\infty}_{*,0}=E^{D+1}_{*,0}\Litl{2\eps}E^D_{*,0}\Litl{2\eps}\ldots\Litl{2\eps}E^3_{*,0}\Litl{2\eps}E^2_{*,0}
\]
and therefore $E^{\infty}_{*,0}\Litl{2(D-1)\eps}E^2_{*,0}$ by the triangle inequality for left interleavings.
\end{proof}

\begin{remark}\label{specialcase}
For dimension $d>D$, since all the modules are trivial it follows that $E^\infty_{d,0}\cong\PH_d(\Nrv)$.
\end{remark}

A similar argument shows a weaker property without any assumptions on the dimension of the nerve.

\begin{theorem}
If $\XCov$ is an $\eps$-acyclic cover of $\Xs$ and $n>0$, we have $E^\infty_{n,0}\Litl{2(n-1)\eps}E^2_{n,0}$. For $n=0$ we have $E^\infty_{n,0}\cong E^2_{n,0}$.
\end{theorem}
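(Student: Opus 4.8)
The plan is to mimic the argument of Theorem~\ref{infinitypage}, but restricted to the single column index $n$ rather than to all columns at once. The key observation is that for a first quadrant spectral sequence, only the differentials landing in or emanating from the bottom row in column $n$ can disturb the stabilization of $E^r_{n,0}$, and these vanish once $r$ is large enough simply because $E^r_{n-r,r-1}$ and $E^r_{n+r,-r+1}$ sit outside the first quadrant for $r>n$. So the sequence of entries $E^r_{n,0}$ stabilizes by the $(n+1)$-th page, i.e. $E^{n+1}_{n,0}=E^{n+2}_{n,0}=\ldots=E^\infty_{n,0}$, \emph{regardless} of the dimension of the nerve.

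Concretely, first I would note that $E^{r+1}_{n,0}=\ker d^r_{n,0}$ for every $r\geq 2$, since the differential $d^r$ into position $(n,0)$ has domain $E^r_{n+r,-r+1}=0$ (first quadrant). By Proposition~\ref{epsilon}, the codomain $E^r_{n-r,r-1}$ of $d^r_{n,0}$ is $\eps$-interleaved with $0$ whenever $r-1\neq 0$, i.e. for $r\geq 2$; hence the short exact sequence $0\to E^{r+1}_{n,0}\to E^r_{n,0}\to \operatorname{im} d^r_{n,0}\to 0$ exhibits $E^{r+1}_{n,0}\Litl{2\eps}E^r_{n,0}$ by Definition~\ref{def:linterleave} and Corollary~\ref{subquotient} (the image is a subquotient of the $\eps$-trivial module $E^r_{n-r,r-1}$, hence $\eps$-trivial). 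This is exactly the content of Proposition~\ref{higherpages} for a single column. Next, for $r>n$ we have $E^r_{n-r,r-1}=0$ because $n-r<0$, so $d^r_{n,0}=0$ and $E^{r+1}_{n,0}=E^r_{n,0}$; thus $E^\infty_{n,0}=E^{n+1}_{n,0}$. Stringing together the left interleavings $E^{n+1}_{n,0}\Litl{2\eps}E^n_{n,0}\Litl{2\eps}\cdots\Litl{2\eps}E^2_{n,0}$ — a chain of $n-1$ steps — and invoking the triangle inequality for left interleavings (Proposition~\ref{prop:leftright}) yields $E^\infty_{n,0}\Litl{2(n-1)\eps}E^2_{n,0}$. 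For $n=0$ the chain is empty and $E^\infty_{0,0}=E^2_{0,0}$ already (indeed $d^r_{0,0}=0$ for all $r\geq 1$ since its codomain lies in column $-r<0$), while for $n=1$ we likewise get $E^\infty_{1,0}=E^2_{1,0}$ since $d^r_{1,0}$ has codomain in column $1-r\leq -1<0$ for all $r\geq 2$.

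I do not anticipate a genuine obstacle here: the proof is essentially a line-by-line transcription of Theorem~\ref{infinitypage} with $D$ replaced by $n$ and the column-wise vanishing $E^r_{p,q}=0$ for $p>D$ replaced by the first-quadrant vanishing $E^r_{n-r,r-1}=0$ for $r>n$. The only point requiring a little care is making sure the indexing of the chain of left interleavings gives the factor $2(n-1)\eps$ rather than $2n\eps$ — i.e. that there are exactly $n-1$ nontrivial steps from page $2$ to page $n+1$ — and that the edge cases $n=0,1$ are handled separately since then there are no steps at all and the conclusion is an honest isomorphism rather than merely a left interleaving.
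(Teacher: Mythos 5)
Your proof is correct and follows essentially the same route as the paper: first-quadrant vanishing gives $E^{n+1}_{n,0}=E^\infty_{n,0}$, and then a chain of $n-1$ left interleavings (the single-column instance of Proposition~\ref{higherpages}) together with Proposition~\ref{prop:leftright} yields the $2(n-1)\eps$ bound, with the $n=0$ (and $n=1$) cases handled by the vanishing of the relevant differentials. Your explicit short exact sequence $0\to E^{r+1}_{n,0}\to E^r_{n,0}\to \im d^r_{n,0}\to 0$ and the use of Corollary~\ref{subquotient} just spell out what the paper cites from Proposition~\ref{higherpages}.
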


\begin{proof}
Observe that for $r>n>0$, we have $d^r_{n,0}=0$ and $d^r_{n+r,-r+1}=0$, since $E^r_{n-r,r-1}$ and $E^r_{n+r,-r+1}$ are zero. Therefore, $E^{n+1}_{n,0}=E^{n+2}_{n,0}=\ldots$. Combined with Proposition \ref{higherpages} this shows that
\[
E^{\infty}_{n,0}=E^{n+1}_{n,0}\Litl{2\eps}E^n_{n,0}\Litl{2\eps}\ldots\Litl{2\eps}E^3_{n,0}\Litl{2\eps}E^2_{n,0}
\]
and therefore $E^\infty_{n,0}\Litl{2(n-1)\eps}E^2_{n,0}$ by the triangle inequality for left interleavings.

The case $n=0$ holds since for $r>1$ all differentials to and from $E^r_{0,0}$ are zero.
\end{proof}

\section{From $E^\infty$ to Homology}\label{sec:linking}

If there were no extension problems, the direct sum of the antidiagonals on the $E^\infty$ page of the spectral sequence would be isomorphic to the homology of the space, and completing the proof would be straightforward. However, when dealing with persistence modules, we \textbf{do} have to worry about extensions. As noted before, in the acyclic case, $E^2_{p,q}=0$ for all $q>0$, so the only possible extension is the trivial one. If we replace the $\eps$-modules below by 0, we see that each step becomes an isomorphism.  
We now show how to infer an Approximate Nerve Theorem from these results. For technical reasons, we have to distiguish between several cases depending on the dimension of the nerve and beyond that dimension.

\begin{proposition}\label{linking}
If $\XCov$ is an $\eps$-acyclic cover of $\Xs$ and $D<\infty$, $\PH_d(\Xs)\Ritl{2d\eps}E^\infty_{d,0}$ holds for $0\leq d \leq D$.
\end{proposition}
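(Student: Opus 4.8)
The plan is to peel off the extension problems along the $d$-th antidiagonal of $E^\infty$ one step at a time, using the fact that everything off the bottom row is $\eps$-trivial. Recall from the discussion of convergence that there is a filtration $0=\PH_d(\Xs)^{-1}\subseteq\PH_d(\Xs)^0\subseteq\cdots\subseteq\PH_d(\Xs)^d=\PH_d(\Xs)$ with successive quotients $\PH_d(\Xs)^p/\PH_d(\Xs)^{p-1}\cong E^\infty_{p,d-p}$. For $p<d$ the index $q=d-p$ is positive, so by Proposition~\ref{epsilon} (applied on the $\infty$-page, which is legitimate since $E^\infty_{p,q}$ is a subquotient of $E^1_{p,q}$, or simply because the spectral sequence converges and these are stabilized values) we have $E^\infty_{p,d-p}\itl{\eps}0$ for all $p$ with $0\leq p\leq d-1$. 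The top quotient is $E^\infty_{d,0}=\PH_d(\Xs)/\PH_d(\Xs)^{d-1}$.

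The key step is then to run the following induction. For $0\leq k\leq d$, consider the short exact sequence
\[
0\to \PH_d(\Xs)^{k-1}\to \PH_d(\Xs)^{k}\to E^\infty_{k,d-k}\to 0.
\]
For $k\leq d-1$ the quotient $E^\infty_{k,d-k}$ is $\eps$-trivial, and $\PH_d(\Xs)^{-1}=0$, so by Proposition~\ref{extension} an easy induction gives $\PH_d(\Xs)^{k}\itl{k\eps}0$ for all $0\leq k\leq d-1$; in particular $\PH_d(\Xs)^{d-1}\itl{(d-1)\eps}0$. Wait---I should be more careful about the constant. Applying Proposition~\ref{extension} inductively: $\PH_d(\Xs)^0\cong E^\infty_{0,d}\itl{\eps}0$, and if $\PH_d(\Xs)^{k-1}\itl{k\eps}0$ then $\PH_d(\Xs)^{k}\itl{(k+1)\eps}0$ by the short exact sequence above together with $E^\infty_{k,d-k}\itl{\eps}0$. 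Hence $\PH_d(\Xs)^{d-1}\itl{d\eps}0$.

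Now look at the final short exact sequence in the filtration,
\[
0\to \PH_d(\Xs)^{d-1}\to \PH_d(\Xs)\to E^\infty_{d,0}\to 0,
\]
whose left-hand term is $d\eps$-interleaved with $0$. By Definition~\ref{def:rinterleave} this exhibits exactly a $2d\eps$-right interleaving $\PH_d(\Xs)\Ritl{2d\eps}E^\infty_{d,0}$, which is the claim. The main obstacle---really the only subtle point---is making sure the bookkeeping of the interleaving factor is tight: one must check that the $\eps$-triviality bound accumulates additively across exactly $d$ extension steps (one for each positive value of $q=d-p$, i.e. $p=0,\dots,d-1$) so that $\PH_d(\Xs)^{d-1}\itl{d\eps}0$ and hence the right interleaving with $E^\infty_{d,0}$ has parameter $2d\eps$ and not something larger. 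Everything else is a direct application of Proposition~\ref{extension} and the definition of right interleaving, together with the explicit description of the filtration on $\PH_*(\Tot(M))$ recalled in Section~\ref{sec:specsec} (in particular $\PH_d(\Xs)^{-1}=0$ and $\PH_d(\Xs)^{d}=\PH_d(\Xs)$).
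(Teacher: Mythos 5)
Your argument is correct and gives exactly the right constant, but it reaches the conclusion by a somewhat different route than the paper's. The paper peels off one extension at a time on the \emph{quotient} side: it observes that each
\[
0\to\tfrac{\PH_d(\Xs)^{p}}{\PH_d(\Xs)^{p-1}}\to\tfrac{\PH_d(\Xs)^d}{\PH_d(\Xs)^{p-1}}\to\tfrac{\PH_d(\Xs)^d}{\PH_d(\Xs)^{p}}\to 0
\]
exhibits a $2\eps$-right interleaving (since the kernel is $E^\infty_{p,d-p}\itl{\eps}0$), then composes the resulting chain of $d$ such right interleavings using the triangle inequality for right interleavings (Proposition~\ref{prop:rightleft}). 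You instead work on the \emph{submodule} side of the filtration: you accumulate $\eps$-triviality through the sequences $0\to\PH_d(\Xs)^{k-1}\to\PH_d(\Xs)^{k}\to E^\infty_{k,d-k}\to 0$ via Proposition~\ref{extension} to conclude $\PH_d(\Xs)^{d-1}\itl{d\eps}0$, and then invoke Definition~\ref{def:rinterleave} exactly once on the final short exact sequence $0\to\PH_d(\Xs)^{d-1}\to\PH_d(\Xs)\to E^\infty_{d,0}\to 0$. Your version is marginally more direct in that it uses the right-interleaving machinery only once and exhibits a single explicit short exact sequence witnessing the $2d\eps$-right interleaving, rather than producing it abstractly as a composite; the paper's version fits the structure of the surrounding exposition, which develops the calculus of composing left and right interleavings. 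Both accountings are correct and produce the same sharp constant $2d\eps$, and both ultimately rest on Proposition~\ref{epsilon} and the description of the convergence filtration. (The self-correction in your bookkeeping lands in the right place: $\PH_d(\Xs)^{k}\itl{(k+1)\eps}0$ for $0\leq k\leq d-1$ is the correct inductive invariant, and the base case $d=0$ degenerates gracefully since then $\PH_0(\Xs)^{-1}=0$ and $\PH_0(\Xs)\cong E^\infty_{0,0}$.)
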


\begin{proof}
By Theorem \ref{convergence}, we already know that $E$ converges to $\PH_*(\Xs)$. Explicitly, this means that a filtration $(\PH_*(\Xs)^p)_{p\in\ZZ}$ is defined on $\PH_*(\Xs)$ such that
\[
E^\infty_{p,q}\cong\frac{\PH_{p+q}(\Xs)^p}{\PH_{p+q}(\Xs)^{p-1}}.
\]
In the process of reconstructing $\PH_n(\Xs)=\PH_n(\Xs)^n$ from $E^\infty_{p,q}$ with $p+q=n$, we therefore encounter a series of extension problems. The effect of each of these extension problems in our case, however, is simply to add an error of $2\eps$ to our approximation of $\PH_n(\Xs)$. Specifically, we have
\begin{equation}\label{ddag}
\frac{\PH_n(\Xs)^n}{\PH_n(\Xs)^{p-1}}\Ritl{2\eps}\frac{\PH_n(\Xs)^n}{\PH_n(\Xs)^{p}}
\end{equation}
for each $p\neq n$ (equivalently $q\neq 0$). To see this, observe that the sequence
\begin{equation}\label{ssddag}
0\to\frac{\PH_n(\Xs)^{p}}{\PH_n(\Xs)^{p-1}}\to\frac{\PH_n(\Xs)^n}{\PH_n(\Xs)^{p-1}}\to\frac{\PH_n(\Xs)^n}{\PH_n(\Xs)^{p}}\to 0
\end{equation}
is exact and
\begin{equation}\label{epseq}
\frac{\PH_n(\Xs)^{p}}{\PH_n(\Xs)^{p-1}}=E^\infty_{p,q}\itl{\eps}0
\end{equation}
holds by Proposition \ref{epsilon} if $q\neq0$. Since the left most term is $\eps$-interleaved with 0, \eqref{ddag} then follows by Definition \ref{def:rinterleave}. The claim now follows inductively. For $0\leq n\leq D$, we have
\[
\PH_n(\Xs)\cong\frac{\PH_n(\Xs)^n}{\PH_n(\Xs)^{-1}} \Ritl{2\eps}\ldots\Ritl{2\eps}\frac{\PH_n(\Xs)^n}{\PH_n(\Xs)^{n-2}}\Ritl{2\eps}\frac{\PH_n(\Xs)^n}{\PH_n(\Xs)^{n-1}}\cong E^\infty_{n,0},
\]
Since $n\leq D$, there are at most $D$ $2\eps$-right interleavings, proving the result by Proposition \ref{prop:rightleft}. 
\end{proof}
Note that in the case where $\eps=0$, the extensions become trivial as the maps in the filtration are isomorphisms by exactness. 
The second case is for $\PH_d(\Xs)$ when $d>D$.

\begin{proposition}\label{linking_outside}
If $\XCov$ is an $\eps$-acyclic cover of $\Xs$ and $D<\infty$, $\PH_d(\Xs)\Ritl{2(D+1)\eps}E^\infty_{d,0}$ holds for $d > D$.
\end{proposition}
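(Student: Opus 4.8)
The plan is to mimic the argument of Proposition~\ref{linking}, but now accounting for the fact that when $d>D$ the bottom row entry $E^\infty_{d,0}$ need no longer be the top of the filtration on $\PH_d(\Xs)$. The starting point is the same: by Theorem~\ref{convergence} there is a filtration $(\PH_d(\Xs)^p)_{p\in\ZZ}$ with $E^\infty_{p,q}\cong \PH_{p+q}(\Xs)^p/\PH_{p+q}(\Xs)^{p-1}$, and with $\PH_d(\Xs)^{-1}=0$, $\PH_d(\Xs)^d=\PH_d(\Xs)$. The key structural observation is that since $D=\dim\Nrv$, we have $E^\infty_{p,q}=0$ whenever $p>D$; hence for $d>D$ the successive quotients $\PH_d(\Xs)^p/\PH_d(\Xs)^{p-1}=E^\infty_{p,d-p}$ vanish for all $p$ with $D<p\le d$, which forces $\PH_d(\Xs)^D=\PH_d(\Xs)^{D+1}=\cdots=\PH_d(\Xs)^d=\PH_d(\Xs)$.

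So it suffices to compare $\PH_d(\Xs)=\PH_d(\Xs)^D$ with $\PH_d(\Xs)^D/\PH_d(\Xs)^{-1}$, and then climb down the filtration from $p=D$ to $p=0$ exactly as in Proposition~\ref{linking}. For each $p$ with $0\le p\le D$ (equivalently $q=d-p\neq 0$, which holds since $p\le D<d$), the short exact sequence
\[
0\to\frac{\PH_d(\Xs)^{p}}{\PH_d(\Xs)^{p-1}}\to\frac{\PH_d(\Xs)^D}{\PH_d(\Xs)^{p-1}}\to\frac{\PH_d(\Xs)^D}{\PH_d(\Xs)^{p}}\to 0
\]
has left-hand term $E^\infty_{p,d-p}\itl{\eps}0$ by Proposition~\ref{epsilon} (here $q=d-p\neq 0$), so it witnesses a $2\eps$-right interleaving as in Definition~\ref{def:rinterleave}. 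Splicing these together for $p=D,D-1,\ldots,1,0$ gives $D+1$ consecutive $2\eps$-right interleavings linking $\PH_d(\Xs)^D/\PH_d(\Xs)^{-1}\cong\PH_d(\Xs)$ down to $\PH_d(\Xs)^D/\PH_d(\Xs)^{D-1}=E^\infty_{D,d-D}$; but for $d>D$ this last quotient is $E^\infty_{D,d-D}$ with $d-D>0$, so it is itself $\eps$-trivial, whereas $E^\infty_{d,0}=0$. One more invocation of Definition~\ref{def:rinterleave} (the sequence $0\to E^\infty_{D,d-D}\to \PH_d(\Xs)^D/\PH_d(\Xs)^{D-1}\to 0\to 0$, or equivalently folding this into the chain) accounts for the step from $E^\infty_{D,d-D}$ down to $E^\infty_{d,0}=0$. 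Collecting the $D+1$ right interleavings and applying the triangle inequality for right interleavings (Proposition~\ref{prop:rightleft}) yields $\PH_d(\Xs)\Ritl{2(D+1)\eps}E^\infty_{d,0}$.

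The main point requiring care — the likely obstacle — is bookkeeping the number of $2\eps$ steps so that it comes out to exactly $D+1$ rather than $D$ or $D+2$: in Proposition~\ref{linking} the chain for $\PH_n(\Xs)$ with $n\le D$ used $n$ steps because the top of the filtration already equalled $E^\infty_{n,0}$, but here the top is $\PH_d(\Xs)^D$ and one must descend all the way through $p=D,\dots,0$ and then absorb the (now nontrivial, $\eps$-small) bottom quotient $E^\infty_{D,d-D}$, which together give $D+1$ applications of Definition~\ref{def:rinterleave}. One should also double-check that every left-hand term encountered is genuinely $\eps$-trivial, which is exactly the content of Proposition~\ref{epsilon} for the relevant indices $q=d-p>0$; no extension problem here falls outside that proposition's scope because $d>D\ge p$ guarantees $q\neq 0$ throughout.
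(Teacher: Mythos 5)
Your argument is correct and takes essentially the same route as the paper: both proofs descend the convergence filtration using the short exact sequences whose left-hand terms are the $\eps$-trivial modules $E^\infty_{p,d-p}$ for $0\le p\le D$ (Proposition~\ref{epsilon}), obtaining $D+1$ successive $2\eps$-right interleavings that compose via Proposition~\ref{prop:rightleft} to give $\PH_d(\Xs)\Ritl{2(D+1)\eps}E^\infty_{d,0}$, with $E^\infty_{d,0}=0$ since $d>D$. The only slip is cosmetic bookkeeping: the splice over $p=0,\dots,D-1$ yields $D$ steps ending at $E^\infty_{D,d-D}$, and your ``one more invocation'' is exactly the $p=D$ step of the same chain, so the total is the $D+1$ you state in the final sentence, matching the paper.
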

\begin{proof}
For $n>D$, we use the fact that $E^\infty_{p,q}\cong 0$ holds for all $p>D$ (equivalently $q<n-D$). The short exact sequence \eqref{ssddag} for these $p$ implies that 
\[
\frac{\PH_n(\Xs)^{n}}{\PH_n(\Xs)^{D}} \cong\frac{\PH_n(\Xs)^{n}}{\PH_n(\Xs)^{D+1}} \cong\ldots\cong \frac{\PH_n(\Xs)^n}{\PH_n(\Xs)^{n-1}}.
\]
Using \eqref{ssddag} and \eqref{epseq} we obtain the following sequence of right interleavings
\[
\PH_n(\Xs) \cong \frac{\PH_n(\Xs)^n}{\PH_n(\Xs)^{-1}} \Ritl{2\eps}\ldots\Ritl{2\eps} \frac{\PH_n(\Xs)^n}{\PH_n(\Xs)^{D-1}}\Ritl{2\eps} \frac{\PH_n(\Xs)^n}{\PH_n(\Xs)^{D}} \cong \frac{\PH_n(\Xs)^n}{\PH_n(\Xs)^{n-1}} \cong  E^\infty_{n,0}.
\]
By counting that there are $(D+1)$ $2\eps$-right interleavings, we obtain the result. 
\end{proof}

For completeness we add one further case: where the dimension of the space is lower than the dimension of the nerve. 
For example, the nerve of a cubical cover of $k$-dimensional Euclidean space has $D=2^k$. We could redo much of our work for cubical complexes, however the following result shows this is unnecessary. Let $\spaceD := \dim\Xs$. For the case, $D>\spaceD$ we show the approximation constant depends on $\spaceD$ instead of $D$. 
\begin{proposition}\label{linking_lowdimspace}
If $\XCov$ is an $\eps$-acyclic cover of $\Xs$ and $\spaceD<\infty$, $\PH_d(\Xs)\Ritl{2\spaceD\eps}E^\infty_{d,0}$ holds for all $d$.
\end{proposition}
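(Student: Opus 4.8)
The plan is to run the argument of Proposition~\ref{linking} almost verbatim, but with the two gradings of the Mayer--Vietoris double complex interchanged: in place of the vanishing $E^\infty_{p,q}=0$ for $p>D$ (which came from the nerve) I would use the vanishing $E^\infty_{p,q}=0$ for $q>\spaceD$, which comes from the fact that each intersection $\XCv_I$ is a subcomplex of the $\spaceD$-dimensional complex $\Xs$. Since $\spaceD$ can be much smaller than $D$ (e.g.\ a cubical cover of $\RR^k$ has $\spaceD=k$ but $D=2^k$), this is genuinely a sharper bound.

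First I would record the dimension-shift observation. Since $\XCv_I\subseteq\Xs$ and $\dim\Xs=\spaceD$, we have $\PC_q(\XCv_I)=0$ for all $q>\spaceD$; by Equation~\eqref{E1} this forces $E^1_{p,q}=0$ whenever $q>\spaceD$, and since every $E^r_{p,q}$ with $r>1$ — in particular $E^\infty_{p,q}$ — is a subquotient of $E^1_{p,q}$, we get $E^\infty_{p,q}=0$ for all $q>\spaceD$.

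Next I would feed this into the extension-problem analysis. By Theorem~\ref{convergence} the spectral sequence converges to $\PH_*(\Xs)$, so there is a filtration $(\PH_n(\Xs)^p)_{p\in\ZZ}$ with $E^\infty_{p,n-p}\cong\PH_n(\Xs)^p/\PH_n(\Xs)^{p-1}$, $\PH_n(\Xs)^{-1}=0$ and $\PH_n(\Xs)^n=\PH_n(\Xs)$. The vanishing above gives $\PH_n(\Xs)^p=\PH_n(\Xs)^{p-1}$ for $p<n-\spaceD$, which together with $\PH_n(\Xs)^{-1}=0$ yields $\PH_n(\Xs)^p=0$ for every $p\le p_0:=\max\{-1,\,n-\spaceD-1\}$; hence $\PH_n(\Xs)=\PH_n(\Xs)^n/\PH_n(\Xs)^{p_0}$. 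Then, exactly as in Proposition~\ref{linking} — using the short exact sequences~\eqref{ssddag}, the $\eps$-triviality~\eqref{epseq} (valid since $q=n-p\ne0$ for $p<n$), and Definition~\ref{def:rinterleave} — I would assemble the chain
\[
\PH_n(\Xs)=\frac{\PH_n(\Xs)^n}{\PH_n(\Xs)^{p_0}}\Ritl{2\eps}\frac{\PH_n(\Xs)^n}{\PH_n(\Xs)^{p_0+1}}\Ritl{2\eps}\cdots\Ritl{2\eps}\frac{\PH_n(\Xs)^n}{\PH_n(\Xs)^{n-1}}\cong E^\infty_{n,0},
\]
which has $n-1-p_0$ steps. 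A short case check shows $n-1-p_0\le\spaceD$ always: it equals $\spaceD$ when $n\ge\spaceD$, equals $n<\spaceD$ otherwise, and is $0$ when $n=0$ or $\spaceD=0$ (so the chain degenerates to an isomorphism). Composing with Proposition~\ref{prop:rightleft} gives $\PH_n(\Xs)\Ritl{2(n-1-p_0)\eps}E^\infty_{n,0}$, and since a $2k\eps$-right interleaving with $k\le\spaceD$ is automatically a $2\spaceD\eps$-right interleaving (its kernel module is $\eps$-trivial, hence $\spaceD\eps$-trivial), the claimed $\PH_n(\Xs)\Ritl{2\spaceD\eps}E^\infty_{n,0}$ follows for all $n$.

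I do not expect a genuine obstacle: the entire new content is the dimension-shift observation $E^\infty_{p,q}=0$ for $q>\spaceD$, after which the proof is structurally identical to Propositions~\ref{linking} and~\ref{linking_outside}. The only points requiring a little care are the edge cases $n\le\spaceD$ and $\spaceD=0$ (where the filtration vanishes below $-1$ rather than below $n-\spaceD-1$) and the legitimacy of truncating the count of right interleavings at $\spaceD$, which is precisely the monotonicity of $\eps$-triviality.
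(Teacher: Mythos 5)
Your proof is correct and takes essentially the same route as the paper's: both rest on the observation that $E^\infty_{p,q}=0$ for $q>\spaceD$ (equivalently $\PH_n(\Xs)^p/\PH_n(\Xs)^{p-1}=0$ for $p\le n-\spaceD-1$), truncate the filtration accordingly, and then compose at most $\spaceD$ right-interleavings built from \eqref{ssddag} and \eqref{epseq}. You make explicit the $E^1$-vanishing coming from $\dim\Xs=\spaceD$ and the edge cases $n\le\spaceD$ and $\spaceD=0$ via the $\max\{-1,n-\spaceD-1\}$ truncation, which the paper leaves implicit, but the substance is identical.
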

\begin{proof}
The proof follows as in the above propositions. However, since $\spaceD$ is the dimension of the space
\[
\frac{\PH_n(\Xs)^{p}}{\PH_n(\Xs)^{p-1}} = 0, \qquad p\leq n-\spaceD-1,
\]
Therefore using \eqref{ssddag} and \eqref{epseq} we obtain the sequence
\[
 \PH_n(\Xs) \cong  \frac{\PH_n(\Xs)^n}{\PH_n(\Xs)^{-1}}
\cong  \frac{\PH_n(\Xs)^n}{\PH_n(\Xs)^{n-\spaceD-1}} \Ritl{2\eps} \ldots \Ritl{2\eps} \frac{\PH_n(\Xs)^n}{\PH_n(\Xs)^{n-2}} \Ritl{2\eps} \frac{\PH_n(\Xs)^n}{\PH_n(\Xs)^{n-1}}
 \cong  E^\infty_{n,0}.
\]
There are $\spaceD$ $2\eps$-right interleavings, proving the result. 
%
%
\end{proof}



\section{Main Theorems}\label{sec:thms}

Here we connect the results of the previous two sections to obtain our main result. The idea is to consider the chain of approximations. Unfortunately there are several cases we have to consider depending on the dimension of the nerve and the space. The basic idea however is to consider the relationships in the sequence 
\[
\PH_*(\Nrv)\itl{} E^2_{*,0}\itl{}E^\infty_{*,0}\itl{}\PH_*(X).
\]
where we recall that $X$ is a filtered simplicial complexes and $\Nrv$ is another filtered complex given by the nerve of a cover on $X$.
Before stating the result with the tight constant, we consider an easy case of the result which does not use the specific properties of left and right interleavings. Recall that a $2\eps$-left or right interleaving implies a $2\eps$-interleaving. 

\begin{theorem}\label{thm:easy}
If $\XCov$ is an $\eps$-acyclic cover of $\Xs$ and $D<\infty$, we have $\PH_*(\Xs)\itl{(4D+2)\eps}\PH_*(\Nrv)$.
\end{theorem}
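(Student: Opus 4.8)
The plan is to assemble, as an explicit chain of interleavings of graded $\kk[t]$-modules, the three comparisons already proved and then close up with the triangle inequality (Proposition~\ref{distance}):
\[
\PH_*(\Nrv)\itl{2\eps}E^2_{*,0},\qquad E^2_{*,0}\itl{2(D-1)\eps}E^\infty_{*,0},\qquad E^\infty_{*,0}\itl{2(D+1)\eps}\PH_*(\Xs).
\]
Adding the parameters gives $2\eps+2(D-1)\eps+2(D+1)\eps=(4D+2)\eps$, which is the asserted bound. Each link comes from a result of Sections~\ref{sec:approx}--\ref{sec:linking} after discarding the extra structure: recall that a $2\eta$-left interleaving and a $2\eta$-right interleaving are in particular $2\eta$-interleavings, so none of the subtle non-additive composition phenomena of Proposition~\ref{doesnotaddup} is needed for this ``easy'' version.

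First I would record the two middle links. The comparison $\PH_*(\Nrv)\itl{2\eps}E^2_{*,0}$ is exactly Corollary~\ref{secondpage}. For the second link, Theorem~\ref{infinitypage} gives $E^\infty_{*,0}\Litl{2(D-1)\eps}E^2_{*,0}$ when $D\ge 1$ (and an isomorphism when $D\le 1$), hence $E^2_{*,0}\itl{2(D-1)\eps}E^\infty_{*,0}$. For the third link I would merge Propositions~\ref{linking} and~\ref{linking_outside} into a single degree-uniform estimate: for $0\le d\le D$ we have $\PH_d(\Xs)\Ritl{2d\eps}E^\infty_{d,0}$, hence $\PH_d(\Xs)\itl{2d\eps}E^\infty_{d,0}$, which, since interleavings are upward closed in the parameter, is also a $2(D+1)\eps$-interleaving; and for $d>D$ we have $\PH_d(\Xs)\Ritl{2(D+1)\eps}E^\infty_{d,0}$ directly. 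Since interleavings of graded modules are defined componentwise, these assemble into $E^\infty_{*,0}\itl{2(D+1)\eps}\PH_*(\Xs)$.

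With the three links in hand, the triangle inequality finishes the proof, and I would spell out the arithmetic above. I would also remark that this bound is deliberately not tight: a degreewise tally already yields the sharper $4D\eps$ for $d\le D$ and $2(D+1)\eps$ for $d>D$, the genuinely sharp constant being the content of the later theorems that exploit the left/right refinement.

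There is no serious obstacle here once Sections~\ref{sec:approx} and~\ref{sec:linking} are available; the proof is essentially bookkeeping. The only points that need a little care are: (i) forgetting the left/right structure at the right moment so the argument stays elementary; (ii) unifying the two regimes $d\le D$ and $d>D$ (in the latter, $\PH_d(\Nrv)$, $E^2_{d,0}$ and $E^\infty_{d,0}$ all vanish, so the chain collapses); and (iii) the corner cases $D\in\{0,1\}$, where the factor $2(D-1)\eps$ coming from Theorem~\ref{infinitypage} must be read as $0$ because $E^\infty_{*,0}$ already coincides with $E^2_{*,0}$, and where one verifies from the degreewise estimates that the stated bound still holds.
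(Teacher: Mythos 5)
Your proposal is correct and follows essentially the same route as the paper: the chain $\PH_*(\Nrv)\itl{2\eps}E^2_{*,0}\itl{2(D-1)\eps}E^\infty_{*,0}\itl{2(D+1)\eps}\PH_*(\Xs)$ via Corollary~\ref{secondpage}, Theorem~\ref{infinitypage}, and Propositions~\ref{linking} and~\ref{linking_outside} (forgetting the left/right structure), closed by the triangle inequality. Your explicit handling of the $d>D$ regime and the corner cases $D\in\{0,1\}$ merely spells out what the paper dismisses as straightforward.
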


\begin{proof}
Assuming $D>0$ and composing interleavings with constants, we obtain
\[
\PH_*(\Nrv)\itl{2\eps} E^2_{*,0}\itl{2(D-1)\eps}E^\infty_{*,0}\itl{2(D+1)\eps}\PH_*(\Xs).
\]
The first interleaving is from Corollary \ref{secondpage} and the second follows from Theorem \ref{infinitypage}. Finally the last interleaving
follows from  Proposition \ref{linking} for $0\leq d\leq D$ and Proposition \ref{linking_outside} for $d>D$. Adding the terms we obtain the result.
The case $D=0$ is straightforward.
\end{proof}

\begin{theorem}\label{thm:main}
Let $\minD=\min(D,\spaceD)$. If $\XCov$ is an $\eps$-acyclic cover of $\Xs$ and $Q<\infty$, we have $\PH_*(\Xs)\itl{2(\minD+1)\eps}\PH_*(\Nrv)$.	
\end{theorem}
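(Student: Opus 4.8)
The plan is to compare, in each homological degree $d$ separately, the four modules
$\PH_d(\Xs)$, $E^\infty_{d,0}$, $E^2_{d,0}$, $\PH_d(\Nrv)$, where $E$ is the Mayer--Vietoris spectral sequence of $(\Xs,\XCov)$. The outermost comparison $E^2_{d,0}\itl{2\eps}\PH_d(\Nrv)$ is Corollary \ref{secondpage} and always costs $2\eps$. The essential idea is to glue the two inner comparisons into one $2\minD\eps$-interleaving instead of paying for both: the module $E^\infty_{d,0}$ is a quotient of $\PH_d(\Xs)$ (a right interleaving coming from the Mayer--Vietoris filtration of $\PH_d(\Xs)$) and a submodule of $E^2_{d,0}$ (a left interleaving coming from the collapse of the spectral sequence), which is exactly the configuration of the fourth case of Proposition \ref{doesnotaddup}. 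Since a $2k\eps$-left (resp. right) interleaving is automatically a $2k'\eps$-left (resp. right) interleaving for every $k'\ge k$ — the mediating module remains $k'\eps$-trivial — it is enough to produce both of these with the common constant $2\minD\eps$; applying the fourth case of Proposition \ref{doesnotaddup} with parameter $\minD\eps$ in place of $\eps$ (to $N=E^\infty_{d,0}$, $M=E^2_{d,0}$, $P=\PH_d(\Xs)$) then gives $\PH_d(\Xs)\itl{2\minD\eps}E^2_{d,0}$.

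First I would handle the generic range $0\le d\le\minD$. For the left interleaving, the estimate $E^\infty_{d,0}\Litl{2(d-1)\eps}E^2_{d,0}$ (valid for all $d>0$, with an isomorphism when $d=0$) yields $E^\infty_{d,0}\Litl{2\minD\eps}E^2_{d,0}$ because $d\le\minD$. For the right interleaving: if $\minD=D$ then $D<\infty$, and Proposition \ref{linking} gives $\PH_d(\Xs)\Ritl{2d\eps}E^\infty_{d,0}$, hence $\Ritl{2\minD\eps}$; if $\minD=\spaceD$ then $\spaceD<\infty$, and Proposition \ref{linking_lowdimspace} gives $\PH_d(\Xs)\Ritl{2\spaceD\eps}E^\infty_{d,0}$, which is already $\Ritl{2\minD\eps}$. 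At least one applies since $\minD<\infty$. Plugging both into the fourth case of Proposition \ref{doesnotaddup} as above and combining with Corollary \ref{secondpage} via the triangle inequality (Proposition \ref{distance}) gives $\PH_d(\Xs)\itl{2(\minD+1)\eps}\PH_d(\Nrv)$.

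The range $d>\minD$ is degenerate and I would treat it by hand, splitting on whether $d>D$ or $\spaceD<d\le D$. If $d>D$ then $E^1_{d,0}=\bigoplus_{|I|=d+1}\PH_0(\XCv_I)=0$, so $E^\infty_{d,0}=0$ (a subquotient of $E^1_{d,0}$) and $\PH_d(\Nrv)=0$; Proposition \ref{linking_outside} gives $\PH_d(\Xs)\Ritl{2(D+1)\eps}0$, so $\PH_d(\Xs)\itl{2(D+1)\eps}\PH_d(\Nrv)$, which is the claimed bound when $\minD=D$, while if $\minD=\spaceD<D$ one also has $d>\spaceD$ and both sides vanish. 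If $\spaceD<d\le D$ then $\minD=\spaceD$ and $\PH_d(\Xs)=0$, so $E^\infty_{d,0}\cong\PH_d(\Xs)^d/\PH_d(\Xs)^{d-1}=0$; here I would use that the bottom row stabilizes early, namely that for $r\ge\spaceD+2$ the target $E^r_{d-r,r-1}$ of $d^r_{d,0}$ is a subquotient of $E^1_{d-r,r-1}=\bigoplus_{|I|=d-r+1}\PH_{r-1}(\XCv_I)=0$ (as $r-1>\spaceD\ge\dim\XCv_I$), so $d^r_{*,0}=0$ and $E^\infty_{*,0}\cong E^{\spaceD+2}_{*,0}$, whence $E^\infty_{*,0}\Litl{2\spaceD\eps}E^2_{*,0}$ by iterating Proposition \ref{higherpages} and the triangle inequality for left interleavings (Proposition \ref{prop:leftright}). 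Combined with Corollary \ref{secondpage} this gives $E^2_{d,0}\itl{\spaceD\eps}0$, hence $\PH_d(\Nrv)\itl{(\spaceD+2)\eps}0=\PH_d(\Xs)$, which lies within $2(\minD+1)\eps=2(\spaceD+1)\eps$.

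The crux — and the reason the constant drops from the $(4D+2)\eps$ of Theorem \ref{thm:easy} to $2(\minD+1)\eps$ — is the fusion step: the left interleaving relating $E^\infty_{*,0}$ and $E^2_{*,0}$ and the right interleaving relating $\PH_*(\Xs)$ and $E^\infty_{*,0}$ must be amalgamated without their constants adding, which is precisely what the fourth case of Proposition \ref{doesnotaddup} provides, at the cost of invoking that $\kk[t]$ has global dimension one. The only other delicate point is the dimension bookkeeping: for each degree $d$ one must select whichever of Propositions \ref{linking}, \ref{linking_outside}, \ref{linking_lowdimspace} and whichever form of the $E^\infty$-versus-$E^2$ estimate keeps both sides of the amalgamation at constant $\le 2\minD\eps$, and dispose separately of the degenerate high-degree ranges in which one of $\PH_d(\Xs)$, $\PH_d(\Nrv)$ vanishes.
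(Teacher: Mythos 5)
Your proposal is correct and follows essentially the same route as the paper: the chain $\PH_*(\Nrv)\itl{2\eps}E^2_{*,0}$, the left interleaving between $E^\infty_{*,0}$ and $E^2_{*,0}$, the right interleavings of Propositions \ref{linking}, \ref{linking_outside}, \ref{linking_lowdimspace}, and crucially the fourth case of Proposition \ref{doesnotaddup} to fuse the left and right interleavings without adding their constants. The only differences are bookkeeping: you argue degree-by-degree (using the per-degree variant $E^\infty_{n,0}\Litl{2(n-1)\eps}E^2_{n,0}$ and monotonicity of left/right interleavings in the parameter) and dispose of the degenerate ranges $d>D$ and $\spaceD<d\leq D$ by explicit vanishing arguments, which matches the paper's case analysis in substance.
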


\begin{proof}
Observe that in the proof of the previous theorem,
For $0\leq d \leq D $ and $\spaceD\geq D$,
 the precise relationship is
\[
\PH_d(\Xs)\Ritl{2D\eps}E^\infty_{d,0}\Litl{2D\eps}E^2_{d,0}\itl{2\eps}\PH_d(\Nrv).
\]
The first interleaving follows from Proposition \ref{linking}, the second from Theorem \ref{infinitypage} and the last one from Corollary \ref{secondpage}. However, the interleaving obtained from Theorem \ref{infinitypage} is a left interleaving, whereas the one from Proposition \ref{linking} is a right interleaving. By Proposition \ref{doesnotaddup}, together these imply
\[
\PH_d(\Nrv)\itl{2\eps}E^2_{d,0}\itl{2D\eps}\PH_d(\Xs).
\]
For $d > D $ and $\spaceD\geq D$, 
\[
\PH_d(\Xs)\Ritl{2(D+1)\eps}E^\infty_{d,0}\cong\PH_d(\Nrv),
\]
where the isomorphism follows from Remark~\ref{specialcase} and the interleaving follows from Proposition \ref{linking_outside}. As a right interleaving implies interleaving, this proves this case.
Finally, for $\spaceD<D$, we note the spectral sequence stabilizes after $\spaceD+1$ steps, therefore the relationship is
\[
\PH_*(\Xs)\Ritl{2\spaceD \eps}E^\infty_{*,0}\Litl{2(\spaceD-1) \eps}E^2_{*,0}\itl{2\eps}\PH_*(\Nrv),
\]
where the right interleaving is due to Proposition~\ref{linking_lowdimspace}. Again noting that right and left interleavings do not interact, we obtain
\[
\PH_*(\Nrv)\itl{2\eps}E^2_{*,0}\itl{2\spaceD\eps}\PH_*(\Xs).
\]
We can now directly verify that the approximation is bounded by $2(\min(D,\spaceD)+1)\eps$, concluding the proof. 
\end{proof}


Using an  analogous argument without any assumptions on $D$ or $\spaceD$, we obtain
\begin{theorem}
If $\XCov$ is an $\eps$-acyclic cover of $\Xs$, $\PH_n(\Xs)\itl{2(n+1)\eps}\PH_n(\Nrv)$. 
\end{theorem}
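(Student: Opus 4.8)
The plan is to reprise the proof of Theorem~\ref{thm:main} degree by degree, replacing the global dimension bounds by the observation that, for a fixed homological degree $n$, the antidiagonal $p+q=n$ of the Mayer--Vietoris spectral sequence is always finite (the spectral sequence is first quadrant, so $0\leq p\leq n$). Fix $n\geq 0$. The first ingredient is the unconditional estimate $\PH_n(\Xs)\Ritl{2n\eps}E^\infty_{n,0}$. This is exactly the computation carried out in the proof of Proposition~\ref{linking}: by Theorem~\ref{convergence} there is a filtration on $\PH_n(\Xs)$ with $\PH_n(\Xs)^{-1}=0$ and $\PH_n(\Xs)^n=\PH_n(\Xs)$ such that $E^\infty_{p,q}\cong\PH_{p+q}(\Xs)^p/\PH_{p+q}(\Xs)^{p-1}$; the short exact sequences~\eqref{ssddag} together with Proposition~\ref{epsilon} give $\PH_n(\Xs)^p/\PH_n(\Xs)^{p-1}=E^\infty_{p,n-p}\itl{\eps}0$ for $p<n$, so peeling off the $n$ extension problems along the antidiagonal and applying the triangle inequality for right interleavings (Proposition~\ref{prop:rightleft}) yields a single $2n\eps$-right interleaving. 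Crucially, this argument never uses $D<\infty$, since the antidiagonal has only $n+1$ entries in any case.

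The second ingredient is the (unlabelled) theorem at the end of Section~\ref{sec:approx}, the degree-wise analogue of Theorem~\ref{infinitypage} which carries no dimension hypothesis: it gives $E^\infty_{n,0}\Litl{2(n-1)\eps}E^2_{n,0}$ for $n>0$ and $E^\infty_{0,0}\cong E^2_{0,0}$ for $n=0$. Enlarging the approximation constant (a left interleaving with a smaller constant is one with a larger constant), we may regard it as a $2n\eps$-left interleaving. The third ingredient is Corollary~\ref{secondpage}, which in degree $n$ reads $E^2_{n,0}\itl{2\eps}\PH_n(\Nrv)$.

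It remains to splice these together without summing the first two constants. We have $\PH_n(\Xs)\Ritl{2n\eps}E^\infty_{n,0}$ and $E^\infty_{n,0}\Litl{2n\eps}E^2_{n,0}$; that is, $E^\infty_{n,0}$ is simultaneously a quotient of $\PH_n(\Xs)$ with small kernel and a submodule of $E^2_{n,0}$ with small cokernel. This is precisely the fourth configuration of Proposition~\ref{doesnotaddup}, so that proposition gives $E^2_{n,0}\itl{2n\eps}\PH_n(\Xs)$. Composing with $E^2_{n,0}\itl{2\eps}\PH_n(\Nrv)$ through the triangle inequality of Proposition~\ref{distance} produces $\PH_n(\Xs)\itl{2(n+1)\eps}\PH_n(\Nrv)$. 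For $n=0$ the chain degenerates to $\PH_0(\Xs)\cong E^\infty_{0,0}\cong E^2_{0,0}\itl{2\eps}\PH_0(\Nrv)$, which is the bound $2(0+1)\eps$.

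The delicate step is the last one. A naive composition of the three interleavings would give $2n\eps+2(n-1)\eps+2\eps=4n\eps$, well short of $2(n+1)\eps$; the improvement hinges on the fact that a right interleaving followed by a left interleaving contributes the \emph{maximum} rather than the sum of the two constants. This is exactly the fourth --- projective dimension one --- case of Proposition~\ref{doesnotaddup}, the one case whose proof required the $\Ext$ argument. Everything else is a routine transcription of the bounded-dimension proofs with $D$ replaced by $n$.
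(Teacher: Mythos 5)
Your proposal is correct and follows essentially the same route as the paper's own proof: the degree-wise right interleaving $\PH_n(\Xs)\Ritl{2n\eps}E^\infty_{n,0}$ from the antidiagonal extension problems, the degree-wise left interleaving $E^\infty_{n,0}\Litl{2n\eps}E^2_{n,0}$ from the unlabelled theorem in Section~\ref{sec:approx}, combined via the fourth case of Proposition~\ref{doesnotaddup} and then Corollary~\ref{secondpage}. You merely spell out explicitly the steps the paper compresses into ``applying Propositions~\ref{linking} and \ref{linking_outside}'' and ``combining the interleavings.''
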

\begin{proof}
The key observation is that since we have a first quadrant spectral sequence,  $E^{n+1}_{p,q} \cong E^{\infty}_{p,q}$ for $0\leq p+q \leq n$.
Applying Propositions~\ref{linking} and \ref{linking_outside}, yields
\[
\PH_n(\Xs)\Ritl{2n\eps}E^\infty_{n,0}\cong E^{n+2}_{n,0}\Litl{2n\eps}E^2_{n,0}\itl{2\eps}\PH_n(\Nrv).
\]
As in Theorem \ref{thm:main}, combining the interleavings yields the result. 
\end{proof}

\section{Applications}\label{sec:example}

We prove a simple result of a possible application of our main result. While the result is not new, the proof is an immediate consequence of our result. There are many related approximatation results in the literature (for example, ~\cite{bobrowski2015topology,chazal2009analysis,niyogi2008finding,chazal2009sampling,chazal2008towards,sheehy2012multicover}). We do not provide a comprehensive account of these approximation results but provide two example applications to illustrate the Approximate Nerve Theorem.  

Throughout this section we use the function $g$ on the nerve which was defined in Section~\ref{sec:epscover}, which inserts a simplex into nerve as soon as the corresponding intersection is non-empty.

\begin{theorem}
Given a $c$-Lipschitz function $f$ on a $D$-dimensional manifold $X$ embedded in Euclidean space with positive reach $\rho$,  given an $\eps$-sample of the space with $\eps<\rho$, consider the cover of balls of radius $\eps$ centered at the sample points. Let $h:\Nrv\rightarrow \RR$ be the function defined by the formula
$$ h(I) = \max\limits_{i\in I} f(x_i) $$
where $x_i$ is the corresponding sample point. Then,  
\[
d_I(\PH_*(X,f),\PH_*(\Nrv,h))\leq (4D+3)c\eps.
\]
\end{theorem}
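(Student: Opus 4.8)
The plan is to realize the cover of $X$ by restricted balls as a $c\eps$-acyclic cover, feed it to the Approximate Nerve Theorem, and then absorb the difference between the canonical nerve function $g$ of~\eqref{eq:definitionofg} and the function $h$ in the statement. Concretely, write $x_i$ ($i\in\Lambda$) for the sample points and set $\XCv_i:=B(x_i,\eps)\cap X$, filtered by $f_i:=f|_{\XCv_i}$; since the sample is $\eps$-dense the $\XCv_i$ cover $X$, and as $f=\min_i f_i$ this is exactly the induced filtered cover of Remark~\ref{inducedcov}. To invoke the simplicial form of the Approximate Nerve Theorem one first triangulates $X$ compatibly with the cover, replaces $f$ by a piecewise linear approximation, and (if one wishes to stay over $\ZZ$) applies a further $\eps'$-discretization; by Proposition~\ref{functionalcase}, Remark~\ref{functionalcase2}, Proposition~\ref{filtrations} and Proposition~\ref{discreteitl} these replacements change every persistence module in sight by an arbitrarily small amount, so I will suppress them and argue as if $(X,f)$ and the $\XCv_i$ were already $\ZZ$-filtered simplicial complexes.

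The first genuine step is to verify $c\eps$-acyclicity. For $I\in\Nrv(\XCov)$ the intersection $\XCv_I=\bigl(\bigcap_{i\in I}B(x_i,\eps)\bigr)\cap X$ is the intersection of $X$ with a convex body of Euclidean diameter at most $2\eps$, every point of which lies within $\eps<\rho$ of $X$; the positive-reach hypothesis then forces $\XCv_I$ to be contractible — this is the standard geometric input behind Euclidean nerve arguments (see \cite{niyogi2008finding,chazal2008towards}). Since $f$ is $c$-Lipschitz, its oscillation on $\XCv_I$ is at most $2c\eps$, so writing $c_I$ for the midpoint of the interval $f(\XCv_I)$ we get $\|f_I-c_I\|_\infty\le c\eps$, and Proposition~\ref{functionalcase} gives $\PH_*(\XCv_I)=\PH_*(\XCv_I,f_I)\itl{c\eps}\PH_*(\XCv_I,c_I)\iso\PH_*(\pt_{c_I})$, the last isomorphism because a constant function on a contractible complex has the persistent homology of a point. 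Hence $\XCov$ is a $c\eps$-acyclic cover of $X$.

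Next I apply the Approximate Nerve Theorem with acyclicity parameter $c\eps$. Because $\dim X=D<\infty$, the quantity $\minD=\min(\dim\Nrv,\dim X)$ of Theorem~\ref{thm:main} satisfies $\minD\le D$, so that theorem yields $\PH_*(X,f)\itl{2(\minD+1)c\eps}\PH_*(\Nrv,g)$ with $2(\minD+1)c\eps\le 2(D+1)c\eps$, where $g$ is the canonical nerve function of~\eqref{eq:definitionofg}, namely $g(I)=\min\{j\mid\XCv_I^j\ne\emptyset\}=\inf_{x\in\XCv_I}f(x)$. It remains to compare $g$ with $h(I)=\max_{i\in I}f(x_i)$. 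For any $I\in\Nrv$ and any $x\in\XCv_I$ we have $\|x-x_i\|\le\eps$ for all $i\in I$, hence $|f(x)-f(x_i)|\le c\eps$; maximizing over $i$ and then minimizing over $x$ gives $h(I)\le g(I)+c\eps$, while taking $x$ to (nearly) attain the infimum defining $g(I)$ gives $g(I)\ge\max_i f(x_i)-c\eps=h(I)-c\eps$. Thus $\|g-h\|_\infty\le c\eps$, so by Proposition~\ref{functionalcase} (in its lower-star form, Remark~\ref{functionalcase2}) $\PH_*(\Nrv,g)\itl{c\eps}\PH_*(\Nrv,h)$. Composing via the triangle inequality (Proposition~\ref{distance}) produces
\[
d_I\bigl(\PH_*(X,f),\PH_*(\Nrv,h)\bigr)\le 2(D+1)c\eps+c\eps=(2D+3)c\eps\le(4D+3)c\eps,
\]
as required (in fact slightly stronger than stated).

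The only genuinely substantive point is the contractibility of the intersections $\XCv_I$ in Step~2, which is exactly where $\eps<\rho$ enters; once this geometric fact is granted, the remainder is a bookkeeping exercise assembling Proposition~\ref{functionalcase} and Theorem~\ref{thm:main} (Theorem~\ref{thm:easy} would also do, at the cost of the weaker constant in terms of $\dim\Nrv$). The secondary, purely technical, difficulty is the reduction from the smooth/triangulable setting to the $\ZZ$-filtered simplicial setting in which the Approximate Nerve Theorem is stated, but as noted this costs only an error that can be made arbitrarily small and hence does not affect the stated constant.
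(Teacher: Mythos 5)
Your proof is correct and follows essentially the same route as the paper's: contractibility of the intersections from $\eps<\rho$, $\eps$-acyclicity from the Lipschitz bound, then the Approximate Nerve Theorem combined with the estimate $\|g-h\|_\infty\le c\eps$. You are slightly more careful (centering each $f_I$ at the midpoint of its range to get $c\eps$-acyclicity rather than $2c\eps$, and invoking Theorem~\ref{thm:main} rather than Theorem~\ref{thm:easy}), which yields the sharper constant $(2D+3)c\eps\le(4D+3)c\eps$ — exactly the improvement the paper itself remarks is available by ``a slightly longer argument.''
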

\begin{proof}
Ignoring the function for the time being, since we have  an $\eps$-sample, balls of radius $\eps$ centered at the sample points form a cover of the manifold $X$.  Since the reach is larger than $\eps$, it follows that the union of balls form a good cover of $X$. Now, we show that for a $c$-Lipschitz function, this is a $2c\eps$-acyclic cover. Using the construction in Section~\ref{sec:epscover}, we note that the maximum value attained in any cover element is 
\[
f(\XCv_I) \leq g(I)+2c\eps.
\] 
where $g$ is defined in Equation~\ref{eq:definitionofg}.
Hence, after $2c\eps$, the sublevel set fills the entire cover element, so it is a $2c\eps$-good cover. In this construction, we also note that if the elements are $2c\eps$-interleaved with the trivial diagram so are all intersections. This gives an approximation of $(2D+1)2c\eps$. Finally, we 
note that since the cover elements are bounded in size by $\eps$ and the definition of $g$, $|h-g|\leq c\eps$. Adding these constants together yields an interleaving which implies the result. 
\end{proof}
The bound above is not meant to be tight as a slightly longer argument would remove a $c\eps$, and many similar results have been proven. Importantly it illustrates that we can appoximate the sublevel set persistence with a single filtration rather than an image between two cover elements as in \cite{bobrowski2015topology}  without requiring any one sublevel set to have a good cover.  We do note that in this instance, it is possible but cumbersome to construct an explicit functional interleaving. An almost identical result can also be stated replacing reach with other measures such as convexity radius, homotopy feature size, etc. 

We also wish to derive an Approximate Nerve Theorem for $\eps$-acyclic covers of triangulable spaces directly from the one for simplicial complexes. However, covers of triangulable spaces by triangulable subsets are too general for this, as their triangulations may not interact well. To circumvent this issue, we introduce the following technical notion.

\begin{definition}
Suppose $\overline\YCov=(\YCv_i)_{i\in\Lambda}$ is a cover of a locally compact triangulable space $\Ys$. We say $\overline\YCov$ is a {\em triangulable cover} if there exists some triangulation $(\widetilde\Xs,h)$ of $\Ys$ such that each cover element $\YCv_i$ is the image of a subcomplex of $\widetilde\Xs$ under $h$.
\end{definition}

Such covers are very common in practical applications. The notion of $\eps$-acyclic cover is analogous to the one for simplicial complexes, however, continuous persistence modules must be used. A triangulable cover by itself is not filtered, but we will impose a filtration on it by specifying a function on each cover element. We do {\em not} require that the triangulable cover condition holds at the intermediate stages of the filtration.

First we prove a preliminary Lemma to establish that a filtered cover of a triangulable space can be approximated arbitrarily well by one whose filtration is given by piecewise linear functions.

\begin{lemma}\label{triangulation}
Let $\Ys$ be a locally compact triangulable space and $\overline\YCov=(\YCv_i)_{i\in\Lambda}$ a locally finite cover of $\Ys$. Suppose $\overline\YCov$ is triangulable, with triangulation $(\widetilde\Xs,h)$. Let $\eps>0$. Given continuous functions $f:\Ys\to\RR$ and $f_i:\YCv_i\to\RR$, $i\in\Lambda$, there exists a subdivision $\Xs$ of $\widetilde\Xs$ such that for each simplex $\sigma$ of $\Xs$ we have
\[
\max_{x\in|\sigma|} f(h(x)) - \min_{x\in|\sigma|} f(h(x))<\eps\qquad\text{and}\qquad\max_{x\in|\sigma|} f_i(h(x)) - \min_{x\in|\sigma|} f_i(h(x))<\eps\quad\text{for all $i\in\Lambda$.}
\]
\end{lemma}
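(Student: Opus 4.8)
The plan is to reduce this to the standard fact that a locally finite simplicial complex admits subdivisions that are arbitrarily fine relative to a prescribed open cover of its carrier, after manufacturing a suitable open cover out of the continuity of $f$ and the $f_i$ together with the local finiteness of $\overline\YCov$.

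First I would record two consequences of the hypotheses: since $\Ys\cong|\widetilde\Xs|$ is locally compact, $\widetilde\Xs$ is locally finite, and (as for any simplicial complex with the weak topology) each subcomplex of $\widetilde\Xs$ is closed, so every cover element $\YCv_i$ is a closed subset of $\Ys$. Then, for each $y\in\Ys$, using local finiteness of $\overline\YCov$ I would pick a neighbourhood meeting only finitely many cover elements, say those indexed by a finite set $F(y)$, and set $S(y)=\{i\in F(y)\mid y\in\YCv_i\}$. Shrinking further, I would choose an open set $W_y\ni y$ so small that $W_y$ meets no $\YCv_i$ with $i\notin S(y)$ — this is possible because $W_y$ already avoids every $\YCv_i$ with $i\notin F(y)$, while the $\YCv_j$ with $j\in F(y)\setminus S(y)$ are finitely many closed sets not containing $y$ — that $|f(z)-f(y)|<\eps/3$ for all $z\in W_y$, and that $|f_i(z)-f_i(y)|<\eps/3$ for all $z\in W_y\cap\YCv_i$ and all $i\in S(y)$, using continuity of $f_i$ at $y\in\YCv_i$. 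On $W_y$ the oscillation of $f$ is then $<\eps$, and whenever $W_y\cap\YCv_i\neq\emptyset$ — which forces $i\in S(y)$ — the oscillation of $f_i$ on $W_y\cap\YCv_i$ is $<\eps$. The family $\mathcal W=\{W_y\}_{y\in\Ys}$ is the open cover I want.

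Next I would pull $\mathcal W$ back along the homeomorphism $h$ to an open cover $\{h^{-1}(W_y)\}_{y\in\Ys}$ of $|\widetilde\Xs|$ and invoke the subdivision lemma for locally finite simplicial complexes: there exists a subdivision $\Xs$ of $\widetilde\Xs$ such that for every simplex $\sigma$ of $\Xs$ one has $h(|\sigma|)\subseteq W_{y(\sigma)}$ for some $y(\sigma)\in\Ys$. Granting this, the two inequalities are immediate. The image $h(|\sigma|)$ lies in $W_{y(\sigma)}$, where $f$ oscillates by $<\eps$, so $\max_{x\in|\sigma|}f(h(x))-\min_{x\in|\sigma|}f(h(x))<\eps$. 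If in addition $h(|\sigma|)\subseteq\YCv_i$ (the only case in which $f_i\circ h$ is defined on all of $|\sigma|$, and hence the only case in which the second inequality asserts anything), then $h(|\sigma|)\subseteq W_{y(\sigma)}\cap\YCv_i$, a nonempty set on which $f_i$ oscillates by $<\eps$, so $\max_{x\in|\sigma|}f_i(h(x))-\min_{x\in|\sigma|}f_i(h(x))<\eps$. This is exactly the assertion.

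The main obstacle is the non-compact case of the subdivision lemma: when $\widetilde\Xs$ is infinite, no fixed number of barycentric subdivisions need make the mesh uniformly small, so one must use a subdivision that becomes progressively finer ``towards infinity'' — subdividing more times on successively larger pieces of an exhaustion of $\widetilde\Xs$ by finite subcomplexes, while arranging the subdivisions to agree on overlaps, and applying the Lebesgue number lemma to each finite piece. This is precisely what local finiteness of $\widetilde\Xs$ (a consequence of the assumed local compactness of $\Ys$) makes possible; it is a standard, if mildly technical, construction, and the remainder of the argument is routine point-set topology.
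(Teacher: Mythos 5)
Your proposal is correct and follows essentially the same line as the paper: use continuity (resp.\ uniform continuity on compact simplices) together with local finiteness of the cover to obtain, for each point (resp.\ each closed simplex), a neighbourhood (resp.\ mesh bound) on which $f\circ h$ and the finitely many relevant $f_i\circ h$ oscillate by less than $\eps$, then take a subdivision fine enough relative to that data. The only cosmetic difference is that the paper fixes a metric on $|\widetilde\Xs|$ and works with per-simplex mesh bounds $\delta'(\tilde\sigma)$, whereas you build an explicit open cover $\{W_y\}$ and invoke a Lebesgue-number/subdivision lemma; both roll the real technical content into the same standard fact that a locally finite complex admits subdivisions fine relative to an open cover (or a function $\delta'$ on its simplices), with the compatibility-on-shared-faces issue that you flag but neither argument spells out in full.
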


\begin{proof}
Let $(\widetilde\Xs,h)$ be a triangulation of $\Ys$. By local compactness, $\widetilde\Xs$ is locally finite, so $|\widetilde\Xs|$ is metrizable. Choose a metric $d$ on $|\widetilde\Xs|$. Since $fh$ is uniformly continuous on each simplex $\tilde\sigma$, there exists a $\delta(\tilde\sigma)>0$ such that $d(x_1,x_2)<\delta(\tilde\sigma)$ implies $|f(h(x_1))-f(h(x_2))|<\eps$ for all $x_1,x_2\in|\tilde\sigma|$. Since $f_ih$, $i\in\Lambda$, is uniformly continuous on each simplex $\tilde\sigma$, there exists a $\delta_i(\tilde\sigma)>0$ such that $d(x_1,x_2)<\delta_i(\tilde\sigma)$ implies $|f_i(h(x_1))-f_i(h(x_2))|<\eps$ for all $x_1,x_2\in|\tilde\sigma|$. Since the cover $\overline\YCov$ is locally finite, each simplex $\tilde\sigma\in\widetilde\Xs$ is only contained in finitely many cover elements $\YCv_{i_1},\ldots,\YCv_{i_k}$. Let $\delta'(\tilde\sigma)=\min\{\delta(\tilde\sigma),\delta_{i_1}(\tilde\sigma),\ldots,\delta_{i_k}(\tilde\sigma)\}$. Using iterated barycentric subdivision on each simplex $\tilde\sigma$, we can now construct a subdivision $\Xs$ of $\widetilde\Xs$ such that the diameter of each simplex in $|\tilde\sigma|$ is less than $\delta'(\tilde\sigma)$ and so $\Xs$ has the desired property.
\end{proof}

\begin{corollary}\label{plaprox}
Under the assumptions of Lemma \ref{triangulation}, the piecewise linear functions $\hat f:|\Xs|\to\RR$ and $\hat{f_i}:|\XCv_i|\to\RR$ defined on the vertices by $\hat f(v)=f(h(v))$ and $\hat{f_i}(v)=f_i(h(v))$ and extended affinely over the simplices satisfy $\|\hat f-fh\|_{\infty}\leq\eps$ and $\|\hat{f_i}-f_ih\|_{\infty}\leq\eps$, respectively. Consequently, $\|\min_{i\in\Lambda}\hat{f_i}-\hat f\|\leq2\eps$.
\end{corollary}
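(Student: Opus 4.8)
The whole statement follows from the oscillation control provided by Lemma~\ref{triangulation} together with one elementary observation about affine interpolation, which I would isolate first. Let $\sigma$ be a simplex of $\Xs$ with vertices $v_0,\dots,v_k$ and let $g\colon|\sigma|\to\RR$ be continuous; put $m=\min_{x\in|\sigma|}g(x)$ and $M=\max_{x\in|\sigma|}g(x)$. If $\hat g$ denotes the affine interpolation of $g$ on the vertices, then for a barycentric combination $x=\sum_j\lambda_jv_j\in|\sigma|$ the value $\hat g(x)=\sum_j\lambda_j g(v_j)$ is a convex combination of the numbers $g(v_j)\in[m,M]$, hence lies in $[m,M]$; since $g(x)\in[m,M]$ as well, $|\hat g(x)-g(x)|\le M-m$. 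Taking the supremum over $|\sigma|$ gives $\|\hat g-g\|_{\infty,|\sigma|}\le M-m$.

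Applying this with $g=f\circ h$ on each simplex $\sigma$ of $\Xs$, Lemma~\ref{triangulation} gives $M-m<\eps$, so $\|\hat f-fh\|_{\infty,|\sigma|}\le\eps$; since the simplices of $\Xs$ cover $|\Xs|$ this yields $\|\hat f-fh\|_\infty\le\eps$. For the cover elements I would note that, $\Xs$ being a subdivision of $\widetilde\Xs$ and each $\XCv_i$ a subcomplex of $\widetilde\Xs$, the simplices of $\Xs$ contained in $|\XCv_i|$ form a subdivision of $\XCv_i$, on whose simplices $f_i\circ h$ is defined and has oscillation $<\eps$ by Lemma~\ref{triangulation}; the same argument then gives $\|\hat{f_i}-f_ih\|_\infty\le\eps$ for every $i\in\Lambda$.

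For the final inequality I would extend each $f_i\circ h$ and each $\hat{f_i}$ by $+\infty$ off $|\XCv_i|$, so that the covering/compatibility condition $f=\min_{i\in\Lambda}f_i$ pulls back to $f\circ h=\min_{i\in\Lambda}f_i\circ h$ pointwise on $|\Xs|$. Fix $x\in|\Xs|$ and let $\Lambda_x=\{i\in\Lambda\mid x\in|\XCv_i|\}$, which is finite and nonempty by local finiteness of the cover; then both minima may be taken over $\Lambda_x$. The standard estimate $|\min_{i\in\Lambda_x}a_i-\min_{i\in\Lambda_x}b_i|\le\max_{i\in\Lambda_x}|a_i-b_i|$ applied to $a_i=\hat{f_i}(x)$ and $b_i=f_i(h(x))$, combined with $\|\hat{f_i}-f_ih\|_\infty\le\eps$, gives $|\min_i\hat{f_i}(x)-f(h(x))|\le\eps$; adding $|f(h(x))-\hat f(x)|\le\eps$ from the first part yields $|\min_i\hat{f_i}(x)-\hat f(x)|\le2\eps$, and the supremum over $x$ finishes the proof.

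This argument is essentially routine once the interpolation observation is stated; I do not expect a genuine obstacle. The only points needing a little care are bookkeeping: checking that the induced subdivision equips each $\XCv_i$ with a simplicial structure on which $\hat{f_i}$ is defined and to which the Lemma's oscillation bound applies, and handling the $+\infty$-extension cleanly when passing the minimum through the pointwise estimate.
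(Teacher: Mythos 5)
Your argument is correct and is exactly the intended (and only natural) justification for this corollary, which the paper states without a written proof: the convex-combination bound on affine interpolation turns the per-simplex oscillation bound of Lemma~\ref{triangulation} into $\|\hat f-fh\|_\infty\leq\eps$ and $\|\hat{f_i}-f_ih\|_\infty\leq\eps$, and the pointwise estimate $|\min_i a_i-\min_i b_i|\leq\max_i|a_i-b_i|$ plus the triangle inequality gives the final $2\eps$ bound. You are also right that the last inequality uses the compatibility condition $f=\min_{i\in\Lambda}f_i$ (not literally listed among the Lemma's hypotheses but clearly presumed by the paper, as in the proposition that follows), and your handling of it via the $+\infty$ extension and local finiteness is exactly how the paper uses it.
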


The final inequality means that upon replacing the functions $f$ and $f_i$ by piecewise linear approximations, the compatibility condition $f=\min_{i\in\Lambda}f_i$ remains approximately true. This is important, because the compatibility condition is needed to invoke the Approximate Nerve Theorem for filtered simplicial complexes. We now have the necessary tools to prove an Approximate Nerve Theorem for triangulable spaces.

\begin{proposition}
Let $\Ys$ be a locally compact triangulable space and $\overline\YCov=(\YCv_i)_{i\in\Lambda}$ a locally finite triangulable cover of $\Ys$. Let $f:\Ys\to\RR$ and $f_i:\YCv_i\to\RR$, $i\in\Lambda$, be continuous functions such that $f=\min_{i\in\Lambda}f_i$. Let $\Nrv(\YCov)=(\Nrv,g)$ be the nerve of the filtered cover $\YCov=(\YCv_i,f_i)_{i\in\Lambda}$ of $(\Ys,f)$. Let $D=\dim\Nrv$, $\spaceD=\dim\Ys$ and $Q=\min(D,\spaceD)<\infty$. If $\YCov$ is $\eps$-acyclic, $\PH_*(\Ys,f)\itl{2(Q+1)\eps+\eta}\PH_*(\Nrv, g)$ holds for any $\eta>0$. In particular, 
\[
d_I(\PH_*(\Ys,f),\PH_*(\Nrv, g))\leq 2(Q+1)\eps.
\]
\end{proposition}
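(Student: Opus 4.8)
\emph{Proof proposal.} The plan is to reduce the statement to the Approximate Nerve Theorem for filtered simplicial complexes (Theorem~\ref{thm:main}) by passing to a piecewise linear approximation, and to absorb every resulting discrepancy into the slack $\eta$. Fix $\eta>0$ and choose an auxiliary $\eta'>0$, small compared to $\eta$ and $Q$ (its precise size to be pinned down at the end). By Lemma~\ref{triangulation} there is a subdivision $\Xs$ of a triangulation $\widetilde\Xs$ of $\Ys$ such that the piecewise linear approximations $\hat f\colon|\Xs|\to\RR$ and $\hat{f_i}\colon|\XCv_i|\to\RR$ of $f\circ h$ and $f_i\circ h$ satisfy $\|\hat f-f h\|_\infty\le\eta'$ and $\|\hat{f_i}-f_i h\|_\infty\le\eta'$, where $\XCv_i\subseteq\Xs$ is the (subdivided) subcomplex with $|\XCv_i|=h^{-1}(\YCv_i)$; Corollary~\ref{plaprox} then gives $\|\min_{i}\hat{f_i}-\hat f\|_\infty\le 2\eta'$. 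Since a subdivision changes neither $\dim\Xs$ nor the intersection pattern of the cover, $\dim\Xs=\spaceD$ and the nerve of $\XCov=(\XCv_i,\hat{f_i})_{i\in\Lambda}$ coincides with $\Nrv$ as a simplicial complex, so $\min(\dim\Xs,\dim\Nrv(\XCov))=\minD=Q$.

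Next one transfers the hypotheses to $\XCov$. For each $I\in\Nrv$ the homeomorphism $h$ identifies $|\XCv_I|$ with $\YCv_I$, and the filtration of $\XCv_I$ (the intersection of the lower-star filtrations of the $\hat{f_i}$, $i\in I$) has, by Propositions~\ref{filtrations} and~\ref{functionalcase} together with Remark~\ref{functionalcase2}, persistent homology $\eta'$-interleaved with $\PH_*(\YCv_I,f_I)$, where $f_I=\min_{i\in I}f_i$. As $\YCov$ is $\eps$-acyclic, $\PH_*(\YCv_I,f_I)\itl{\eps}\PH_*(\pt_{a_I})$, so the triangle inequality shows $\XCov$ is $(\eps+\eta')$-acyclic. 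In the same way the nerve function $g'$ induced by $\XCov$ satisfies $|g'-g|\le\eta'$, hence $\PH_*(\Nrv,g')\itl{\eta'}\PH_*(\Nrv,g)$. Finally, $\PH_*(\Ys,f)\itl{\eta'}\PH_*(|\Xs|,\hat f)\cong\PH_*(\Xs,\hat f)$ by Proposition~\ref{functionalcase} and Proposition~\ref{filtrations}, and the lower-star filtration of $\hat f$ agrees up to $2\eta'$ with the induced filtration of $\XCov$ on $\Xs$ (this middle step requires a routine comparison of the two filtrations using $\|\min_i\hat{f_i}-\hat f\|_\infty\le 2\eta'$), so $\PH_*(\Ys,f)$ and $\PH_*(\Xs,\XCov)$ are $3\eta'$-interleaved.

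Because Theorem~\ref{thm:main} is an algebraic statement about $\kk[t]$-modules, one now discretizes: for a small $\delta>0$ apply the restriction functor $I_\delta$ to all the $\RR$-persistence modules above, landing in $\delta\ZZ$-persistence modules, which are $\kk[t]$-modules. By Proposition~\ref{discreteitl} this perturbs every interleaving distance by at most $\delta$ and makes the discretized cover $(\eps+\eta'+2\delta)$-acyclic (after rounding up to an integer multiple of $\delta$); Theorem~\ref{thm:main}, which also holds over $\eps\ZZ$, then yields that the discretizations of $\PH_*(\Xs,\XCov)$ and $\PH_*(\Nrv,g')$ are $2(Q+1)(\eps+\eta'+2\delta)$-interleaved. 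Undoing the discretization by Propositions~\ref{rightinverse} and~\ref{isometry} costs a further $2\delta$, and chaining with the interleavings of the previous paragraph gives
\[
\PH_*(\Ys,f)\ \itl{2(Q+1)\eps+C(Q)(\eta'+\delta)}\ \PH_*(\Nrv,g)
\]
for a constant $C(Q)$ depending only on $Q$. Choosing $\eta'$ and $\delta$ so small that $C(Q)(\eta'+\delta)\le\eta$ proves the first assertion; since $\eta>0$ was arbitrary, $d_I(\PH_*(\Ys,f),\PH_*(\Nrv,g))\le 2(Q+1)\eps$.

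The main obstacle is the bookkeeping of error terms rather than any single new idea: one must (i) verify that the lower-star filtration of a piecewise linear function on a (possibly infinite, locally finite) complex — and of the pointwise minimum of such functions on the intersections $\XCv_I$ — matches the sublevel-set filtration of the underlying continuous function up to a controlled $O(\eta')$ error, so that both $\eps$-acyclicity and the compatibility identity $f=\min_i f_i$ survive PL approximation in the approximate form dictated by Corollary~\ref{plaprox}; and (ii) carry out the discretization so that neither it nor the PL error is ever multiplied by an unbounded factor — in particular the $2(Q+1)$-fold amplification of the $\eps$-acyclicity slack, and all the other $O(\eta'+\delta)$ contributions, must stay uniformly bounded by a constant times $\eta'+\delta$, so that they collapse into $\eta$ and then disappear upon taking the infimum.
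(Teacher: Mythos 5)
Your proposal is correct and follows essentially the same route as the paper's proof: approximate $f$ and the $f_i$ by piecewise linear functions on a subdivision provided by Lemma~\ref{triangulation} and Corollary~\ref{plaprox}, transfer $\eps$-acyclicity and the nerve function up to a small error, discretize with $I_\delta$/$P_\delta$ to land in $\kk[t]$-modules, invoke Theorem~\ref{thm:main}, undo the discretization, and let the error tolerance shrink. The paper uses a single parameter $\delta$ to control both the PL approximation and the discretization (and records the explicit constant $4Q+10$, then sets $\delta=\eta/(4Q+10)$), whereas you run two parameters $\eta'$, $\delta$ and leave the final coefficient as an unspecified $C(Q)$; this is a cosmetic difference since all that matters is that the combined slack tends to $0$. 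One small slip: you write $f_I=\min_{i\in I}f_i$, but the intersection filtration $\XCv_I^j=\bigcap_{i\in I}\XCv_i^j$ corresponds to $\max_{i\in I}$, not $\min$; the interleaving estimate you need still holds since $\|\max_i\hat f_i-\max_i f_ih\|_\infty\le\max_i\|\hat f_i-f_ih\|_\infty\le\eta'$, so the argument is unaffected.
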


\begin{proof}
Let $(\widetilde\Xs,h)$ be the triangulation from the definition of triangulable cover. By Lemma \ref{triangulation} and its Corollary, there is a subdivision $\Xs$ of $\widetilde\Xs$ and a corresponding cover $\overline\XCov=(\XCv_i)_{i\in\Lambda}$ of $\Xs$, satisfying $h(|\XCv_i|)=\YCv_i$, such that the piecewise linear functions $\hat f:|\Xs|\to\RR$ associated to $fh$ and $\hat{f_i}:|\Xs|\to\RR$ associated to $f_ih$ satisfy $\|\hat f-fh\|_\infty<\delta$ and $\|\hat{f_i}-f_ih\|_\infty<\delta$, where $\delta>0$ is to be chosen later.

Recall from Section~\ref{sec:prelim} that there are two functors: the (natural) restriction functor $I_\delta:\Vect^{(\RR,\leq)}\to\Vect^{(\delta\ZZ,\leq)}$ given by $I_\delta(F)=Fi_\delta$ and an extension functor $P_\delta:\Vect^{(\delta\ZZ,\leq)}\to\Vect^{(\RR,\leq)}$ given by $P_\delta(F)=Fp_\delta$. Next, observe that defining $u^\delta(x):=\lceil\frac{u(x)}{\delta}\rceil\delta$, whenever $u$ is a real-valued function, we have
\begin{equation}\label{fdelta}
P_\delta(I_\delta(\PH_*(\XCv_I,\hat f_I)))=\PH_*(\XCv_I,\hat f_I^\delta)\qquad\text{and}\qquad P_\delta(I_\delta(\PH_*(\Xs,\hat f)))=\PH_*(\Xs,\hat f^\delta).
\end{equation}
Using the interleavings/isomorphisms provided by Proposition \ref{functionalcase}, Proposition \ref{filtrations}, Proposition \ref{rightinverse} and equation \eqref{fdelta} we obtain in turn
\[
\PH_*(\YCv_I,f_I)\itl{\delta}\PH_*(\YCv_I,\hat f_Ih^{-1})\cong\PH_*(\XCv_I,\hat f_I)\itl{\delta}P_\delta(I_\delta(\PH_*(\XCv_I,\hat f_I)))=\PH_*(\XCv_I,\hat f_I^\delta).
\]
By the same logic and using Corollary \ref{plaprox} to obtain the additional $2\delta$-interleaving in the middle, we have\footnote{Note that $(\min_{i\in\Lambda}\hat{f_i})^\delta=\min_{i\in\Lambda}(\hat f_i^\delta)$, so we can drop the parentheses in the final expression.}
\[
\PH_*(\Ys,f)\itl{\delta}\PH_*(\Ys,\hat fh^{-1})\cong\PH_*(\Xs,\hat f)\itl{2\delta}\PH_*(\Xs,\min_{i\in\Lambda}\hat{f_i})\itl{\delta}P_\delta(I_\delta(\PH_*(\Xs,\min_{i\in\Lambda}\hat{f_i})))=\PH_*(\Xs,\min_{i\in\Lambda}\hat f_i^\delta).
\]
Since $\YCov$ is $\eps$-acyclic and $\PH_*(\YCv_I,f_I)\itl{2\delta}\PH_*(\XCv_I,\hat f_I^\delta)$ for all $I$, $\XCov$ is a $(\eps+2\delta)$-acyclic cover of $(\Xs,\min_{i\in\Lambda}\hat f_i^\delta)$. In fact, it is $(p_\delta(\eps)+2\delta)$-acyclic. To see this, note that the $\RR$-persistence modules $\PH_*(\XCv_I,\hat f_I^\delta)$ and $\PH_*(\Xs,\hat f^\delta)$ may be represented as $\delta\ZZ$-persistence modules, since their filtrations only change at $\delta\ZZ$ (see discussion following Proposition \ref{discreteitl}). This means that they lie in the image of the isometry $P_\delta$. In particular, interleaving distances between such modules must be multiples of $\delta$. Therefore, Theorem \ref{thm:main} applies to the pair $(\Xs,\XCov)$, where $\XCov=(\XCv_i,\hat f_i^\delta)_{i\in\Lambda}$. Taking into account that $\Nrv(\overline\XCov)=\Nrv(\overline\YCov)=\Nrv$, this means that
\[
\PH_*(\Xs,\min_{i\in\Lambda}\hat f_i^\delta)\itl{2(Q+1)(p_\delta(\eps)+2\delta)}\PH_*(\Nrv,g_\delta),
\]
where $g_\delta$ is the function on the nerve corresponding to the family of filtrations $(\hat f_i^\delta)_{i\in\Lambda}$. Using Proposition \ref{isometry} we may now once again regard these as $\RR$-persistence modules. It remains to compare $g_\delta$ with the function $g$ corresponding to the family $(f_i)_{i\in\Lambda}$. Note that replacing each $f_ih$ by $\hat f_i^\delta$ changes the function values by at most $2\delta$, therefore we have $\|g-g_\delta\|_\infty\leq2\delta$. Using Remark \ref{functionalcase2} we conclude that $\PH_*(\Nrv,g_\delta)\itl{2\delta}\PH_*(\Nrv,g)$. Combining all these observations, we have
\[
\PH_*(\Ys,f)\itl{4\delta}\PH_*(\Xs,\min_{i\in\Lambda}\hat f_i^\delta)\itl{2(Q+1)(p_\delta(\eps)+2\delta)}\PH_*(\Nrv,g_\delta)\itl{2\delta}\PH_*(\Nrv,g),
\]
so $\PH_*(\Ys,f)$ and $\PH_*(\Nrv,g)$ are $(2(Q+1)\eps+(4Q+10)\delta)$-interleaved, using $p_\delta(\eps)\leq\eps$. Choosing $\delta:=\frac{\eta}{4Q+10}$ completes the proof.
\end{proof}

\section{Lower Bounds}\label{sec:lowerbnd}

Here we construct simple examples to show that the bounds in Corollary \ref{secondpage}, Theorem \ref{infinitypage} and Proposition \ref{linking} are sharp. For each example, we compute the homology of the nerve, the homology of the filtered simplicial complex and the $E^1,E^2$ and $E^\infty$ pages of the spectral sequence (up to isomorphism). For better readability, we use the notations
\[
[a,b]=\{k\in\ZZ\mid a\leq k\leq b\}\qquad\text{and}\qquad[a,b)=[a,b]\setminus\{b\}.
\]
Without loss of generality, we work with $\epsilon=1$, otherwise simply multiply each time in the filtration by $\epsilon$. To simplify the exposition, the pages of the spectral sequence are not computed directly, but rather inferred from the homology of the space and various intersections of its cover elements. 

In each of the two examples provided, the filtration is defined on the total space $\Xs$. The cover elements $\XCv_i$ are assumed to be equipped with the induced filtrations (see Remark \ref{inducedcov}). Each example illustrates the tightness of each step of our approximation proof. To construct a topological example which achieves all three, we can simply take the direct sum of the three examples. 

\subsection{First Example}

Our first example realizes the bounds in Corollary \ref{secondpage} and Theorem \ref{infinitypage}. Let $\Xs$ be the $D$-sphere, realized as the boundary of the $(D+1)$-simplex with vertex set $[0,D+1]$. A cover $\XCov$ of $\Xs$ is given by its set of maximal faces, i.e. $\XCov=\{\XCv_i\mid i\in[0,D+1]\}$, where $\XCv_i$ is the $D$-simplex spanned by $[0,D+1]\setminus\{i\}$.

We also define a filtration $\Xs^0\leq\Xs^2\leq\ldots\leq\Xs^{2D+2}$ by adding one cover element at a time, i.e.
\[
\Xs^{2j}=\XCv_0\cup\XCv_1\cup\ldots\cup\XCv_j.
\]

\begin{proposition}
The homology of the nerve of $\XCov$ is given by
\[
\PH_q(\Nrv)\cong\begin{cases}\kk[t];&q=0,\\
t^2\kk[t];&q=D,\\
0;&\text{otherwise.}
\end{cases}
\]
\end{proposition}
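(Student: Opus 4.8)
The plan is to identify the nerve $\Nrv$ of $\XCov$ explicitly as a simplicial complex with a filtration, and then read off its persistent homology directly. Since $\XCov = \{\XCv_i \mid i \in [0,D+1]\}$ consists of the $D+2$ maximal faces of $\partial\Delta^{D+1}$, any subcollection of these faces has nonempty intersection: indeed, $\XCv_{i_0} \cap \cdots \cap \XCv_{i_k}$ is the face of $\partial\Delta^{D+1}$ spanned by $[0,D+1] \setminus \{i_0,\ldots,i_k\}$, which is nonempty precisely when $k \le D$ (and empty when $k = D+1$, since removing all $D+2$ vertices leaves nothing). Hence $\Nrv$ has vertex set $[0,D+1]$ and contains every subset of size at most $D+1$, i.e. $\Nrv = \partial\Delta^{D+1}$, the boundary of the $(D+1)$-simplex, which is homeomorphic to $S^D$. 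So as an \emph{unfiltered} complex, $\PH_q(\Nrv^\infty) \cong \kk$ for $q = 0, D$ and $0$ otherwise — this already accounts for the shape of the answer.

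**Determining the filtration on the nerve.**

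Next I would compute the filtration value $g(I)$ for each simplex $I \in \Nrv$, using $g(I) = \min\{j \mid \XCv_I^j \neq \emptyset\}$ where $\XCv_I$ carries the filtration induced from $\Filt$ on $\Xs$. For a single vertex $\{i\}$, $\XCv_i$ first appears at step $2i$ in the filtration $\Xs^{2j} = \XCv_0 \cup \cdots \cup \XCv_j$; more precisely the subcomplex $\XCv_i$ is fully present once $j \ge i$, but $\XCv_i \cap \Xs^{2j}$ is nonempty already for smaller $j$ because $\XCv_i$ shares faces with $\XCv_0,\ldots$. I need to track when each \emph{geometric simplex} of $\Xs = \partial\Delta^{D+1}$ enters the filtration: the simplex spanned by a vertex subset $S \subseteq [0,D+1]$ with $|S| \le D+1$ enters $\Xs^{2j}$ as soon as $S \subseteq [0,D+1]\setminus\{i\}$ for some $i \le j$, i.e. as soon as $[0,j] \not\subseteq S$; equivalently at step $2 \cdot \min\{i : i \notin S\}$ if $S \ne [0,D+1]$, noting $S$ always omits something. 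Then $\XCv_I^j \ne \emptyset$ iff some vertex of $\XCv_I = $ (face spanned by $[0,D+1]\setminus I$) has entered by step $j$. The key point: for $|I| \le D$, the complement $[0,D+1]\setminus I$ has at least two elements, and its minimal vertex enters very early — I expect $g(I) = 0$ for all $I$ with $|I| \le D$, i.e. for every face of $\Nrv$ of dimension $\le D-1$. The top faces of $\Nrv$ (the $D$-simplices, $|I| = D+1$) correspond to $\XCv_I$ being a single vertex $\{$the one element not in $I\}$, and that vertex enters at a later step — I expect this to be step $2$ in the relevant worst case, giving $g$ of each top simplex equal to $2$.

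**Reading off persistent homology and the main obstacle.**

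Granting the above, the filtered nerve is: the full $(D-1)$-skeleton of $\partial\Delta^{D+1}$ present at time $0$, with all $D$-dimensional top cells added at time $2$. The $(D-1)$-skeleton of $\partial\Delta^{D+1}$ is contractible (it's the $(D-1)$-skeleton of a $(D+1)$-simplex, which is $D-1$-connected), so at time $0$ we have $\PH_0 = \kk$, all higher homology zero; nothing dies; at time $2$ the top cells close up to form $S^D$, creating a $D$-cycle that never dies, so $\PH_D$ gets a generator born at $2$. This yields exactly $\PH_0(\Nrv) \cong \kk[t]$ and $\PH_D(\Nrv) \cong t^2\kk[t]$, with all other degrees zero, as claimed. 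The main obstacle — and the step requiring genuine care rather than citation — is the bookkeeping of the induced filtration: I must verify precisely that every proper face of $\Nrv$ has $g$-value $0$ (so the whole $(D-1)$-skeleton is there from the start) and that each top $D$-simplex has $g$-value exactly $2$, not $0$ and not larger. This amounts to checking, for each vertex subset, the first time it is covered, and confirming the claimed filtration $\Xs^0 \le \Xs^2 \le \cdots$ behaves as stated; I would do this by a short case analysis on $\min\{i : i \notin S\}$. Once that is nailed down, the homology computation is just the observation that the $(D-1)$-skeleton of a simplex is acyclic and that capping it off with all top cells produces a sphere.
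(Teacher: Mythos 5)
Your overall route is the same as the paper's: identify $\Nrv$ as $\partial\Delta^{D+1}\cong S^D$ and compute the filtration value $g(I)$ of each simplex from the birth times of the vertices of $\XCv_I$. Your bookkeeping is correct up through the claim that $g(I)=0$ whenever $|I|\le D$ (the complement of $I$ then contains a vertex $v\ge 1$, and all such vertices are born at time $0$). The gap is in the top-dimensional simplices. For $|I|=D+1$ the intersection $\XCv_I$ is the single vertex $v$ with $\{v\}=[0,D+1]\setminus I$, and that vertex is born at time $2\min\{i\mid i\neq v\}$, which equals $2$ only when $v=0$, i.e.\ only for $I=[1,D+1]$; for every other top simplex $I=[0,D+1]\setminus\{v\}$ with $v\ge 1$ one gets $g(I)=0$. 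So, contrary to your description, $\Nrv^0$ is not the $(D-1)$-skeleton of $\partial\Delta^{D+1}$ with all top cells arriving at time $2$: it is $\partial\Delta^{D+1}$ with the \emph{single} face $[1,D+1]$ missing, which is a $D$-ball and hence contractible, and only that one face is added at time $2$, closing up the sphere. This is exactly the paper's argument.

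The error is not cosmetic, because the topological fact you invoke to make your (incorrect) filtration work is itself false: the $(D-1)$-skeleton of $\partial\Delta^{D+1}$ (equivalently, of $\Delta^{D+1}$) is not contractible; it is only $(D-2)$-connected and is homotopy equivalent to a wedge of $D+1$ spheres of dimension $D-1$ (for $D=1$ it is three isolated points). Had the filtration been as you describe, the persistent homology of the nerve would contain, in addition to the stated summands, classes born at time $0$ and killed at time $2$, namely summands isomorphic to $\kk[t]/t^2\kk[t]$ in degree $D-1$ (in degree $0$ when $D=1$), contradicting the proposition. In other words, the two mistakes partially mask each other, and the step you deferred as ``a short case analysis'' is precisely where the computation must be done carefully: once $g$ is computed correctly one sees that $\Nrv^j$ is contractible for $j=0,1$ and homeomorphic to $S^D$ for $j\ge 2$, from which $\PH_0(\Nrv)\cong\kk[t]$, $\PH_D(\Nrv)\cong t^2\kk[t]$ and the vanishing in all other degrees follow.
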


\begin{proof}
At time $0$, the vertices $1,\ldots,D+1$ are born in $\Xs$. For $I\subseteq[0,D+1]$ each $\XCv_I$ except $\XCv_{[1,D+1]}$ and $\XCv_{[0,D+1]}$ contains one of these vertices, so the nerve at time $0$ consists of all $I\subseteq[0,D+1]$, except for $[1,D+1]$ and $[0,D+1]$. At time $2$, the vertex $0$ is born, which corresponds to the birth of $[1,D+1]$ in the nerve. Since $\XCv_{[0,D+1]}$ is always empty, $\Nrv^j$ is contractible for $j=0,1$ and homeomorphic to a $D$-sphere for $j\geq 2$.
\end{proof}

\begin{proposition}
The homology of the filtered simplicial complex $\Xs$ is given by
\[
\PH_q(\Xs)\cong\begin{cases}\kk[t];&q=0,\\
t^{2D+2}\kk[t];&q=D,\\
0;&\text{otherwise.}
\end{cases}
\]
\end{proposition}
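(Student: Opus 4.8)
The plan is to compute the homology of every filtration stage directly and then read off the persistence module in each degree, using the fact that all intermediate stages are contractible. First I would make the subcomplexes explicit: for $0\le j\le D+1$ we have $\Xs^{2j}=\XCv_0\cup\cdots\cup\XCv_j$, while $\Xs^k=\Xs^{2j}$ whenever $2j\le k<2j+2$, and $\Xs^k=\emptyset$ for $k<0$. The key observation is that for $j\le D$ every one of $\XCv_0,\ldots,\XCv_j$ contains the vertex $D+1$ (since $D+1\notin\{0,\ldots,j\}$ when $j\le D$), so $\Xs^{2j}$ is a cone with apex $D+1$ and hence contractible. At the last stage, $\Xs^{2D+2}=\XCv_0\cup\cdots\cup\XCv_{D+1}=\partial\Delta^{D+1}\cong S^D$.

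With this in hand the persistence module is immediate degree by degree. In degree $0$: $\Xs^k$ is empty for $k<0$ and nonempty and connected for every $k\ge 0$ (each stage is contractible or a sphere), and all inclusion-induced maps between connected complexes are isomorphisms on $\PH_0$, so $\PH_0(\Xs)\cong\kk[t]$, generated by the class born at time $0$. In degrees $0<q<D$: every stage is contractible or homeomorphic to $S^D$, so $\PH_q(\Xs^k)=0$ for all $k$, whence $\PH_q(\Xs)=0$. In degree $D$: the stages $\Xs^k$ with $k<2D+2$ are contractible, so $\PH_D(\Xs^k)=0$ there, whereas $\PH_D(\Xs^k)\cong\kk$ for $k\ge 2D+2$ with the internal maps identities (these are the terminal stages of the filtration); therefore $\PH_D(\Xs)\cong t^{2D+2}\kk[t]$. (Here we read $D\ge 1$ so that degrees $0$ and $D$ are distinct.)

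I do not expect a real obstacle; the only point requiring care is the bookkeeping of the filtration indices, in particular that $\Xs^{2D+1}=\Xs^{2D}$ is still contractible, so the top cycle is genuinely born at time $2D+2$ rather than $2D+1$. The contractibility of the intermediate stages is cleanest via the cone argument above, rather than by describing $\Xs^{2j}$ as $S^D$ with an open cell removed.
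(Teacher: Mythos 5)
Your proposal is correct and follows the same route as the paper: compute the homology of $\Xs^k$ at each filtration stage and read off the persistence module slice-wise. The only difference is that the paper simply asserts that $\Xs^j$ is contractible for $0\le j\le 2D+1$, whereas you spell out the reason via the cone-with-apex-$D+1$ argument (since $D+1\notin\{0,\ldots,j\}$ for $j\le D$, every $\XCv_i$ with $i\le j$ contains the vertex $D+1$); this is a worthwhile elaboration of exactly the same idea rather than a different proof.
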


\begin{proof}
$\Xs^j$ is contractible at the times $j=0,\ldots,2D+1$. For $j\geq 2D+2$ it is homeomorphic to a $D$-sphere.
\end{proof}

Computing the $E^1$ page requires some preparation, namely simplifying $\XCv_I^{2j}$.

\begin{proposition}
Suppose that $\emptyset\neq I\subseteq[0,D+1]$ and let $j\in[0,D+1]$. If $j\geq\min I$, $\XCv_I^{2j}$ is a $(D+1-|I|)$-simplex. If $j<\min I$, $\XCv_I^{2j}$ is the join of a $(j-1)$-sphere, realized as the boundary of a $j$-simplex, and a $(D-|I|-j)$-simplex. We allow $D-|I|-j=-1$ and interpret ``$(-1)$-simplex'' as the empty set.
\end{proposition}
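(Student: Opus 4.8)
The plan is to compute everything combinatorially: first pin down the unfiltered intersections $\XCv_I$, then express each filtration stage $\XCv_I^{2j}$ as a union of such intersections, and finally recognise the resulting subcomplex as a join.

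First I would note that, since each $\XCv_i$ is the full simplex on the vertex set $[0,D+1]\setminus\{i\}$ together with all of its faces, the intersection $\XCv_I=\bigcap_{i\in I}\XCv_i$ consists of exactly those simplices contained in $W_I:=[0,D+1]\setminus I=\bigcap_{i\in I}\bigl([0,D+1]\setminus\{i\}\bigr)$. Thus $\XCv_I$ is the full simplex $\Delta_{W_I}$ on $W_I$ (here and below $\Delta_S$ denotes the full simplex on a finite vertex set $S$, with $\Delta_\emptyset$ the $(-1)$-simplex, i.e.\ the empty set); since $|W_I|=D+2-|I|$, this is a $(D+1-|I|)$-simplex. (We may assume $|I|\leq D+1$, so that $W_I\neq\emptyset$; otherwise $\XCv_I=\emptyset$ and there is nothing to prove.) Using the induced filtration and $\Xs^{2j}=\XCv_0\cup\cdots\cup\XCv_j$, distributing the intersection over the union gives
\[
\XCv_I^{2j}=\XCv_I\cap\Xs^{2j}=\bigcup_{k=0}^{j}\bigl(\XCv_I\cap\XCv_k\bigr)=\bigcup_{k=0}^{j}\XCv_{I\cup\{k\}}.
\]

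If $j\geq\min I$, then $\min I\in[0,j]$ and $\XCv_{I\cup\{\min I\}}=\XCv_I$, so one of the terms of the displayed union is $\XCv_I$ itself; since every term is contained in $\XCv_I$, the union equals $\XCv_I$, the $(D+1-|I|)$-simplex, as claimed. If instead $j<\min I$, then $\{0,1,\dots,j\}\cap I=\emptyset$, so $\{0,\dots,j\}\subseteq W_I$; set $W_I':=W_I\setminus\{0,\dots,j\}$, so $|W_I'|=D+1-|I|-j$. For $k\in[0,j]$ we have $\XCv_{I\cup\{k\}}=\Delta_{W_I\setminus\{k\}}$, hence $\XCv_I^{2j}$ is the subcomplex of $\Delta_{W_I}$ consisting of all faces $\tau\subseteq W_I$ that omit at least one element of $\{0,\dots,j\}$, i.e.\ with $\{0,\dots,j\}\not\subseteq\tau$. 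Writing $\Delta_{W_I}=\Delta_{\{0,\dots,j\}}*\Delta_{W_I'}$ and each face as $\tau=\tau_1\cup\tau_2$ with $\tau_1\subseteq\{0,\dots,j\}$, $\tau_2\subseteq W_I'$, the condition $\{0,\dots,j\}\not\subseteq\tau$ is precisely $\tau_1\neq\{0,\dots,j\}$, i.e.\ $\tau_1\in\partial\Delta_{\{0,\dots,j\}}$. Therefore
\[
\XCv_I^{2j}=\partial\Delta_{\{0,\dots,j\}}*\Delta_{W_I'},
\]
the join of the boundary of a $j$-simplex (a $(j-1)$-sphere) with the $(D-|I|-j)$-simplex on $W_I'$; when $W_I'=\emptyset$, i.e.\ $D-|I|-j=-1$, the second factor is the $(-1)$-simplex and the join is just $\partial\Delta_{\{0,\dots,j\}}$, in accordance with the stated convention. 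Finally I would record that $|W_I'|\geq0$ always, since $\{0,\dots,j\}$ and $I$ are disjoint subsets of the $(D+2)$-element set $[0,D+1]$, so $D-|I|-j$ never drops below $-1$.

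The only step that is more than bookkeeping is the join identification in the second case: one must verify that $\bigcup_{k=0}^{j}\Delta_{W_I\setminus\{k\}}$ is exactly the \emph{full} subcomplex of $\Delta_{W_I}$ of faces not containing $\{0,\dots,j\}$, and keep track of the degenerate case in which the simplicial factor $\Delta_{W_I'}$ collapses to the $(-1)$-simplex; everything else is a direct consequence of the definitions of $\XCv_I$ and of the filtration.
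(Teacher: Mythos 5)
Your proof is correct and takes essentially the same route as the paper: both distribute the intersection to get $\XCv_I^{2j}=\bigcup_{k=0}^{j}\XCv_{I\cup\{k\}}$, dispose of the case $j\geq\min I$ by noting one term is all of $\XCv_I$, and for $j<\min I$ identify the union as a join. The only cosmetic difference is that the paper describes the join via its maximal faces $J\cup([j+1,D+1]\setminus I)$ with $|J|=j$, while you describe it via the excluded faces (those containing all of $\{0,\dots,j\}$); the two descriptions are equivalent.
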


\begin{proof}
Observe that
\[
\XCv_I^{2j}=\XCv_I\cap\Xs^{2j}=\XCv_I\cap(\XCv_0\cup\ldots\cup\XCv_j)=\XCv_{I\cup\{0\}}\cup\XCv_{I\cup\{1\}}\cup\ldots\cup\XCv_{I\cup\{j\}}.
\]
For $j\geq\min I$, one of the terms is $\XCv_{I\cup\{\min I\}}=\XCv_I$, so $\XCv_I^{2j}=\XCv_I$ is the $(D+1-|I|)$-simplex with vertices $[0,D+1]\setminus I$. (Intersecting with $\XCv_i$ corresponds to removing the vertex $i$.)

For $j<\min I$, $\XCv_{I\cup\{k\}}$ (where $k\in[0,j]$) is the $(D-|I|)$-simplex with vertices $[0,D+1]\setminus(I\cup\{k\})$. So, $\XCv_I^{2j}$ is the complex spanned by all the simplices of the form $J\cup([j+1,D+1]\setminus I)$ where $J$ is a $j$-element subset of $[0,j]$. But this means precisely that $\XCv_I^{2j}$ is the simplicial join of the $(D-|I|-j)$-simplex $[j+1,D+1]\setminus I$ and the $(j-1)$-sphere, realized as the boundary of the $j$-simplex $[0,j]$.
\end{proof}

\begin{proposition}
Suppose that $\emptyset\neq I\subseteq[0,D+1]$. Then
\[
\PH_q(\XCv_I)\cong\begin{cases}\frac{t^{2q+2}\kk[t]}{t^{2q+4}\kk[t]};&q=D-|I|>0,I=[q+2,D+1],\\
\kk[t]\oplus\frac{t^2\kk[t]}{t^4\kk[t]};&q=0,I=[2,D+1],\\
\kk[t];&q=0<D-|I|\text{ or }q=0,|I|=D+1,I\neq[1,D+1],\\
t^2\kk[t];&q=0,I=[1,D+1],\\
0;&\text{otherwise.}
\end{cases}
\]
\end{proposition}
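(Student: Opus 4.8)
The plan is to compute $\PH_*(\XCv_I)$ directly from the explicit description of the filtration pieces $\XCv_I^{2j}$ given in the previous proposition, then read off the $\kk[t]$-module structure from the inclusion maps between consecutive pieces. Write $s=|I|$ and $m=\min I$; since $I\subseteq[m,D+1]$ we have $s\le D+2-m$, with equality precisely when $I$ is the ``maximal tail'' $[D+2-s,D+1]$. This dichotomy organizes the argument. Throughout I would use the elementary facts that $\partial\Delta^j$ is a $(j-1)$-sphere (with $\partial\Delta^0=\emptyset$ and $\partial\Delta^1=S^0$ a two-point space), that a nonempty simplex is contractible, that a join $A*\Delta^n$ with a nonempty simplex is contractible (being homotopy equivalent to the cone $A*\{\mathrm{pt}\}$), and that joining with the empty set does nothing. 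Also, the filtration of $\XCv_I$ changes only at even integers, so all bars have even endpoints.

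The generic case is $I\ne[D+2-s,D+1]$, i.e.\ $m\le D+1-s$. If $m=0$ then $\XCv_I^{2j}=\XCv_I$, the $(D+1-s)$-simplex on $[0,D+1]\setminus I$, for every $j\ge0$. If $m\ge1$, then for $0\le j<m$ we have $j\le m-1\le D-s$, so $\XCv_I^{2j}=\partial\Delta^j*\Delta^{D-s-j}$ (which is $\Delta^{D-s}$ when $j=0$, since $\partial\Delta^0=\emptyset$) is a nonempty join with a nonempty simplex, hence contractible; and for $j\ge m$ it equals $\XCv_I$, again contractible. Either way $\XCv_I$ is nonempty and contractible at every level, so $\PH_0(\XCv_I)\iso\kk[t]$ and $\PH_q(\XCv_I)=0$ for $q>0$. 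This is the ``$\kk[t]$'' line of the table.

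Next I would treat the maximal tail $I=[D+2-s,D+1]$. Set $q:=D-s$, so $m=q+2$ and $I=[q+2,D+1]$, with $q\in\{-1,0,\dots,D-1\}$. For $0\le j\le q$ the piece $\XCv_I^{2j}=\partial\Delta^j*\Delta^{q-j}$ is a join with a nonempty simplex (it is $\Delta^q$ when $j=0$), hence contractible; at $j=q+1$, still $<m$, the simplex factor degenerates and $\XCv_I^{2(q+1)}=\partial\Delta^{q+1}=S^q$; and for $j\ge m=q+2$ one has $\XCv_I^{2j}=\XCv_I=\Delta^{q+1}$. So, as the parameter grows, $\XCv_I$ runs through: contractible (or, when $q=-1$, empty, since then $\XCv_I^0=\partial\Delta^0*\Delta^{-1}=\emptyset$) up to time $2q+1$; then $S^q$ at time $2q+2$; then $\Delta^{q+1}$ from time $2q+4$ on. The inclusion of the earlier (contractible or empty) piece into $S^q$ forces a class in $\widetilde{\Hg}_q$ to be born at time $2q+2$, and the inclusion $S^q=\partial\Delta^{q+1}\hookrightarrow\Delta^{q+1}$ into a contractible complex forces it to die at time $2q+4$. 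Splitting on $q$: for $q\ge1$ this gives $\PH_q(\XCv_I)\iso t^{2q+2}\kk[t]/t^{2q+4}\kk[t]$, together with $\PH_0(\XCv_I)\iso\kk[t]$ and all other groups zero; for $q=0$ the ``sphere'' is a pair of points, so it is the $0$-th homology that jumps and then merges, giving $\PH_0(\XCv_I)\iso\kk[t]\oplus t^2\kk[t]/t^4\kk[t]$; for $q=-1$, i.e.\ $I=[1,D+1]$, the complex is empty before time $2$ and a point afterwards, giving $\PH_0(\XCv_I)\iso t^2\kk[t]$. Assembling the cases gives the table.

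The only genuine obstacle is the bookkeeping at the extremes of the parameter ranges: being consistent about the degenerate joins ($\partial\Delta^0=\emptyset$, $\Delta^{-1}=\emptyset$) and the small spheres ($S^{-1}=\emptyset$, $S^0$ = two points) and about whether $m=0$, and --- more substantively --- verifying the birth and death \emph{times} rather than merely the ranks of the groups $\Hg_q(\XCv_I^{2j})$. The latter is settled by the two inclusions identified above: the source of the map onto $S^q$ is contractible or empty, so the $q$-cycle is genuinely new at time $2q+2$, and the target $\Delta^{q+1}$ is contractible, so the class is killed by time $2q+4$. Everything else is routine.
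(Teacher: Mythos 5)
Your proof is correct and follows essentially the same route as the paper: both read off each slice $\XCv_I^{2j}$ from the preceding proposition (a join of a sphere with a possibly empty simplex), observe that the only non-acyclic slices are the spheres arising for the ``maximal tail'' $I=[q+2,D+1]$ at $j=q+1$, and assemble the $\kk[t]$-modules slice-wise. Your added care with the inclusions into $S^q$ and from $S^q=\partial\Delta^{q+1}$ into $\Delta^{q+1}$ to pin down birth and death times is precisely what the paper's remark following its proof (slice-wise computation, filtration changing only at even times) is meant to cover.
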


\begin{proof}
The join of a sphere and a non-empty simplex is contractible, so $\XCv_I^{2j}$ can only be non-acyclic is if it is the join of a sphere and an empty simplex. By the previous proposition this occurs precisely if $D=|I|+j-1$ and $I=[j+1,D+1]$ (the latter is required so that $\min I>j$) in which case $\XCv_I^{2j}$ is a $(j-1)$-sphere. If $j-1>0$, this means that $\Hg_{j-1}(\XCv_I^{2j})\cong\kk$ and $\Hg_0(\XCv_I^{2j})\cong\kk$, if $j-1=0$, it means that $\Hg_{j-1}(\XCv_I^{2j})\cong\kk^2$, and for $j=0$ all homology groups (corresponding to $\XCv_{[1,D+1]}$) are trivial. In all other cases, $\XCv_I^{2j}$ is contractible, so $\Hg_0(\XCv_I^{2j})\cong\kk$. The remaining homology groups are $0$. Setting $q=j-1$ completes the proof.
\end{proof}

Note that we have computed persistent homology slice-wise, i.e. by computing the simplicial homology at each step of the filtration. To infer the correct $\kk[t]$-module from this, we have used the facts that the filtration only changes at even times and that once it is born, the first class appearing in dimension $0$ lives forever. One immediate consequence of these computations is the following.

\begin{corollary}
The cover $\XCov$ is $1$-acyclic.
\end{corollary}

Since we already know that
\[
E^1_{p,q}=\bigoplus_{|I|=p+1}\PH_q(\XCv_I),
\]
the previous proposition also immediately yields the $E^1$ page.

\begin{corollary}
The $E^1$ page of the Mayer-Vietoris spectral sequence of $(\Xs,\XCov)$ is given by:
\begin{center}
  \begin{tikzpicture}[xscale=1.3,yscale=1.1]
	\node (E50) at (7,0) {$\kk[t]^{\binom{D+2}{D+1}-1}\oplus t^2\kk[t]$};
 	\node (E40) at (4.7,0) {$\kk[t]^{\binom{D+2}{D}}\oplus\frac{t^2\kk[t]}{t^4\kk[t]}$}; 
 	\node (E31) at (3,1) {$\frac{t^4\kk[t]}{t^6\kk[t]}$}; 
 	\node (E22) at (2,2) {$\ddots $};
	\node (E13) at (1,3) {$\frac{t^{2D-2}\kk[t]}{t^{2D}\kk[t]}$};
 	\node (E04) at (0,4) {$\frac{t^{2D}\kk[t]}{t^{2D+2}\kk[t]}$}; 
 	\node (E30) at (3,0) {$\kk[t]^{\binom{D+2}{D-1}}$};
	\node (E20) at (2,0) {$\ldots$};
	\node (E10) at (1,0) {$\kk[t]^{\binom{D+2}{2}}$};
	\node (E00) at (-.2,0) {$\kk[t]^{\binom{D+2}{1}}$};
 	\node (EN0) at (-.9,-.2) {};
	\node (E60) at (8.2,-.2) {};
 	\node (E05) at (-.9,4.5) {};
 	
     \draw (E05.west) -- (EN0.south west) -- (E60.south);
  \end{tikzpicture}
\end{center}
\end{corollary}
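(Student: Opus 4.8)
The plan is to read off $E^1_{p,q}$ directly from the identity $E^1_{p,q}=\bigoplus_{|I|=p+1}\PH_q(\XCv_I)$ of Equation~\eqref{E1}, substituting the description of $\PH_q(\XCv_I)$ from the preceding proposition and sorting the subsets $\emptyset\neq I\subseteq[0,D+1]$ according to their cardinality $|I|=p+1$. This is entirely a bookkeeping exercise: no new idea is needed, and the work splits into the rows $q>0$ and the bottom row $q=0$.

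First I would treat the rows $q>0$. By the preceding proposition, $\PH_q(\XCv_I)$ can be nonzero for some $q>0$ only when $q=D-|I|$ and $I=[q+2,D+1]$, in which case it equals $\tfrac{t^{2q+2}\kk[t]}{t^{2q+4}\kk[t]}$. For each $q$ with $1\le q\le D-1$ there is exactly one such subset, of cardinality $D-q$, hence sitting in column $p=D-q-1$. Thus the only nonzero entries strictly above the bottom row are $E^1_{p,q}\cong\tfrac{t^{2q+2}\kk[t]}{t^{2q+4}\kk[t]}$ along the antidiagonal $p+q=D-1$ with $1\le q\le D-1$, and all remaining $E^1_{p,q}$ with $q>0$ vanish; these are precisely the terms $\tfrac{t^{2D}\kk[t]}{t^{2D+2}\kk[t]},\tfrac{t^{2D-2}\kk[t]}{t^{2D}\kk[t]},\dots,\tfrac{t^4\kk[t]}{t^6\kk[t]}$ appearing in the figure.

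For the bottom row $q=0$ I would split by $|I|=p+1$. When $|I|\le D-1$, every one of the $\binom{D+2}{|I|}$ subsets satisfies $D-|I|>0$ and contributes a copy of $\kk[t]$, giving $E^1_{p,0}\cong\kk[t]^{\binom{D+2}{p+1}}$ for $0\le p\le D-2$. When $|I|=D$, the single subset $[2,D+1]$ contributes $\kk[t]\oplus\tfrac{t^2\kk[t]}{t^4\kk[t]}$ while the other $\binom{D+2}{D}-1$ contribute $\kk[t]$, so $E^1_{D-1,0}\cong\kk[t]^{\binom{D+2}{D}}\oplus\tfrac{t^2\kk[t]}{t^4\kk[t]}$. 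When $|I|=D+1$, the subset $[1,D+1]$ contributes $t^2\kk[t]$ and the remaining $\binom{D+2}{D+1}-1$ contribute $\kk[t]$, so $E^1_{D,0}\cong\kk[t]^{\binom{D+2}{D+1}-1}\oplus t^2\kk[t]$. Finally $|I|=D+2$ forces $I=[0,D+1]$, for which $\XCv_I=\emptyset$ and $\PH_*(\XCv_I)=0$ (equivalently $[0,D+1]\notin\Nrv$), so column $D+1$ is empty. Assembling these columns reproduces the displayed page.

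The only points requiring a little care are keeping track of which column each of the distinguished subsets $[1,D+1]$ and $[2,D+1]$ occupies and not double counting it, together with the small-$D$ degenerations (for instance, when $D=1$ the band $0\le p\le D-2$ is empty and the two surviving columns are read off directly, again in agreement with the asserted formulas). Beyond this I foresee no genuine obstacle.
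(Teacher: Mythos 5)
Your proposal is correct and follows essentially the same route as the paper, which obtains the corollary immediately by substituting the preceding computation of $\PH_q(\XCv_I)$ into $E^1_{p,q}=\bigoplus_{|I|=p+1}\PH_q(\XCv_I)$ and sorting the subsets $I$ by cardinality. Your bookkeeping matches the displayed page, including the tacit point that every size-$D$ subset other than $[2,D+1]$ contributes a copy of $\kk[t]$ (a case the proposition's ``otherwise'' clause literally omits but clearly intends, and which is exactly what the exponent $\binom{D+2}{D}$ in the corollary requires).
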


The $E^2$ page can be inferred from this.

\begin{corollary}\label{firste2}
The $E^2$ page of the Mayer-Vietoris spectral sequence of $(\Xs,\XCov)$ is given by:
\begin{center}
  \begin{tikzpicture}[xscale=1.3,yscale=1.1]
	\node (E50) at (5,0) {$t^4\kk[t]$};
 	\node (E40) at (4,0) {$0$}; 
 	\node (E31) at (3,1) {$\frac{t^4\kk[t]}{t^6\kk[t]}$}; 
 	\node (E22) at (2,2) {$\ddots $};
	\node (E13) at (1,3) {$\frac{t^{2D-2}\kk[t]}{t^{2D}\kk[t]}$};
 	\node (E04) at (0,4) {$\frac{t^{2D}\kk[t]}{t^{2D+2}\kk[t]}$}; 
 	\node (E30) at (3,0) {};
	\node (E20) at (2,0) {};
	\node (E10) at (1,0) {};
	\node (E00) at (-.2,0) {$\kk[t]$};
 	\node (EN0) at (-.9,-.2) {};
	\node (E60) at (6,-.2) {};
 	\node (E05) at (-.9,4.5) {};
 	
     \draw (E05.west) -- (EN0.south west) -- (E60.south);
  \end{tikzpicture}
\end{center}
\end{corollary}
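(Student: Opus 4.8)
The plan is to obtain the $E^2$ page by taking homology of the $E^1$ page computed above with respect to $d^1$, handling the rows $q\geq 1$ and the bottom row $q=0$ by quite different arguments. The rows $q\geq 1$ are immediate: by the computation of $\PH_q(\XCv_I)$ above, for a fixed $q\geq 1$ the only $I$ with $\PH_q(\XCv_I)\neq 0$ is $I=[q+2,D+1]$, which has $|I|=D-q$, so by \eqref{E1} the $q$-th row of $E^1$ has a single nonzero term, $E^1_{D-q-1,q}\cong\frac{t^{2q+2}\kk[t]}{t^{2q+4}\kk[t]}$, with zero neighbours. Hence $d^1$ vanishes on that row and $E^2_{p,q}=E^1_{p,q}$ for all $q\geq 1$, giving the off-diagonal entries of the asserted $E^2$ page.

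For the bottom row I would compute $E^2_{*,0}=\PH_*(E^1_{*,0},d^1_{*,0})$ one $\ZZ$-degree at a time. Since $d^1$ is a degree-preserving morphism of $\kk[t]$-modules, its restriction to the degree-$j$ part is, by \eqref{E1}, exactly the bottom row $p\mapsto\bigoplus_{|I|=p+1}\Hg_0(\XCv_I^j)$ with differential induced by $\partial^1$ of the \emph{unfiltered} Mayer--Vietoris spectral sequence of the pair $(\Xs^j,\XCov^j)$, where $\XCov^j=(\XCv_i^j)_{i\in\Lambda}$; thus $(E^2_{p,0})^j$ is the $(p,0)$-entry of the $E^2$ page of that spectral sequence, and the $\kk[t]$-module structure maps are the ones induced by the inclusions of filtration stages. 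Now split into three ranges. If $j\leq 1$, then $\Xs^j=\XCv_0$ and every $\XCv_I^j$ is a simplex, hence connected, so this bottom row is the simplicial chain complex of $\Nrv^j$, which by the nerve computation above is the boundary of the $(D+1)$-simplex with the face $[1,D+1]$ deleted, i.e.\ a disk, contributing $\kk$ in degree $0$ only. If $j=2,3$, then $\Xs^j=\XCv_0\cup\XCv_1$; using the description of $\XCv_I^{2m}$ above one sees that $\Hg_q(\XCv_I^j)=0$ for every $q\geq 1$ (a nontrivial class in degree $q\geq 1$ appears only from time $2(q+1)\geq 4$ onward), so this unfiltered spectral sequence is concentrated in its bottom row, hence collapses, and the bottom-row homology equals $\Hg_*(\Xs^j)$, which is $\kk$ in degree $0$ since $\Xs^j$ is contractible (as in the computation of $\PH_*(\Xs)$ above). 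If $j\geq 4$, then every $\XCv_I^j$ is again connected and $\Nrv^j$ is the full boundary of the $(D+1)$-simplex, a $D$-sphere, so the bottom row is its simplicial chain complex, with homology $\kk$ in degrees $0$ and $D$. Assembling the slices: the degree-$0$ class is present and stable from $j=0$ on, giving $E^2_{0,0}\cong\kk[t]$; the degree-$D$ class is absent for $j\leq 3$ and present (and stable) for $j\geq 4$, giving $E^2_{D,0}\cong t^4\kk[t]$; and every middle column $1\leq p\leq D-1$ is zero in every slice, so $E^2_{p,0}=0$ there. This is exactly the stated $E^2$ page.

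The delicate point -- and the one worth spelling out carefully -- is the slices $j=2,3$. There $\XCv_{[2,D+1]}^j$ is disconnected (two points), so the bottom-row slice is strictly larger than the simplicial chain complex of $\Nrv^j$, even though $\Nrv^j$ is already a $D$-sphere at time $2$. The extra generator in column $D-1$ must cancel in homology, and what guarantees this is precisely the contractibility of $\Xs^2=\XCv_0\cup\XCv_1$ together with the vanishing of the higher rows of its Mayer--Vietoris spectral sequence at that stage. This cancellation is what delays the birth of the degree-$D$ class of $E^2_{*,0}$ from time $2$ (where $\PH_D(\Nrv)$ is born) to time $4$, turning $\PH_D(\Nrv)=t^2\kk[t]$ into $E^2_{D,0}=t^4\kk[t]$; the resulting shift by $2$ is the extremal behaviour that makes the $2\eps$ bound of Corollary~\ref{secondpage} sharp.
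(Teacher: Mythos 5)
Your proof is correct, but it reaches the bottom row by a genuinely different route than the paper. The paper first computes $\PH_*(\Xs)$, invokes convergence of the \emph{filtered} spectral sequence to force $E^2_{p,0}=E^\infty_{p,0}=0$ for $0<p<D$ and $E^2_{0,0}\cong\PH_0(\Xs)\cong\kk[t]$, and then identifies $E^2_{D,0}=\ker d^1_{D,0}\cong t^4\kk[t]$ by an inductive rank count, using $\ker d^1_{p,0}\cong\im d^1_{p+1,0}$ and the first isomorphism theorem column by column. You instead compute $(E^2_{*,0})^j$ slice-wise, identifying the degree-$j$ part of $(E^1_{*,0},d^1_{*,0})$ with the bottom row of the classical (unfiltered) Mayer--Vietoris spectral sequence of $(\Xs^j,\XCov^j)$, and then arguing: for $j\leq1$ the slice complex is the chain complex of a disk; for $j=2,3$ all intersections have vanishing higher homology, so the bottom-row homology is $\Hg_*(\Xs^j)=\Hg_*(\pt)$ by collapse of that slice's spectral sequence; for $j\geq4$ all intersections are connected, so the slice complex is the chain complex of the boundary of the $(D+1)$-simplex. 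This is more geometric, avoids the global convergence-plus-counting step, and explains directly why the degree-$D$ class is delayed from time $2$ (birth in $\PH_D(\Nrv)$) to time $4$; your handling of the rows $q\geq1$ coincides with the paper's. One step you leave implicit and should record explicitly: slice dimensions $0,0,0,0,1,1,\ldots$ alone do not determine the $\kk[t]$-module $E^2_{D,0}$ (for instance $\bigoplus_{k\geq4}t^{k}\kk[t]/t^{k+1}\kk[t]$ has the same slices), so you need that the structure maps $(E^2_{D,0})^j\to(E^2_{D,0})^{j+1}$ are isomorphisms for $j\geq4$. This is immediate in your setup -- for $j\geq4$ every inclusion-induced map $\Hg_0(\XCv_I^j)\to\Hg_0(\XCv_I^{j+1})$ is an isomorphism of one-dimensional spaces, so the transition map is an isomorphism of slice complexes -- and the analogous (trivial) remark justifies $E^2_{0,0}\cong\kk[t]$; with that sentence added, the argument is complete.
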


\begin{proof}
We have already seen that all persistent homology groups $\PH_q(\Xs)$ for $q\neq 0,D$ are trivial. This means that the corresponding antidiagonals on the $E^{\infty}$ page must consist of trivial modules. As there are no nontrivial differentials to and from $E^r_{p,0}$ for $p\neq D$ for $r\geq 2$, these modules stabilize already on $E^2$. Hence, these are all trivial, except for $E^2_{0,0}\cong\PH_0(\Xs)\cong\kk[t]$. The modules $E^2_{p,q}$ for $q>0$ are isomorphic to $E^1_{p,q}$ since $d^1$ is trivial above the bottom row. Finally, $E^2_{D,0}=\ker d^1_{D,0}$. This can be computed explicitly from the generators, or inductively, as follows. We already know most of $E^2$, so we can use this to our advantage. Namely, we know that
\[
\frac{\ker d^1_{0,0}}{\im d^1_{1,0}}\cong\kk[t]
\]
and for $p=1,\ldots,D-1$ we have
\[
\ker d^1_{p,0}\cong\im d^1_{p+1,0}.
\]
From this, using the first isomorphism theorem, we can inductively infer that for $p=0,\ldots,D-2$
\[
\ker d^1_{p,0}\cong\kk[t]^{\sum_{k=0}^{p+1}(-1)^k\binom{D+2}{p+1-k}}
\]
and using the binomial theorem
\[
\ker d^1_{D-1,0}\cong\frac{t^2\kk[t]}{t^4\kk[t]}\oplus\kk[t]^{\sum_{l=2}^{D+2}(-1)^l\binom{D+2}{D+2-l}}\cong\frac{t^2\kk[t]}{t^4\kk[t]}\oplus\kk[t]^{D+1}.
\]
Since
\[
E^1_{D,0}\cong t^2\kk[t]\oplus\kk[t]^{D+1}
\]
and
\[
\frac{t^2\kk[t]}{t^4\kk[t]}\oplus\kk[t]^{D+1}\cong\im d^1_{D,0}\cong\frac{E^1_{D,0}}{\ker d^1_{D,0}}\cong\frac{t^2\kk[t]\oplus\kk[t]^{D+1}}{\ker d^1_{D,0}},
\]
we finally infer that $\ker d^1_{D,0}\cong t^4\kk[t]$ and thus conclude the proof.
\end{proof}

The $E^\infty$ page can be inferred in a similar fashion.

\begin{corollary}
The $E^\infty$ page of the Mayer-Vietoris spectral sequence of $(\Xs,\XCov)$ is given by:
\begin{center}
  \begin{tikzpicture}[xscale=1.3,yscale=1.1]
	\node (E50) at (5,0) {$t^{2D+2}\kk[t]$};
 	\node (E40) at (4,0) {}; 
 	\node (E30) at (3,0) {};
	\node (E20) at (2,0) {};
	\node (E10) at (1,0) {};
	\node (E00) at (-.2,0) {$\kk[t]$};
 	\node (EN0) at (-.9,-.2) {};
	\node (E60) at (6,-.2) {};
 	\node (E01) at (-.9,0.5) {};
 	
     \draw (E01.west) -- (EN0.south west) -- (E60.south);
  \end{tikzpicture}
\end{center}
\end{corollary}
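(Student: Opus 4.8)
The plan is to deduce the $E^\infty$ page purely from the two ingredients already available: the convergence of the Mayer--Vietoris spectral sequence to $\PH_*(\Xs)$ (Theorem \ref{convergence}) and the description of the $E^2$ page (Corollary \ref{firste2}), together with the known $\PH_*(\Xs)$. No explicit computation of higher differentials is needed. First I would record the relevant inputs: $\PH_0(\Xs)\cong\kk[t]$, $\PH_D(\Xs)\cong t^{2D+2}\kk[t]$ and $\PH_q(\Xs)=0$ otherwise; and on $E^2$ the only nonzero entries are $E^2_{0,0}\cong\kk[t]$, $E^2_{D,0}\cong t^4\kk[t]$ and the torsion staircase $E^2_{D-q-1,q}\cong t^{2q+2}\kk[t]/t^{2q+4}\kk[t]$ for $q=1,\dots,D-1$, all of which sit on the single antidiagonal $p+q=D-1$. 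Since $d^r:E^r_{p,q}\to E^r_{p-r,q+r-1}$ lowers $p+q$ by one, only the antidiagonals $0$, $D-1$ and $D$ carry anything, and for $r\geq 2$ no differential can enter or leave $E^r_{0,0}$ (one of the indices of the source or target is negative), so $E^\infty_{0,0}=E^2_{0,0}\cong\kk[t]$.

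Next I would dispose of the remaining positions by comparing antidiagonals. Convergence supplies, for each $n$, a filtration $0=\PH_n(\Xs)^{-1}\subseteq\cdots\subseteq\PH_n(\Xs)^n=\PH_n(\Xs)$ with $E^\infty_{p,n-p}\cong\PH_n(\Xs)^p/\PH_n(\Xs)^{p-1}$, so every entry of $E^\infty$ on the antidiagonal $p+q=n$ is a subquotient of $\PH_n(\Xs)$. Hence $E^\infty_{p,q}=0$ whenever $p+q\notin\{0,D\}$; in particular, when $D\geq 2$ (so that $D-1\notin\{0,D\}$) every module of the staircase dies, $E^\infty_{D-q-1,q}=0$, and for $q=0$, $p\neq 0,D$ one likewise gets $E^\infty_{p,0}=0$. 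It then only remains to identify $E^\infty_{D,0}$: on the antidiagonal $p+q=D$ the sole nonzero $E^2$-entry is $E^2_{D,0}$, so $E^\infty_{p,D-p}=0$ for $p<D$, which forces $\PH_D(\Xs)^p=\PH_D(\Xs)^{p-1}$ for all $p\leq D-1$, hence $\PH_D(\Xs)^{D-1}=0$ and therefore $E^\infty_{D,0}\cong\PH_D(\Xs)^D/\PH_D(\Xs)^{D-1}\cong\PH_D(\Xs)\cong t^{2D+2}\kk[t]$. As a consistency check one can note that no differential ever enters $E^r_{D,0}$, so $E^\infty_{D,0}$ is in fact a submodule of $E^2_{D,0}\cong t^4\kk[t]$, which is compatible since $t^{2D+2}\kk[t]\subseteq t^4\kk[t]$ for $D\geq 1$. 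The degenerate cases $D=0,1$ are immediate: the staircase is empty, no higher differential can be nonzero, and $E^\infty=E^2$ already has the asserted shape.

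I do not expect a real obstacle here; the argument is bookkeeping with antidiagonals and the convergence statement. The only points needing a little care are applying ``subquotient of $\PH_n(\Xs)$'' on the correct antidiagonal (the one indexed by the homological degree $p+q$, not by the column $p$) and not overlooking the small values of $D$, both of which are routine once written out.
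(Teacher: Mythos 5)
Your argument is correct, and it takes a slightly different (more direct) route than the paper's. The paper proves the corollary by running the induction through the intermediate pages: it observes that the only possibly nontrivial differentials are $d^r_{D,0}:E^r_{D,0}\to E^r_{D-r,r-1}$, then uses $\PH_{D-1}(\Xs)=0$ together with the fact that $E^{R}_{D-r,r-1}$ stabilizes for $R>r$ to conclude that each $d^r_{D,0}$ is surjective, and finally computes $E^r_{D,0}\cong t^{2r}\kk[t]$ inductively until collapse at $r=D+1$. You instead bypass the page-by-page induction entirely: once the nonzero $E^2$ entries are known to sit only on antidiagonals $0$, $D-1$, $D$, you read $E^\infty$ directly off the convergence filtration $0=\PH_n(\Xs)^{-1}\subseteq\cdots\subseteq\PH_n(\Xs)^n=\PH_n(\Xs)$ using the fact that every $E^\infty$ entry is a subquotient of both $E^2$ and the appropriate $\PH_n(\Xs)$. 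Both proofs hinge on the same two inputs (the $E^2$ page from Corollary \ref{firste2} and the already-computed $\PH_*(\Xs)$ together with convergence), but the paper's route also identifies the intermediate pages $E^r_{D,0}\cong t^{2r}\kk[t]$, while yours gets to the $E^\infty$ statement with less bookkeeping and avoids having to argue surjectivity of the higher differentials. One small caution: your ``consistency check'' uses $t^{2D+2}\kk[t]\subseteq t^4\kk[t]$, which holds precisely for $D\geq 1$; since for $D=0$ the positions $(0,0)$ and $(D,0)$ coincide and the example's stated $\PH_*(\Xs)$ is degenerate anyway, this is fine, but it is worth noting that the example (like the paper's own treatment) is really aimed at $D\geq 1$.
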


\begin{proof}
Note that the only nontrivial differential on the $r$-th page, $2\leq r\leq D$, is $d^r_{D,0}$. Note that $E^{R}_{D-r,r-1}$ has already stabilized for $R>r$, as there are no more nontrivial differentials to and from this module. Since $\PH_{D-1}(\Xs)=0$, we can infer that $E^{r+1}_{D-r,r-1}=0$ and that $d^r_{D,0}$ is surjective. A simple inductive argument shows that $E^r_{D,0}\cong t^{2r}\kk[t]$. The spectral sequence collapses at $r=D+1$ where $E^r_{D,0}\cong t^{2D+2}\kk[t]$.
\end{proof}

From these considerations it follows that this example has the following properties:
\begin{itemize}
\item $E^2_{*,0}\itl{\eta}\PH_*(\Nrv)$ holds for $\eta=2$ but not for $\eta<2$,
\item $E^2_{*,0}\itl{\eta}E^\infty_{*,0}$ holds for $\eta=2(D-1)$ but not for $\eta<2(D-1)$,
\end{itemize}
therefore, it attains the bounds from Corollary \ref{secondpage} and Theorem \ref{infinitypage}, so these bounds are in fact sharp.

\subsection{Second Example}

Our second example shows that the bound in Proposition \ref{linking} is also sharp. Let $D\geq1$ and let $\Xs$ be the simplicial complex with vertex set $[0,D+3]$ consisting of all simplices $\sigma\subseteq[0,D+3]$ such that $[1,D+1]\not\subseteq\sigma$ and $\{D+2,D+3\}\not\subseteq\sigma$.

We may visualize $\Xs$ geometrically as a bipyramid consisting of two $(D+1)$-simplices, each of which is subdivided into $D+1$ smaller $(D+1)$-simplices. More specifically, consider the subdivision of the $D$-simplex $[1,D+1]$ into $D+1$ smaller $D$-simplices obtained by adding the point $0$ at the barycenter and connecting it to the vertices (note that this is {\em not} the barycentric subdivision). Then $\Xs$ can be understood as the union of two cones over this subdivision, whose apices are $D+2$ and $D+3$.
%

A cover $\XCov$ of $\Xs$ is given by the cone with apex $D+3$ and the $D+1$ small $(D+1)$-simplices the cone with apex $D+2$ is subdivided into. Specifically, the $0$-th cover element $\XCv_0$ is the full subcomplex of $\Xs$ spanned by $[0,D+1]\cup\{D+3\}$ and for each $i\in[1,D+1]$, the $i$-th cover element $\XCv_i$ is defined as the full subcomplex of $\Xs$ spanned by $[0,D+2]\setminus\{i\}$. In the geometric interpretation mentioned above, the intersection $\XCv_I$ with $0\notin I$ is the cone with apex $D+2$ over the corresponding $(D+1-|I|)$-simplex occurring in the subdivision of the base $D$-simplex $[1,D+1]$, and $\XCv_{I\cup\{0\}}$ is this base $(D+1-|I|)$-simplex.

Next, we define a filtration. The idea is to start with the boundary of the bipyramid $\Xs$ and fill in the $\XCv_i$ one at a time. Let $A$ be the subcomplex of $\Xs$ obtained by removing all simplices $\sigma\subseteq[0,D+3]$ such that $0\in\sigma$. Geometrically, $A$ corresponds to the boundary of the bipyramid $\Xs$. A filtration $\Xs^{-2D}\leq\Xs^0\leq\Xs^2\leq\Xs^4\leq\ldots\leq\Xs^{2D+2}$ of $\Xs$ is defined by
\[
\Xs^{-2D}=A\qquad\text{and}\qquad\Xs^{2j}=A\cup \XCv_0\cup\ldots\cup \XCv_j\qquad\text{for $j\geq 0$}.
\]
We claim that with this filtration, $\Xs$ achieves the relevant bound of $2(D+1)$. To see this, we compute the $E^1$ page of the spectral sequence directly. This corresponds to computing the persistent homology of the $|I|$-fold intersections $\XCv_I$, equipped with the naturally induced filtrations $\XCv_I^{2j}=\XCv_I\cap \Xs^{2j}$.

First, we compute homology of the nerve of the cover of $\Xs$.

\begin{proposition}
The persistent homology of the nerve of $\XCov$ is given by
\[
\PH_q(\Nrv)=\begin{cases}t^{-2D}\kk[t];&q=0,\\
\frac{t^{-2D}\kk[t]}{\kk[t]};&q=D,\\
0;&\text{otherwise.}
\end{cases}
\]
\end{proposition}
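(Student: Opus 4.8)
The plan is to identify the $\ZZ$-filtered complex $\Nrv$ completely and then read off the persistent homology degree by degree. First I would compute the intersections $\XCv_I$. Using the fact that the intersection of full subcomplexes of $\Xs$ is again a full subcomplex, spanned by the intersection of the vertex sets, one finds: for $\emptyset\neq I\subseteq[1,D+1]$ the intersection $\XCv_I$ is the full subcomplex of $\Xs$ on $[0,D+2]\setminus I$; for $I=I'\cup\{0\}$ with $\emptyset\neq I'\subseteq[1,D+1]$ it is the full subcomplex on $[0,D+1]\setminus I'$; and $\XCv_{\{0\}}=\XCv_0$. Since $D\geq1$, every singleton is a simplex of $\Xs$, so a full subcomplex on a nonempty vertex set is nonempty; as the spanning sets above are always nonempty, $\XCv_I\neq\emptyset$ for every nonempty $I\subseteq[0,D+1]$, and therefore $\Nrv$ is the full $(D+1)$-simplex on the vertex set $[0,D+1]$.

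Next I would compute the nerve filtration from $g(I)=\min\{j\mid\XCv_I\cap\Xs^j\neq\emptyset\}$ (recall that $\XCv_I^j=\XCv_I\cap\Xs^j$ for the induced filtrations). For a proper face $I\subsetneq[0,D+1]$ I would exhibit a vertex of $\XCv_I$ lying in $A=\Xs^{-2D}$: if $0\notin I$ then $\{D+2\}\in\XCv_I$ and $\{D+2\}\in A$ since it does not contain $0$; if $0\in I$ but $I\neq[0,D+1]$ then $I':=I\setminus\{0\}\subsetneq[1,D+1]$, so for any $v\in[1,D+1]\setminus I'$ one has $\{v\}\in\XCv_I\cap A$. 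Hence $g(I)=-2D$ for every $I\subsetneq[0,D+1]$. By contrast, $\XCv_{[0,D+1]}$ is the single vertex $\{0\}$, which lies in no simplex of $A$ but lies in $\XCv_0\subseteq\Xs^0$, so $g([0,D+1])=0$. Passing to sublevel sets, this determines the filtration: $\Nrv^j=\emptyset$ for $j<-2D$; $\Nrv^j$ is the boundary of the $(D+1)$-simplex on $[0,D+1]$, hence homeomorphic to the $D$-sphere, for $-2D\leq j<0$; and $\Nrv^j$ is the full $(D+1)$-simplex, hence contractible, for $j\geq0$.

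Finally I would read off the persistence modules. In degree $0$, $\Hg_0(\Nrv^j)$ is $0$ for $j<-2D$ and $\kk$ for $j\geq-2D$, with all maps between nonzero terms isomorphisms (connected spaces), so $\PH_0(\Nrv)\cong t^{-2D}\kk[t]$. In degree $D$, $\Hg_D(\Nrv^j)$ is $\kk$ exactly for $-2D\leq j<0$ and $0$ otherwise; the maps among the nonzero terms are isomorphisms, while the inclusion into $\Nrv^0$ induces the zero map onto $\Hg_D(\Nrv^0)=0$, so the fundamental class is born at $-2D$ and dies at $0$, giving $\PH_D(\Nrv)\cong t^{-2D}\kk[t]/\kk[t]$. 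In every other degree $q$ each $\Nrv^j$ has vanishing $\Hg_q$ (for $D\geq1$ the $D$-sphere has homology only in degrees $0$ and $D$, and the contractible complex only in degree $0$), so $\PH_q(\Nrv)=0$.

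The argument is routine; the only care needed is the vertex-set bookkeeping for the $\XCv_I$ and the observation that $\XCv_I\cap A$ is nonempty for precisely the proper faces $I$. The one point to flag is that the $D$-cycle cannot die before time $0$, which is immediate since $\Nrv^j$ is the $D$-sphere for every $-2D\leq j<0$; beyond this bookkeeping there is no real obstacle, and the computation parallels the nerve computation of the first example with different filtration values.
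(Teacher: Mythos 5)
Your proposal is correct and follows essentially the same route as the paper: determine the birth time $g(I)$ of each nerve simplex by exhibiting a vertex of $\XCv_I$ lying in $A=\Xs^{-2D}$ (for proper faces) versus the vertex $\{0\}$ born at time $0$ (for the top face $[0,D+1]$), so that $\Nrv^j$ is the boundary $D$-sphere of the $(D+1)$-simplex for $-2D\leq j<0$ and the full $(D+1)$-simplex for $j\geq0$, and then read off the persistence modules. The only difference is that you verify all proper faces explicitly via the full-subcomplex description of the $\XCv_I$, whereas the paper checks only the codimension-one faces and lets monotonicity of $g$ handle the rest; this is an inessential variation.
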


\begin{proof}
First recall the definition of a cover element $U_I$. For $I\subseteq [0,D+2]$, $U_I$ consists of all simplices in the space spanned by $[0,D+2]\setminus I $. The nerve becomes non-empty at time $-2D$, since at this time all $D$-simplices $I\subseteq[0,D+1], |I|=D+1,$ are born. This is because for the simplex  $I=[0,D+1]\setminus\{i\}$ in the nerve, where $i>0$,  by definition $U_I^{-2D}$ contains the point $i$. 
On the other hand, if $I=[1,D+1]$, the point $D+2$ is contained in $U_I^{-2D}$. However, the top simplex $[0,D+1]$ only contains the point $0$, which is born at time $0$. It follows from all this that $\Nrv^j$ is a $(D+1)$-simplex for $j\in[0,\infty)$ and the boundary of this $(D+1)$-simplex for $j\in[-2D,0)$.
\end{proof}

Next, we compute the persistent homology of the union.

\begin{proposition}
The persistent homology of $\Xs$ is given by
\[
\PH_q(\Xs)=\begin{cases}t^{-2D}\kk[t];&q=0,\\
\frac{t^{-2D}\kk[t]}{t^{2D+2}\kk[t]};&q=D,\\
0;&\text{otherwise.}
\end{cases}
\]
\end{proposition}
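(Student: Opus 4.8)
The plan is to compute the ordinary homology of each slice $\Xs^j$ of the filtration together with the maps induced by the inclusions, and then read off the $\kk[t]$-module structure slice-wise, exactly as in the nerve computation above. Since the filtration only changes at $j=-2D$ and at $j=2k$ for $0\le k\le D+1$, everything is encoded in the tower
\[
\Xs^{-2D}=A\hookrightarrow \Xs^0\hookrightarrow \Xs^2\hookrightarrow\cdots\hookrightarrow \Xs^{2D}\hookrightarrow \Xs^{2D+2}=\Xs.
\]
All the combinatorial identifications below are mechanical once one observes that each $\XCv_i$ (as well as $A$ and $\Xs$) is a full subcomplex, so any intersection of them is again the full subcomplex on the intersection of the vertex sets. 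First I would record the homotopy types of the pieces: every maximal simplex of $\Xs$ contains the vertex $0$, so $\Xs$ is the cone over $A$ with apex $0$ and hence $\Xs^{2D+2}$ is contractible; every maximal simplex of $\XCv_0$ contains $0$, and each $\XCv_i$ with $i\ge1$ is a full simplex, so every $\XCv_i$ is contractible; and writing $A=P\cup R$ with $P$ the full subcomplex on $[1,D+2]$ and $R$ the full subcomplex on $[1,D+1]\cup\{D+3\}$, both $P$ and $R$ are cones (on the vertices $D+2$ and $D+3$ respectively) while $P\cap R$ is the full subcomplex on $[1,D+1]$, namely $\partial\Delta^{D}\cong S^{D-1}$, so Mayer--Vietoris gives $\PH_q(A)\cong\kk$ for $q\in\{0,D\}$ and $0$ otherwise.

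Next I would check that every inclusion in the tower except the last is a homology isomorphism. For $\Xs^0=A\cup\XCv_0$ this is Mayer--Vietoris, since $\XCv_0$ and $A\cap\XCv_0=R$ are contractible. For $0\le j\le D-1$ we have $\Xs^{2(j+1)}=\Xs^{2j}\cup\XCv_{j+1}$, and the key point is that
\[
\XCv_{j+1}\cap\Xs^{2j}=(\XCv_{j+1}\cap A)\cup\bigcup_{i=0}^{j}(\XCv_{j+1}\cap\XCv_i)
\]
is a cone on the vertex $D+1$: each of the listed pieces is a full simplex having $D+1$ among its vertices, and adjoining $D+1$ to any of their simplices keeps it inside that same piece (this is where $j+1\le D$ is used, so that $D+1\notin\{1,\dots,j,j+1\}$). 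Hence $\XCv_{j+1}$ and $\XCv_{j+1}\cap\Xs^{2j}$ are both contractible, and Mayer--Vietoris makes $\Xs^{2j}\hookrightarrow\Xs^{2(j+1)}$ a homology isomorphism. Composing, $A\hookrightarrow\Xs^{2j}$ is a homology isomorphism for all $0\le j\le D$, so every slice up to $\Xs^{2D}$ has the homology of $S^{D}$ and the maps between them are isomorphisms.

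The decisive step is the last inclusion $\Xs^{2D}\hookrightarrow\Xs^{2D+2}=\Xs^{2D}\cup\XCv_{D+1}$, where $\XCv_{D+1}$ is the top $(D+1)$-simplex of $\Xs$. Here $\XCv_{D+1}\cap A$, $\XCv_{D+1}\cap\XCv_0$ and the $\XCv_{D+1}\cap\XCv_i$ for $1\le i\le D$ turn out to be exactly the $D+2$ facets of $\XCv_{D+1}$, one opposite each vertex, so $\XCv_{D+1}\cap\Xs^{2D}=\partial\XCv_{D+1}\cong S^{D}$. By excision $\PH_q(\Xs^{2D+2},\Xs^{2D})\cong\PH_q(\XCv_{D+1},\partial\XCv_{D+1})$, which is $\kk$ for $q=D+1$ and $0$ otherwise; feeding this into the long exact sequence of the pair $(\Xs^{2D+2},\Xs^{2D})$ and using that $\Xs^{2D+2}$ is contractible and $\PH_*(\Xs^{2D})\cong\PH_*(S^{D})$ (recall $D\ge1$, so $0$, $D$ and $D+1$ are distinct degrees) forces the connecting map $\PH_{D+1}(\Xs^{2D+2},\Xs^{2D})\to\PH_D(\Xs^{2D})$ to be an isomorphism. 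Consequently $\PH_D(\Xs^{2D})\to\PH_D(\Xs^{2D+2})$ is the zero map, while $\PH_0(\Xs^{2D})\to\PH_0(\Xs^{2D+2})$ stays an isomorphism and all other degrees remain $0$.

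Assembling the slices: in degree $0$ the module is $\kk$ from filtration level $-2D$ on, with every structure map an isomorphism, giving $t^{-2D}\kk[t]$; in degree $D$ it is $\kk$ from level $-2D$ through level $2D+1$ and $0$ from level $2D+2$ on, giving $t^{-2D}\kk[t]/t^{2D+2}\kk[t]$; in all other degrees it vanishes --- which is the asserted formula. The only genuinely delicate points are the cone identification of $\XCv_{j+1}\cap\Xs^{2j}$ for $j\le D-1$ and, above all, the identification $\XCv_{D+1}\cap\Xs^{2D}\cong S^{D}$, which is what pins down the death time $2D+2$; everything else is routine Mayer--Vietoris and excision bookkeeping carried out degreewise so that the $\kk[t]$-module structure is recovered exactly as in the nerve computation.
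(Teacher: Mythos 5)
Your proof is correct and follows essentially the same route as the paper: a slice-wise computation along the tower $A=\Xs^{-2D}\subseteq\Xs^{0}\subseteq\cdots\subseteq\Xs^{2D+2}=\Xs$, showing that every slice up to $\Xs^{2D}$ has the homology of $A\simeq S^{D}$ via the inclusions and that the final slice is contractible, and then reading off the $\kk[t]$-module degreewise. Where the paper merely asserts collapses of $\Xs^{2j}$ onto $A$ and invokes the bipyramid/boundary picture, you verify the same facts with explicit cone identifications, Mayer--Vietoris and excision (in particular the identification $\XCv_{D+1}\cap\Xs^{2D}=\partial\XCv_{D+1}$, which pins the death time at $2D+2$), so this is a fleshed-out rendering of the paper's argument rather than a different one.
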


\begin{proof}
For each $j\in[0,D]$, there is a collapse of $\Xs^{2j}$ to $\Xs^{-2D}$. Observing that $\Xs^{2D+2}=\Xs$ is a bipyramid and $\Xs^{-2D}=A$ is its boundary completes the proof.
\end{proof}

In order to compute $E^1$, we describe $\XCv_I^{2j}$ in more familiar terms.

\begin{proposition}
Suppose that $\emptyset\neq I\subseteq[1,D+1]$ and let $j\in[0,D+1]$. Then the following hold:
\begin{itemize}
\item if $j\geq\min I$, $\XCv_I^{2j}$ is a $(D+2-|I|)$-simplex,
\item if $j<\min I$, $\XCv_I^{2j}$ is the join of the boundary $j$-sphere of a $(j+1)$-simplex and a $(D-|I|-j)$-simplex,
\item $\XCv_I^{-2D}$ is a $(D+1-|I|)$-simplex,
\item $\XCv_{I\cup\{0\}}^{2j}$ is a $(D+1-|I|)$-simplex and $\XCv_{I\cup\{0\}}^{-2D}$ is a $(D-|I|)$-simplex,
\item $\XCv_{0}^{2j}$ is a subdivided $(D+1)$-simplex and $\XCv_0^{-2D}$ is the cone over the boundary of a $D$-simplex.
\end{itemize}
In the second and fourth bullet points, we allow $D-|I|-j=-1$ resp. $D-|I|=-1$ and interpret ``$(-1)$-simplex'' as the empty set.
\end{proposition}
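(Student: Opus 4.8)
The plan is to reduce the entire statement to elementary manipulations with full subcomplexes of simplices, exploiting two facts: (i) if a vertex set $W\subseteq[0,D+3]$ omits a nonempty subset of $[1,D+1]$ and omits at least one of $D+2,D+3$, then every subset of $W$ satisfies both defining conditions of $\Xs$, so the full subcomplex of $\Xs$ spanned by $W$ is the honest simplex on $W$; and (ii) the filtration is assembled by a \emph{union}, $\Xs^{2j}=A\cup\XCv_0\cup\cdots\cup\XCv_j$ (with $\Xs^{-2D}=A$), which distributes over the intersection $\XCv_I=\bigcap_{i\in I}\XCv_i$.

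First I would identify the relevant pieces as simplices. For $\emptyset\neq I\subseteq[1,D+1]$ we have $\XCv_i=\{\sigma\in\Xs:i\notin\sigma\}$, hence $\XCv_I$ is the full subcomplex spanned by $V_I:=[0,D+2]\setminus I$, which by (i) is the $(D+2-|I|)$-simplex on $V_I$; moreover $\XCv_I\cap A$ is the part of this simplex avoiding the vertex $0$, i.e. the $(D+1-|I|)$-simplex on $V_I\setminus\{0\}$, which gives the third bullet. The exceptional cover element $\XCv_0$, spanned by $[0,D+1]\cup\{D+3\}$, must be handled separately: since $D+3\notin V_I$ and $D+2\notin[0,D+1]$, the common vertex set of $\XCv_0$ and $\XCv_I$ is $[0,D+1]\setminus I=V_I\setminus\{D+2\}$, so $\XCv_{I\cup\{0\}}=\XCv_0\cap\XCv_I$ is the $(D+1-|I|)$-simplex on $V_I\setminus\{D+2\}$, and deleting the vertex $0$ from it leaves the $(D-|I|)$-simplex on $[1,D+1]\setminus I$; this is the fourth bullet once we note that $\XCv_{I\cup\{0\}}\subseteq\XCv_0\subseteq\Xs^{2j}$ already at $j=0$, so $\XCv_{I\cup\{0\}}^{2j}=\XCv_{I\cup\{0\}}$ for all $j\geq 0$. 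The same inclusion gives $\XCv_0^{2j}=\XCv_0$, the subdivided $(D+1)$-simplex (cone with apex $D+3$ over the subdivided base $D$-simplex, as in the geometric description in the text), and $\XCv_0^{-2D}=\XCv_0\cap A$ is the part avoiding vertex $0$, namely the cone with apex $D+3$ over the boundary of the $D$-simplex $[1,D+1]$; this is the fifth bullet.

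The heart is the first two bullets. Using (ii), $\XCv_I^{2j}=(\XCv_I\cap A)\cup\bigcup_{k=0}^{j}\XCv_{I\cup\{k\}}$, where $\XCv_I\cap A$ is the face $V_I\setminus\{0\}$, $\XCv_{I\cup\{0\}}$ is the face $V_I\setminus\{D+2\}$, and for $k\in[1,j]$ the term $\XCv_{I\cup\{k\}}$ is the codimension-one face $V_I\setminus\{k\}$ when $k\notin I$ and equals all of $\XCv_I$ when $k\in I$. If $j\geq\min I$, then $\min I\in\{1,\dots,j\}\cap I$, so one summand is $\XCv_I$ itself and $\XCv_I^{2j}=\XCv_I$ is the $(D+2-|I|)$-simplex --- the first bullet. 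If $j<\min I$, then $\{0,1,\dots,j\}\cap I=\emptyset$, so $S:=\{0,D+2\}\cup\{1,\dots,j\}$ is a subset of $V_I$ with $|S|=j+2$, and $\XCv_I^{2j}$ is exactly the union of the codimension-one faces $V_I\setminus\{v\}$, $v\in S$; a face $\tau\subseteq V_I$ lies in this union iff $S\not\subseteq\tau$, which is precisely the description of the join $\partial\Delta_S\ast\Delta_{V_I\setminus S}$. Since $|S|=j+2$, the factor $\partial\Delta_S$ is the boundary $j$-sphere of a $(j+1)$-simplex, and $|V_I\setminus S|=(D+3-|I|)-(j+2)=D-|I|-j+1$ makes $\Delta_{V_I\setminus S}$ a $(D-|I|-j)$-simplex, so $\XCv_I^{2j}=S^j\ast\Delta^{D-|I|-j}$ --- the second bullet, with the degenerate case $D-|I|-j=-1$ (i.e. $V_I=S$) covered by the convention $\Delta^{-1}=\emptyset$ and $S^j\ast\emptyset=S^j$.

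I expect the only real friction to be the bookkeeping forced by the two asymmetries --- the vertex $0$ is the one removed by passing to $A$, while the cover element $\XCv_0$ is spanned by $[0,D+1]\cup\{D+3\}$ rather than $[0,D+2]\setminus\{0\}$ --- together with carrying the $\Delta^{-1}=\emptyset$ convention through the boundary cases; once the identity ``union of the codimension-one faces of $\Delta_{V_I}$ through a fixed face $S$ equals $\partial\Delta_S\ast\Delta_{V_I\setminus S}$'' is recorded, everything else is a direct count of vertices.
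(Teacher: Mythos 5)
Your proposal is correct and follows essentially the same route as the paper's proof: both expand $\XCv_I^{2j}=\XCv_I\cap\Xs^{2j}$ by distributing the intersection over the union defining $\Xs^{2j}$, identify each resulting piece as a face of the simplex $\Delta_{V_I}$, and recognize the union as $\partial\Delta_S\ast\Delta_{V_I\setminus S}$ with $S=\{0,\ldots,j,D+2\}$ (the paper phrases this by listing the maximal simplices $J\cup([j+1,D+1]\setminus I)$, you phrase it via the cleaner criterion $\tau\in\XCv_I^{2j}\iff S\not\subseteq\tau$). One small inaccuracy: the formula $\XCv_i=\{\sigma\in\Xs:i\notin\sigma\}$ is not quite right, since $\XCv_i$ for $i\in[1,D+1]$ is the full subcomplex on $[0,D+2]\setminus\{i\}$ and hence also excludes simplices containing $D+3$; this does not propagate, because the $V_I$ you actually use, namely $[0,D+2]\setminus I$, is the correct one.
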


\begin{proof}
We begin by proving the first three bullet points. Observe that
\[
\XCv_I^{2j}=\XCv_I\cap\Xs^{2j}=\XCv_I\cap((A\cup\XCv_0)\cup\XCv_1\cup\ldots\cup\XCv_j)=(\XCv_I\cap(A\cup\XCv_0))\cup\XCv_{I\cup\{1\}}\cup\ldots\cup\XCv_{I\cup\{j\}}.
\]
For $j\geq\min I$, one of the terms is $\XCv_{I\cup\{\min I\}}=\XCv_I$, so $\XCv_I^{2j}=\XCv_I$ is the $(D+2-|I|)$-simplex with vertices $[0,D+2]\setminus I$. (Intersecting with $\XCv_i,i>0$, corresponds to removing the vertex $i$.)

For $j<\min I$, $\XCv_{I\cup\{k\}}$ (where $k\in[1,j]$) is the $(D+1-|I|)$-simplex with vertices $[0,D+2]\setminus (I\cup\{k\})$. The first term, $\XCv_I\cap (A\cup\XCv_0)$, consists of two $(D+1-|I|)$-simplices, $[0,D+1]\setminus I$ and $[1,D+2]\setminus I$. So, $\XCv_I^{2j}$ is the complex spanned by all the simplices of the form $J\cup([j+1,D+1]\setminus I)$ where $J$ is a $(j+1)$-element subset of $[0,j]\cup\{D+2\}$. But this means precisely that $\XCv_I^{2j}$ is the simplicial join of the $(D-|I|-j)$-simplex $[j+1,D+1]\setminus I$ and the $j$-sphere, realized as the boundary of the $(j+1)$-simplex $[0,j]\cup\{D+2\}$.

By definition, $\XCv_I^{-2D}=\XCv_I\cap A$ is the $(D+1-|I|)$-simplex spanned by $[0,D+2]\setminus(I\cup\{0\})$.

To prove the fourth bullet point, note that
\[
\XCv_{I\cup\{0\}}^{2j}=\XCv_{I\cup\{0\}}\cap\Xs^{2j}=\XCv_{I\cup\{0\}}\cap((A\cup\XCv_0)\cup\XCv_1\cup\ldots\cup\XCv_j)=\XCv_{I\cup\{0\}}\cup\XCv_{I\cup\{0,1\}}\cup\ldots\cup\XCv_{I\cup\{0,j\}}=\XCv_{I\cup\{0\}}
\]
is the $(D+1-|I|)$-simplex spanned by $[0,D+1]\setminus I$ and $\XCv_{I\cup\{0\}}^{-2D}=\XCv_{I\cup\{0\}}\cap A$ is the $(D-|I|)$-simplex spanned by $[1,D+1]\setminus I$.

The last bullet point follows by definition of $\XCv_0$, namely $\XCv_0^{2j}=\XCv_0\cap\Xs^{2j}=\XCv_0$ is the half of the bipyramid with apex $D+3$, so it is a subdivided $(D+1)$-simplex, and $\XCv_0^{-2D}=\XCv_0\cap\Xs^{-2D}=\XCv_0\cap A$ is the cone with apex $D+3$ over the boundary $(D-1)$-sphere of the base $D$-simplex of the bipyramid.
\end{proof}

Using this fact, we can compute the persistent homology of the intersections $\XCv_I$.

\begin{proposition}
Suppose that $\emptyset\neq I\subseteq[1,D+1]$. Then
\[
\PH_q(\XCv_I)=\begin{cases}\frac{t^{2q}\kk[t]}{t^{2q+2}\kk[t]};&q=D+1-|I|>0,I=[q+1,D+1],\\
t^{-2D}\kk[t]\oplus\frac{\kk[t]}{t^2\kk[t]};&q=D+1-|I|=0,\\
t^{-2D}\kk[t];&q=0<D+1-|I|,\\
0;&\text{otherwise.}
\end{cases}
\]
and for any $I\subseteq[1,D+1]$ we have
\[
\PH_q(\XCv_{I\cup\{0\}})=\begin{cases}t^{-2D}\kk[t];&q=0\text{ and }I\neq[1,D+1],\\
\kk[t];&q=0\text{ and }I=[1,D+1],\\
0;&\text{otherwise.}
\end{cases}
\]
\end{proposition}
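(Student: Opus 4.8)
The plan is to compute the simplicial homology of each intersection one filtration step at a time, using the explicit descriptions of the slices $\XCv_I^{2j}$ and $\XCv_I^{-2D}$ (and likewise for $\XCv_{I\cup\{0\}}$) furnished by the preceding proposition, and then to reassemble the resulting sequences of vector spaces and inclusion-induced maps into $\kk[t]$-modules. The first observation is that every complex occurring in those descriptions --- a simplex, the join of a boundary sphere with a \emph{nonempty} simplex (which is a cone, hence contractible), a subdivided simplex, or a cone over the boundary of a simplex --- is contractible, and the only slice that can fail to be contractible is a genuine sphere $S^j=\partial\Delta^{j+1}$, occurring precisely when the simplex factor of the join is empty. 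For $\emptyset\neq I\subseteq[1,D+1]$ this happens only at the step $j=D+1-|I|$, and only when $j<\min I$.

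Next I would dispatch the combinatorics. Since $I$ consists of $|I|$ elements of $[1,D+1]$, one always has $\min I\leq D+2-|I|$, so the requirement $D+1-|I|<\min I$ forces $\min I=D+2-|I|$, that is $I=[D+2-|I|,D+1]$; writing $q=D+1-|I|$ this is precisely $I=[q+1,D+1]$, and then $\XCv_I^{2q}\simeq S^q$ while all other slices are contractible (and $\XCv_I^{-2D}$, a $(D+1-|I|)$-simplex, is nonempty). Now reassemble: the filtration changes only at the times $-2D,0,2,\dots,2D+2$. In degree $0$ the complex $\XCv_I$ is nonempty and, apart from one possible occurrence of $S^0$, connected from time $-2D$ on, so the class born at $-2D$ survives forever and gives a summand $t^{-2D}\kk[t]$; when $q=D+1-|I|=0$, i.e. $I=[1,D+1]$, the slice at $j=0$ is $S^0$ (two points), so an extra $0$-class is born at time $0$ and dies at time $2$ (the step $j=1=\min I$, where the slice becomes a $1$-simplex), contributing $\kk[t]/t^2\kk[t]$ and hence $\PH_0(\XCv_I)\iso t^{-2D}\kk[t]\oplus\kk[t]/t^2\kk[t]$. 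In degree $q>0$ a nonzero group appears only for $I=[q+1,D+1]$, where a class is born at time $2q$ (the sphere $S^q$) and dies at time $2q+2$, the step $j=q+1=\min I$ at which $\XCv_I$ is again a $(D+2-|I|)=(q+1)$-simplex, yielding $t^{2q}\kk[t]/t^{2q+2}\kk[t]$; for every other $I$ one gets $\PH_q(\XCv_I)=0$.

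For the intersections containing the vertex $0$, the preceding proposition shows that $\XCv_{I\cup\{0\}}^{2j}$ is always a simplex and $\XCv_{I\cup\{0\}}^{-2D}$ is a $(D-|I|)$-simplex; the latter is nonempty exactly when $|I|\leq D$, i.e. $I\neq[1,D+1]$ (the case $I=\emptyset$ being $\XCv_0$, a subdivided simplex at times $\geq 0$ and a cone at time $-2D$), whereas for $I=[1,D+1]$ the complex $\XCv_{I\cup\{0\}}$ is empty before time $0$ and equals the point $\{0\}$ from time $0$ on. Since every nonempty slice here is contractible, $\PH_q(\XCv_{I\cup\{0\}})=0$ for $q>0$, and $\PH_0$ is $t^{-2D}\kk[t]$ when $I\neq[1,D+1]$ and $\kk[t]$ when $I=[1,D+1]$, as claimed. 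The genuinely delicate part is the $\kk[t]$-module bookkeeping --- correctly reading off the birth and death times, hence the precise $t$-powers, from the slice-wise homology, in particular justifying that the essential degree-$0$ class persists forever and pinning down the death times of the transient $S^0$ class and of the spherical classes; the combinatorial reduction isolating which $I$ produce a spherical slice is short. This same procedure of computing the slices and then reassembling is exactly what will feed the subsequent computations of the $E^1$, $E^2$, and $E^\infty$ pages for this example.
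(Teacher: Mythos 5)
Your proposal is correct and follows essentially the same route as the paper's proof: use the preceding proposition's descriptions of the slices, observe that a join with a nonempty simplex is contractible so the only non-acyclic slice is the sphere occurring exactly when $q=D+1-|I|$ and $I=[q+1,D+1]$, and then reassemble the slice-wise homology into $\kk[t]$-modules using that the filtration changes only at the listed times and that the essential degree-$0$ class persists. Your explicit combinatorial step ($\min I\leq D+2-|I|$ forcing $I=[q+1,D+1]$) just spells out what the paper states in passing, so there is no substantive difference.
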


\begin{proof}
The join of a sphere and a non-empty simplex is contractible, so $\XCv_I^{2q}$ can only be non-acyclic if it is the join of a sphere and an empty simplex. The previous proposition shows that this occurs precisely if $D=|I|+q-1$ and $I=[q+1,D+1]$ (the latter is required so that $\min I>q$) in which case $\XCv_I^{2q}$ is a $q$-sphere. If $q>0$, this means that $\Hg_q(\XCv_I^{2q})\cong\kk$ and $\Hg_0(\XCv_I^{2q})\cong\kk$ and if $q=0$, it means that $\Hg_q(\XCv_I^{2q})\cong\kk^2.$ In all other cases, including $q=-D$, $\XCv_I^{2q}$ is contractible, so $\Hg_0(\XCv_I^{2q})\cong\kk.$ All other homology groups of $\XCv_I^{2q}$ are $0$.

The second part holds because, once born, $\XCv_{I\cup\{0\}}^{2q}$ is contractible, so we have $\Hg_0(\XCv_{I\cup\{0\}}^{2q})=\kk$ for $q\geq-D$ if $I\neq[1,D+1]$ and for $q\geq0$ otherwise.
\end{proof}

Again, persistent homology has been computed slice-wise, so the remark from the first example applies.

\begin{corollary}
The cover $\XCov$ is $1$-acyclic.
\end{corollary}

As in the previous example, this immediately yields the $E^1$ page.

\begin{corollary}
The $E^1$ page of the Mayer-Vietoris spectral sequence of $(\Xs,\XCov)$ is given by:
\begin{center}
  \begin{tikzpicture}[xscale=1.3,yscale=1.1]
	\node (E50) at (9.2,0) {$\kk[t]^{\binom{D+2}{D+2}}$};
 	\node (E40) at (7.1,0) {$\frac{\kk[t]}{t^2\kk[t]}\oplus(t^{-2D}\kk[t])^{\binom{D+2}{D+1}}$}; 
 	\node (E31) at (4.6,1) {$\frac{t^2\kk[t]}{t^4\kk[t]}$}; 
 	\node (E22) at (3.3,2) {$\ddots $};
	\node (E13) at (2,3) {$\frac{t^{2D-2}\kk[t]}{t^{2D}\kk[t]}$};
 	\node (E04) at (0,4) {$\frac{t^{2D}\kk[t]}{t^{2D+2}\kk[t]}$}; 
 	\node (E30) at (4.6,0) {$(t^{-2D}\kk[t])^{\binom{D+2}{D}}$};
	\node (E20) at (3.3,0) {$\ldots$};
	\node (E10) at (2,0) {$(t^{-2D}\kk[t])^{\binom{D+2}{2}}$};
	\node (E00) at (0,0) {$(t^{-2D}\kk[t])^{\binom{D+2}{1}}$};
 	\node (EN0) at (-.9,-.2) {};
	\node (E60) at (10,-.2) {};
 	\node (E05) at (-.9,4.5) {};
 	
     \draw (E05.west) -- (EN0.south west) -- (E50.south);
  \end{tikzpicture}
\end{center}
\end{corollary}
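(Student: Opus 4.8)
The plan is to read the $E^1$ page straight off the identity $E^1_{p,q}=\bigoplus_{|I|=p+1}\PH_q(\XCv_I)$ from Equation~\eqref{E1}, substituting the formulas for $\PH_q(\XCv_I)$ established in the preceding proposition and keeping track of multiplicities. Since the index set of $\XCov$ is $[0,D+1]$, a subset $I$ with $|I|=p+1$ either avoids $0$, so that $I\subseteq[1,D+1]$ and the first formula of that proposition applies, or contains $0$, so that $I=I'\cup\{0\}$ with $I'\subseteq[1,D+1]$ and $|I'|=p$, where the second formula applies. I would therefore split the direct sum defining $E^1_{p,q}$ into these two families and count the contributions of each.

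For the row $q=0$: among the $\binom{D+1}{p+1}$ sets $I\subseteq[1,D+1]$ with $|I|=p+1$, each contributes one copy of $t^{-2D}\kk[t]$, with the single exception that when $p=D$ the set $I=[1,D+1]$ additionally contributes the summand $\frac{\kk[t]}{t^2\kk[t]}$; among the $\binom{D+1}{p}$ sets $I'\subseteq[1,D+1]$ with $|I'|=p$, each $I'\cup\{0\}$ contributes one copy of $t^{-2D}\kk[t]$, with the single exception that when $p=D+1$ the choice $I'=[1,D+1]$ makes $[0,D+1]$ contribute $\kk[t]$ instead. Summing and applying Pascal's identity $\binom{D+1}{p}+\binom{D+1}{p+1}=\binom{D+2}{p+1}$ yields $E^1_{p,0}\cong(t^{-2D}\kk[t])^{\binom{D+2}{p+1}}$ for $0\leq p\leq D-1$, then $E^1_{D,0}\cong\frac{\kk[t]}{t^2\kk[t]}\oplus(t^{-2D}\kk[t])^{\binom{D+2}{D+1}}$, and $E^1_{D+1,0}\cong\kk[t]^{\binom{D+2}{D+2}}$, which is exactly the claimed bottom row.

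For $q>0$: the second formula gives $\PH_q(\XCv_{I'\cup\{0\}})=0$ for all $I'$, while the first gives $\PH_q(\XCv_I)\neq0$ for $I\subseteq[1,D+1]$ only when $I=[q+1,D+1]$, so that $|I|=D+1-q$ and $q=D+1-|I|>0$, in which case $\PH_q(\XCv_I)\cong\frac{t^{2q}\kk[t]}{t^{2q+2}\kk[t]}$. Rewriting $|I|=D+1-q$ as the column index $p=|I|-1=D-q$ produces exactly one nonzero module on each antidiagonal, namely $E^1_{D-q,q}\cong\frac{t^{2q}\kk[t]}{t^{2q+2}\kk[t]}$ for $1\leq q\leq D$, and $E^1_{p,q}=0$ otherwise; this is precisely the staircase $\frac{t^2\kk[t]}{t^4\kk[t]},\ldots,\frac{t^{2D}\kk[t]}{t^{2D+2}\kk[t]}$ drawn above the bottom row. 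There is no genuine obstacle here: the computation is a direct transcription of the previous proposition, entirely parallel to the corresponding corollary in the first example, and the only things to keep straight are the binomial bookkeeping and the two edge columns $p\in\{D,D+1\}$.
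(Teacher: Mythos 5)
Your argument is exactly the computation the paper has in mind: the paper simply says "As in the previous example, this immediately yields the $E^1$ page," leaving it to the reader to substitute the formulas from the preceding proposition into $E^1_{p,q}=\bigoplus_{|I|=p+1}\PH_q(\XCv_I)$ and tally the contributions. You carry out this bookkeeping carefully and correctly, including the split according to whether $0\in I$, the two exceptional columns $p\in\{D,D+1\}$, and the single nonzero module per antidiagonal above the bottom row.
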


This allows us to infer the $E^2$ page.

\begin{corollary}
The $E^2=E^\infty$ page of the Mayer-Vietoris spectral sequence of $(\Xs,\XCov)$ is given by:
\begin{center}
  \begin{tikzpicture}[xscale=1.3,yscale=1.1]
 	\node (E40) at (4,0) {$\frac{t^{-2D}\kk[t]}{t^2\kk[t]}$}; 
 	\node (E31) at (3,1) {$\frac{t^2\kk[t]}{t^4\kk[t]}$}; 
 	\node (E22) at (2,2) {$\ddots $};
	\node (E13) at (1,3) {$\frac{t^{2D-2}\kk[t]}{t^{2D}\kk[t]}$};
 	\node (E04) at (0,4) {$\frac{t^{2D}\kk[t]}{t^{2D+2}\kk[t]}$}; 
 	\node (E30) at (3,0) {};
	\node (E20) at (2,0) {};
	\node (E10) at (1,0) {};
	\node (E00) at (0,0) {$t^{-2D}\kk[t]$};
 	\node (EN0) at (-.6,-.2) {};
	\node (E50) at (5,-.2) {};
 	\node (E05) at (-.6,4.5) {};
 	
     \draw (E05.west) -- (EN0.south west) -- (E50.south);
  \end{tikzpicture}
\end{center}
\end{corollary}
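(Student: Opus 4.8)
The plan is to follow the same strategy as for the first example (cf.\ Corollary~\ref{firste2}): rather than computing the higher differentials directly, infer the pages from the $E^1$ page above together with the already-known $\PH_*(\Xs)$ and $\PH_*(\Nrv)$. First I would handle the rows above the bottom. By the previous corollary, for $q\geq1$ the nonzero terms of $E^1$ all lie on the single anti-diagonal $p+q=D$, with $E^1_{p,D-p}\cong\frac{t^{2(D-p)}\kk[t]}{t^{2(D-p)+2}\kk[t]}$. Since $d^1_{p,q}\colon E^1_{p,q}\to E^1_{p-1,q}$ then has source and target on distinct anti-diagonals whenever $q\geq1$, both the $d^1$ into and the $d^1$ out of each such term vanish, so $E^2_{p,q}=E^1_{p,q}$ for all $q\geq1$; these are exactly the displayed quotients.

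Next I would show the corner dies, i.e.\ $E^2_{D+1,0}=0$. Here $E^1_{D+1,0}=\PH_0(\XCv_{[0,D+1]})$, and $\XCv_{[0,D+1]}$ is the single vertex $\{0\}$ (born at time $0$), so $E^1_{D+1,0}$ is free of rank one on the class $([0],[0,D+1])$, while $E^1_{D+2,0}=0$. The explicit formula for $d^1_{D+1,0}$ sends this generator to the alternating sum of the classes $([0],([0,D+1])_l)$ over the codimension-one faces $([0,D+1])_l$ of the top simplex of $\Nrv$, and since the vertex $0$ lies, born at time $0$, in every one of those intersections, all the exponents of $t$ in this sum are $0$; hence the image has a nonzero torsion-free component in each of the $D+2$ free summands of $E^1_{D,0}$ it hits. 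Thus $d^1_{D+1,0}$ is injective and $E^2_{D+1,0}=\ker d^1_{D+1,0}=0$, consistent with its omission from the diagram.

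From the first two steps, every nonzero term of $E^2$ lies either on the anti-diagonal $p+q=D$ with $q\geq1$, or in the bottom row in a column $0\leq p\leq D$. A short inspection of the slope $(p,q)\mapsto(p-r,q+r-1)$ of $d^r$ for $r\geq2$ then shows that each such differential has a zero source or a zero target: a nonzero differential into or out of the anti-diagonal would force its other endpoint into column $D+1$ of the bottom row (which is $0$ by the previous step) or into negative $q$, while a nonzero differential touching the bottom row in a column $\leq D$ would need its other endpoint off both the bottom row and the anti-diagonal. Hence $d^r=0$ for all $r\geq2$ and $E^2=E^\infty$. Finally I would read the bottom row off convergence to $\PH_*(\Xs)$ (Theorem~\ref{convergence}): $E^\infty_{0,0}\cong\PH_0(\Xs)=t^{-2D}\kk[t]$, and $E^\infty_{p,0}$ is a subquotient of $\PH_p(\Xs)=0$ for $1\leq p\leq D-1$; for $p=D$, the convergence filtration of $\PH_D(\Xs)=\frac{t^{-2D}\kk[t]}{t^{2D+2}\kk[t]}$ has successive subquotients $E^\infty_{k,D-k}=\frac{t^{2(D-k)}\kk[t]}{t^{2(D-k)+2}\kk[t]}$ for $0\leq k\leq D-1$ and top term $\PH_D(\Xs)^D=\PH_D(\Xs)$, so, using that $\frac{t^{-2D}\kk[t]}{t^{2D+2}\kk[t]}$ is uniserial, these conditions force $\PH_D(\Xs)^{D-1}=\frac{t^2\kk[t]}{t^{2D+2}\kk[t]}$ and hence $E^2_{D,0}=E^\infty_{D,0}=\frac{t^{-2D}\kk[t]}{t^2\kk[t]}$, as in the diagram.

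I expect the collapse step to be the delicate point: the vanishing of all higher differentials is geometrically clear from the two-line shape of $E^2$, but it genuinely relies on $E^2_{D+1,0}=0$, since that corner is the only bottom-row term that could in principle support a nonzero $d^r$ reaching the anti-diagonal; this is why the small explicit computation of $d^1_{D+1,0}$ in the second step cannot be dispensed with.
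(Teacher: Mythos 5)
Your proposal is correct. Steps (1)--(3) coincide with the paper's argument: the paper likewise notes that $d^1$ is trivial above the bottom row (so $E^1_{p,q}\cong E^2_{p,q}$ for $q>0$), proves $E^2_{D+1,0}=0$ by exactly your computation that $d^1([0],[0,D+1])=\sum_l(-1)^l([0],[0,D+1]\setminus\{l\})$ generates a free submodule of $E^1_{D,0}$, deduces the collapse at $E^2$ from the vanishing of that corner, and reads off the trivial antidiagonals and $E^2_{0,0}\cong\PH_0(\Xs)$ from convergence. Where you genuinely diverge is the crucial entry $E^2_{D,0}=E^\infty_{D,0}$: the paper computes it internally from the $E^1$-page data, first pinning down $\ker d^1_{D,0}\cong\frac{\kk[t]}{t^2\kk[t]}\oplus t^{-2D}\kk[t]$ by the same inductive rank count as in the first example, then exhibiting explicit generators $a=(t^{2D}[D+2]-[0],[1,D+1])$ and $b=([D+2],[1,D+1])+\sum_l(-1)^l([l],[0,D+1]\setminus\{l\})$ with $t^2a=0$, observing that the generator of $\im d^1_{D+1,0}$ is $t^{2D}b-a$, and computing the quotient to be $\frac{t^{-2D}\kk[t]}{t^2\kk[t]}$. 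You instead solve the extension problem backwards: since $E^2=E^\infty$ and the graded submodules of the interval module $\PH_D(\Xs)=\frac{t^{-2D}\kk[t]}{t^{2D+2}\kk[t]}$ form a chain, the known subquotients $E^\infty_{k,D-k}$ for $k<D$ force $\PH_D(\Xs)^{D-1}=\frac{t^2\kk[t]}{t^{2D+2}\kk[t]}$ and hence $E^\infty_{D,0}\cong\frac{t^{-2D}\kk[t]}{t^2\kk[t]}$. Both routes are sound (and there is no circularity in yours, since convergence is established independently and is already used elsewhere in the paper's computation); yours is shorter and fits the section's stated philosophy of inferring pages from known homology, while the paper's buys explicit cycle representatives on $E^2_{D,0}$ and determines that entry without appealing to $\PH_D(\Xs)$, which makes the sharpness of the extension bound in Proposition \ref{linking} visible directly at the level of the $d^1$ data. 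Your closing remark correctly identifies $E^2_{D+1,0}=0$ as the one indispensable explicit computation.
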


\begin{proof}
We claim that the map $d^1_{D+1,0}:E^1_{D+1,0}\to E^1_{D,0}$ is injective. To see this, note that $([0],[0,D+1])$ is a generator of $E^1_{D+1}$ (see Section \ref{sec:specsec} for notation) and its image
\[
d^1([0],[0,D+1])=\sum_{l=0}^{D+1}(-1)^l([0],[0,D+1]\setminus\{l\})
\]
generates a free submodule of $E^1_{D,0}$, since $([0],[0,D+1]\setminus\{l\})$ generates a free submodule of $\PH_0(U_{[0,D+1]\setminus\{l\}})$. This implies that $E^2_{D+1,0}=0$, so there are no nontrivial differentials to or from any $E^r_{p,q}$ for $r\geq 2$. If $q>0$, this is true also for $r=1$. Therefore, the spectral sequence collapses on $E^2$ and $E^1_{p,q}\cong E^2_{p,q}$ for $q>0$. As we have seen, the persistent homology groups $\PH_q(\Xs)$ for $q\neq 0,D$ are trivial. This means that the corresponding antidiagonals on the $E^2=E^{\infty}$ page must consist of trivial modules. Furthermore, $E^2_{0,0}\cong\PH_0(\Xs)\cong t^{-2D}\kk[t]$. Finally, we shall compute $E^2_{D,0}$ explicitly. We already know $\im d^1_{D+1,0}$, so it remains to compute $\ker d^1_{D,0}$ and the corresponding quotient. Using a similar inductive argument as in the proof of Corollary \ref{firste2}, we have
\[
\frac{\kk[t]}{t^2\kk[t]}\oplus(t^{-2D}\kk[t])^{D+2}\cong E^1_{D,0}\cong\im d^1_{D,0}\oplus\ker d^1_{D,0}\cong\ker d^1_{D-1,0}\oplus\ker d^1_{D,0}\cong(t^{-2D}\kk[t])^{D+1}\oplus\ker d^1_{D,0}.
\]
Since these modules are finitely generated, we may conclude that
\[
\ker d^1_{D,0}\cong\frac{\kk[t]}{t^2\kk[t]}\oplus t^{-2D}\kk[t].
\]
In fact, the generators may be deduced from the explicit description of the intersections of the cover elements. Namely, they are given by
\[
a:=(t^{2D}[D+2]-[0],[1,D+1])\qquad\text{and}\qquad b:=([D+2],[1,D+1])+\sum_{l=1}^{D+1}(-1)^l([l],[0,D+1]\setminus\{l\}),
\]
subject to the single relation $t^2 a = 0$. Note that since $([0],[0,D+1]\setminus\{l\})=t^{2D}([l],[0,D+1]\setminus\{l\})$, the generator of $\im d^1_{D+1,0}$ may be written as $t^{2D}b-a$. Therefore, letting $x$ and $y$ be the generators of $\kk[t]\oplus t^{-2D}\kk[t]$, the quotient may be computed as follows:
\[
E^2_{D,0}\cong\frac{\langle x,y\rangle}{\langle t^2 x,t^{2D}y-x\rangle}=\frac{\langle t^{2D}y-x,y\rangle}{\langle t^{2D+2} y,t^{2D}y-x\rangle}\cong\frac{\langle y\rangle}{\langle t^{2D+2} y\rangle}\cong\frac{t^{-2D}\kk[t]}{t^2\kk[t]}.
\]
\end{proof}

The modules $\PH_{D}(\Xs)=\frac{t^{-2D}\kk[t]}{t^{2D+2}\kk[t]}$ and $E^{\infty}_{D,0}=\frac{t^{-2D}\kk[t]}{t^2\kk[t]}$ are $\eta$-interleaved for $\eta=2D$, but not so for any $\eta<2D$. Therefore, this example attains the bound of Proposition \ref{linking}, so this bound is also sharp.

\section{Discussion}
Our initial motivation for this work was algorithmic - given a filtered simplicial complex, it would be computationally desirable to construct a coarser simplicial complex via a cover such that the persistent homology was preserved. This has been done for metric spaces~\cite{sheehy2013linear} but not for more general filtrations. An alternate spectral sequence approach is used for computation of persistence but it does not allow for passing to a coarser representation.  Our results suggest a natural approximation algorithm, where a coarse cover is constructed and the condition of $\eps$-acyclicity is checked locally for each finite intersection. Conversely, the maximum $\eps$ overall finite non-empty intersections could provide the bound. We would then have an explicit error bound relating the persistent homology of the input simplicial complex and the coarser (and presumably smaller) nerve. 

Beyond the initial motivation, our setting of $\kk[t]$-modules and simplicial complexes may seem restrictive. However, these were chosen to make the constructions as explicit as possible and to avoid technical complications. We believe the bounds hold in much greater generality. For example, a natural direction is to consider a sheaf of $q$-tame persistence modules and to use the Leray spectral sequence, of which the Mayer-Vietoris spectral sequence is a special case. We believe the error analysis goes through identically and plan to address this in a separate note. The main technical obstacles are in setting up the spectral sequence so that the differentials are well-defined.

Likewise, the restriction to simplicial complexes is mainly to avoid complications and should hold for CW-complexes or perhaps even suitably nice singular spaces. In general, our results should simplify proving approximation results. It does not require individual sublevel sets of a function to have a good cover at any particular level. In particular, this removes the need to consider the image of a pair of covers. Finally, the $\eps$-acyclicity is a local condition, making it easier to verify in a number of applications.   

Finally we note, this work can also be extended to multidimensional persistence modules. The weaker bound using only interleaving applies directly. The tighter bound does not however, as in our proof that left and right interleavings do not interact (Proposition~\ref{doesnotaddup}), the last case uses the fact that persistence modules have projective dimension of 1, which does not hold for multidimensional persistence modules. 


\section*{Acknowledgments}
The authors would like to thank Don Sheehy for introducing them to the problem and to Petar Pave\v{s}i\'c for suggesting the proof of Proposition~\ref{prop:proof}.



	
	
	


\bibliographystyle{plain}
\bibliography{nerve}

\appendix

\section{Convergence of the Mayer-Vietoris spectral sequence}

The aim of this Appendix is to briefly describe the basic idea of the proof of Theorem \ref{convergence}. Let $(M,\partial^0,\partial^1)$ be the double complex associated to a filtered cover of a filtered simplicial complex, i.e. a pair $(\Xs,\XCov)$, and $(E^r,d^r)$ the spectral sequence associated to this double complex, as defined in Section \ref{sec:specsec}. As mentioned there, the spectral sequence associated to a double complex $(M,\partial^0,\partial^1)$ is just a tool to compute the homology of the associated total complex $(\Tot(M),D)$, namely $(E^r,d^r)$ will converge to $\PH_*(\Tot(M))$. This standard fact can be established by a series of elementary but tedious computations, so we do not replicate the proof here, see for instance \cite{rotman2008introduction}. To prove Theorem \ref{convergence}, it is therefore sufficient to show that the homology of the total complex $\Tot(M)$ is isomorphic to the homology of $(\Xs,f)$.

In fact, the double complex $(M,\partial^0,\partial^1)$ has a geometric counterpart, namely, the {\em filtered Mayer-Vietoris blowup complex $\Xs_\XCov$} associated to $(\Xs,\XCov)$. The total complex $\Tot(M)$ arises as the chain complex associated to $\Xs_\XCov$ and $M$ arises from a filtration on $\Tot(M)$ which in turn is induced by a natural filtration of $\Xs_\XCov$.

So far, we have been working mostly with filtered simplicial complexes, however, in the case at hand, it is slightly more convenient to work with filtered CW complexes and cellular homology. Any filtered simplicial complex $\Xs$ gives rise to a filtered CW complex $\Xs^\CW$ whose cellular homology is isomorphic to the simplicial homology of $\Xs$. In fact, the corresponding chain complexes are isomorphic. Each simplex $\sigma$ in $\Xs$ is assigned a cell $e_{\sigma}$ in $\Xs^\CW$. The cartesian product $\Xs_1^\CW\times\Xs_2^\CW$ of two such complexes again has the structure of a CW complex whose cells are given as $e_{\sigma_1}\times e_{\sigma_2}$ for each pair of simplices $\sigma_1$ in $\Xs_1$ and $\sigma_2$ in $\Xs_2$. The blowup complex is a subcomplex of such a product.

\begin{definition}
The {\em filtered Mayer-Vietoris blowup complex} associated to $(\Xs,\XCov)$ is the filtered CW complex $(\Xs_\XCov,\Filt_\XCov)$, where $\Xs_\XCov\leq\Xs\times\Nrv$ is given by
\[
\Xs_\XCov=\bigcup_{\sigma\in\XCv_I}e_\sigma\times e_I
\]
and the filtration $\Filt_\XCov=(\Xs_\XCov^j)_{j\in\ZZ}$ is given by
\[
\Xs_\XCov^j=\bigcup_{\sigma\in\XCv_I^j}e_\sigma\times e_I.
\]
\end{definition}

Let $(\PC^{\Xs}_*,\partial^{\Xs})$ be the (persistent) cellular chain complex associated to $\Xs^\CW$ and let $(\PC^{\Nrv}_*,\partial^{\Nrv})$ be the cellular chain complex associated to $\Nrv^\CW$. Let $(\PC_*,\partial)$ be the cellular chain complex associated to the blowup complex $\Xs_\XCov$. Explicitly, each $\PC_n$ is the free $\kk[t]$-module, generated by the graded set of all cells $e_\sigma\times e_I$ with $\dim\sigma+\dim I=n$, where the grading is given by $\deg(e_\sigma\times e_I)$, i.e. the birth times of the cells in the filtration of the blowup complex. Since the blowup complex is a subcomplex of $\Xs^\CW\times\Nrv^\CW$, the boundary homomorphisms $\partial_n$ are simply restrictions of the boundary homomorphisms of the chain complex associated to this product. These satisfy the following relation:
\[
\partial_n(e_\sigma\times e_I)=\partial^0_n(e_\sigma)\times e_I+(-1)^{\dim\sigma}e_\sigma\times\partial^1_n(e_I).
\]
Taking into account the isomorphisms between $(\PC^{\Xs}_*,\partial^{\Xs})$ and $(\PC^{\Nrv}_*,\partial^{\Nrv})$ and the corresponding simplicial chain complexes, it follows that $(\PC_*,\partial)\cong(\Tot_*(M),D)$. For comparison, here is the boundary formula for the latter chain complex, written out in full. If $(\sigma,I)$ is a pair with $\dim\sigma=q$ and $\dim I=p$ such that $p+q=n$, we have
\[
D_n(\sigma,I)=\sum_{k=0}^q(-1)^k t^{\deg(\sigma,I)-\deg(\sigma_k,I)}(\sigma_k,I)+(-1)^q\sum_{l=0}^p(-1)^l t^{\deg(\sigma,I)-\deg(\sigma,I_l)} (\sigma,I_l).
\]
This also explains the grading from which the double complex structure of $M$ arises. Namely for each pair $p,q$ with $p+q=n$, let $N_{p,q}\leq\PC_n$ be the submodule freely generated by all cells $e_\sigma\times e_I$ with $\dim\sigma=q$ and $\dim I=p$. Then, we have $\PC_n=\bigoplus_{p+q=n}N_{p,q}$ and $\partial^{\Xs}\times\id$ and $\id\times\partial^{\Nrv}$ respect this grading. The aforementioned isomorphism of $(\PC_*,\partial)\cong(\Tot_*(M),D)$ isomorphically maps the double complex structure of $N$ into that of $M$. Therefore, the homology of the total complex $(\Tot(M),D)$ is precisely the (persistent) cellular homology of the blowup complex. In other words, we have:
\begin{proposition}
The homology of the total complex is isomorphic to the homology of the filtered blowup complex:
\[
\PH_*(\Tot(M),D)\cong\PH^\CW_*(\Xs_\XCov,\Filt_\XCov).
\]
\end{proposition}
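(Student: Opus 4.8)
The plan is to establish the isomorphism $\PH_*(\Tot(M),D)\cong\PH^\CW_*(\Xs_\XCov,\Filt_\XCov)$ by exhibiting an explicit isomorphism of the underlying (filtered) chain complexes and then applying the homology functor. The key point — already spelled out in the discussion preceding the statement — is that $\Tot(M)$ and the cellular chain complex $(\PC_*,\partial)$ of the blowup complex $\Xs_\XCov$ are not merely quasi-isomorphic but literally isomorphic as differential $\kk[t]$-modules, so nothing beyond bookkeeping is required once the right identification is written down.

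First I would fix, for each pair $(\sigma,I)$ with $\sigma\in\XCv_I$, the generator $e_\sigma\times e_I$ of $\PC_{\dim\sigma+\dim I}$ and declare the correspondence $e_\sigma\times e_I\leftrightarrow(\sigma,I)$; this is a bijection of the graded generating sets, since the cells of $\Xs_\XCov$ are by definition exactly the products $e_\sigma\times e_I$ with $\sigma\in\XCv_I$, and the grading (birth time) matches because $\deg(e_\sigma\times e_I)$ in $\Filt_\XCov$ was defined to be $\deg(\sigma,I)$, the birth time of $\sigma$ in $\Filt_I$. Extending $\kk[t]$-linearly gives a degree-preserving isomorphism $\Phi\colon\PC_*\to\Tot_*(M)$ of graded $\kk[t]$-modules. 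Next I would check that $\Phi$ is a chain map: on the blowup side the boundary is the restriction of the product boundary, giving
\[
\partial_n(e_\sigma\times e_I)=\partial^0_n(e_\sigma)\times e_I+(-1)^{\dim\sigma}e_\sigma\times\partial^1_n(e_I),
\]
while on the total-complex side $D_n=\partial^0+\partial^1$ with the explicit simplicial-plus-nerve formula quoted in the excerpt; matching the two amounts to observing that the Künneth sign $(-1)^{\dim\sigma}$ is precisely the sign introduced when one replaces $\partial^0$ by $(-1)^p\partial^0$ to pass from the commutative to the anticommutative double complex, and that the powers of $t$ recording the shifts in birth time agree on both sides by construction of $\Filt_\XCov$. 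One also has to note that $\Phi$ carries the column/row grading $N_{p,q}\subseteq\PC_n$ (cells with $\dim I=p$, $\dim\sigma=q$) onto $M_{p,q}\subseteq\Tot_n(M)$, so it is an isomorphism of double-complex structures, not just of total complexes. Finally, applying $\PH_*$ to the chain isomorphism $\Phi$ yields the desired isomorphism of persistence modules.

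The main obstacle — and it is a mild one — is the careful sign and degree-shift accounting in verifying that $\Phi$ commutes with the differentials: one must be consistent about which copy of $\sigma$ in $E^0_{p,q}=\bigoplus_{|I|=p+1}\PC_q(\XCv_I)$ a given summand refers to (this is why the paper writes simplices as pairs $(\sigma,I)$), about the ordering conventions on the vertex set $V$ and the index set $\Lambda$ used to define the face maps $\sigma_k$ and $I_l$, and about the $t^{\deg(\sigma,I)-\deg(\sigma_k,I)}$ and $t^{\deg(\sigma,I)-\deg(\sigma,I_l)}$ factors. All of these are forced once the identification of generators and gradings is fixed, so the verification is routine; I would present it by simply displaying the two boundary formulas side by side and remarking that they coincide term by term under $\Phi$. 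No genuinely new idea is needed — the content is that the blowup complex was engineered precisely so that its cellular chain complex realizes the total complex of the Mayer–Vietoris double complex on the nose.
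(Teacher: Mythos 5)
Your proposal is correct and follows essentially the same route as the paper: the paper's proof consists precisely of the identification $e_\sigma\times e_I\leftrightarrow(\sigma,I)$ of graded generators, the observation that the restricted product boundary $\partial_n(e_\sigma\times e_I)=\partial^0_n(e_\sigma)\times e_I+(-1)^{\dim\sigma}e_\sigma\times\partial^1_n(e_I)$ matches the total differential $D$ term by term (including the $t$-shifts and the double-complex bigrading $N_{p,q}\cong M_{p,q}$), followed by applying homology. The only cosmetic difference is the sign convention (the paper puts $(-1)^q$ on the $\partial^1$ part in the appendix rather than $(-1)^p$ on $\partial^0$ as in Section~\ref{sec:specsec}), which changes nothing up to an isomorphism of total complexes.
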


It only remains to check that $\PH^\CW_*(\Xs_\XCov,\Filt_\XCov)\cong\PH_*(\Xs,\Filt)$. To see this, it suffices to construct a homotopy equivalence of the two spaces, in the filtered sense. Let $\pi:\Xs_\XCov\to\Xs^\CW$ be the natural projection (to the first component) and let $\pi^j:\Xs_\XCov^j\to(\Xs^j)^\CW$ be the appropriate restriction. It is a standard fact \cite[Proposition 4G.2]{Hatcher} that these projections are homotopy equivalences. Finally, these maps obviously also respect the filtration, i.e. for each $j_1\leq j_2$, the diagram
\begin{center}
\begin{tikzpicture}[scale=1]
\node (B1) at (-1,1) {$\Xs_\XCov^{j_1}$};
\node (B2) at (1,1) {$\Xs_\XCov^{j_2}$};
\node (X1) at (-1,-1) {$(\Xs^{j_1})^\CW$};
\node (X2) at (1,-1) {$(\Xs^{j_2})^\CW$};
\path[->]
(B1) edge (B2)
(X1) edge (X2)
(B1) edge node[left]{$\pi^{j_1}$} (X1)
(B2) edge node[right]{$\pi^{j_2}$} (X2);
\end{tikzpicture}
\end{center}
commutes, because the projections simply forget the second component, whereas the information from the first component remains unchanged. Therefore, as claimed in Theorem \ref{convergence}, the Mayer-Vietoris spectral sequence of $(\Xs,\XCov)$ converges to the cellular persistent homology of $\Xs^\CW$ and therefore to the simplicial persistent homology of $\Xs$.
\end{document}